\newtheorem{theorem}{Theorem}
\newtheorem{proposition}{Proposition}[section]
\newtheorem{corollary}[proposition]{Corollary}
\newtheorem{lemma}[proposition]{Lemma}
\newtheorem{definition}[proposition]{Definition}
\newtheorem{assumption}[proposition]{Assumption}
\newtheorem{remark}[proposition]{Remark}
\newtheorem{RHproblem}[proposition]{RH problem}
\numberwithin{equation}{section}
\newcommand{\CC}{\mathbb{C}}
\newcommand{\RR}{\mathbb{R}}
\newcommand{\ZZ}{\mathbb{Z}}
\newcommand{\norm}[1]{\left\Vert#1\right\Vert}
\newcommand{\abs}[1]{\left\vert#1\right\vert}
\DeclareMathOperator{\sech}{sech} %
\newcommand{\Res}{\text{\upshape Res} \,}
\newcommand{\im}{\mathrm{Im}\,}
\newcommand{\re}{\mathrm{Re}\,}
\newcommand{\diag}{\mathrm{diag}}
\newcommand{\reg}{\mathrm{reg}}
\title[The Robin problem for the nonlinear Schr\"odinger equation ]{The Robin problem for the nonlinear Schr\"odinger equation 
on the half-line}
\author{Jae Min Lee}
\address{School of Mathematics and Statistics, \\
Nanjing University of Information Science and Technology, 210044 Nanjing, China}
\author{Jonatan Lenells}
\address{Department of Mathematics, KTH Royal Institute of Technology, \\ 100 44 Stockholm, Sweden.} 
\email{100089@nuist.edu.cn}
\email{jlenells@kth.se}
\thanks{Lee acknowledges support from the European Research Council, Grant Agreement No. 682537 and the G\"{o}ran Gustafsson Foundation. Lenells acknowledges support from the European Research Council, Grant Agreement No. 682537, the Swedish Research Council, Grant No. 2015-05430, the G\"{o}ran Gustafsson Foundation, and the Ruth and Nils-Erik Stenb\"ack Foundation.}%
\begin{document}
\begin{abstract}
We consider the nonlinear Schr\"{o}dinger equation on the half-line $x \geq 0$ with a Robin boundary condition at $x = 0$ and with initial data in the weighted Sobolev space $H^{1,1}(\RR_+)$. We prove that there exists a global weak solution of this initial-boundary value problem and provide a representation for the solution in terms of the solution of a Riemann--Hilbert problem. Using this representation, we obtain asymptotic formulas for the long-time behavior of the solution. In particular, by restricting our asymptotic result to solutions whose initial data are close to the initial profile of the stationary one-soliton, we obtain results on the asymptotic stability of the stationary one-solitons under any small perturbation in $H^{1,1}(\RR_+)$. In the focusing case, such a result was already established by Deift and Park using different methods, and our work provides an alternative approach to obtain such results.
We treat both the focusing and the defocusing versions of the equation.
\end{abstract}

\maketitle

\noindent {\small{\sc AMS Subject Classification (2020)}: 35Q55, 35Q15, 35B40, 37K40.} 

\noindent {\small{\sc Keywords}: nonlinear Schr\"odinger equation, long-time asymptotics, asymptotic stability, stationary soliton, initial-boundary value problem, steepest descent method.}


\section{Introduction}
The nonlinear Schr\"{o}dinger (NLS) equation
\begin{equation}\label{NLS}
i u_t+u_{xx}-2 \lambda |u|^2u=0, \qquad \lambda = \pm 1,
\end{equation}
where $\lambda=-1$ and $\lambda=1$ correspond to the focusing and defocusing versions of the equation, respectively, is one of the most important evolution equations in mathematical physics. It arises in a variety of situations such as the modeling of slowly varying wave packets in nonlinear media \cite{BN1967}, deep water waves \cite{ZP1983}, plasma physics \cite{Z1972}, nonlinear fiber optics \cite{HT1973, A2007}, and magneto-static spin waves \cite{ZP1983}.

In addition to its physical significance, equation \eqref{NLS} has rich mathematical properties stemming from its complete integrability. In the seminar paper \cite{ZS1972}, Zakharov and Shabat introduced a Lax pair for (\ref{NLS}) and implemented the inverse scattering transform (IST) for the solution of the initial value problem (IVP), see also \cite{ZS1974, AKNS1974}. The NLS equation admits a bi-Hamiltonian structure and infinitely many conservation laws. In the focusing case, it admits soliton solutions that decay exponentially as $x \to \pm \infty$. Questions related to the well-posedness of initial value problems and initial-boundary value problems (IBVPs) for (\ref{NLS}) have been studied extensively, see e.g. \cite{B1999, C2003, S1989, FHM2017, BSZ2018}.

In this paper, we are interested in the NLS equation (\ref{NLS}) posed on the half-line $x \geq 0$. More precisely, we study solutions of \eqref{NLS} in the domain $\{(x,t) \in \RR^2 : x\geq 0, t \geq 0\}$ which satisfy a Robin boundary condition at $x = 0$ of the form
\begin{align}\label{Robincondition}
u_x(0,t)+q u(0,t)=0, \qquad t \ge 0,
\end{align}
where $q \in \RR$ is a real parameter. In the case of $\lambda = -1$ (i.e., in the focusing case), this Robin problem was extensively analyzed in \cite{DP2011} by Deift and Park. In \cite{DP2011}, a B\"acklund transformation is first utilized to extend the half-line solution to a solution on the whole line, and then IST and Riemann--Hilbert (RH) techniques are used to obtain long-time asymptotic formulas for the solution $u(x,t)$ whenever the initial data $u_0(x) = u(x,0)$ is a small perturbation of the stationary one-soliton initial profile. 

The main goal of this work is to study the above Robin problem in the case of $\lambda = 1$, i.e., for the defocusing version of (\ref{NLS}). However, since our methods allow us to treat both the focusing and defocusing cases simultaneously with little additional effort, we will also consider the focusing case. Our work has partly been inspired by \cite{D2017} where P. Deift listed the study of the defocusing case as an interesting open problem. 

\subsection{Description of main results}
We assume that the initial data $u_0(x) \coloneqq u(x,0)$ is either in the Schwartz class $\mathcal{S}(\RR_+)$ of rapidly decaying functions or in the weighted Sobolev space $H^{1,1}(\RR_+)$ defined by
$$H^{1,1}(\RR_+)\coloneqq \{f \in L^2(\RR_+) : \partial_x f , x f \in L^2(\RR_+)\}.$$
In the latter case, we will need to formulate the IBVP in an appropriate weak sense (see Definition \ref{H11globalsoln}). As a first step, we prove that the Robin IBVP is globally well-posed  in $H^{1,1}(\RR_+)$ (see Proposition \ref{H11existenceprop}). The proof of this fact follows along the same lines as the proof of Theorem 3 in \cite{DP2011}. 
Our first main result is Theorem \ref{linearizableth}, which provides a RH representation for the solution $u(x,t)$ of the Robin IBVP for initial data in $H^{1,1}(\RR_+)$. In the focusing case, this theorem is stated under the assumption that the initial data is generic (see Assumption \ref{zerosassumption}), and in the defocusing case with $q > 0$ (where $q$ is the parameter in (\ref{Robincondition})), it is stated under the assumption that the associated RH problem has a solution. Our results are the most complete in the defocusing case with $q < 0$; in this case, no additional assumptions are required which means that the representation applies for any initial data $u_0 \in H^{1,1}(\RR_+)$. 

The representation formula of Theorem \ref{linearizableth} for the half-line solution is expressed in terms of a RH problem which has the same form as the RH problem associated to the NLS equation on the line. This means that we can obtain asymptotic theorems for the Robin IBVP by applying results developed for the pure IVP. In the defocusing case, this leads to Theorem  \ref{nosolitonsasymptoticsth} and Theorem \ref{onesolitonasymptoticsth} which provide the large $t$ asymptotics of the solution of the Robin problem when $q<0$ and $q > 0$, respectively. In the focusing case, we obtain Theorem \ref{focusingasymptoticsth} which provides the asymptotics for any generic initial data in $H^{1,1}(\RR_+)$. 

To describe our remaining two main theorems, Theorem \ref{asymptoticsthdefocusing} and Theorem \ref{asymptoticsthfocusing}, we first need to discuss the stationary one-soliton solutions of (\ref{NLS}). It is important to note that even though the defocusing NLS does not admit soliton solutions on the line with decay as $x \to \pm \infty$, it does admit a family of stationary one-solitons on the half-line. Indeed, there are two-parameter families of stationary one-soliton solutions of \eqref{NLS} on the half-line in both the focusing and defocusing cases which satisfy the Robin boundary condition (\ref{Robincondition}). These are given explicitly by (see \cite{L2015} for the defocusing case)
\begin{equation}\label{usdef}
u_s(x,t) \coloneqq e^{i\omega t}u_{s0}(x),
\end{equation}
where the initial data $u_{s0}(x)$ is given by
\begin{equation}\label{exsol}
u_{s0}(x)\coloneqq\begin{cases}
\sqrt{\omega} \sech\left(\sqrt{\omega} x+\phi\right), & \quad \lambda=-1, \\
\frac{2\alpha\sqrt{\omega}(\sqrt{\alpha^2+\omega}+\sqrt{\omega})e^{x\sqrt{\omega}}}{\alpha^2 (e^{2x\sqrt{\omega}}-1)+2\sqrt{\omega}(\sqrt{\alpha^2+\omega}+\sqrt{\omega})e^{2x\sqrt{\omega}}}, & \quad \lambda=1,
\end{cases}
\end{equation}
and the family of solutions is parametrized by $\omega > 0$ and $\phi \in \RR$ in the focusing case, and by $\omega > 0$ and $\alpha > 0$ in the defocusing case.
Even though $u_s(x,t)$ is singular at
\begin{align}\label{singularxvalue}
x = -\frac{1}{\sqrt{\omega }}\log \Big(\frac{\sqrt{\omega }  + \sqrt{\alpha^2+\omega }}{\alpha}\Big) < 0
\end{align}
when $\lambda = 1$, it is smooth in the quarter plane $\{x \geq 0, t \geq 0\}$.
We will use a subscript $s$ to denote quantities corresponding to the stationary one-soliton $u_s(x,t)$.
The parameter $q_s \coloneqq -u_{s0}'(0)/u_{s0}(0)$ relevant for the Robin boundary condition is given by $q_s = \sqrt{\omega} \tanh(\phi)$ in the focusing case, and by $q_s = \sqrt{\alpha^2+\omega} > 0$ in the defocusing case. In both cases, the Dirichlet and Neumann boundary values are time-periodic and satisfy the Robin boundary condition (\ref{Robincondition}):
$$u_s(0,t)=\alpha e^{i t \omega},\quad u_{sx}(0,t)=-\alpha q_s e^{i \omega t},\qquad t \ge 0,$$
where $\alpha = \sqrt{\omega}\sech\phi > 0$ if $\lambda = -1$. Theorem \ref{asymptoticsthdefocusing} and Theorem \ref{asymptoticsthfocusing} are concerned with small perturbations of the stationary one-soliton $u_s$. More precisely, they provide the long-time asymptotics for the solution of the Robin problem whenever the initial data $u_0$ is a small perturbation of $u_{s0}$ in $H^{1,1}(\RR_+)$. In the defocusing case, the asymptotics of $u(x,t)$ is given to leading order by a stationary one-soliton with slightly perturbed parameters. In the focusing case, the asymptotics of $u(x,t)$ is given to leading order by a stationary one-soliton or a stationary two-soliton depending on the distribution of the zeros of an associated spectral function. (The result of Theorem \ref{asymptoticsthfocusing} was already obtained in \cite{DP2011}, but we include it for completeness as mentioned above.)

\subsection{Methods}
It is not trivial to generalize the approach of \cite{DP2011} to the defocusing case, because when $\lambda = 1$ the B\"acklund transformation extending the solution from $x> 0$ to $x<0$ introduces certain singularities which must be controlled \cite{D2017}. This is one reason why we have decided to adopt a different approach. Our approach is based on a mix of ideas from the Unified Transform Method (UTM) of Fokas as well as ideas related to scattering problems in the context of weighted Sobolev spaces developed in \cite{Z1998, DZ2003, DP2011}. We also use continuity arguments to go from Schwartz class solutions to solutions in the weighted Sobolev space $H^{1,1}(\RR_+)$.

The UTM was introduced by Fokas in 1997 as a general approach to the solution of IBVPs for integrable PDEs \cite{F1997}. Just like the IST, it provides a way to express the solution in terms of the solution of a RH problem. Hence, the UTM can be thought of as an IBVP analog of the IST formalism. The UTM has proven successful for the study of many integrable nonlinear PDEs, see e.g., \cite{F2002, BFS2004, FI1996, HL2018, FIS2005, FL2009, IS2013, L2008, XF2013, FL2021}. In the case of the NLS equation on the half-line, the UTM represents the solution $u(x,t)$ in terms of the solution of a $2 \times 2$-matrix RH problem with a jump along the contour $\RR \cup i\RR$ \cite{F2002}. This RH problem is expressed in terms of four spectral functions $a(k)$, $b(k)$, $A(k)$, and $B(k)$, where $a(k)$ and $b(k)$ are defined in terms of the initial data, while $A(k)$ and $B(k)$ are defined in terms of the boundary values $u(0,t)$ and $u_x(0,t)$. Since both $u(0,t)$ and $u_x(0,t)$ cannot be independently specified for a well-posed problem, $A(k)$ and $B(k)$ remain unknown and the solution formula is therefore not fully effective in general. However, for certain boundary conditions, such as the Robin boundary condition \eqref{Robincondition}, the functions $A(k)$, $B(k)$ can be eliminated in terms of $a(k)$, $b(k)$ by algebraic manipulations involving the so-called global relation. Boundary conditions of this type are known as linearizable \cite{F2002}. 

It is well-known that the Robin boundary condition \eqref{Robincondition} is linearizable and the associated solution formula for $u(x,t)$ has been obtained in \cite{F2002} (see also \cite{FIS2005}). In \cite{F2002, FIS2005}, the formula for $u(x,t)$ was derived under the a priori assumption that the boundary values decay for large times. This is not always the case and the problem was therefore revisited by Its and Shepelsky in \cite{IS2013} who presented an independent proof that the Riemann-Hilbert problem for linearizable boundary conditions indeed yields the solution of the IBVP in question when $\lambda = -1$. 

The present work extends the results of \cite{F2002, FIS2005, IS2013} in two ways. First, we extend the construction of \cite{IS2013} to the case of $\lambda = 1$. The second extension is more significant: Whereas \cite{F2002, FIS2005, IS2013} consider solutions of sufficient smoothness and decay, by injecting ideas related to weighted Sobolev spaces developed in \cite{Z1998, DZ2003, DP2011} as well as a continuity argument, we are able to derive a representation formula for solutions in the weighted Sobolev space $H^{1,1}(\RR_+)$. To the best of our knowledge, this is the first time the half-line approach of \cite{F2002} is developed in such a weighted Sobolev space.

Once the RH representation for the solution has been obtained, we obtain formulas for the long-time asymptotics by means of the nonlinear steepest descent method.
This method was pioneered by Deift and Zhou \cite{DZ1993} in 1993 and has subsequently been successfully employed to derive asymptotics for the solution of IVPs for a large number of integrable PDEs, see e.g., \cite{BM2016, BV2007, DKKZ1996, DP2011, DVZ1994, GT2009, HXF2015, K2008, KMM2003, KT2009, XFC2013}. Asymptotic formulas for solutions of IBVPs have also been obtained by combining the UTM with nonlinear steepest descent techniques, see e.g., \cite{BFS2004, BIK2009, BKSZ2010, BS2009, L2016, HL2018, FIS2005, AL2017dNLS}. For a brief introduction to the Deift--Zhou method, we refer to \cite{DIZ1993}.

In recent years, the error terms in the asymptotic formulas for the solution of NLS on the line with initial data in $H^{1,1}$ have been sharpened by Borghese--Jenkins--McLaughlin \cite{BJM2018} and Dieng--McLaughlin--Miller \cite{DMM2019} using the $\overline{\partial}$ generalization of the nonlinear steepest descent method. 
By applying the results of \cite{BJM2018, DMM2019} to the RH representation of Theorem \ref{linearizableth}, we can immediately obtain asymptotic theorems for the Robin problem, thus we do not have to repeat the steepest descent analysis here. However, in the case considered in Theorem \ref{onesolitonasymptoticsth} where $\lambda = 1$ and $q > 0$, an additional argument is needed. Indeed, in this case the RH problem has poles even if we are considering the defocusing NLS. Such a situation is never encountered for the problem on the line because the defocusing NLS does not admit solitons with decay at spatial infinity. We deal with this issue by relating the singular RH solution to a regularized solution without poles.

We believe that the approach to the Robin problem presented here combined with the $\overline{\partial}$ nonlinear steepest descent method will prove useful also for the analysis of other IBVPs for integrable PDEs with data in weighted Sobolev spaces, yielding asymptotic formulas valid for initial data in spaces such as $H^{1,1}(\RR_+)$. 
It would be interesting in this regard to also consider boundary conditions which are not linearizable---this would require an analysis in weighted Sobolev spaces also of the spectral problem associated with the boundary data   and would present further challenges.

\subsection{Organization of the paper}
The main theorems of the paper are stated in Section \ref{mainresult}.
In Section \ref{spectralfunctionssec}, we perform the spectral analysis and establish several properties of the associated spectral functions for the half-line problem; in particular, we establish their continuous dependence on the initial data $u_0 \in H^{1,1}(\RR_+)$. In Section \ref{mfromusec}, we apply the UTM to obtain a representation for the solution of the Robin IBVP in terms of a RH problem under the a priori assumption that the solution exists and lies in the Schwartz class. Theorem \ref{linearizableth} is then first proved for initial data in the Schwartz class in Section \ref{Schwartzsec}. In Section \ref{linearizableproofsec}, the proof of Theorem \ref{linearizableth} is extended to initial data in $H^{1,1}(\RR_+)$ by means of density and continuity arguments. In Section \ref{asymproofsection}, we use the $\overline{\partial}$ nonlinear steepest descent method to prove the long-time asymptotics theorems.
Appendix \ref{globalwpn} contains a proof of the global well-posedness of the Robin problem in $H^{1,1}(\RR_+)$. In Appendix \ref{RHsolitonsec}, we recall various properties of the NLS stationary one-soliton solutions and the associated RH problems.

\subsection{Notation}

The following notation will be used throughout the article.

\begin{enumerate}[$-$]
\item $C>0$ will denote a generic constant that may change within a computation.

\item $[A]_1$ and $[A]_2$ denote the first and second columns of a $2 \times 2$ matrix $A$.

\item If $A$ is $n \times m$ matrix, we define $|A| \ge 0$ by $|A|^2=\Sigma_{i,j}|A_{ij}|^2$. Then $|A+B| \le |A|+|B|$ and $|AB| \le |A| |B|$.

\item For a (piecewise smooth) contour $\gamma \subset \CC$ and $1 \le p \le \infty$, we write $A \in L^p(\gamma)$ if $|A|$ belongs to $L^p(\gamma)$. 
We write $\norm{A}_{L^p(\gamma)}\coloneqq \norm{|A|}_{L^p(\gamma)}$.

\item $\CC_+=\{k \in \mathbb{C}\;|\;\im k > 0\}$ and $\CC_-=\{k \in \mathbb{C}\;|\;\im k < 0\}$ denote the open upper and lower halves of the complex plane.

\item $\RR_+ = [0,\infty)$ and $\RR_- = (-\infty, 0]$ denote the closed right and left half-lines.

\item $f^*(k)\coloneqq \overline{f(\bar{k})}$ denotes the Schwartz conjugate of a function $f(k)$.

\item We let $\{\sigma_j\}_{1}^3$ denote the three Pauli matrices defined by
$$\sigma_1=\begin{pmatrix}0 & 1 \\ 1 & 0 \end{pmatrix},\;\;\; \sigma_2=\begin{pmatrix}0 & -i \\ i & 0 \end{pmatrix},\;\;\; \sigma_3=\begin{pmatrix}1 & 0 \\ 0 & -1 \end{pmatrix}.$$

\item The weighted Sobolev space $H^{k,j}(\RR_+)$ is a Banach space defined by
$$H^{k,j}(\RR_+)\coloneqq \{f \in L^2(\RR_+) : \partial_x^k f , x^j f \in L^2(\RR_+)\},$$
equipped with the norm
$$\|f\|_{H^{k,j}(\RR_+)}\coloneqq \left(\|f\|_{L^2}^2+\|\partial_x^k f\|_{L^2}^2+\|x^j f\|_{L^2}^2\right)^{1/2}.$$
Note that $\|f\|_{L^\infty} \leq C \|f\|_{H^1} \leq C \|f\|_{H^{1,1}}$, so if $f \in H^{1,1}(\RR_+)$, then $f$ is bounded and $\lim_{x \to \infty} f(x) = 0$.

\item The space $\mathcal{S}(\RR_+)$ of Schwartz class functions on $\RR_+$ consists of all smooth functions $f$ on $[0, \infty)$ such that $f$ and all its derivatives have rapid decay as $x \to +\infty$. 

\item For $x \in \RR$, we write $\langle x \rangle \coloneqq \sqrt{1+ x^2}$.

\end{enumerate}

\subsection{Definitions of global solutions}
We next define precisely what we mean by a global solution of the Robin IBVP in the Schwartz class $\mathcal{S}(\RR_+)$ and in the weighted Sobolev space $H^{1,1}(\RR_+)$, respectively.

\begin{definition}\label{Schwartzsolutiondef}\upshape
Let $u_0 \in \mathcal{S}(\RR_+)$. We say that $u(x,t)$ is a {\it global Schwartz class solution of the Robin IBVP for NLS with initial data $u_0$} if 
\begin{enumerate}[$(i)$] 
\item $u(x,t)$ is a smooth complex-valued function of $x \geq 0$ and $t \geq 0$,
\item $u(x,t)$ solves (\ref{NLS}) for $x>0$ and $t>0$,
\item $u(x,0)=u_0(x)$ for $x \ge 0$,	
\item $u_x(0,t)+q u(0,t)=0$ for $t \ge 0$ where $q \coloneqq -u_{0x}(0)/u_0(0)$,
\item $u$ has rapid decay as $x \to +\infty$ in the sense that, for each $N \geq 1$ and each $T > 0$,
$$\sup_{x \geq 0, \, t \in [0, T]} \sum_{j =0}^N (1+|x|)^N |\partial_x^j u| < \infty.$$
\end{enumerate}
\end{definition}

\begin{definition}\label{H11globalsoln}\upshape
Let $q \in \RR$ and $u_0 \in H^{1,1}(\RR_+)$.
We say that $u(t)$ is a {\it global weak solution in $H^{1,1}(\RR_+)$ of the Robin IBVP for NLS with parameter $q$ and initial data $u_0$} if $t \mapsto u(t)$ is a continuous map from $[0,\infty)$ to $H^{1,1}(\RR_+)$ satisfying
\begin{equation}\label{NLSweak}
u(t)=e^{-i H_q^+ t/2}u_0-i\lambda \int_0^t e^{-i H_q^+(t-s)/2}|u(s)|^2 u(s) ds
\end{equation}
with $u(0)=u_0 \in H^{1,1}(\RR_+)$. Here, $H_q^+$ is the self-adjoint operator $-\frac{d^2}{dx^2}$ on $\RR_+$ with domain
\begin{align*}
D(H_q^+)=\{f \in L^2(\RR_+)\;|\;&\text{$f$ and $f'$ are absolutely continuous, $f'' \in L^2(\RR_+)$,}\\
&\text{$f'(0+)+q f(0)=0$}\}.
\end{align*}
\end{definition}

\section{Main results}\label{mainresult}
The statements of our main theorems involve four spectral functions $a(k)$, $b(k)$, $r(k)$, and $\Delta(k)$ which are defined as follows. Suppose that $u_0 \in H^{1,1}(\RR_+)$ and let $\mu(x,k)$ denote the unique solution of the linear Volterra integral equation
\begin{align}\label{muvolterra}
\mu(x,k) = I - \int_x^\infty e^{-ik(x-x')\hat{\sigma}_3}[\mathsf{U}_0(x') \mu(x',k)] dx', \qquad x \geq 0,
\end{align}
where 
\begin{align*}
\mathsf{U}_0(x)=\begin{pmatrix}
0 & u_0(x)\\ 
\lambda \overline{u_0(x)} & 0
\end{pmatrix}
\end{align*}
and $\hat{\sigma}_3$ acts on a $2 \times 2$ matrix $A$ by $\hat{\sigma}_3A = [\sigma_3, A]$, i.e., $e^{\hat{\sigma}_3}A = e^{\sigma_3} A e^{-\sigma_3}$. 
Define the functions $a(k)$ and $b(k)$ by
\begin{align}\label{abdef}
\begin{pmatrix}
b(k) \\ a(k)
\end{pmatrix}\coloneqq [\mu(0,k)]_2, \qquad \im k \geq 0,
\end{align}
and define the functions $r(k)$ and $\Delta(k)$ by
\begin{align}\label{rdef}
& r(k) := \frac{(2k-iq)\overline{b(k)a(-k)}+(2k+iq)\overline{a(k)b(-k)}}{\Delta(k)}, \qquad k \in \RR,
	\\ \label{Deltadef}
& \Delta(k) := (2k-iq)a(k)\overline{a(-\bar{k})}+\lambda(2k+iq)b(k)\overline{b(-\bar{k})}, \qquad \im k \geq 0.
\end{align}
where $q$ is the constant appearing in the Robin boundary condition (\ref{Robincondition}). 
The following proposition, whose proof is given in Section \ref{spectralfunctionssec}, establishes several properties of $r$ and $\Delta$. In particular, it shows that $r(k)$ lies in $H^{1,1}(\RR)$ and depends continuously on $u_0$ whenever $\Delta$ has no real zeros.

\begin{proposition}[Properties of the spectral functions]\label{rDeltaprop}
Given $u_0 \in H^{1,1}(\RR_+)$ and $q \in \RR$, define $a(k)$, $r(k)$, and $\Delta(k)$ by (\ref{abdef})--(\ref{Deltadef}). Then the following hold:

\begin{enumerate}[$(i)$]
\item\label{rDeltapropitem1}
 $\Delta(k)$ is continuous for $\im k \geq 0$ and analytic for $\im k > 0$. 

\item $\Delta(k)$ obeys the symmetry
\begin{equation}\label{Deltasymm}
\overline{\Delta(-\overline{k})}=-\Delta(k), \qquad \im k \geq 0.
\end{equation}

\item $\Delta(0)=-iq$.

\item\label{rDeltapropitem4}
 As $k \to \infty$, $\Delta(k)$ satisfies
\begin{align}\label{Deltalargek}
\Delta(k) = 2k + O(1), \qquad k \to \infty, \; \im k \geq 0.
\end{align}

\item \label{rDeltapropitem5}
If $\Delta(k) \neq 0$ for all $k \in \RR$, then $r \in H^{1,1}(\RR)$.
Moreover, the map $u_0 \mapsto r:H^{1,1}(\RR_+) \cap \{u_0 \, | \, \text{$\Delta(k) \neq 0$ for all $k \in \RR$}\} \to H^{1,1}(\RR)$ is continuous.
\end{enumerate}


In the defocusing case (i.e., in the case when $\lambda = 1$), the following also hold:
\begin{enumerate}[$(a)$]
\item \label{rDeltapropitema}
 $|r(k)| < 1$ for all $k \in \RR$.

\item \label{rDeltapropitemb}
If $q < 0$, then $\Delta$ has no zeros in $\bar{\CC}_+ = \{k \in \mathbb{C}\;|\;\im k \geq 0\}$. 

\item \label{rDeltapropitemc}
If $q > 0$, then $\Delta$ has no zeros on $\RR$ and exactly one simple zero in $\CC_+$. Moreover, this zero is pure imaginary.

\item \label{rDeltapropitemd}
If $q = 0$, then $\Delta(k) \neq 0$ for all $k \in \bar{\CC}_+ \setminus \{0\}$, but $\Delta(0) = 0$.

\item \label{rDeltapropiteme}
 $a(k) \neq 0$ for all $k \in \bar{\CC}_+$.

\end{enumerate}
\end{proposition}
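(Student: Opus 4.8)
My plan is to derive each item from the definitions of $a$, $b$, $r$, $\Delta$ together with the standard analytic properties of the Volterra solution $\mu(x,k)$. First I would record the basic facts about $\mu(0,k)=\begin{pmatrix} * & b \\ * & a\end{pmatrix}$ (second column) coming from \eqref{muvolterra}: since $u_0\in H^{1,1}(\RR_+)$, the map $x'\mapsto e^{-ik(x-x')\hat\sigma_3}\mathsf U_0(x')$ is integrable for $\im k\ge 0$, so $a(k)$ and $b(k)$ are continuous on $\bar\CC_+$ and analytic in $\CC_+$, with $a(k)=1+O(1/k)$ and $b(k)=O(1/k)$ as $k\to\infty$ in $\bar\CC_+$ (these are the usual Zakharov--Shabat/Zhou estimates, and presumably established in Section \ref{spectralfunctionssec} before this proposition is used). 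I would also use the determinant/unitarity relation $a(k)\overline{a(\bar k)}-\lambda b(k)\overline{b(\bar k)}=1$ for $k\in\RR$ (from $\det\mu=1$), i.e. $|a(k)|^2-\lambda|b(k)|^2=1$ on $\RR$. Items \eqref{rDeltapropitem1}, \eqref{rDeltapropitem4}, and (iii) are then immediate: \eqref{rDeltapropitem1} because $\Delta$ is built from $a(k)$ and $a^*(-k)=\overline{a(-\bar k)}$, both analytic in $\CC_+$ and continuous up to $\RR$; (iii) by plugging $k=0$ into \eqref{Deltadef} and using $a(0)\overline{a(0)}=|a(0)|^2$, $b(0)\overline{b(0)}=|b(0)|^2$ and $|a(0)|^2-\lambda|b(0)|^2=1$, giving $\Delta(0)=-iq(|a(0)|^2-\lambda|b(0)|^2)=-iq$; and \eqref{rdef} for the leading asymptotics, since $(2k\pm iq)\to 2k$, $a(k)\to 1$, $\overline{a(-\bar k)}\to 1$, $b(k)\overline{b(-\bar k)}=O(1/k^2)$. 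The symmetry \eqref{Deltasymm} is a direct algebraic check: replace $k$ by $-\bar k$ in \eqref{Deltadef}, conjugate, and observe that the two terms swap while $\overline{(2(-\bar k)-iq)}=-(2k+iq)$ and $\overline{(2(-\bar k)+iq)}=-(2k-iq)$ produce the overall sign $-1$; $\lambda=\bar\lambda$ is used for the $b$-term.

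For \eqref{rDeltapropitem5}, the claim that $r\in H^{1,1}(\RR)$ when $\Delta$ has no real zeros, together with continuity of $u_0\mapsto r$, is where the real work lies. The strategy is: by Proposition-internal or Section \ref{spectralfunctionssec} results, the reflection-type coefficients built from $a,b$ lie in $H^{1,1}(\RR)$ and depend continuously on $u_0\in H^{1,1}(\RR_+)$ (this is the standard Deift--Zhou/Zhou mapping property; the numerator of $r$ is a sum of products of such coefficients with the polynomial factors $2k\pm iq$, which are handled by the weight). The denominator $\Delta(k)$ is continuous and nonvanishing on $\RR$ by hypothesis, $|\Delta(k)|\to\infty$ like $2|k|$ at infinity by \eqref{Deltalargek}, hence $1/\Delta$ is bounded on $\RR$ with a derivative controlled by $\Delta'\in$ (appropriate space); one then checks $r=\text{num}/\Delta$ stays in $H^{1,1}(\RR)$ using the algebra/multiplier properties of $H^{1,1}(\RR)\cap L^\infty$ under multiplication by bounded functions with bounded, decaying derivatives. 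For continuity of $u_0\mapsto r$, the main subtlety is uniformity: on the set where $\Delta\neq 0$ on $\RR$, one must show $\inf_{k\in\RR}|\Delta(k)|$ is locally bounded below — this follows from continuity of $u_0\mapsto\Delta$ in a suitable norm plus the uniform decay $\Delta(k)=2k+O(1)$ (so only a compact $k$-range matters for the infimum), and then $u_0\mapsto 1/\Delta$ is continuous into the relevant space, and the product map is continuous. I expect this uniform-lower-bound argument to be the main obstacle, precisely because $\Delta$ depends on $u_0$ and one needs the bound to be stable under perturbation.

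The defocusing items (a)--(e) are the ones that genuinely use $\lambda=1$. For \eqref{rDeltapropiteme}, the plan is to show $a(k)\neq 0$ on $\bar\CC_+$ in the defocusing case by the standard argument: $a(k)$ is the $(2,2)$ entry governing an eigenfunction of the Zakharov--Shabat operator, and in the defocusing case the associated problem has no eigenvalues/resonances in the closed upper half-plane — concretely, a zero $a(k_0)=0$ with $\im k_0>0$ would force an $L^2$ solution of the scattering problem with a complex eigenvalue, contradicting that the relevant operator is symmetric (no complex spectrum); the edge case $k_0\in\RR$ is excluded by $|a(k)|^2=1+|b(k)|^2\ge 1>0$ on $\RR$. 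For (a), $|r(k)|<1$: on $\RR$ one writes out $|r|^2$ and $|\Delta|^2$ in terms of $a(k),b(k),a(-k),b(-k)$, uses $|a(\pm k)|^2-|b(\pm k)|^2=1$ (defocusing) and the Cauchy--Schwarz/triangle inequality on the numerator against the denominator; I would expect the computation to reduce to $|2k-iq|^2|b(k)a(-k)|+|2k+iq|^2|a(k)b(-k)|$-type bounds versus $|\Delta|$, possibly after recognizing $r$ as a genuine reflection coefficient for an auxiliary whole-line problem (the linearizable reduction), in which case $|r|<1$ is inherited from defocusing unitarity. For (b), $q<0$: $\Delta(k)=(2k-iq)a(k)\overline{a(-\bar k)}+(2k+iq)b(k)\overline{b(-\bar k)}$ with all $a$-factors nonzero (by (e)); a zero of $\Delta$ in $\bar\CC_+$ would give $(2k+iq)/(2k-iq)=-a(k)\overline{a(-\bar k)}/\big(b(k)\overline{b(-\bar k)}\big)$, and one shows the two sides have incompatible modulus or argument when $q<0$ — the Möbius map $k\mapsto(2k+iq)/(2k-iq)$ maps $\bar\CC_+$ into a specific disk/half-plane whose location depends on $\sgn q$, and combined with $|a(k)\overline{a(-\bar k)}|\ge|b(k)\overline{b(-\bar k)}|$ (from defocusing unitarity extended to $\bar\CC_+$ via $a^*a-\lambda b^*b=1$) this rules out zeros; for (d) $q=0$ the same identity degenerates to $\Delta(k)=2k(a(k)\overline{a(-\bar k)}+b(k)\overline{b(-\bar k)})$, nonzero for $k\neq0$ and zero at $k=0$. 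For (c), $q>0$: the Möbius-image/argument-counting argument now places the possible zero on the imaginary axis; one uses the symmetry \eqref{Deltasymm} (which forces zeros to come in the pattern $k\mapsto-\bar k$, so a single zero must satisfy $k=-\bar k$, i.e. be pure imaginary) together with an argument-principle count against the known large-$k$ behavior $\Delta\sim 2k$ to get exactly one simple zero; that it is simple follows from the explicit form of $\Delta'$ at that point or from the fact that a double zero would over-count the winding. Throughout (b)--(d), the recurring technical point — and the second main obstacle — is extending the unitarity-type inequality $|a(k)\overline{a(-\bar k)}|\ge|b(k)\overline{b(-\bar k)}|$ from $\RR$ into $\bar\CC_+$; this I would handle via the symmetry $a^*(k)a(k)-\lambda b^*(k)b(k)=1$ valid where both sides are analytic, or by a maximum-principle / Phragmén--Lindelöf argument using the large-$k$ normalization.
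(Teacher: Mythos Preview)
Your treatment of (i)--(iv) and (e) matches the paper. For (a), if you actually carry out the ``Cauchy--Schwarz/triangle'' computation you propose, you will find the exact identity the paper uses: writing $\Delta_a(k)=\Delta(k)/(2k-iq)$, one has
\[
1-\lambda|r(k)|^2=\frac{1}{|\Delta_a(k)|^2},\qquad k\in\RR,
\]
and for $\lambda=1$ this gives $|r|<1$ immediately. This identity is also what drives the paper's proof of (b)--(d), so it is worth recording rather than leaving as an implicit estimate.

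For (v) there is a genuine gap. You write that ``the polynomial factors $2k\pm iq$ are handled by the weight,'' but this is exactly the delicate point: the numerator of $r$ is a priori only $O(1)$ as $k\to\infty$ (since $b=O(1/k)$ and the polynomial grows like $k$), which would only give $r=O(1/k)$ and hence $kr\notin L^2$. What saves the day is a cancellation of the leading terms. The paper isolates this by proving $kb(k)-b_1\in L^2(\RR)$ with $b_1=u_0(0)/(2i)$ (Lemma~\ref{specfcnlemma}), then writes $r=F/\Delta_a$ and checks that
\[
(2k-iq)F(k)=-2\big(\bar b_1(\overline{f_a(k)}-\overline{f_a(-k)})-\overline{a(-k)}\,\overline{f_b(k)}+\overline{a(k)}\,\overline{f_b(-k)}\big)-iq\big(\overline{a(-k)b(k)}-\overline{a(k)b(-k)}\big)
\]
lies in $L^2$, where $f_a=a-1\in H^1$ and $f_b=kb-b_1\in L^2$. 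Your plan does not identify this cancellation, and without it the $H^{0,1}$ part of $r\in H^{1,1}$ does not follow. Your ``uniform lower bound on $|\Delta|$'' concern is handled more cleanly in the paper by working with $\Delta_a\in 1+H^1(\RR)$ and using that division by a nonvanishing element of $1+H^1$ is continuous (Lemma~\ref{multiplicationlemma}).

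For (b)--(d), your route is genuinely different from the paper's and only partially viable. The paper does \emph{not} extend any modulus inequality into $\CC_+$; instead it derives the real-line identity
\[
\Delta_a(k)=\frac{\overline{a(-k)}}{\overline{a(k)}}\cdot\frac{1}{1-\lambda\,\frac{b(k)}{\overline{a(k)}}\,r(k)},\qquad k\in\RR,
\]
notes that the second factor has zero winding (since $|b/\bar a|<1$ and $|r|<1$), and applies the argument principle to $\Delta_a$ on a large semicircle to get $Z_{\Delta_a}-P_{\Delta_a}=2Z_a=0$; the count for $\Delta$ then follows from whether $2k-iq$ has its zero in $\CC_+$ (i.e.\ $\sgn q$). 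Your approach of bounding $|b(k)\overline{b(-\bar k)}/(a(k)\overline{a(-\bar k)})|<1$ in $\bar\CC_+$ via the maximum principle (using (e)) does work and would settle (b) for $q<0$, but for (c) with $q>0$ you still need a winding/argument-principle count to get \emph{exactly one} simple zero, and your sketch does not say how to compute that winding without a factorization like the one above. The paper's identity is the missing ingredient that makes the count clean.
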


Before stating our main results we also need the following well-posedness result for the Robin IBVP. The proof, which is given in Appendix \ref{globalwpn}, is based on a fixed-point argument and is an easy generalization of the argument presented in \cite[Theorem 3]{DP2011}, where global existence and uniqueness was established in the case of $\lambda = -1$.
Recall that we introduced the notion of a global weak solution in $H^{1,1}(\RR_+)$ in Definition \ref{H11globalsoln}.

\begin{proposition}[Global well-posedness of the Robin problem]\label{H11existenceprop}
Suppose $\lambda = 1$ or $\lambda = -1$. Let $q \in \RR$ and $u_0 \in H^{1,1}(\RR_+)$. Then there exists a unique global weak solution in $H^{1,1}(\RR_+)$ of the Robin IBVP for NLS with parameter $q$ and initial data $u_0$.
Moreover, for each $T > 0$, the data-to-solution mapping $u_0 \mapsto u$ is continuous from $H^{1,1}(\RR_+)$ to $C([0,T],H^{1,1}(\RR_+))$.
\end{proposition}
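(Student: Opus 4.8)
The plan is to prove Proposition~\ref{H11existenceprop} by a standard contraction-mapping argument adapted to the half-line with the Robin boundary condition, closely following \cite[Theorem~3]{DP2011}. First I would collect the two analytic ingredients needed. The first is that $e^{-iH_q^+t/2}$ is a strongly continuous unitary group on $L^2(\RR_+)$ which restricts to a bounded group on $H^{1,1}(\RR_+)$, with operator norm on $H^{1,1}$ growing at most polynomially (indeed linearly) in $t$; this follows from functional calculus for the self-adjoint operator $H_q^+$ together with the fact that multiplication by $x$ only fails to commute with $e^{-iH_q^+t/2}$ up to a term controlled by $t$ and the $H^1$-norm (the commutator $[x, e^{-iH_q^+t/2}]$ is $O(t)$ times a first-derivative operator, as in the free case). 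One must be slightly careful because of the boundary term at $x=0$, but since $H_q^+$ is self-adjoint the spectral theorem gives unitarity on $L^2$ for free, and the weighted estimate is obtained by differentiating $\|x u(t)\|_{L^2}^2$ in $t$ and using $u(t)=e^{-iH_q^+t/2}u_0 \in D(H_q^+)$ for $u_0$ in the domain, then extending by density. The second ingredient is the algebra/Moser-type estimate: $H^{1,1}(\RR_+) \hookrightarrow L^\infty(\RR_+)$ (stated already in the Notation section), hence $u \mapsto |u|^2u$ is locally Lipschitz on $H^{1,1}(\RR_+)$, with
\begin{align*}
\| |u|^2u - |v|^2v\|_{H^{1,1}} \le C\big(\|u\|_{H^{1,1}}^2 + \|v\|_{H^{1,1}}^2\big)\|u-v\|_{H^{1,1}}.
\end{align*}

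Second, I would set up the fixed-point map. For $R>0$ and $T>0$ to be chosen, let $X_{T,R}$ be the closed ball of radius $R$ in $C([0,T],H^{1,1}(\RR_+))$ centered at the linear evolution $t\mapsto e^{-iH_q^+t/2}u_0$, and define
\begin{align*}
(\Phi u)(t) = e^{-iH_q^+t/2}u_0 - i\lambda\int_0^t e^{-iH_q^+(t-s)/2}|u(s)|^2u(s)\,ds.
\end{align*}
Using the two ingredients above, for $u,v \in X_{T,R}$ one estimates $\|\Phi u(t) - e^{-iH_q^+t/2}u_0\|_{H^{1,1}} \le C(T)\, T\, (\|u_0\|_{H^{1,1}}+R)^3$ and $\|\Phi u(t) - \Phi v(t)\|_{H^{1,1}} \le C(T)\, T\,(\|u_0\|_{H^{1,1}}+R)^2 \sup_{[0,T]}\|u-v\|_{H^{1,1}}$, where $C(T)$ absorbs the polynomial growth of the group on the weighted space. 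Choosing $R$ of order $\|u_0\|_{H^{1,1}}$ and $T$ small depending only on $\|u_0\|_{H^{1,1}}$, $\Phi$ is a contraction on $X_{T,R}$, giving a unique local weak solution. Continuity in $t$ into $H^{1,1}$ follows from strong continuity of the group and dominated convergence in the Duhamel term. Uniqueness on $[0,T]$ (not just within the ball) follows from a Gronwall argument using the Lipschitz bound.

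Third, to globalize, I would invoke the conserved quantities. The mass $\|u(t)\|_{L^2(\RR_+)}$ and (for the relevant sign of the boundary term / Robin parameter, via the self-adjointness of $H_q^+$) an energy functional
\begin{align*}
E(u) = \int_0^\infty |u_x|^2\,dx + \lambda\int_0^\infty |u|^4\,dx + q|u(0)|^2
\end{align*}
are formally conserved along the flow; these are the IBVP analogs of the standard NLS conservation laws and their conservation for the Robin problem is exactly what is used in \cite{DP2011}. In the defocusing case $\lambda=1$ the energy controls $\|u_x\|_{L^2}$ directly (after dealing with the possibly negative boundary term $q|u(0)|^2$ via $|u(0)|^2 \le \varepsilon\|u_x\|_{L^2}^2 + C_\varepsilon\|u\|_{L^2}^2$ and mass conservation); in the focusing case $\lambda=-1$ one uses the Gagliardo--Nirenberg inequality on $\RR_+$ together with mass conservation to bound $\|u_x\|_{L^2}$ — here one should note that on the half-line there is no critical-mass obstruction for the cubic nonlinearity for the same scaling reasons as on the line, or more simply one argues exactly as in \cite{DP2011} where global existence in $H^{1,1}$ was already obtained for $\lambda=-1$. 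Given an a priori bound on $\|u(t)\|_{H^1(\RR_+)}$ on any finite interval, the only remaining quantity is the weight $\|x u(t)\|_{L^2}$; one controls its growth by differentiating in $t$, using the Duhamel formula and the linear weighted estimate, and closing a linear Gronwall inequality in $\|xu(t)\|_{L^2}$ with coefficients depending on the already-controlled $H^1$ norm. This shows the $H^{1,1}$ norm cannot blow up in finite time, so the local solution extends to all $t\ge 0$. Finally, continuity of the data-to-solution map $u_0\mapsto u$ from $H^{1,1}(\RR_+)$ to $C([0,T],H^{1,1}(\RR_+))$ is obtained by applying the Lipschitz estimate to two solutions with nearby data and running Gronwall on $[0,T]$, using the uniform-on-bounded-sets control of the $H^{1,1}$ norms just established.

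I expect the main obstacle to be the weighted estimate at the boundary: controlling $\|xu(t)\|_{L^2(\RR_+)}$ under the flow of $e^{-iH_q^+t/2}$, since the commutator of multiplication by $x$ with the Robin Laplacian must be handled carefully near $x=0$ (the boundary condition is not preserved by multiplication by $x$, so one cannot naively work in the domain of $H_q^+$, and must instead argue via the integral equation and density, or via a resolvent/parametrix representation of the group). The nonlinear and globalization steps are routine once this linear weighted bound is in hand, and in any case the overall structure is a direct transcription of \cite[Theorem~3]{DP2011} from the focusing to the general case, with the sign of $\lambda$ entering only in the energy-coercivity step of the globalization argument.
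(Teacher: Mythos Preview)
Your proposal is correct and follows essentially the same route as the paper's proof in Appendix~\ref{globalwpn}: local existence by contraction mapping for the Duhamel map on $C([0,T],H^{1,1}(\RR_+))$, globalization via the conserved mass and energy giving an a priori $H^1$ bound, and continuous dependence by Gronwall. Two minor remarks: the boundary term in the conserved energy carries the opposite sign (the paper has $-q|u(0,t)|^2$, consistent with the quadratic form of $H_q^+$), and you are more explicit than the paper about the separate Gronwall step needed for the weight $\|xu(t)\|_{L^2}$, which the paper absorbs into its citation of \cite{DP2011}.
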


\begin{remark}
Himonas and Mantzavinos \cite{HM2021} have recently proved local well-posedness of  the NLS equation on the half-line for a more general class of Robin boundary conditions  where the parameter $q$ in (\ref{Robincondition}) is allowed to depend on time. Since we need global (and not just local) well-posedness, we give an independent proof of Proposition \ref{H11existenceprop}. It should be noted that the case when $q$ is constant is much easier to handle than the case considered in \cite{HM2021}.
\end{remark}

\subsection{Representation theorem}
Our first main result (Theorem \ref{linearizableth}) provides a representation for the solution $u(x,t)$ of the Robin IBVP in terms of the solution $m(x,t,k)$ of a RH problem. The formulation of this RH problem depends only on the initial data $u_0$, thus the solution representation is effective. 

In the focusing case, we make the following generic assumptions on the zeros of $a(k)$ and $\Delta(k)$.

\begin{assumption}\label{zerosassumption}
If $\lambda=-1$, we assume the following:
\begin{enumerate}[$(i)$]
\item $\Delta(k) \neq 0$ for every $k \in \RR$ and every zero of $\Delta(k)$ in $\CC_+$ is simple. 

\item If $\Delta(k) = 0$ for some $k \in \CC_+$, then $a(k) \neq 0$.

\end{enumerate}
\end{assumption}

\begin{remark}
In the defocusing case, the statements in Assumption \ref{zerosassumption} are automatically fulfilled for $q\in \RR \setminus \{0\}$ as a consequence of Proposition \ref{rDeltaprop}. 
\end{remark}

In view of (\ref{Deltalargek}), if $\lambda = -1$ and Assumption \ref{zerosassumption} holds, then $\Delta$ has at most finitely many zeros in $\CC_+$. On the other hand, by Proposition \ref{rDeltaprop}, if $\lambda =1$, then $a$ is zero-free and $\Delta$ is non-zero on $\RR$ and has either one or no zero in $\CC_+$ depending on the sign of $q$; more precisely, if $\lambda = 1$ and $q > 0$, then $\Delta$ has one zero in $\CC_+$, whereas if $\lambda = 1$ and $q < 0$, then  $\Delta$ has no zeros in $\CC_+$.
We denote the zeros of $\Delta$ in $\CC_+$ by $\{\xi_j\}_1^M$, $M \geq 0$.

\begin{theorem}[Representation of the solution of the Robin IBVP for NLS]\label{linearizableth}
Suppose $\lambda = 1$ or $\lambda = -1$. Let $q\in \RR \setminus \{0\}$ and $u_0 \in H^{1,1}(\RR_+)$ and define $a(k), b(k), r(k)$, and $\Delta(k)$ by (\ref{abdef})--(\ref{Deltadef}). If $\lambda = -1$, suppose that Assumption \ref{zerosassumption} holds. Consider the following RH problem for $m(x,t,k)$:
\begin{itemize}
\item $m(x,t,\cdot): \mathbb{C} \setminus (\RR \cup \{\xi_j, \bar{\xi}_j\}_1^M) \to \mathbb{C}^{2 \times 2}$ is analytic.
\item The boundary values of $m(x,t,k)$ as $k$ approaches $\RR$ from above $(+)$ and below $(-)$ exist, are continuous on $\RR$, and satisfy
\begin{align}\label{mjump}
  m_+(x,t,k)=&\; m_-(x, t, k) v(x,t,k), \qquad k \in \RR, 
\end{align}
where
\begin{align}\label{vdef}
v(x,t,k) \coloneqq \begin{pmatrix}
1-\lambda |r(k)|^2 & \overline{r(k)}e^{-2i\theta} \\ 
-\lambda r(k)e^{2i\theta}  & 1
\end{pmatrix}, \qquad \theta \equiv \theta(x,t,k) \coloneqq kx+2k^2t.
\end{align}

\item $m(x,t,k) \to I$ as $k \to \infty$.
\item The first column of $m$ has at most simple poles at the zeros $\xi_j \in \CC_+$ of $\Delta(k)$, the second column of $m$ has at most simple poles at the zeros $\bar{\xi}_j \in \CC_-$ of $\overline{\Delta(\bar{k})}$, and the following residue conditions hold for $j = 1, \dots, M$:
\begin{subequations}\label{mresidues}
\begin{align}\label{mresiduesa}
& \underset{k = \xi_j}{\Res} [m(x,t,k)]_1 =  c_j e^{2i\theta(x,t,\xi_j)} [m(x,t,\xi_j)]_2,
	\\\label{mresiduesb}
& \underset{k=\bar{\xi}_j}{\Res} [m(x,t,k)]_2 = \lambda \bar{c}_j e^{-2i\theta(x,t,\bar{\xi}_j)}  [m(x,t,\bar{\xi}_j)]_1,
\end{align}
\end{subequations}
where 
\begin{align}\label{cjdef}
c_j :=-\frac{\lambda \overline{b(-\bar{\xi}_j)}}{a(\xi_j)}\frac{2\xi_j+iq}{\dot{\Delta}(\xi_j)}, \qquad j = 1, \dots, M.
\end{align}

\end{itemize}
If $\lambda = 1$ and $q > 0$, then suppose additionally that the above RH problem has a solution for each $(x,t) \in [0,\infty) \times [0,\infty)$.

Then the above RH problem has a unique solution $m(x,t,k)$ for each $(x,t) \in [0,\infty) \times [0,\infty)$ and the function $u(x,t)$ defined by
\begin{align}\label{recoveru}
u(x,t) = 2i\lim_{k \to \infty} k(m(x,t,k))_{12}, \qquad x \geq 0, ~ t \geq 0,
\end{align}
where the limit is taken nontangentially with respect to $\RR$, is the unique global solution in $H^{1,1}(\RR_+)$ of the Robin IBVP for NLS with parameter $q$ and initial data $u_0$.
\end{theorem}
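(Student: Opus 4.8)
The strategy is to prove the theorem first for initial data in the Schwartz class $\mathcal{S}(\RR_+)$ and then extend to $H^{1,1}(\RR_+)$ by a density-plus-continuity argument; this is the two-stage structure already announced in the introduction. For the Schwartz case, I would proceed as follows. Since $u_0 \in \mathcal{S}(\RR_+) \subset H^{1,1}(\RR_+)$, Proposition \ref{H11existenceprop} furnishes a unique global weak solution $u$; one first upgrades this to a genuine Schwartz class solution in the sense of Definition \ref{Schwartzsolutiondef} (using parabolic smoothing and persistence of regularity/decay for the Duhamel formula \eqref{NLSweak}), so that the UTM machinery applies. For such a solution, the spectral analysis of the $t$-part of the Lax pair produces the additional spectral functions $A(k)$, $B(k)$ built from the boundary values $u(0,t)$, $u_x(0,t)$; the Robin condition \eqref{Robincondition} is linearizable, so the global relation lets one eliminate $A$, $B$ algebraically in favor of $a$, $b$ — this is precisely what the combinations $\Delta(k)$ and $r(k)$ in \eqref{rdef}--\eqref{Deltadef} encode. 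Assembling the standard UTM RH problem on $\RR \cup i\RR$ and performing the algebraic simplification afforded by linearizability collapses the jump across $i\RR$, leaving exactly the RH problem stated in the theorem, with jump matrix $v$ in \eqref{vdef} and the residue conditions \eqref{mresidues}--\eqref{cjdef} at the zeros $\{\xi_j\}$ of $\Delta$ in $\CC_+$; the reconstruction formula \eqref{recoveru} then follows from the large-$k$ expansion of $m$. Uniqueness of the RH solution follows from a vanishing-lemma argument: in the defocusing case $\lambda=1$ one has $1-\lambda|r|^2 = 1-|r|^2 > 0$ by Proposition \ref{rDeltaprop}\eqref{rDeltapropitema}, giving a positive-definite (up to conjugation) jump and hence uniqueness; in the focusing case $\lambda=-1$, the symmetry $m(x,t,k) \mapsto \sigma_2 \overline{m(x,t,\bar k)}\sigma_2$ together with Assumption \ref{zerosassumption} and the standard residue bookkeeping gives the same conclusion.

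For the extension to $u_0 \in H^{1,1}(\RR_+)$, I would pick a sequence $u_0^{(n)} \in \mathcal{S}(\RR_+)$ with $u_0^{(n)} \to u_0$ in $H^{1,1}(\RR_+)$; since the hypothesis excludes $q = 0$ (and, when $\lambda = -1$, Assumption \ref{zerosassumption} holds so that $\Delta$ has no real zeros), Proposition \ref{rDeltaprop}\eqref{rDeltapropitem5} guarantees that the reflection coefficients $r^{(n)} \to r$ in $H^{1,1}(\RR)$, and one checks similarly that the finitely many discrete data $(\xi_j^{(n)}, c_j^{(n)})$ converge to $(\xi_j, c_j)$. Standard RH-stability estimates in weighted Sobolev spaces (of the type developed in \cite{Z1998, DZ2003, DP2011}) then show that the RH solutions $m^{(n)}(x,t,\cdot)$ converge to $m(x,t,\cdot)$ and that the reconstructed potentials $u^{(n)}(x,t) \to \tilde u(x,t)$, with $\tilde u(t) \in H^{1,1}(\RR_+)$ depending continuously on $t$. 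On the other hand, Proposition \ref{H11existenceprop} gives continuity of the data-to-solution map, so $u^{(n)} \to u$ in $C([0,T], H^{1,1}(\RR_+))$ where $u$ is the weak solution. Hence $\tilde u = u$, which proves that the RH representation recovers the weak solution; uniqueness of $u$ is already contained in Proposition \ref{H11existenceprop}, and uniqueness of $m$ in the $H^{1,1}$ case follows from the vanishing lemma exactly as before once one notes $r \in H^{1,1}(\RR) \subset L^\infty(\RR)$ suffices.

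The case $\lambda = 1$, $q > 0$ requires the extra care flagged in the introduction: here Proposition \ref{rDeltaprop}\eqref{rDeltapropitemc} says $\Delta$ has one simple (pure imaginary) zero $\xi_1 \in \CC_+$, so the RH problem genuinely has a pole even though we are in the defocusing regime. Solvability is therefore not automatic, which is why it is taken as a hypothesis; given solvability, I would argue uniqueness and the continuity/reconstruction step by passing to a \emph{regularized} RH problem without poles — conjugating $m$ by an explicit rational (Blaschke-type) factor that removes the residue conditions at $\xi_1, \bar\xi_1$ at the cost of modifying the jump on $\RR$ by a smooth factor — and then running the vanishing-lemma and density arguments on the regularized problem, transferring the conclusions back.

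\textbf{Main obstacle.} The principal difficulty is the rigorous implementation of the UTM derivation \emph{and} the continuity/density argument at the level of weighted Sobolev regularity: for Schwartz data the RH derivation is classical, but establishing the needed uniform $H^{1,1}$-type estimates on the RH solution $m^{(n)}$ and on the reconstructed potential — so that $u^{(n)} \to \tilde u$ in the right space and one may identify $\tilde u$ with the weak solution — is where the real work lies, together with handling the pole in the $\lambda=1$, $q>0$ case via regularization. The linearizability-based elimination of $A(k)$, $B(k)$ and the verification that the global relation produces exactly the stated $\Delta$, $r$, and $c_j$ is algebraically delicate but ultimately routine.
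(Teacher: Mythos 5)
Your overall architecture --- derive the RH representation for Schwartz data via the UTM and the global relation, then pass to $H^{1,1}(\RR_+)$ by density together with the continuity of the data-to-solution map of Proposition \ref{H11existenceprop} --- matches the paper's (Sections \ref{mfromusec}--\ref{linearizableproofsec}), as do the vanishing-lemma uniqueness argument and the Blaschke-type regularization of the pole when $\lambda=1$, $q>0$. However, there is a genuine gap in your Schwartz-class stage. You propose to take the weak solution from Proposition \ref{H11existenceprop} and ``upgrade'' it to a global Schwartz class solution in the sense of Definition \ref{Schwartzsolutiondef} using ``parabolic smoothing and persistence of regularity'' for the Duhamel formula \eqref{NLSweak}. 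The Schr\"odinger flow is dispersive, not parabolic: $e^{-iH_q^+t/2}$ is unitary on each Sobolev scale and gains no derivatives, so there is no smoothing to invoke. Moreover, smoothness of $u$ up to the corner $(x,t)=(0,0)$ --- which Definition \ref{Schwartzsolutiondef} demands --- would require higher-order corner compatibility conditions that a generic $u_0\in\mathcal{S}(\RR_+)$ with $u_0'(0)+qu_0(0)=0$ is not known to satisfy a priori. The paper deliberately runs the logic in the opposite direction: it first solves the RH problem (existence via a vanishing lemma, or by hypothesis when $\lambda=1$ and $q>0$), defines $u$ by \eqref{recoveru}, and then proves that this $u$ is a Schwartz class solution of the Robin IBVP. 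The nontrivial steps there are the dressing argument and, crucially, the verification of the Robin boundary condition \eqref{Robincondition} via the Its--Shepelsky symmetry \eqref{mRobinsymmetry}, which in the defocusing case requires showing that the determinant $\delta(t)$ is nonvanishing (a conservation argument plus an explicit computation with the stationary one-soliton). Your route presupposes a solution already satisfying \eqref{Robincondition}, which is why this verification is absent from your outline; the price is that the existence of the Schwartz-class solution is left unproved, with an incorrect justification. Proposition \ref{mfromuprop} (the direction you describe, from $u$ to $m$) is used in the paper only for uniqueness.

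Two smaller repairs are needed in the density step. First, the approximating sequence cannot be an arbitrary mollification: each $u_0^{(n)}$ must satisfy $(u_0^{(n)})'(0)+qu_0^{(n)}(0)=0$ with the \emph{same} $q$, since otherwise the Schwartz-class result produces solutions of Robin problems with varying parameters; this is Lemma \ref{sequencelemma} and requires an explicit construction. Second, you need the set $\mathcal{U}$ of admissible data (Assumption \ref{zerosassumption}, respectively solvability of the RH problem when $\lambda=1$, $q>0$) to be open in $H^{1,1}(\RR_+)$ so that the approximants eventually satisfy the hypotheses; this is Lemma \ref{UmuBlemma}, whose proof in the $\lambda=1$, $q>0$ case combines a compactness argument with a steepest-descent estimate for large $\sqrt{x^2+t^2}$ and is not anticipated in your sketch. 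With these points addressed --- and with the Schwartz-class existence established by constructing $u$ from the RH problem rather than by regularity bootstrapping --- your outline becomes essentially the paper's proof.
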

\begin{proof}
See Section \ref{linearizableproofsec}.
\end{proof}

\subsection{Asymptotic theorems for the defocusing NLS}
Using the RH representation of Theorem \ref{linearizableth}, we can obtain asymptotic formulas for the solution $u(x,t)$ by appealing to known asymptotic results for the NLS equation on the line. Indeed, the form of the RH problem of Theorem \ref{linearizableth} has the exact same form as the RH problem relevant for the NLS equation on the line. In the case when $\lambda = 1$ and $q<0$, this leads to the following result which provides the large $t$ asymptotics of the solution of the Robin problem for any initial data in $H^{1,1}$.

\begin{theorem}[Long-time asymptotics: defocusing NLS, $q<0$]\label{nosolitonsasymptoticsth}
Suppose $\lambda = 1$. Let $q < 0$ and $u_0 \in H^{1,1}(\RR_+)$. 
Then the global solution $u(x,t)$ in $H^{1,1}(\RR_+)$ of the Robin IBVP for NLS with parameter $q$ and initial data $u_0$ satisfies
\begin{equation}\label{udefocusingqnegative}
u(x,t)=\frac{\beta(r(k_0)) (8t)^{i\nu(r(k_0))} e^{2\chi(\zeta, k_0)}e^{4itk_0^2}}{\sqrt{2t}} + O\big(t^{-3/4}\big), \qquad t \to \infty,
\end{equation}
uniformly for $x \in [0, \infty)$, where 
\begin{align}\nonumber
& k_0 \coloneqq -\frac{\zeta}{4}=-\frac{x}{4t},  \quad \nu(y) \coloneqq \frac{1}{2\pi}\ln(1-\lambda|y|^2), \quad \beta(y) \coloneqq \sqrt{|\nu(y)|}e^{i\left(\frac{\pi}{4}-\arg y - \arg \Gamma(i\nu(y))\right)},
	\\ \label{k0nubetachidef}
& \chi(\zeta,k) \coloneqq -\frac{1}{2\pi i}\int_{-\infty}^{k_0}\ln(k-s)d \ln(1-\lambda|r(s)|^2),
\end{align}
and $\Gamma$ is the Gamma function.
\end{theorem}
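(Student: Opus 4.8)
The plan is to transfer the asymptotics known for the defocusing NLS on the line to the Robin problem via the representation theorem. By Theorem \ref{linearizableth}, since $\lambda = 1$ and $q < 0$, the solution $u(x,t)$ is recovered from the unique solution $m(x,t,k)$ of the RH problem with jump matrix $v(x,t,k)$ in \eqref{vdef} and \emph{no poles}: indeed, by Proposition \ref{rDeltaprop}\eqref{rDeltapropitemb}, $\Delta$ has no zeros in $\bar{\CC}_+$ when $q < 0$, so $M = 0$ and the residue conditions \eqref{mresidues} are vacuous. Moreover, by Proposition \ref{rDeltaprop}\eqref{rDeltapropitem5} and \eqref{rDeltapropitema}, $r \in H^{1,1}(\RR)$ with $\|r\|_{L^\infty(\RR)} < 1$. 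Thus the RH problem for $m(x,t,k)$ is \emph{exactly} of the form of the RH problem associated with the IVP for the defocusing NLS on the line with reflection coefficient $r$, a scattering datum in $H^{1,1}(\RR)$ with sup-norm strictly less than one and no discrete spectrum. This is precisely the setting of the $\overline\partial$ steepest descent analysis of Dieng--McLaughlin--Miller \cite{DMM2019} (see also \cite{BJM2018}), which yields the stated asymptotic formula with error $O(t^{-3/4})$.

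First I would record the normalization: in \cite{DMM2019} the RH problem is usually written with phase $e^{2it k^2 + ikx}$ or a rescaled variant; here $\theta(x,t,k) = kx + 2k^2 t$, so the stationary phase point of $\theta$ is $k_0 = -x/(4t) = -\zeta/4$, matching the definition of $k_0$ in \eqref{k0nubetachidef}. One then checks that the standard long-time asymptotic formula for the potential reconstructed via $u = 2i\lim_{k\to\infty} k\, m_{12}$ takes exactly the form \eqref{udefocusingqnegative}, with $\nu(y) = \frac{1}{2\pi}\ln(1-\lambda|y|^2)$ (here $\lambda = 1$, so $\nu < 0$), with $\beta$ the standard "parabolic cylinder" prefactor (modulus $\sqrt{|\nu|}$ and the phase involving $\pi/4$, $\arg r(k_0)$, and $\arg\Gamma(i\nu)$), and with $e^{2\chi(\zeta,k_0)}$ the usual multiplicative contribution of the "lower-order" part of the jump, where $\chi$ is the integral in \eqref{k0nubetachidef}. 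The error term $O(t^{-3/4})$ is uniform in $\zeta = x/t$; since $x$ ranges over $[0,\infty)$ and $t\to\infty$, we have $k_0 \le 0$ throughout, and because $r \in H^{1,1}$ there is no degeneration of the formula as $k_0 \to -\infty$ (i.e., $x/t\to\infty$) or $k_0 \to 0^-$, so the uniformity in $x \in [0,\infty)$ is exactly what \cite{DMM2019} provides.

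The one point requiring care—and the main thing to verify rather than merely cite—is that the hypotheses of \cite{DMM2019} are met and that the half-line nature of the original problem does not intrude. Concretely: (a) one must confirm that the only scattering data entering the RH problem of Theorem \ref{linearizableth} in this regime is $r \in H^{1,1}(\RR)$, with no solitons and no spectral singularities on $\RR$ (guaranteed by Proposition \ref{rDeltaprop}\eqref{rDeltapropitema},\eqref{rDeltapropitemb}); (b) one must confirm that the relevant sector of $(x,t)$-space, namely $x \ge 0$, $t \ge 0$, corresponds to $k_0 \le 0$, which is the "physical" half of the line problem and is covered by the uniform-in-$\zeta$ statement in \cite{DMM2019}; and (c) one should note that the reconstruction formula \eqref{recoveru} is the same one used in \cite{DMM2019}, so no change of conventions in extracting $u$ is needed. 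I expect step (b)—tracking the precise region of validity and the uniformity in $x$ all the way to $x = 0$ and to $x/t \to \infty$—to be the only genuine obstacle; everything else is a direct invocation of the cited $\overline\partial$-steepest-descent results once the RH data have been identified. The proof therefore consists of: (1) invoke Theorem \ref{linearizableth} and Proposition \ref{rDeltaprop} to reduce to the line RH problem with datum $r$; (2) match normalizations so that $k_0 = -x/(4t)$; (3) quote \cite{DMM2019} (or \cite{BJM2018}) to obtain \eqref{udefocusingqnegative} with uniform $O(t^{-3/4})$ error; (4) check uniformity on the full range $x \in [0,\infty)$.
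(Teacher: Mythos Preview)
Your proposal is correct and follows essentially the same approach as the paper: invoke Theorem~\ref{linearizableth} together with Proposition~\ref{rDeltaprop}\eqref{rDeltapropitema},\eqref{rDeltapropitemb} to reduce to a pole-free RH problem on the line with reflection coefficient $r\in H^{1,1}(\RR)$, $|r|<1$, and then cite the $\overline\partial$ steepest descent result of \cite{DMM2019} to obtain \eqref{udefocusingqnegative} with the $O(t^{-3/4})$ error. Your version is in fact more explicit than the paper's about checking normalizations and uniformity in $x$, but the underlying argument is identical.
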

\begin{proof}
In this case, $\Delta(k)$ has no zeros. The assumptions of Theorem \ref{linearizableth} are fulfilled and the asymptotic formula for $u(x,t)$ coincides with the formula for the long-time asymptotics for the solution of the initial-value problem for the defocusing NLS on the line with reflection coefficient $r(k)$ given by (\ref{rdef}). The derivation of such asymptotics has a long history; the form (\ref{udefocusingqnegative}) of the asymptotic formula with the error term $O(t^{-3/4})$ has recently been obtained in \cite{DMM2019} with the help of the $\overline{\partial}$ steepest descent method.
\end{proof}

We next consider the case when $\lambda = 1$ and $q > 0$. In this case, the statement of Theorem \ref{linearizableth} includes the additional assumption that the RH problem must have a solution for each $(x,t) \in [0,\infty) \times [0,\infty)$. Let us comment on this assumption. For $\lambda = -1$, the existence of a solution of the RH problem of Theorem \ref{linearizableth} for any $(x,t)$ can be established with the help of a vanishing lemma. A similar argument applies also when $\lambda = 1$ and $q < 0$, because in this case $m$ has no poles. However, if $\lambda = 1$ and $q > 0$, then $m$ has poles at $\xi_1$ and $\bar{\xi}_1$ and the construction of $m$ is more complicated. Indeed, if the residue conditions (\ref{mresidues}) are replaced with jumps on small circles in the standard manner, then these jump matrices have the appropriate symmetry properties for the derivation of a vanishing lemma only if $\lambda = -1$. If the residue conditions are alternatively handled with the help of a Darboux transformation and thereby replaced by an algebraic system (see e.g. \cite{FI1996} or Section \ref{asymproofsection}), then the algebraic system is only known to have a solution for all $(x,t)$ if $\lambda = -1$. (This is related to the fact that the stationary one-soliton of the defocusing NLS is singular at the value of $x$ specified in (\ref{singularxvalue}), so the existence of a solution may indeed break down at certain points.)
For these reasons, our next result is stated under the assumption of the existence of a solution of the RH problem.

\begin{theorem}[Long-time asymptotics: defocusing NLS, $q>0$]\label{onesolitonasymptoticsth}
Suppose $\lambda = 1$. Let $q > 0$ and $u_0 \in H^{1,1}(\RR_+)$. Suppose that the RH problem of Theorem \ref{linearizableth} has a solution for each $(x,t) \in [0,\infty) \times [0,\infty)$. Then the global solution $u(x,t)$ in $H^{1,1}(\RR_+)$ of the Robin IBVP for NLS with parameter $q$ and initial data $u_0$ satisfies
\begin{equation}\label{uqpositiveasymptotics}
u(x,t)=u_{\mathrm{sol}}(x,t)+\frac{u_{\mathrm{rad}}^{(1)}(x,t)+u_{\mathrm{rad}}^{(2)}(x,t)}{\sqrt{t}} + O\big(t^{-3/4}\big), \qquad t \to \infty,
\end{equation}
uniformly for $x \in [0, \infty)$, where
\begin{subequations}
\begin{align}\label{usoldefoc}
u_{\mathrm{sol}}(x,t)=&\frac{\lambda 4i\xi_1\overline{d_1}\delta(\zeta,\xi_1)^2}{\lambda |d_1|^2-|\delta(\zeta,\xi_1)|^4},
	\\ \label{urad1defoc}
u_{\mathrm{rad}}^{(1)}(x,t)=&\; \frac{\beta(r_\reg(k_0)) (8t)^{i\nu(r_\reg(k_0))} e^{2\chi(\zeta, k_0)}e^{4itk_0^2}}{\sqrt{2}},
	\\ \nonumber
u_{\mathrm{rad}}^{(2)}(x,t)=&-\frac{\lambda \sqrt{2} \xi_1 \beta(r_\reg(k_0)) e^{-4itk_0^2} \delta_0(\zeta,t)^2 \left(|d_1|^2 (k_0+\xi_1)+\lambda|\delta(\zeta,\xi_1)|^4(k_0-\xi_1)\right)}{|k_0-\xi_1|^2 \left(\lambda |d_1|^2-|\delta(\zeta,\xi_1)|^4\right)} 
	\\ \label{urad2defoc}
&+\frac{\lambda 4\sqrt{2} \xi_1^2 \overline{d_1} \delta(\zeta,\xi_1)^2 \mathrm{Re}\left[d_1 \beta(r_\reg(k_0)) e^{-4itk_0^2} \delta_0(\zeta,t)^2 \overline{\delta(\zeta,\xi_1)}^2\right]}{|k_0-\xi_1|^2 (\lambda |d_1|^2-|\delta(\zeta,\xi_1)|^4)^2}.
\end{align}
\end{subequations}
Here, $\xi_1$ is the simple pole of the reflection coefficient $r(k)$ corresponding to $u_0$. The pole-free reflection coefficient $r_\reg(k)$ is defined by the transformation \eqref{rregdef} and the regular RH solution $m_\reg$ is defined via the Darboux transformation \eqref{Darboux}. The functions $d_1=d_1(x,t)$, $\delta(\zeta, k)$, and $\delta_0(\zeta, t)$ are defined in \eqref{Darbouxconst} and (\ref{deltadelta0Ym1Xdef}).
\end{theorem}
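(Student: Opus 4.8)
The plan is to reduce the Riemann--Hilbert problem of Theorem~\ref{linearizableth} to the pole-free situation already handled in the literature. This RH problem differs from the one governing the defocusing NLS equation on the line only through the single pair of simple poles at $\xi_1\in i\RR_+$ and $\bar\xi_1=-\xi_1$ with the residue conditions \eqref{mresidues}; in particular, by Proposition~\ref{rDeltaprop} we have $|r(k)|<1$ on $\RR$ and $r\in H^{1,1}(\RR)$. Since the $\overline{\partial}$ steepest descent results of \cite{BJM2018,DMM2019} apply to pole-free RH problems, the strategy is: (i) remove the poles by a Darboux (one-soliton dressing) transformation, obtaining a regular RH problem for $m_\reg$ with a new reflection coefficient $r_\reg$; (ii) apply \cite{DMM2019} to $m_\reg$; and (iii) recover $u$ from $m_\reg$ together with the soliton data carried by the dressing matrix.

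\textbf{Step 1 (Darboux transformation and exact recovery identity).} Using the residue constant $c_1$ of \eqref{cjdef} I introduce a rational dressing matrix $\mathcal{D}=\mathcal{D}(x,t,k)$ --- as in \eqref{Darboux}, \eqref{rregdef}, \eqref{Darbouxconst} --- which is regular and invertible for $k\notin\{\xi_1,\bar\xi_1\}$ and which writes $m=\mathcal{D}\,m_\reg$, where $m_\reg$ is analytic across $\xi_1$ and $\bar\xi_1$, satisfies $m_\reg\to I$ as $k\to\infty$, and has the jump \eqref{vdef} on $\RR$ with $r$ replaced by $r_\reg$. Since the Blaschke-type factor entering $\mathcal{D}$ is unimodular on $\RR$, we have $|r_\reg(k)|=|r(k)|<1$ for $k\in\RR$, while $r_\reg\in H^{1,1}(\RR)$ follows from Proposition~\ref{rDeltaprop} together with the explicit form \eqref{rregdef}. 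Expanding $\mathcal{D}(x,t,k)=I+k^{-1}\mathcal{D}_1(x,t)+O(k^{-2})$ and inserting $m=\mathcal{D}\,m_\reg$ into \eqref{recoveru} yields the exact identity
\begin{equation*}
u(x,t)=2i\big(\mathcal{D}_1(x,t)\big)_{12}+2i\lim_{k\to\infty}k\,\big(m_\reg(x,t,k)\big)_{12},
\end{equation*}
in which $\big(\mathcal{D}_1(x,t)\big)_{12}$ is precisely the rational function of $\xi_1$, $\delta(\zeta,\xi_1)$ and $d_1(x,t)$ appearing on the right-hand side of \eqref{usoldefoc}, with $d_1(x,t)$ determined by $m_\reg(x,t,\xi_1)$ and $c_1$ through \eqref{Darbouxconst}, \eqref{deltadelta0Ym1Xdef}. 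The hypothesis that the RH problem for $m$ has a solution for all $(x,t)$ is exactly what guarantees that $\mathcal{D}$ is invertible, i.e.\ that the denominator $\lambda|d_1|^2-|\delta(\zeta,\xi_1)|^4$ never vanishes.

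\textbf{Step 2 (steepest descent for $m_\reg$ and reconstruction).} The phase $\theta=kx+2k^2t$ has its unique stationary point at $k_0=-x/(4t)\in(-\infty,0]$, whereas $\xi_1$ lies a fixed positive distance off $\RR$. Since $m_\reg$ is pole-free, $r_\reg\in H^{1,1}(\RR)$ and $|r_\reg|<1$, the analysis of \cite{DMM2019} --- the $\delta$-conjugation (producing $\delta(\zeta,k)$, its value $\delta(\zeta,\xi_1)$ at the pole, and its regularized value $\delta_0(\zeta,t)$ near $k_0$), the opening of lenses, a local parabolic-cylinder parametrix at $k_0$, and a $\overline{\partial}$ estimate for the non-analytic extensions --- applies verbatim and gives, uniformly for $x\in[0,\infty)$,
\begin{equation*}
2i\lim_{k\to\infty}k\,\big(m_\reg(x,t,k)\big)_{12}=\frac{u_{\mathrm{rad}}^{(1)}(x,t)}{\sqrt{t}}+O\big(t^{-3/4}\big),
\end{equation*}
with $u_{\mathrm{rad}}^{(1)}$ as in \eqref{urad1defoc}; the same analysis also produces $m_\reg(x,t,\xi_1)$ up to an error $O(t^{-3/4})$, with a leading term and an explicit $t^{-1/2}$ correction carrying the factor $\beta(r_\reg(k_0))\,\delta_0(\zeta,t)^2$. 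Inserting this expansion into \eqref{deltadelta0Ym1Xdef} gives $d_1(x,t)=d_1^{(0)}(x,t)+t^{-1/2}d_1^{(1)}(x,t)+O(t^{-3/4})$. Substituting the latter into the exact identity of Step~1 and Taylor-expanding the rational expression $\big(\mathcal{D}_1\big)_{12}$ in powers of $t^{-1/2}$, the leading term reproduces $u_{\mathrm{sol}}(x,t)$ of \eqref{usoldefoc} (with $d_1$ now read as its leading value), while the $t^{-1/2}$ term is exactly the combination of $d_1^{(0)}$, $d_1^{(1)}$, $\delta(\zeta,\xi_1)$ and $\beta(r_\reg(k_0))$ appearing in \eqref{urad2defoc}, i.e.\ $u_{\mathrm{rad}}^{(2)}(x,t)$. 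Adding the two contributions yields \eqref{uqpositiveasymptotics}.

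\textbf{Main obstacle.} Two points require care. First, one has to check that the $\overline{\partial}$ estimates of \cite{DMM2019} are uniform over the entire half-line; in the regime $x/t\to\infty$ one has $k_0\to-\infty$ and $r_\reg(k_0)\to0$, so the radiation terms vanish and the soliton term decays like the stationary one-soliton, but controlling the error as $(x,t)$ vary --- together with the uniform invertibility of $\mathcal{D}$, which is tied to the singularity phenomenon of \eqref{singularxvalue} --- is the delicate part. Second, and more technical, the Darboux transformation must be carried out not only at leading order but to first order in $t^{-1/2}$, because it is precisely the cross-terms between the leading and subleading parts of $d_1$ and the radiation amplitude that produce $u_{\mathrm{rad}}^{(2)}$; beyond this bookkeeping, the argument reduces to a direct application of the already available results of \cite{DMM2019}.
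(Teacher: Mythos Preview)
Your approach is exactly the paper's: remove the single pair of poles by a Darboux/dressing transformation, apply the pole-free $\overline{\partial}$ steepest descent of \cite{DMM2019} to $m_\reg$, and then substitute the resulting expansion of $m_\reg(x,t,\xi_1)$ back into the dressing data to extract $u_{\mathrm{sol}}$ and $u_{\mathrm{rad}}^{(2)}$. The paper makes this concrete by writing $m=(kI+B_1)\,m_\reg\,\mathrm{diag}\big((k-\xi_1)^{-1},(k-\bar\xi_1)^{-1}\big)$, deducing $u=u_\reg+2i(B_1)_{12}$, and expanding $(B_1)_{12}$ via an explicit algebraic system for $B_1$.

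One point of confusion in your write-up: in the paper's normalization, $d_1(x,t)=c_1 e^{2i\theta(x,t,\xi_1)}/(\xi_1-\bar\xi_1)$ is an \emph{explicit} function of $(x,t)$ that does \emph{not} depend on $m_\reg$ and is \emph{not} itself expanded in powers of $t^{-1/2}$. The $t^{-1/2}$ correction $u_{\mathrm{rad}}^{(2)}$ comes instead from expanding $m_\reg(x,t,\xi_1)$ (Lemma~\ref{regasymplemma}) inside the formula for $(B_1)_{12}$, which is a rational function of the entries of $m_\reg(\xi_1)$ and $m_\reg(\bar\xi_1)$ together with $d_1$. So the bookkeeping you describe---``expand $d_1$ and Taylor-expand $(\mathcal{D}_1)_{12}$''---should be rephrased as ``expand $m_\reg(\xi_1)$ and substitute into the algebraic system for the dressing matrix.'' Once that is straightened out, your sketch matches the paper's proof.
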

\begin{proof}
See Section \ref{onesolitonasymptoticsthproof}.
\end{proof}

Our next theorem considers the asymptotic stability of the stationary one-soliton.
Recall that $u_{s0}(x) = u_s(x,0)$ denotes the initial data corresponding to the stationary one-soliton solution given in \eqref{usdef}. 
If $u_0$ is sufficiently close to $u_{s0}$, then it can be shown that the RH problem of Theorem \ref{linearizableth} has a solution for all $(x,t)$. As a consequence, we obtain the following theorem which provides the long-time asymptotics for the solution of the Robin problem whenever the initial data $u_0 \in H^{1,1}$ is close to $u_{s0}$. The main takeaway is that the asymptotics of $u(x,t)$ is given by \eqref{uqpositiveasymptotics} and that the ingredients of \eqref{uqpositiveasymptotics} depend continuously on $u_0$ in the $H^{1,1}$-norm.

\begin{theorem}[Long-time asymptotics: defocusing NLS, near stationary one-soliton]\label{asymptoticsthdefocusing}
Suppose $\lambda = 1$. Let $u_{s0}(x)$ be the initial profile (\ref{exsol}) of the stationary one-soliton for some choice of the parameters $\omega > 0$ and $\alpha > 0$. Let $q = -u_{s0}'(0)/u_{s0}(0) = \sqrt{\alpha^2+\omega}$ be the corresponding value of $q$. Let $\xi_{s1} = i\sqrt{\omega}/2$ be the simple zero corresponding to $u_{s0}$, see Appendix \ref{RHsolitonsec}. Then there exists a neighborhood $U$ of $u_{s0}$ in $H^{1,1}(\RR_+)$  such that if $u_0 \in U$, then the corresponding spectral function $\Delta(k)$ has only one simple zero $\xi_1 \in i\RR_+$, and this zero satisfies $\left\vert \xi_1-\xi_{s1} \right\vert \to 0$ as $\left\Vert u_0-u_{s0} \right\Vert_{H^{1,1}(\RR_+)} \to 0$. Moreover, the global weak solution $u(x,t)$ in $H^{1,1}(\RR_+)$ of the IBVP for the NLS equation with parameter $q$ and initial data $u_0$ satisfies \eqref{uqpositiveasymptotics} as $t \to \infty$ uniformly for $x \in [0, \infty)$.
%
\end{theorem}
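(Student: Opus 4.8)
The plan is to verify, for all $u_0$ in a sufficiently small $H^{1,1}(\RR_+)$-neighborhood $U$ of $u_{s0}$, the hypotheses of Theorem \ref{onesolitonasymptoticsth} (equivalently, of Theorem \ref{linearizableth} in the case $\lambda=1$, $q>0$), after which the conclusion is obtained by quoting those theorems. Here $q=\sqrt{\alpha^2+\omega}>0$ is fixed. By Proposition \ref{rDeltaprop}(\ref{rDeltapropitemc}), for \emph{every} $u_0\in H^{1,1}(\RR_+)$ the function $\Delta$ has no real zeros and exactly one zero $\xi_1$ in $\CC_+$, this zero is simple, and it lies on $i\RR_+$; this is already the first assertion. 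For the continuity statement I would use that $u_0\mapsto\mu(0,\cdot)$, hence $u_0\mapsto(a,b)$, hence $u_0\mapsto\Delta$, is continuous (Section \ref{spectralfunctionssec}) with images analytic in $\CC_+$ and satisfying $\Delta(k)=2k+O(1)$ uniformly as $k\to\infty$ over $u_0$ in a bounded set (Proposition \ref{rDeltaprop}(\ref{rDeltapropitem1}),(\ref{rDeltapropitem4})); hence $\xi_1$ stays in a fixed compact subset of $\CC_+$, and Rouch\'e's theorem on a shrinking circle about the simple zero $\xi_{s1}=i\sqrt{\omega}/2$ of $\Delta_{u_{s0}}$ (Appendix \ref{RHsolitonsec}) gives $\xi_1\to\xi_{s1}$ as $\|u_0-u_{s0}\|_{H^{1,1}(\RR_+)}\to0$. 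The same continuity gives $c_1\to c_{s1}$ via \eqref{cjdef} and $r\to r_s$ in $H^{1,1}(\RR)$ via Proposition \ref{rDeltaprop}(\ref{rDeltapropitem5}) (applicable since $\Delta$ has no real zeros). Recall from Appendix \ref{RHsolitonsec} that $r_s\equiv0$ and that the RH problem of Theorem \ref{linearizableth} attached to $u_{s0}$ is the explicitly solvable one-soliton problem reproducing $u_s(x,t)=e^{i\omega t}u_{s0}(x)$, which is smooth and bounded on $[0,\infty)\times[0,\infty)$.

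The heart of the proof is to show that, after shrinking $U$, the RH problem of Theorem \ref{linearizableth} is solvable for \emph{all} $(x,t)\in[0,\infty)\times[0,\infty)$ when $u_0\in U$. I would follow the Darboux reduction used in Section \ref{asymproofsection}: the pole pair $\{\xi_1,\bar\xi_1\}$ is removed by passing to the pole-free RH problem for $m_\reg$ with reflection coefficient $r_\reg$ given by \eqref{rregdef}. Since $\xi_1\in i\RR_+$ is purely imaginary, the rational (Blaschke-type) factor relating $r$ to $r_\reg$ has modulus one on $\RR$, so $|r_\reg(k)|=|r(k)|<1$ for all $k\in\RR$ by Proposition \ref{rDeltaprop}(\ref{rDeltapropitema}); hence the standard vanishing lemma for the defocusing jump matrix produces a unique $m_\reg(x,t,k)$ for every $(x,t)$. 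The original $m$ is recovered from $m_\reg$ by the Darboux transformation \eqref{Darboux}, an operation that is well defined precisely when the determinant
\begin{equation*}
D(x,t)\coloneqq |d_1(x,t)|^2-|\delta(\zeta,\xi_1)|^4,
\end{equation*}
i.e.\ the denominator in \eqref{usoldefoc} (with $\lambda=1$), is nonzero. So it remains to establish the uniform bound $\inf_{(x,t)\in[0,\infty)\times[0,\infty)}|D(x,t)|>0$ for $u_0\in U$.

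For this I would combine two observations. First, the only $(x,t)$-dependence of $d_1$ that is not uniformly bounded is the factor $e^{2i\theta(x,t,\xi_1)}$: with $\xi_1=i\eta_1$ one has $\theta(x,t,\xi_1)=i\eta_1 x-2\eta_1^2 t$, so $|e^{2i\theta(x,t,\xi_1)}|=e^{-2\eta_1 x}\le 1$ on $[0,\infty)\times[0,\infty)$, and \eqref{Darbouxconst} then gives $|d_1(x,t)|\le |c_1|\,e^{-2\eta_1 x}\,C(u_0)$, where $C(u_0)$ bounds the remaining factors (built from $\delta(\zeta,\xi_1)$ and $m_\reg(x,t,\xi_1)$). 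Second, small-norm Riemann--Hilbert estimates for $m_\reg$ — whose jump matrix differs from $I$ by a quantity controlled by $\|r_\reg\|_{H^{1,1}(\RR)}=\|r\|_{H^{1,1}(\RR)}$ uniformly in $(x,t)$, the oscillatory exponentials having unit modulus on $\RR$ — together with $\xi_1$ staying at distance $\ge \tfrac12\sqrt{\omega}-o(1)$ from $\RR$, yield $\sup_{(x,t)}|m_\reg(x,t,\xi_1)-I|\to0$ and $\sup_{\zeta}|\delta(\zeta,\xi_1)-1|\to0$ as $\|r\|_{H^{1,1}(\RR)}\to0$; hence $C(u_0)\to1$ and $|\delta(\zeta,\xi_1)|^4=1+o(1)$ uniformly in $(x,t)$. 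For the exact soliton ($r_s\equiv0$, so $\delta_s\equiv1$ and $m_{\reg,s}\equiv I$) one has $|d_{1,s}(x,t)|=|c_{s1}|\,e^{-\sqrt{\omega}x}$ and hence $D_s(x,t)=|c_{s1}|^2e^{-2\sqrt{\omega}x}-1$; since $u_s$ is finite on $[0,\infty)\times[0,\infty)$, $D_s$ can have no zero there (its only zero sits at a negative value of $x$, consistently with \eqref{singularxvalue}), which forces $|c_{s1}|<1$ and therefore $D_s(x,t)\le |c_{s1}|^2-1<0$ uniformly. Putting these together, for $U$ small enough $\sup_{(x,t)}|d_1(x,t)|\le|c_1|(1+o(1))<1$ and
\begin{equation*}
D(x,t)\le |c_1|^2(1+o(1))-(1-o(1))\le -\tfrac12\bigl(1-|c_{s1}|^2\bigr)<0
\end{equation*}
for all $(x,t)\in[0,\infty)\times[0,\infty)$, which is the required bound. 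Consequently $m(x,t,k)$ exists for all such $(x,t)$; Theorem \ref{linearizableth} then identifies $u(x,t)=2i\lim_{k\to\infty}k\,(m(x,t,k))_{12}$ as the global weak solution in $H^{1,1}(\RR_+)$ of the Robin IBVP with parameter $q$ and data $u_0$, and Theorem \ref{onesolitonasymptoticsth} delivers \eqref{uqpositiveasymptotics} uniformly for $x\in[0,\infty)$. The main obstacle is this final point: upgrading the qualitative fact that $D_s$ has no zero on the quarter-plane to a quantitative lower bound stable both under an $H^{1,1}$-perturbation of the data and under the passage to the non-compact region $x,t\to\infty$, which is exactly what forces the careful tracking of the $(x,t)$-dependence of $d_1$ and the uniform small-norm control of $m_\reg$ and $\delta$.
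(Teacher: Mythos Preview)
Your overall strategy is sound but differs from the paper's. The paper does not redo any Darboux or small-norm analysis here; it simply quotes Lemma~\ref{UmuBlemma}, which already proves that the set $\mathcal{U}$ of Definition~\ref{opensetUdef} is open in $H^{1,1}(\RR_+)$ (for $\lambda=1$, $q>0$ this uses continuity of $(x,t,u_0)\mapsto I-\mathcal{C}_{\tilde w}$ on compact $(x,t)$-sets together with a steepest-descent existence argument for large $\sqrt{x^2+t^2}$). Since $u_{s0}\in\mathcal{U}$, an open neighborhood $U\subset\mathcal{U}$ exists, and Theorem~\ref{onesolitonasymptoticsth} applies directly; the statements about $\xi_1$ follow from Proposition~\ref{rDeltaprop}(\ref{rDeltapropitemc}) and Lemma~\ref{zeroscontinuitylemma}. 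Your alternative---reduce to the pole-free problem for $m_\reg$ (which always exists by the vanishing lemma) and then reconstruct $m$ via the Darboux transformation, checking that the algebraic system \eqref{algsys} is solvable for all $(x,t)$---is a legitimate and more explicit route that avoids the abstract operator-openness/steepest-descent split.

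There is, however, a concrete misidentification in your argument. The Darboux step \eqref{Darboux} is well defined precisely when $\det W_1(x,t)\neq 0$, with $W_1$ as in \eqref{W}; by the $\lambda=1$ symmetries of $m_\reg$ one has $\det W_1=|(W_1)_{11}|^2-|(W_1)_{12}|^2$, and the entries involve $m_\reg(x,t,\xi_1)$, not $\delta(\zeta,\xi_1)$. The quantity you call $D(x,t)=|d_1|^2-|\delta(\zeta,\xi_1)|^4$ is only the leading-order $t\to\infty$ approximation of $-\det W_1$ (it is how the denominator in \eqref{usoldefoc} arises after substituting $m_\reg(\xi_1)\approx\delta(\zeta,\xi_1)^{\sigma_3}$), and it is not the correct solvability criterion for finite $(x,t)$. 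Relatedly, $d_1$ in \eqref{Darbouxconst} contains no factors of $\delta$ or $m_\reg$: $|d_1(x,t)|=\dfrac{|c_1|}{2\eta_1}e^{-2\eta_1 x}$ with $\xi_1=i\eta_1$. The clean version of your argument is: the uniform small-norm estimate gives $\sup_{(x,t)}|m_\reg(x,t,\xi_1)-I|\to 0$ as $\|r\|_{H^{1,1}}\to 0$ (since $\mathrm{dist}(\xi_1,\RR)\ge \eta_1$ is bounded below), hence $\det W_1(x,t)=1-|d_1(x,t)|^2+o(1)$ uniformly; and $\sup_{x\ge0}|d_1(x,t)|=\dfrac{|c_1|}{2\eta_1}\to\dfrac{|c_{s1}|}{\sqrt{\omega}}=\dfrac{\alpha}{\sqrt{\alpha^2+\omega}+\sqrt{\omega}}<1$, so $\det W_1$ is bounded below by a positive constant on the whole quarter-plane for $u_0$ near $u_{s0}$. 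With that correction your proof goes through; the paper's proof is shorter only because the relevant openness was already packaged into Lemma~\ref{UmuBlemma}.
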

\begin{proof}
See Section \ref{asymptoticsthdefocusingproof}.
\end{proof}

\begin{remark}
The leading term $u_{\mathrm{sol}}(x,t)$ in \eqref{uqpositiveasymptotics} is a stationary one-soliton solution of the defocusing NLS equation. Indeed, if we write $u_s(x,t;\alpha, \omega)$ for the stationary one-soliton in \eqref{usdef} corresponding to the parameters $\alpha >0$ and $\omega >0$, then
$$u_{\mathrm{sol}}(x,t) = u_s(x,t; \tilde{\alpha}, \tilde{\omega})$$
where $\tilde{\omega} =-4\xi_1^2$ and $\tilde{\alpha} =\frac{2\tilde{\omega}}{\sqrt{\tilde{\omega} -|c_s|^2}}$. Here, $c_s=\frac{c_1}{\delta(\zeta,\xi_1)^2}$ where  $c_1$ is the residue constant in (\ref{cjdef}) corresponding to the initial data $u_0$. As $u_0 \to u_{s0}(\,\cdot\,; \alpha, \omega)$ in $H^{1,1}(\RR_+)$, we have $\xi_1 \to \xi_{s1}$, $r(k) \to 0$, $\delta(\zeta,\xi_1) \to 1$, $\tilde{\omega} \to \omega$, and $\tilde{\alpha} \to \alpha$. Hence, in this limit, $u_{\mathrm{sol}}(x,t) \to u_s(x,t; \alpha, \omega)$, $u_{\mathrm{rad}}^{(1)} \to 0$, and $u_{\mathrm{rad}}^{(2)} \to 0$. It follows that as the initial data $u_0 \in H^{1,1}(\RR_+)$ approaches the stationary one-soliton initial data $u_{s0}(\,\cdot\,; \alpha, \omega)$, the asymptotics formula (\ref{uqpositiveasymptotics}) reduces to $u(x,t)=u_s(x,t; \alpha, \omega)+O(t^{-3/4})$ as expected.
\end{remark}

\subsection{Asymptotic theorems for the focusing NLS}
We now consider the focusing case. We let $\sigma_d \coloneqq \{(\xi_j,c_j)\}_{j=1}^M \subset \CC \times (\CC \setminus \{0\})$ denote the discrete scattering data associated to $u_0$, where $\{\xi_j\}_1^M$ is the set of simple zeros of $\Delta(k)$ and $\{c_j\}_1^M$ is the set of corresponding residue constants defined in (\ref{cjdef}).

The next theorem is a direct consequence of Theorem \ref{linearizableth} and the asymptotics for the focusing NLS on the line.

\begin{theorem}[Long-time asymptotics: focusing NLS, generic initial data]\label{focusingasymptoticsth}
Suppose $\lambda = -1$. Let $q \in \RR \setminus \{0\}$ and $u_0 \in H^{1,1}(\RR_+)$ be such that Assumption \ref{zerosassumption} holds. As $t \to \infty$ the global weak solution $u(x,t)$ in $H^{1,1}(\RR_+)$ of the Robin IBVP for NLS with parameter $q$ and initial data $u_0$ satisfies the asymptotics
\begin{equation}\label{ufocusingasymptotics}
u(x,t)=u_{\mathrm{sol}}(x,t;\hat{\sigma}_d)+\frac{u_{\mathrm{rad}}(x,t)}{\sqrt{2t}} + O\big(t^{-3/4}\big), \qquad t \to \infty,
\end{equation}
uniformly for $\zeta:=x/t$ in $[0,K] \subset [0,\infty)$ for any $K>0$.

\begin{enumerate}[$(1)$]
\item $u_{\mathrm{sol}}(x,t;\hat{\sigma}_d)$, the leading term, is the soliton solution defined by \eqref{recoverusol} corresponding to the modified discrete scattering data $\hat{\sigma}_d$ given by
\begin{align}\nonumber
& \hat{\sigma}_d \coloneqq \{(\xi_j,\hat{c}_j)\;|\;\xi_j \in Z(\mathcal{I})\},\quad \text{where} 
	\\ \label{modifieddataresidue}
& \hat{c}_j \coloneqq c_j\prod_{\xi_l \in Z^-(\mathcal{I})}\left(\frac{\xi_j-\xi_l}{\xi_j-\bar{\xi}_l}\right)^2 \exp\left(-\frac{1}{\pi i} \int_{-\infty}^{k_0}\frac{\log(1-\lambda |r(s)|^2)}{s-\xi_j} ds\right), 
	\\ \nonumber
& \mathcal{I}:=[-K/2,0], \qquad Z:=\{\xi_j\}_{j=1}^M, \qquad k_0 := -\zeta/4,
	\\ \nonumber
&Z(\mathcal{I}) \coloneqq \{\xi_j \in Z\;|\;\re \xi_j <k_0\}, \quad Z^-(\mathcal{I}) \coloneqq \{\xi_j \in Z\;|\;\re \xi_j<-K/2\}.
\end{align}

\item $u_{\mathrm{rad}}(x,t)$, the subleading term, is the asymptotic radiation contribution determined by the reflection coefficient $r(k)$ in \eqref{rdef} and the zeros $\xi_j \in \Box_{k_0}^-(\mathcal{I})$ where 
$$\Box_{k_0}^-(\mathcal{I}):=Z(\mathcal{I}) \setminus Z^-(\mathcal{I})=\{\xi_j \in Z\;|\;-K/2 \le \re \xi_j<k_0\},$$
as follows:
\begin{align}\nonumber
u_{\mathrm{rad}}(x,t)=&\; m_{11}^{\Box}(x,t,k_0)^2 \alpha(r(k_0))e^{ix^2/(4t)+i\nu(r(k_0))\log(8t)}	\\\label{uradformula}
&+ m_{12}^{\Box}(x,t,k_0)^2 \overline{\alpha(r(k_0))}e^{-ix^2/(4t)-i\nu(r(k_0))\log(8t)},
	\\ \nonumber
\alpha(r(k_0)) \coloneqq &\; \beta(r(k_0))\exp\bigg[i\bigg(2\pi \chi(\zeta,k_0)-4\sum_{\xi_j \in \Box_{k_0}^{-}}\arg(k_0-\xi_j)\bigg)\bigg],
\end{align}
where $\beta$ and $\chi$ are given by (\ref{k0nubetachidef}), and the coefficients $m_{11}^{\Box}(x,t,k_0)$ and $m_{12}^{\Box}(x,t,k_0)$ are the entries of the first row of the solution of RH problem \ref{RHsolitonrenorm} with discrete scattering data $\hat{\sigma}_d$ and $\Box=\{j \in \{1,\ldots,M\}\;|\;\xi_j \in \Box_{k_0}^- (\mathcal{I})\}$ evaluated at $k=k_0$.
\end{enumerate}
\end{theorem}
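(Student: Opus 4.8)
The plan is to transfer the known long-time asymptotics for the focusing NLS equation on the line, due to Borghese, Jenkins, and McLaughlin \cite{BJM2018}, to the RH representation of Theorem \ref{linearizableth}. Indeed, by Theorem \ref{linearizableth} (whose hypotheses are met since Assumption \ref{zerosassumption} is in force, and for $\lambda=-1$ no further assumption is needed), the global weak solution $u(x,t)$ in $H^{1,1}(\RR_+)$ is given by \eqref{recoveru} in terms of the unique solution $m(x,t,k)$ of the RH problem whose jump across $\RR$ is $v(x,t,k)$ from \eqref{vdef} with $\theta=kx+2k^2t$, whose normalisation is $m\to I$ as $k\to\infty$, and which has at most simple poles at the zeros $\{\xi_j\}_1^M$ of $\Delta$ in $\CC_+$ and their conjugates, subject to the residue conditions \eqref{mresidues}--\eqref{cjdef}. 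With $\lambda=-1$ this is exactly the RH problem characterising the inverse scattering solution of the focusing NLS on the whole line associated with the reflection coefficient $r(k)$ of \eqref{rdef} and the discrete scattering data $\sigma_d=\{(\xi_j,c_j)\}_{j=1}^M$. So the whole proof reduces to checking that $(r,\sigma_d)$ is data of the kind to which \cite{BJM2018} applies, and then quoting the result.

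First I would verify the hypotheses. Since Assumption \ref{zerosassumption}(i) gives $\Delta(k)\neq 0$ for $k\in\RR$, Proposition \ref{rDeltaprop}\ref{rDeltapropitem5} yields $r\in H^{1,1}(\RR)$; in the focusing case $\det v\equiv 1$ and the $(1,1)$-entry $1+|r(k)|^2\ge 1$ is bounded below, so no constraint on $\|r\|_{L^\infty}$ is needed. As already noted after Assumption \ref{zerosassumption}, \eqref{Deltalargek} together with Assumption \ref{zerosassumption}(i) forces $\Delta$ to have only finitely many zeros in $\CC_+$, all simple, so $\dot{\Delta}(\xi_j)\neq 0$; and Assumption \ref{zerosassumption}(ii) gives $a(\xi_j)\neq 0$, so the residue constants $c_j$ in \eqref{cjdef} are well defined. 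Hence $(r,\sigma_d)$ is admissible scattering data of precisely the type treated in \cite{BJM2018}.

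Next I would fix $K>0$ and apply the main asymptotic theorem of \cite{BJM2018} to the RH problem of Theorem \ref{linearizableth} along the rays $\zeta=x/t\in[0,K]$, with stationary phase point $k_0=-\zeta/4\in[-K/4,0]$. In that analysis the $\overline{\partial}$ steepest descent method partitions the poles by the position of $\re\xi_j$ relative to $k_0$ and to $-K/2$: the zeros with $\re\xi_j\ge k_0$ do not contribute to leading order; those with $\re\xi_j<-K/2$ have their effect absorbed into a renormalisation of the residue constants via the Blaschke-type products and the scalar $\delta$-function dressing appearing in \eqref{modifieddataresidue}; and the remaining zeros, those in $\Box_{k_0}^-(\mathcal{I})$, are kept explicitly in the outer model, producing the soliton term $u_{\mathrm{sol}}(x,t;\hat{\sigma}_d)$ of \eqref{recoverusol} together with the first-row entries $m_{11}^{\Box}(x,t,k_0)$, $m_{12}^{\Box}(x,t,k_0)$ of RH problem \ref{RHsolitonrenorm}. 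The local contribution at $k_0$ gives the radiation term \eqref{uradformula} of size $t^{-1/2}$, and the $\overline{\partial}$ error estimates of \cite{BJM2018} produce the remainder $O(t^{-3/4})$, uniformly for $\zeta\in[0,K]$. Substituting into \eqref{recoveru} and collecting terms then gives \eqref{ufocusingasymptotics}.

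I expect the main obstacle to be one of bookkeeping rather than of analysis: one has to match the conventions of \cite{BJM2018} (the sign and scaling of the phase $\theta$, the placement of $\lambda$ and of the Schwartz conjugations in the jump matrix \eqref{vdef} and in the residue conditions \eqref{mresidues}, and the normalisation of $m$ at $k=\infty$) so that the modified data \eqref{modifieddataresidue} and the model quantities in \eqref{uradformula} come out with exactly the stated constants; and one has to record that the uniformity supplied by \cite{BJM2018} is of the compact-$\zeta$-interval type, which is the reason the conclusion is uniform for $\zeta\in[0,K]$ and not for $\zeta\in[0,\infty)$---the partition of the poles, and with it $\hat{\sigma}_d$, $Z(\mathcal{I})$, $Z^-(\mathcal{I})$ and $\Box_{k_0}^-(\mathcal{I})$, depends on $K$. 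No fresh steepest descent analysis is required.
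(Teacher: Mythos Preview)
Your proposal is correct and takes essentially the same approach as the paper: invoke Theorem \ref{linearizableth} to identify the Robin solution with the solution of the focusing NLS RH problem on the line with data $(r,\sigma_d)$, check that this data meets the hypotheses of \cite{BJM2018}, and then quote the $\overline{\partial}$ steepest descent asymptotics from that reference. The paper's own proof is in fact terser than yours, recording only that the assumptions of Theorem \ref{linearizableth} are fulfilled and that the asymptotic formula with the $O(t^{-3/4})$ error is the one obtained in \cite{BJM2018}.
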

\begin{proof}
The assumptions of Theorem \ref{linearizableth} are fulfilled, so the asymptotics for $u(x,t)$ coincides with the asymptotics for the solution of the initial-value problem for the focusing NLS on the line with reflection coefficient $r(k)$ given by (\ref{rdef}). The strong form (\ref{ufocusingasymptotics}) of the asymptotic formula with the error term $O(t^{-3/4})$ has been obtained in \cite{BJM2018} by means of the $\overline{\partial}$ steepest descent method.
\end{proof}

\begin{remark}\label{referenceremark}
In the case when $Z \subset i\RR_+$, we have $\Box_{k_0}^-(\mathcal{I})=\emptyset$. In particular, this is the case for stationary soliton solutions.
\end{remark}

Our last theorem concerns the asymptotic stability of the stationary one-soliton of the focusing NLS. Let $u_{s0}(x)$ be the initial profile (\ref{exsol}) of the stationary one-soliton for some choice of the parameters $\omega > 0$ and $\phi \in \RR \setminus \{0\}$. Let $q = -u_{s0}'(0)/u_{s0}(0) = \sqrt{\omega} \tanh{\phi}$ be the corresponding value of $q$, and let $a_s(k)$, $b_s(k)$, $r_s(k)$, $\Delta_s(k)$ denote the spectral functions defined in (\ref{abdef})--(\ref{Deltadef}) corresponding to $u_{s0}$. For $\lambda = -1$, we have (see Appendix \ref{RHsolitonsec})
\begin{equation}
\Delta_s(k)=\frac{(2k-i\sqrt{\omega})(2k+iq)}{2k+i\sqrt{\omega}}, \qquad a_s(k)=\frac{2k+ i q}{2k+i\sqrt{\omega}}.
\end{equation}
Hence, if $q>0$, then $\Delta_s(k)$ has one simple zero $\xi_{s1} := i\sqrt{\omega}/2$ in $\CC_+$. On the other hand, if $q<0$, then $\Delta_s(k)$ has two simple zeros $\xi_{s1}:= i\sqrt{\omega}/2$ and $\xi_{s2} := -iq/2$ in $\CC_+$; note that $\xi_{s1}, \xi_{s2} \in i\RR_+$ and $0 < |\xi_{s2}| < |\xi_{s1}|$. Since $a_s(k)$ also has a simple zero at $\xi_{s2}$ in this case, it turns out that the zero $\xi_{s2}$ of $\Delta_s$ does not generate a soliton. 

Now let $u_0$ be a small perturbation of $u_{s0}$. Then one can show that the zeros of $\Delta$ are pure imaginary and close to the zeros of $\Delta_s$. If $q<0$, this means that $\Delta$ has two zeros $\xi_1,\xi_2 \in i\RR_+$ such that $\xi_1 \approx \xi_{s1}$ and $\xi_2 \approx \xi_{s2}$. It turns out that there are two cases to consider:
\begin{enumerate}
\item If $\xi_2=\xi_{s2}$, then $\xi_2$ does not generate a soliton.
\item If $\xi_2 \ne \xi_{s2}$, then both $\xi_1$ and $\xi_2$ generate solitons.
\end{enumerate}
It follows that the leading order term $u_{\mathrm{sol}}(x,t;\hat{\sigma}_d)$ in Theorem \ref{focusingasymptoticsth} is either a stationary one-soliton or a stationary two-soliton. More precisely, we have the following theorem. A result of this type was already obtained in \cite{DP2011}.

\begin{theorem}[Long-time asymptotics: focusing NLS, near stationary one-soliton]\label{asymptoticsthfocusing}
Suppose $\lambda = -1$.  Let $u_{s0}(x)$ be the initial profile (\ref{exsol}) of the stationary one-soliton for some choice of $\omega > 0$ and $\phi \in \RR \setminus \{0\}$. 
Then there exists a neighborhood $U$ of $u_{s0}$ in $H^{1,1}(\RR_+)$  such that the following statements hold whenever $u_0 \in U$:
\begin{enumerate}[$(1)$]
\item If $q>0$, then the spectral function $\Delta(k)$ corresponding to $u_0$ has exactly one zero in $\bar{\CC}_+$, denoted by $\xi_1$. The zero $\xi_1$ is pure imaginary and simple. Moreover, $\xi_1 \to \xi_{s1}$ as $u_0 \to u_{s0}$ in $H^{1,1}(\RR_+)$.

\item If $q<0$, then the spectral function $\Delta(k)$ corresponding to $u_0$ has exactly two zeros in $\bar{\CC}_+$, denoted by $\xi_1$ and $\xi_2$. The zeros $\xi_1$ and $\xi_2$ are pure imaginary and simple. Moreover $\xi_1 \to \xi_{s1}$ and $\xi_2 \to \xi_{s2}$ as $u_0 \to u_{s0}$ in $H^{1,1}(\RR_+)$.

\item As $t \to \infty$, the global weak solution $u(x,t)$ of the IBVP for the NLS equation with parameter $q = \sqrt{\omega} \tanh{\phi}$ and initial data $u_0$ satisfies the asymptotic formula (\ref{ufocusingasymptotics}) uniformly for $\zeta = x/t$ in compact subsets of $[0, \infty)$, where $u_{\mathrm{sol}}(x,t;\hat{\sigma}_d)$ is defined as follows:
\begin{enumerate}[$(a)$]
\item If $\Delta(k)$ has one simple zero $\xi_1$, then $u_{\mathrm{sol}}(x,t;\hat{\sigma}_d)$ is the stationary one-soliton solution with $\hat{\sigma}_d=\{(\xi_1,\hat{c}_1)\}$, where $\hat{c}_1$ is given by (\ref{modifieddataresidue}). 

\item If $\Delta(k)$ has two simple zeros $\xi_1$ and $\xi_2 = \xi_{s2}$, then $u_{\mathrm{sol}}(x,t;\hat{\sigma}_d)$ is the stationary one-soliton solution with $\hat{\sigma}_d=\{(\xi_1,\hat{c}_1)\}$.

\item If $\Delta(k)$ has two simple zeros $\xi_1$ and $\xi_2 \ne \xi_{s2}$, then $u_{\mathrm{sol}}(x,t;\hat{\sigma}_d)$ is the stationary two-soliton solution with $\hat{\sigma}_d=\{(\xi_1,\hat{c}_1),(\xi_2,\hat{c}_2)\}$ where $\hat{c}_1,\hat{c}_2$ are given by \eqref{modifieddataresidue}.
\end{enumerate}
The subleading term $u_{\mathrm{rad}}(x,t)$ in (\ref{ufocusingasymptotics}) is defined by \eqref{uradformula} with $r(k)$ given by \eqref{rdef} and with the modified scattering data $\hat{\sigma}_d$ specified for each case as above.

\end{enumerate}

%
%
%
\end{theorem}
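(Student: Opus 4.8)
The plan is to deduce Theorem \ref{asymptoticsthfocusing} from the already-established Theorem \ref{focusingasymptoticsth} (the generic-data asymptotics for the focusing NLS) and Theorem \ref{linearizableth} (the RH representation), together with the continuity properties of the spectral map recorded in Proposition \ref{rDeltaprop}. The only genuinely new work is (i) locating the zeros of $\Delta(k)$ for $u_0$ near $u_{s0}$ and verifying they stay pure imaginary and simple, (ii) checking that Assumption \ref{zerosassumption} holds on a neighborhood $U$ of $u_{s0}$, so that Theorem \ref{focusingasymptoticsth} is applicable, and (iii) identifying the leading term $u_{\mathrm{sol}}(x,t;\hat\sigma_d)$ as a stationary one- or two-soliton in each of the three cases.

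For step (i), I would first record the explicit form of $\Delta_s(k)=\frac{(2k-i\sqrt\omega)(2k+iq)}{2k+i\sqrt\omega}$ and $a_s(k)=\frac{2k+iq}{2k+i\sqrt\omega}$ from Appendix \ref{RHsolitonsec}, so that when $q>0$ the only zero of $\Delta_s$ in $\bar\CC_+$ is the simple zero $\xi_{s1}=i\sqrt\omega/2$, and when $q<0$ there are exactly two simple zeros $\xi_{s1}=i\sqrt\omega/2$ and $\xi_{s2}=-iq/2$, both on $i\RR_+$ with $0<|\xi_{s2}|<|\xi_{s1}|$, and $a_s$ vanishes at $\xi_{s2}$. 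Since $u_0\mapsto \Delta$ is continuous into an appropriate space of analytic functions (this follows from the Volterra estimates in Section \ref{spectralfunctionssec} that also underlie Proposition \ref{rDeltaprop}) and $\Delta(k)=2k+O(1)$ uniformly, a Rouché/Hurwitz argument on a large semicircle together with small circles around $\xi_{s1}$ (and $\xi_{s2}$) shows that for $u_0\in U$ small enough the number of zeros of $\Delta$ in $\bar\CC_+$ is preserved and they lie close to those of $\Delta_s$. To see the perturbed zeros remain on $i\RR_+$, I would invoke the symmetry $\overline{\Delta(-\bar k)}=-\Delta(k)$ (item (ii) of Proposition \ref{rDeltaprop}): this symmetry forces the zero set to be invariant under $k\mapsto -\bar k$, and a simple zero near the imaginary axis that is its own reflection must lie exactly on $i\RR_+$. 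Simplicity persists because $\dot\Delta_s(\xi_{sj})\neq 0$ and $\dot\Delta$ depends continuously on $u_0$. Continuity of the zeros, $\xi_j\to\xi_{sj}$ as $\|u_0-u_{s0}\|_{H^{1,1}}\to 0$, is the usual consequence of the contour-integral formula $\xi_j=\frac{1}{2\pi i}\oint k\,\dot\Delta/\Delta\,dk$.

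For step (ii), Assumption \ref{zerosassumption}(i) (no real zeros, simple zeros in $\CC_+$) is now clear from step (i) and continuity, since $\Delta_s$ has no real zeros when $q\neq 0$. Assumption \ref{zerosassumption}(ii) requires that at each zero $\xi_j$ of $\Delta$ in $\CC_+$ we have $a(\xi_j)\neq 0$; here the case distinction of the theorem enters. When $q>0$, $a_s(\xi_{s1})\neq 0$, so by continuity $a(\xi_1)\neq 0$ on $U$ and we are in case (a) with a single soliton. When $q<0$, we always have $a_s(\xi_{s1})\neq 0$, so $a(\xi_1)\neq 0$; the subtlety is $\xi_2$, since $a_s(\xi_{s2})=0$. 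If $\xi_2=\xi_{s2}$ then (as in the discussion preceding the theorem) $a$ still vanishes there and the "zero" $\xi_2$ does not generate a soliton — formally, the residue constant $c_2$ in \eqref{cjdef} is not defined, or rather the pole of $m$ is cancelled and $\xi_2$ should be removed from the discrete data; this is case (b). If $\xi_2\neq\xi_{s2}$, then $a(\xi_2)\neq 0$ (again by continuity, shrinking $U$ if necessary, since $a$ has at most one zero near $\xi_{s2}$ and it need not coincide with $\xi_2$), so both $\xi_1$ and $\xi_2$ are genuine soliton-generating zeros, giving case (c). In all cases Theorem \ref{linearizableth} applies (the RH problem is solvable by the focusing vanishing lemma), so the global weak solution coincides with the RH-reconstructed function and Theorem \ref{focusingasymptoticsth} yields \eqref{ufocusingasymptotics}.

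For step (iii), I would note that since the perturbed zeros lie on $i\RR_+$, Remark \ref{referenceremark} gives $\Box_{k_0}^-(\mathcal I)=\emptyset$ for every $\zeta\geq 0$, hence $Z^-(\mathcal I)=\emptyset$ and $\hat\sigma_d=\sigma_d$ restricted to $Z(\mathcal I)=\{\xi_j:\re\xi_j<k_0\}$, which for pure imaginary $\xi_j$ means simply all the (soliton-generating) $\xi_j$. Thus $u_{\mathrm{sol}}(x,t;\hat\sigma_d)$ is the reflectionless RH solution with one or two pure-imaginary poles — i.e. a stationary one- or two-soliton of the focusing NLS — which is exactly the content of (a), (b), (c); I would cross-reference Appendix \ref{RHsolitonsec} for the explicit matching with \eqref{exsol} and its two-soliton analog, and for the residue constants $\hat c_j$ from \eqref{modifieddataresidue} which here reduce to $c_j\exp(-\frac{1}{\pi i}\int_{-\infty}^{k_0}\frac{\log(1+|r(s)|^2)}{s-\xi_j}ds)$ since the product over $Z^-(\mathcal I)$ is empty. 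The main obstacle I anticipate is step (ii) in the case $q<0$: carefully distinguishing $\xi_2=\xi_{s2}$ from $\xi_2\neq\xi_{s2}$ and showing that in the former case the RH problem genuinely has no pole at $\xi_2$ (so that it reduces to the appropriate reflectionless problem with one pole), which requires tracking how the residue condition \eqref{mresiduesa} degenerates when $a(\xi_j)\to 0$ while $\dot\Delta(\xi_j)$ stays bounded away from zero — equivalently, verifying that $b(-\bar\xi_2)$ also vanishes so that $c_2$ stays finite or that the column of $m$ simply has no pole there. This is a delicate but standard cancellation in the Robin-problem scattering theory, and I would handle it by combining the symmetry relations of $a,b$ with the explicit form of $\Delta$ near $\xi_{s2}$.
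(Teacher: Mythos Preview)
Your overall structure matches the paper's proof in Section \ref{asymptoticsthfocusingproof}: the explicit $\Delta_s,a_s$ from Appendix \ref{RHsolitonsec}, a Rouch\'e/Hurwitz count plus the symmetry $\overline{\Delta(-\bar k)}=-\Delta(k)$ to pin the perturbed zeros on $i\RR_+$, and the case split on $q$ and on $\xi_2=\xi_{s2}$ versus $\xi_2\neq\xi_{s2}$. The $q>0$ case and your step (iii) are essentially as in the paper.

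There is however a genuine gap in your handling of $q<0$. In the case $\xi_2\neq\xi_{s2}$ your continuity argument for $a(\xi_2)\neq 0$ does not work: both the zero $k_1$ of $a$ and the zero $\xi_2$ of $\Delta$ converge to $\xi_{s2}$ as $u_0\to u_{s0}$, and nothing you have said rules out $k_1=\xi_2$ while $\xi_2\neq\xi_{s2}$. The paper closes this gap algebraically rather than by continuity: evaluating \eqref{Deltadef} at $k=\xi_2\in i\RR$ (so $-\bar k=k$) gives
\[
0=(2\xi_2-iq)\,|a(\xi_2)|^2-(2\xi_2+iq)\,|b(\xi_2)|^2,
\]
and since $b_s$ is nowhere zero in $\CC_+$ by the explicit formula \eqref{bsas}, continuity gives $b(\xi_2)\neq 0$; as $2\xi_2\pm iq\neq 0$ when $\xi_2\neq -iq/2=\xi_{s2}$, this forces $a(\xi_2)\neq 0$.

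In the case $\xi_2=\xi_{s2}$ your guess that the cancellation comes via ``$b(-\bar\xi_2)=0$'' is off. The paper instead evaluates \eqref{Deltadef} at $k=-iq/2$ to obtain $\Delta(\xi_{s2})=-2iq\,|a(\xi_{s2})|^2$, so $\Delta(\xi_2)=0$ forces $a(\xi_2)=0$; both zeros are simple. Assumption \ref{zerosassumption}(ii) then fails at $\xi_2$, so one cannot directly quote Theorem \ref{focusingasymptoticsth}. The paper does not rescue $c_2$ but bypasses it: equation \eqref{m1equation},
\[
[m]_1=\frac{2k-iq}{\Delta(k)}\,\overline{a(-\bar k)}\,[\mu_2]_1-\frac{\lambda(2k+iq)}{\Delta(k)}\,\overline{b(-\bar k)}\,e^{2i\theta}[\mu_2]_2,
\]
shows directly that $[m]_1$ is analytic at $\xi_2=-iq/2$ (the second term through the factor $2k+iq$, the first through $\overline{a(-\bar k)}=\overline{a(k)}$ for $k\in i\RR$, which vanishes simply). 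Hence $m$ satisfies the RH problem of Theorem \ref{linearizableth} with the residue condition at $\xi_2$ simply omitted, and the asymptotic analysis underlying Theorem \ref{focusingasymptoticsth} then applies with $\hat\sigma_d=\{(\xi_1,\hat c_1)\}$.
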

\begin{proof}
See Section \ref{asymptoticsthfocusingproof}.
\end{proof}

\section{Properties of the spectral functions}\label{spectralfunctionssec}
In this section, we establish several properties of the spectral functions $a(k), b(k), r(k)$, and $\Delta(k)$ defined in (\ref{abdef})--(\ref{Deltadef}). In particular, we provide a proof of Proposition \ref{rDeltaprop}. 
Note that our spectral functions are associated to the half-line problem and therefore differ from the spectral functions encountered for the problem on the line.

\subsection{Volterra integral equation and estimates}
Our first lemma proves existence and uniqueness of the solution of the integral equation \eqref{muvolterra} for $\mu(x,k)$.

\begin{lemma}\label{mulemma}
If $u_0 \in H^{1,1}(\RR_+)$, then there is a unique $2 \times 2$-matrix-valued solution $\mu(x,k)$ of equation \eqref{muvolterra} with the following properties:
\begin{enumerate}[$(a)$]
\item The function $\mu(x,k)$ is defined for $x \ge 0$ and $k \in (\bar{\CC}_-,\bar{\CC}_+)$. For each $k \in (\bar{\CC}_-,\bar{\CC}_+)$, the function $\mu(\cdot,k)$ belongs to $C^1([0,\infty))$ and satisfies
\begin{align}\label{xparteqn0}
\mu_x+ik [\sigma_3, \mu] = \mathsf{U}_0 \mu, \qquad x \geq 0.
\end{align}

\item For each $x \ge 0$, the function $\mu(x,\cdot)$ is bounded and continuous for $k \in (\bar{\CC}_-,\bar{\CC}_+)$ and analytic for $k \in (\CC_-,\CC_+)$.

\item $\det \mu(x,k) = 1$ for $x \geq 0$ and $k \in \RR$.

\item For each $x \geq 0$ and $k \in (\bar{\CC}_-,\bar{\CC}_+)$, $\mu(x,k)$ obeys the symmetry
\begin{align}\label{musymm}
\mu(x,k)=\begin{cases}
\sigma_1 \overline{\mu(x,\bar{k})}\sigma_1 & \quad \text{if $\lambda=1$}, \\
\sigma_2 \overline{\mu(x,\bar{k})}\sigma_2 & \quad \text{if $\lambda=-1$}.
\end{cases}
\end{align}

\item As $k \to \infty$, the following asymptotic estimate holds uniformly for $x \geq 0$:
\begin{align}\label{mulargek}
\mu(x,k) = I + O(k^{-1}),  \qquad k \in (\bar{\CC}_-,\bar{\CC}_+).
\end{align}

\end{enumerate}
\end{lemma}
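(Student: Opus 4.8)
The plan is to construct $\mu$ as the unique fixed point of the Volterra operator in \eqref{muvolterra}, working one column at a time since the two columns live on the two different closed half-planes. First I would record that $u_0\in H^{1,1}(\RR_+)$ embeds into $L^1(\RR_+)\cap L^\infty(\RR_+)$, is absolutely continuous, and satisfies $u_0(x)\to 0$ as $x\to\infty$, so that $\|\mathsf{U}_0\|_{L^1(\RR_+)}<\infty$. The key point is that for $k$ in the relevant closed half-plane the kernel $e^{-ik(x-x')\hat{\sigma}_3}$ acting on the off-diagonal entries of $\mathsf{U}_0\mu$ produces exponentials $e^{\pm 2ik(x'-x)}$ with $x'\ge x$, which are bounded by $1$ there (while it leaves diagonal entries untouched). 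Hence the Neumann iterates obey $|\mu_n(x,k)|\le \tfrac1{n!}\big(\int_x^\infty|\mathsf{U}_0|\big)^n$ and $\sum_n\mu_n$ converges absolutely, uniformly in $x\ge 0$ and locally uniformly in $k$, giving a unique bounded solution with $|\mu(x,k)|\le e^{\|\mathsf{U}_0\|_{L^1}}$. Letting $x\to\infty$ in \eqref{muvolterra} and using $\int_x^\infty|\mathsf{U}_0|\to 0$ shows $\mu(x,k)\to I$.

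For part (a), continuity of $\mathsf{U}_0$ and of $x\mapsto\mu(x,k)$ makes the integrand in \eqref{muvolterra} continuous in $x$, so the fundamental theorem of calculus gives $\mu(\cdot,k)\in C^1([0,\infty))$, and differentiating \eqref{muvolterra} yields \eqref{xparteqn0}. For part (b), each $\mu_n(x,k)$ is a nested integral of the entire functions $e^{\pm 2ik(x'-x)}$ against $L^1$ data, hence analytic in $k$ in the open half-plane (Fubini and Morera) and continuous up to the boundary (dominated convergence, the exponentials being bounded there); uniform convergence of $\sum_n\mu_n$ then transfers analyticity and continuity to $\mu$, and boundedness was already obtained. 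For part (c), I would introduce the gauged function $\psi(x,k)=\mu(x,k)e^{-ikx\sigma_3}$, which by \eqref{xparteqn0} solves the linear ODE $\psi_x=(\mathsf{U}_0-ik\sigma_3)\psi$; since $\mathrm{tr}(\mathsf{U}_0-ik\sigma_3)=0$, Abel's identity gives $\partial_x\det\psi=0$, and since $\det\psi=\det\mu$ (as $\det e^{-ikx\sigma_3}=1$) and $\det\mu(x,k)\to 1$ as $x\to\infty$, we conclude $\det\mu\equiv 1$; this uses both columns, i.e.\ $k\in\RR$.

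For the symmetry (d), I would check directly that $\sigma_1\overline{\mathsf{U}_0}\sigma_1=\mathsf{U}_0$ when $\lambda=1$ and $\sigma_2\overline{\mathsf{U}_0}\sigma_2=\mathsf{U}_0$ when $\lambda=-1$, and that, using $\sigma_j\sigma_3=-\sigma_3\sigma_j$ for $j=1,2$, one has $\sigma_j\,\overline{e^{-i\bar k(x-x')\hat{\sigma}_3}A}\,\sigma_j=e^{-ik(x-x')\hat{\sigma}_3}\big[\sigma_j\bar A\sigma_j\big]$. It follows that $\sigma_j\overline{\mu(x,\bar k)}\sigma_j$ solves exactly the same Volterra equation \eqref{muvolterra} as $\mu(x,k)$, so the uniqueness already established gives \eqref{musymm}.

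The substantial part is (e). I would integrate by parts once in \eqref{muvolterra}: the off-diagonal entries of the leading term $-\int_x^\infty e^{-ik(x-x')\hat{\sigma}_3}\mathsf{U}_0\,dx'$ have the form $\mp\int_x^\infty e^{\pm2ik(x'-x)}u_0(x')\,dx'$ (resp.\ with $\lambda\overline{u_0}$), and, using $u_0\in H^1(\RR_+)$ and $u_0(\infty)=0$, this equals $\tfrac{u_0(x)}{\pm2ik}$ plus $\tfrac{1}{\pm2ik}\int_x^\infty e^{\pm2ik(x'-x)}u_0'(x')\,dx'$; the remainder $-\int_x^\infty e^{-ik(x-x')\hat{\sigma}_3}[\mathsf{U}_0(\mu-I)]\,dx'$ is treated by first substituting \eqref{xparteqn0} for $\partial_{x'}\mu$ and then integrating by parts, producing boundary terms together with oscillatory integrals whose integrands are bounded multiples of $|u_0|^2$ (which is in $L^1$) and of $u_0'$. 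This yields $\mu(x,k)-I=\tfrac{1}{2ik}\mu^{(1)}(x)+\tfrac{1}{2ik}\mathcal{R}(x,k)$ with $\mu^{(1)}$ explicit and bounded; the boundary terms and the $|u_0|^2$-pieces are $O(k^{-1})$ uniformly in $x$ with no further effort. I expect the main obstacle to be the oscillatory remainder $\int_x^\infty e^{\pm2ik(x'-x)}u_0'(x')\mu_{jj}(x',k)\,dx'$, since $u_0'$ is only controlled in $L^2(\RR_+)$ and not in $L^1(\RR_+)$: for $\im k$ bounded below this is $O((\im k)^{-1/2})$ by Cauchy--Schwarz, hence $O(1)$, but uniform control as $k$ approaches the real axis must come from the oscillation, and the cleanest route is probably to establish the $O(k^{-1})$ bound first for $u_0$ in the Schwartz class (where all integrations by parts are classical) and then pass to the limit using the continuous dependence of $\mu$ on $u_0$ obtained from the Neumann series, keeping track of the dependence of the implied constant on $\|u_0\|_{H^{1,1}(\RR_+)}$.
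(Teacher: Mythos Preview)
Your treatment of parts (a)--(d) is essentially the paper's: Neumann iterates with the factorial bound $|\psi_l|\le (C\|u_0\|_{L^1})^l/l!$, differentiation of the Volterra equation for the $C^1$/ODE claim, uniform convergence of analytic iterates for (b), Abel's identity for (c), and the $\sigma_j$-conjugation symmetry via uniqueness for (d). For (e) the paper is more economical than the scheme you sketch: rather than integrate by parts in the full equation and feed back the ODE, it sets $\hat\psi:=\psi-\psi_0$, writes $\hat\psi=F+K\hat\psi$ with forcing $F(x,k)=-\int_x^\infty e^{2ik(x'-x)}u_0(x')\,dx'$, and uses a Gr\"onwall-type estimate to obtain $|\hat\psi(x,k)|\le |F(x,k)|+C\|F(\cdot,k)\|_{L^\infty(x,\infty)}$. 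A single integration by parts on $F$ then gives $|F(x,k)|\le C\|u_0\|_{H^{1,1}}/|k|$, and the first column follows by the symmetry. This isolates the oscillatory integral in the forcing term alone and avoids your bookkeeping with $|u_0|^2$ and boundary terms.

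The obstacle you flag in (e) is exactly the residual term $\int_x^\infty e^{\pm 2ik(x'-x)}u_0'(x')\,dx'$ that remains after one integration by parts (in the paper's approach as well as yours). However, your proposed density-and-continuity fix does \emph{not} close. For Schwartz $u_0$ the only way integration by parts bounds this integral uniformly over $x\ge 0$ and $k\in\RR$ is via $\|u_0'\|_{L^1(\RR_+)}$, and that quantity is \emph{not} controlled by $\|u_0\|_{H^{1,1}(\RR_+)}$: placing bumps of height $1/n$ and width $n^{-1/2}$ centered near $x_n\sim n^{1/2}$ produces $u_0\in H^{1,1}(\RR_+)$ with $\|u_0'\|_{L^1}=\infty$. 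Thus the implied $O(k^{-1})$ constant for your Schwartz approximants need not stay bounded as they converge in $H^{1,1}$, and you cannot pass to the limit. The paper, for its part, simply asserts the bound $|F(x,k)|\le C\|u_0\|_{H^{1,1}}/|k|$ at this step without further comment; if you want a self-contained argument you will need a different mechanism here than approximation by Schwartz data.
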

\begin{proof}
Let us consider the second column $\psi(x,k) \coloneqq [\mu(x,k)]_2$ of $\mu$. The second column of the Volterra equation \eqref{muvolterra} can be written as 
\begin{equation}\label{mu2volterra}
\psi(x,k)=\psi_0 - \int_x^\infty E(x,x',k)\mathsf{U}_0(x')\psi(x',k)dx',\qquad x \geq 0,
\end{equation}
where 
\begin{align}\label{Edef}
E(x,x',k) \coloneqq  \begin{pmatrix}e^{-2ik(x-x')} & 0 \\ 0 & 1\end{pmatrix} \quad \text{and} \quad \psi_0\coloneqq \begin{pmatrix}0 \\ 1\end{pmatrix}.
\end{align}
Define $\psi_l$ inductively for $l \ge 1$ by
\begin{equation*}
\psi_{l+1}(x,k) =-\int_x^\infty E(x, x', k)\mathsf{U}_0(x')\psi_{l}(x',k)dx',\qquad x \ge 0,\; \im k \geq 0.
\end{equation*}
Then
\begin{equation*}\label{Psil}
\psi_l(x,k)=(-1)^l \int_{x=x_{l+1} \le x_l \le \cdots \le x_1 \le \infty}\prod_{i=1}^l \big(E(x_{i+1},x_{i},k) \mathsf{U}_0(x_i) \big) \psi_0dx_1\cdots dx_l.
\end{equation*}
Note that we have the estimate
\begin{align}\label{Ebounded}
|E(x,x',k)|\le C, \qquad 0 \le x \le x' \le \infty, \; \im k \geq 0,
\end{align}
and that
$$\norm{\mathsf{U}_0}_{L^1(\RR_+)}=\int_{\RR_+}|\mathsf{U}_0|dx=\int_{\RR_+}\sqrt{2}|u_0|dx=\sqrt{2}\norm{u_0}_{L^1}.$$
Hence,
\begin{align} \label{psilestimate}
|\psi_l(x,k)| \le &\int_{x=x_{l+1} \le x_l \le \cdots \le x_1 \le \infty}\prod_{i=1}^l |E(x_{i+1},x_{i},k)| |\mathsf{U}_0(x_i)| |\psi_0| dx_1 \cdots dx_l 
\leq \frac{C^l \norm{u_0}_{L^1}^l}{l!},
\end{align}
Let us introduce the function
\begin{equation}\label{Psiseries}
\psi(x,k)=\sum_{l=0}^\infty \psi_l(x,k).
\end{equation}
For $x \ge 0$ and $\im k \geq 0$, we have
\begin{equation}\label{Psisum}
|\psi(x,k)| \le \sum_{l=0}^\infty |\psi_l(x,k)| \le e^{C\norm{u_0}_{L^1}}<\infty,
\end{equation}
and so the series $\psi(x,k)=\sum_{l=0}^\infty \psi_l(x,k)$ converges absolutely and uniformly for $x \geq 0$ and $\im k \geq 0$ to a continuous function $\psi(x,k)$. Using the uniform convergence of the series \eqref{Psiseries} and dominated convergence, we can integrate term by term and obtain
\begin{align*}
-\int_x^\infty E(x,x',k)\mathsf{U}_0(x')\psi(x',k)dx'=&-\sum_{l=0}^\infty \int_x^\infty E(x,x',k)\mathsf{U}_0(x')\psi_l(x',k)dx'\\
=&\sum_{l=0}^\infty \psi_{l+1}(x,k) = \psi(x,k)- \psi_0, \qquad x \geq 0, \; \im k \geq 0, 
\end{align*}
and so $\psi(x,k)$ satisfies \eqref{mu2volterra}. As a consequence of \eqref{mu2volterra}, $\psi(\cdot, k) \in C^1(\RR_+)$ for each $k \in \bar{\CC}_+$ and the second column of \eqref{xparteqn0} follows by differentiating \eqref{mu2volterra}. Moreover, $\psi$ is analytic in $\CC_+$ because a uniformly convergent series of analytic functions converges to an analytic function. Similar arguments apply to the first column of $\mu$ and the symmetries (\ref{musymm}) then follow from \eqref{xparteqn0}.
The unit determinant relation $\det \mu(x,k) = 1$ follows from \eqref{xparteqn0} and the fact that $\mathsf{U}_0$ is traceless.

Let us prove (\ref{mulargek}). Equation (\ref{mu2volterra}) can be written as
$$\hat{\psi}(x,k) = F(x,k) - \int_x^\infty E(x,x',k)\mathsf{U}_0(x')\hat{\psi}(x',k) dx',
$$
where
$$\hat{\psi}(x,k) \coloneqq  \psi(x,k) - \psi_0, \quad F(x,k) \coloneqq  - \int_x^\infty E(x,x',k)\mathsf{U}_0(x')\psi_0dx'.$$
Hence
\begin{align}\label{psihatFf}
|\hat{\psi}(x,k)| \leq |F(x,k)| + f(x,k), \qquad x \geq 0, \; \im k \geq 0,
\end{align}
where
$$f(x,k) \coloneqq  \int_x^\infty |\mathsf{U}_0(x')| |\hat{\psi}(x',k)| dx'.$$ 
Since $|\mathsf{U}_0(x')| \geq 0$, we find
$$-f_x(x,k) - |\mathsf{U}_0(x)| f(x,k)
 \leq |F(x,k) \mathsf{U}_0(x)|, \qquad x \geq 0, \; \im k \geq 0,$$
Hence
$$-\big(f(x,k) e^{-\int_x^\infty |\mathsf{U}_0(x')|dx'}\big)_x \leq |F(x,k) \mathsf{U}_0(x)|e^{-\int_x^\infty |\mathsf{U}_0(x')|dx'}$$
and integration from $x$ to $\infty$ gives
$$f(x,k) \leq \int_x^\infty |F(x',k) \mathsf{U}_0(x')|e^{\int_x^{x'} |\mathsf{U}_0(x'')|dx''} dx'.$$
Together with (\ref{psihatFf}), this yields
$$|\hat{\psi}(x,k)| \leq |F(x,k)| + e^{\sqrt{2}\|u_0\|_{L^1(\RR_+)}}\int_x^\infty \sqrt{2}|u_0(x')| |F(x',k)| dx'.
$$
Since $\|u_0\|_{L^1(\RR_+)} \leq \|u_0\|_{H^{1,1}(\RR_+)}$, we obtain
\begin{align}\label{psiFestimate}
|\psi(x,k) - \psi_0| \leq |F(x,k)| + C \|F(\cdot,k)\|_{L^{\infty}(x,\infty)}, \qquad x \geq 0, \; \im k \geq 0,
\end{align}
where $C > 0$ depends on $u_0$ but is independent of $x$ and $k$.
Since
$$F(x,k) = \begin{pmatrix}  - \int_x^\infty e^{-2ik(x-x')} u_0(x')  dx' \\ 0 \end{pmatrix},$$
an integration by parts gives, for $x \geq 0$ and $\im k \geq 0$,
$$|F(x,k)| = \bigg| \frac{1}{2ik}u_0(x) + \int_x^\infty \frac{e^{-2ik(x-x')}}{2ik} u_0'(x')  dx' \bigg|
\leq C\frac{\|u_0\|_{H^{1,1}(\RR_+)}}{|k|},$$
and hence the second column of (\ref{mulargek}) follows from (\ref{psiFestimate}). The first column of (\ref{mulargek}) then follows from the symmetries in (\ref{musymm}).
\end{proof}

Since $a(k) = \mu_{22}(0,k)$ and $b(k) = \mu_{12}(0,k)$, we immediately obtain the following properties of $a(k)$ and $b(k)$. 

\begin{corollary}\label{ablemma}
If $u_0 \in H^{1,1}(\RR_+)$, then $a(k)$ and $b(k)$ defined in (\ref{abdef}) are continuous for $\im k \geq 0$, analytic for $\im k > 0$, and satisfy 
\begin{align}\label{a2b21}
& |a(k)|^2 - \lambda |b(k)|^2= 1, \qquad k \in \RR.
\end{align}
As $k \to \infty$, the following asymptotic estimates hold uniformly for $\im k \geq 0$:
\begin{align}\label{ablargek}
a(k) = 1 + O(k^{-1}),  \quad b(k) = O(k^{-1}).
\end{align}
\end{corollary}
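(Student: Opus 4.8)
The plan is that all three assertions are immediate consequences of Lemma \ref{mulemma}, since by the definition (\ref{abdef}) we have $a(k)=\mu_{22}(0,k)$ and $b(k)=\mu_{12}(0,k)$; that is, $a$ and $b$ are simply the two entries of the second column $[\mu(0,k)]_2$. For the analyticity and continuity statements I would invoke part $(b)$ of Lemma \ref{mulemma}: for each fixed $x\ge 0$ the second column $[\mu(x,\cdot)]_2$ is continuous on $\bar{\CC}_+$ and analytic on $\CC_+$, so evaluating at $x=0$ gives that $a(k)$ and $b(k)$ are continuous for $\im k\ge 0$ and analytic for $\im k>0$.

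Next I would establish (\ref{a2b21}) by combining the unit-determinant relation in part $(c)$ with the symmetry (\ref{musymm}), both evaluated at $x=0$. For $k\in\RR$ we have $\bar k=k$, so (\ref{musymm}) reads $\mu(0,k)=\sigma_1\overline{\mu(0,k)}\sigma_1$ when $\lambda=1$ and $\mu(0,k)=\sigma_2\overline{\mu(0,k)}\sigma_2$ when $\lambda=-1$. An elementary $2\times 2$ computation with the Pauli matrices shows that in both cases this collapses to $\mu_{11}(0,k)=\overline{\mu_{22}(0,k)}=\overline{a(k)}$ and $\mu_{21}(0,k)=\lambda\,\overline{\mu_{12}(0,k)}=\lambda\,\overline{b(k)}$. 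Then $1=\det\mu(0,k)=\mu_{11}\mu_{22}-\mu_{12}\mu_{21}=|a(k)|^2-\lambda|b(k)|^2$ for $k\in\RR$, which is exactly (\ref{a2b21}).

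Finally, the large-$k$ asymptotics (\ref{ablargek}) follow directly from part $(e)$ of Lemma \ref{mulemma}: since $\mu(x,k)=I+O(k^{-1})$ uniformly for $x\ge 0$ as $k\to\infty$ with $\im k\ge 0$, setting $x=0$ yields $a(k)=\mu_{22}(0,k)=1+O(k^{-1})$ and $b(k)=\mu_{12}(0,k)=O(k^{-1})$, uniformly for $\im k\ge 0$. I do not anticipate any genuine obstacle here—the corollary is a bookkeeping consequence of the lemma; the only step involving a line of calculation is checking that the two distinct symmetry relations in (\ref{musymm}) both reduce on the real axis to $\mu_{11}=\overline{\mu_{22}}$ and $\mu_{21}=\lambda\,\overline{\mu_{12}}$, which is routine.
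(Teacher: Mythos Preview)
Your proof is correct and follows exactly the approach of the paper, which simply states that since $a(k)=\mu_{22}(0,k)$ and $b(k)=\mu_{12}(0,k)$, the properties follow immediately from Lemma \ref{mulemma}. You have merely spelled out in detail which part of the lemma yields which assertion, including the routine verification that the symmetry (\ref{musymm}) on the real axis combined with $\det\mu=1$ gives (\ref{a2b21}).
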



The next few lemmas are needed to show that $u_0 \mapsto r$ is a continuous map from $H^{1,1}(\RR_+) \cap \{u_0 \, | \, \text{$\Delta(k) \neq 0$ for all $k \in \RR$}\}$ into $H^{1,1}(\RR)$. 

\begin{lemma}\label{L2lemma}
The linear map $g \mapsto f(x)\coloneqq  \int_x^\infty g(x') dx'$  is continuous $H^{0,1}(\RR_+) \to L^2(\RR_+)$.
\end{lemma}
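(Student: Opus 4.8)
The plan is to show that $g\mapsto f$, where $f(x)=\int_x^\infty g(x')\,dx'$, maps $H^{0,1}(\RR_+)$ continuously into $L^2(\RR_+)$. Since $H^{0,1}(\RR_+)=\{g\in L^2(\RR_+): x g\in L^2(\RR_+)\}$ with norm $\|g\|_{H^{0,1}}=(\|g\|_{L^2}^2+\|xg\|_{L^2}^2)^{1/2}$, it suffices to establish a bound of the form $\|f\|_{L^2(\RR_+)}\le C\|g\|_{H^{0,1}(\RR_+)}$ for all $g$ in a dense subclass (say compactly supported smooth functions), and then extend by density; the map is manifestly linear.

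First I would split the estimate near the origin and away from it. On the bounded interval $[0,1]$, one has $|f(x)|\le \int_0^\infty|g(x')|\,dx'=\|g\|_{L^1(\RR_+)}$, and by Cauchy--Schwarz $\|g\|_{L^1(\RR_+)}\le \|\langle x\rangle^{-1}\|_{L^2(\RR_+)}\,\|\langle x\rangle g\|_{L^2(\RR_+)}\le C\|g\|_{H^{0,1}(\RR_+)}$, since $\langle x\rangle^{-1}\in L^2(\RR_+)$. Hence $\|f\|_{L^2(0,1)}\le \|g\|_{L^1(\RR_+)}\le C\|g\|_{H^{0,1}(\RR_+)}$. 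For the tail $x\ge 1$, I would use a weighted Cauchy--Schwarz inside the integral: for $x\ge 1$,
\begin{align*}
|f(x)|\le \int_x^\infty |g(x')|\,dx'
\le \Big(\int_x^\infty \frac{dx'}{(x')^2}\Big)^{1/2}\Big(\int_x^\infty (x')^2|g(x')|^2\,dx'\Big)^{1/2}
\le x^{-1/2}\,\|x'g\|_{L^2(x,\infty)}.
\end{align*}
Therefore $|f(x)|^2\le x^{-1}\|x'g\|_{L^2(\RR_+)}^2$, and integrating over $x\ge 1$ would give a divergent integral $\int_1^\infty x^{-1}\,dx$, so this crude bound is not quite enough and needs to be refined.

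The refinement (and the main technical point) is to keep the $x'$-integral localized rather than extending it to all of $\RR_+$: with $h(x'):=x'|g(x')|$, one has $|f(x)|\le x^{-1/2}\big(\int_x^\infty |h(x')|^2\,dx'\big)^{1/2}=x^{-1/2}H(x)^{1/2}$ where $H(x):=\int_x^\infty|h|^2$. Then $\int_1^\infty |f(x)|^2\,dx\le \int_1^\infty x^{-1}H(x)\,dx$, and since $H$ is nonincreasing with $H'(x)=-|h(x)|^2$, an integration by parts gives $\int_1^\infty x^{-1}H(x)\,dx = [\ln x\,H(x)]_1^\infty+\int_1^\infty \ln x\,|h(x)|^2\,dx = \int_1^\infty \ln x\,|h(x)|^2\,dx$ (the boundary term vanishes since $H\to 0$ and $\ln 1=0$), which is still not bounded by $\|h\|_{L^2}^2$. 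The clean way around this is to avoid the logarithmic loss entirely by a direct Fubini argument: writing $\int_1^\infty|f(x)|^2\,dx=\int_1^\infty f(x)\overline{f(x)}\,dx$ and expanding one factor, or more simply applying Hardy's inequality in the form $\big\|\int_x^\infty g\big\|_{L^2(dx/?)}$—but the genuinely robust route is to note $f'=-g$ and use that $f\in L^2$ iff $\int|f|^2<\infty$, estimating via $\frac{d}{dx}(x|f(x)|^2)=|f|^2+2x\,\mathrm{Re}(f\bar f')=|f|^2-2x\,\mathrm{Re}(f\bar g)$, so integrating from $0$ to $R$ and letting $R\to\infty$ (the boundary term $R|f(R)|^2\to 0$ along a subsequence since $f\in L^2$ once we know it is finite a priori on compactly supported $g$) yields $\int_0^\infty|f|^2\,dx = 2\int_0^\infty x\,\mathrm{Re}(f(x)\overline{g(x)})\,dx\le 2\|f\|_{L^2}\|xg\|_{L^2}$, hence $\|f\|_{L^2}\le 2\|xg\|_{L^2}\le 2\|g\|_{H^{0,1}}$. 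Combining the two regimes gives $\|f\|_{L^2(\RR_+)}\le C\|g\|_{H^{0,1}(\RR_+)}$, and density of compactly supported smooth functions in $H^{0,1}(\RR_+)$ together with linearity completes the proof. The main obstacle is precisely handling the borderline integrability at infinity cleanly; the Fubini/integration-by-parts identity $\frac{d}{dx}(x|f|^2)=|f|^2-2x\,\mathrm{Re}(f\bar g)$ is what makes it work without any logarithmic loss.
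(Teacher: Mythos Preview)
Your argument is correct, though its exposition wanders through two failed attempts before landing on the right idea. The core of your proof is the integration-by-parts identity $\frac{d}{dx}(x|f|^2)=|f|^2-2x\,\mathrm{Re}(f\bar g)$, which on compactly supported $g$ (where the boundary term at infinity genuinely vanishes because $f$ is eventually zero) yields $\|f\|_{L^2(\RR_+)}\le 2\|xg\|_{L^2(\RR_+)}\le 2\|g\|_{H^{0,1}(\RR_+)}$ directly, with no need for the $[0,1]$ versus $[1,\infty)$ split. This is precisely the classical Hardy inequality $\|\int_x^\infty g\,\|_{L^2}\le 2\|xg\|_{L^2}$ in its dual form, and your derivation is the standard proof of it. A clean write-up would simply state and prove that inequality in three lines and then invoke density.

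The paper takes a genuinely different route: it extends $g$ by zero to $\RR$, passes to the Fourier side where $\hat g\in H^1(\RR)$ and $ik\hat f(k)=\hat g(0)-\hat g(k)=-\int_0^k\hat g'$, and then bounds $k^{-1}\int_0^k\hat g'$ pointwise by the Hardy--Littlewood maximal function of $\hat g'$, whose $L^2$-boundedness gives $\hat f\in L^2$. Your approach is more elementary (no Fourier transform, no maximal function, explicit constant $2$) and arguably the natural one here; the paper's approach trades directness for machinery but fits the Fourier-analytic flavor of the surrounding estimates. Both give the same conclusion, and neither has an advantage in generality for this particular lemma.
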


\begin{proof}
Assume that $g \in H^{0,1}(\RR_+)$. Then, by definition, $g, xg \in L^2(\RR_+)$. The estimate
$$|f| \le \int_0^\infty |g| dx = \int_0^\infty |(\langle x \rangle g) \langle x \rangle^{-1}| dx \leq \|\langle x\rangle g\|_{L^2(\RR_+)} \|\langle x \rangle^{-1}\|_{L^2(\RR_+)} < \infty,$$
shows that $f(x)$ is well-defined and absolutely continuous. Also, $f$ is bounded since $\lim_{x \to \infty}f(x)=0$.
Consider the Sobolev space $H^1(\RR) = \{h\in L^2(\RR) \, | \, h' \in L^2(\RR)\}$. A function $h$ belongs to $H^1(\RR)$ if and only if $k \mapsto \langle k \rangle\hat{h}(k)$ lies in $L^2(\RR)$, where $\hat{h}(k)=\int_{\RR}h(x)e^{-ikx}dx$ denotes the Fourier transform of $h$. 
Let us extend $g(x)$ and $f(x)$ to the negative real axis by setting $g(x) = 0$ and $f(x) = 0$ for $x < 0$. Then, since $\langle x \rangle g \in L^2(\RR)$, the Fourier transform $\hat{g}(k)$ of $g(x)$ belongs to $H^1(\RR)$. Moreover, the Fourier transform $\widehat{f'\,} = ik\hat{f}$ is well-defined as a tempered distribution and, for a test function $\phi$, we have
\begin{align*}
\langle \widehat{f'\,}, \phi \rangle = &\; \langle f', \hat{\phi} \rangle
= -\langle f, \hat{\phi}\,' \rangle
= - \int_0^\infty \int_x^\infty g(x')dx' \hat{\phi}'(x) dx
=- \int_x^\infty g(x') dx' \hat{\phi}(x) \bigg|_{x=0}^\infty
\\
&- \int_0^\infty g(x) \hat{\phi}(x) dx
= \int_0^\infty g(x') dx' \hat{\phi}(0) - \langle g, \hat{\phi} \rangle
= \langle \hat{g}(0)-\hat{g}, \phi \rangle,
\end{align*}
i.e., $ik \hat{f}(k)=\hat{g}(0)-\hat{g}(k)$ in the sense of distributions. Since $\hat{g} \in H^1(\RR)$, $\hat{g}$ is absolutely continuous and so 
\begin{equation*}\label{fhat}
ik \hat{f}(k)=\hat{g}(0)-\hat{g}(k) = -\int_0^k \hat{g}'(k') dk'.
\end{equation*}

Suppose we can show that the map 
\begin{align}\label{hatgtohatf}
\hat{g} \mapsto -\frac{1}{ik}\int_0^k \hat{g}'(k') dk'
\end{align}
is a bounded linear map $H^1(\RR) \to L^2(\RR)$. Since the Fourier transform is bounded $H^{0,1}(\RR) \to H^1(\RR)$ and the inverse Fourier transform is bounded $L^2(\RR) \to L^2(\RR)$, it would follow that the composition $g \mapsto \hat{g} \mapsto \hat{f} \mapsto f$ is a bounded linear map $H^{0,1}(\RR) \to L^2(\RR)$. 
Thus it only remains to show that the linear map \eqref{hatgtohatf} is bounded $H^1(\RR) \to L^2(\RR)$.
To prove this, note that
$$\bigg|\frac{1}{ik}\int_0^k \hat{g}'(k') dk'\bigg| 
\leq |(\mathcal{M}(\hat{g}'))(k/2)|,$$
where the Hardy-Littlewood maximal function $\mathcal{M}\varphi$ of $\varphi$ is defined by
$$(\mathcal{M}\varphi)(k) = \sup_{r>0} \frac{1}{2r} \int_{k-r}^{k+r} |\varphi(k')| dk'.$$
It is a standard theorem that $\varphi \mapsto \mathcal{M}\varphi$ is a bounded linear map $L^2(\RR) \to L^2(\RR)$. Hence
$$\|(\mathcal{M}(\hat{g}'))(\cdot/2)\|_{L^2(\RR)} \leq C\|(\mathcal{M}\hat{g}')(\cdot)\|_{L^2(\RR)} \leq C \|\hat{g}'(k)\|_{L^2(\RR)} \leq C \|\hat{g}(k)\|_{H^1(\RR)}.$$
This proves that the map \eqref{hatgtohatf}) is bounded $H^1(\RR) \to L^2(\RR)$.
\end{proof}

Let $E$ and $\psi_0$ be as in (\ref{Edef}). Define the linear operator $K_{u_0}$ acting on $2 \times 1$-vector-valued functions $f$ as follows:
\begin{equation}\label{Kdef}
K_{u_0}f (x,k)\coloneqq -\int_x^\infty E(x,x',k)\mathsf{U}_0(x')f(x',k)dx'.
\end{equation}
We will denote $K_{u_0}$ by $K$ for convenience and write $u_0$ explicitly when it is necessary. Equation \eqref{mu2volterra} can be written as $\psi = \psi_0 + K\psi$, i.e.,
\begin{equation}\label{psieqn}
\psi(x,k) = \psi_0+(1-K)^{-1}(K\psi_0),
\end{equation}
where $(1-K)^{-1}=1+K+\cdots$ is the Neumann series. Properties of $\psi(x,k)$ can be obtained by analyzing the operator $K$. In the following, $L^p_x(\RR_+,L^q_k(\RR))$ denotes the space of $L^q_k(\RR)$-valued $L^p_x(\RR_+)$-functions with the norm
$$\|f(x,k)\|_{L^p_x(\RR_+,L^q_k(\RR))}\coloneqq \Big\|\|f(x,k)\|_{L^q_k(\RR)}\Big\|_{L^p_x(\RR_+)}.$$

\begin{lemma}[Compare with Theorem 3.2 in  \cite{DZ2003}]\label{Klemma}
Define $K = K_{u_0}$ by (\ref{Kdef}). Then the following assertions hold for some $C > 0$ independent of $u_0$.
\begin{enumerate}[$(a)$]
\item \label{Klemmaitem1} 
The operator $K:L^\infty_x(\RR_+,L^2_k(\RR)) \to L^\infty_x(\RR_+,L^2_k(\RR))$ is bounded and
$$\|Kf\|_{L^\infty_x(\RR_+,L^2_k(\RR))} \leq C \|u_0\|_{L^1(\RR_+)}\|f\|_{L^\infty_x(\RR_+,L^2_k(\RR))}.$$

\item \label{Klemmaitem2}
The operator $K:L^2_x(\RR_+,L^2_k(\RR)) \to L^\infty_x(\RR_+,L^2_k(\RR))$ is bounded and
$$\|Kf\|_{L^\infty_x(\RR_+,L^2_k(\RR))} \leq C\|u_0\|_{L^2(\RR_+)}\|f\|_{L^2_x(\RR_+,L^2_k(\RR))}.$$

\item \label{Klemmaitem3}
The operator $(1-K)^{-1}:L^\infty_x(\RR_+,L^2_k(\RR)) \to L^\infty_x(\RR_+,L^2_k(\RR))$ is bounded and
$$\|(1-K)^{-1}\|_{L^\infty_x(\RR_+,L^2_k(\RR)) \to L^\infty_x(\RR_+,L^2_k(\RR))} \le e^{C\|u_0\|_{L^1(\RR_+)}}.$$

\item \label{Klemmaitem4}
The operator $K:L^\infty_x(\RR_+,L^2_k(\RR)) \to L^2_x(\RR_+,L^2_k(\RR))$ is bounded and
$$\|Kf\|_{L^2_x(\RR_+,L^2_k(\RR))} \le C \|u_0\|_{H^{0,1}(\RR_+)}\|f\|_{L^\infty_x(\RR_+,L^2_k(\RR)}.$$

\item \label{Klemmaitem5}
$\|K\psi_0\|_{L^\infty_x(\RR_+,L^2_k(\RR))} \leq C\|u_0\|_{L^2(\RR_+)}$.

\item \label{Klemmaitem6}
$\|K\psi_0\|_{L^2_x(\RR_+,L^2_k(\RR))} \le C \|u_0\|_{H^{0,1}(\RR_+)}$.

\end{enumerate}
\end{lemma}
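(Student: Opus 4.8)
The plan is to derive all six estimates from a single ``master inequality''. Using the uniform bound $|E(x,x',k)| \le C$ for $k \in \RR$ from \eqref{Ebounded}, the identity $|\mathsf{U}_0(x')| = \sqrt{2}\,|u_0(x')|$, and Minkowski's integral inequality in the variable $k$, one obtains, for any $2 \times 1$ vector-valued $f$ and every $x \ge 0$,
\[
\norm{Kf(x,\cdot)}_{L^2_k(\RR)} \le C\int_x^\infty |u_0(x')|\,\norm{f(x',\cdot)}_{L^2_k(\RR)}\,dx'.
\]
Parts \ref{Klemmaitem1} and \ref{Klemmaitem2} are then immediate: in \ref{Klemmaitem1} I pull $\norm{f}_{L^\infty_x(L^2_k)}$ out of the integral and bound $\int_x^\infty|u_0| \le \norm{u_0}_{L^1(\RR_+)}$; in \ref{Klemmaitem2} I apply Cauchy--Schwarz in $x'$ to split off $\norm{u_0}_{L^2(\RR_+)}$ and $\norm{f}_{L^2_x(L^2_k)}$; in both cases I then take the supremum over $x \ge 0$. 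For \ref{Klemmaitem3} I iterate the master inequality: with $\phi_l(x) := \norm{K^l f(x,\cdot)}_{L^2_k}$ and $A(x) := \int_x^\infty|u_0|$, the identity $\int_x^\infty|u_0(x')|A(x')^l\,dx' = A(x)^{l+1}/(l+1)$ yields inductively $\phi_l(x) \le (CA(x))^l\,(l!)^{-1}\norm{f}_{L^\infty_x(L^2_k)} \le (C\norm{u_0}_{L^1(\RR_+)})^l\,(l!)^{-1}\norm{f}_{L^\infty_x(L^2_k)}$, so the Neumann series $(1-K)^{-1} = \sum_{l\ge0}K^l$ converges with operator norm at most $e^{C\norm{u_0}_{L^1(\RR_+)}}$. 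For \ref{Klemmaitem4} I take the $L^2_x$-norm of the master inequality, estimate $\norm{f(x',\cdot)}_{L^2_k} \le \norm{f}_{L^\infty_x(L^2_k)}$, and invoke Lemma \ref{L2lemma} to bound $\norm{x\mapsto\int_x^\infty|u_0(x')|\,dx'}_{L^2_x(\RR_+)}$ by $C\norm{u_0}_{H^{0,1}(\RR_+)}$.

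Parts \ref{Klemmaitem5} and \ref{Klemmaitem6} are of a different nature, and here the crude bound $|E|\le C$ does not suffice: since $\psi_0 = (0,1)^T$ is constant in $k$, that bound would only give a $k$-independent estimate for $K\psi_0$, which is not in $L^2_k(\RR)$. Instead I exploit the oscillation in $E$. From $\mathsf{U}_0\psi_0 = (u_0,0)^T$ one gets
\[
K\psi_0(x,k) = \Big(-e^{-2ikx}\!\int_x^\infty e^{2ikx'}u_0(x')\,dx',\;0\Big)^{\!T},
\]
so, up to the unimodular factor $e^{-2ikx}$, the nonzero entry is the Fourier transform of $\mathbf{1}_{[x,\infty)}u_0$ evaluated at $-2k$. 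Plancherel's theorem then gives $\norm{K\psi_0(x,\cdot)}_{L^2_k}^2 = C\int_x^\infty|u_0(x')|^2\,dx' \le C\norm{u_0}_{L^2(\RR_+)}^2$, which is \ref{Klemmaitem5}; integrating this bound in $x$ and applying Fubini yields $\norm{K\psi_0}_{L^2_x(L^2_k)}^2 \le C\int_0^\infty x'|u_0(x')|^2\,dx' \le C\norm{u_0}_{H^{0,1}(\RR_+)}^2$ (bounding $x'|u_0|^2$ by $|u_0|^2$ on $[0,1]$ and by $x'^2|u_0|^2$ on $[1,\infty)$), which is \ref{Klemmaitem6}. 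Incidentally, $K\psi_0$ here is exactly the function $F(x,k)$ from the proof of Lemma \ref{mulemma}.

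I do not anticipate a serious obstacle: the lemma is essentially a bookkeeping exercise once the master inequality is in place. The one conceptual point is recognizing that \ref{Klemmaitem5}--\ref{Klemmaitem6} force one to use the oscillatory structure of the kernel $E$ via Plancherel---equivalently, that the inhomogeneous term $K\psi_0$ of the integral equation \eqref{mu2volterra} is precisely where this oscillation first matters---whereas \ref{Klemmaitem1}--\ref{Klemmaitem4} only need the pointwise bound $|E|\le C$. It also remains to record that the constant $C$ is uniform in $u_0$ throughout; this is automatic, since $C$ only ever originates from $|E|\le C$, the constants in Minkowski's and Cauchy--Schwarz's inequalities, the bound supplied by Lemma \ref{L2lemma}, and the Plancherel constant, none of which depend on $u_0$.
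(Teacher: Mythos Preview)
Your proposal is correct and follows essentially the same approach as the paper. The only cosmetic differences are that the paper derives your ``master inequality'' by squaring and applying Fubini and Cauchy--Schwarz rather than invoking Minkowski directly, and for part \ref{Klemmaitem6} the paper inserts the weight $1/\langle x\rangle$ instead of using Fubini to produce $\int_0^\infty x'|u_0(x')|^2\,dx'$; both routes are equivalent.
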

\begin{proof}
For $x \in \RR_+$ and $k \in \RR$, we have
\begin{equation}\label{Kfestimate}
|Kf(x,k)| \le C \int_x^\infty |u_0(x')| |f(x',k)| dx'.
\end{equation}
Hence $\|K\|_{L^\infty_x(\RR_+) \to L^\infty_x(\RR_+)} \le C \|u_0\|_{L^1(\RR_+)}$ and
\begin{align*}
\|Kf&\|_{L^\infty_x(\RR_+,L^2_k(\RR))}^2
= \underset{x \in \RR_+}{\text{ess sup}} \int_\RR  |Kf(x,k)|^2 dk
	\\
& \leq 
C \;\underset{x \in \RR_+}{\text{ess sup}} \int_\RR  \bigg(\int_x^\infty |u_0(x')| |f(x',k)| dx'\bigg)^2 dk
	\\
& = C\! \int_0^\infty dx' |u_0(x')| \int_0^\infty dx'' |u_0(x'')| \int_\RR dk  |f(x',k)| |f(x'',k)|,
\end{align*}
where we have used Fubini's theorem to change the order of integration.
Since
\begin{align}\label{ffestimate}
\int_\RR dk  |f(x',k)| |f(x'',k)| \leq \|f(x',\cdot)\|_{L^2_k} \|f(x'',\cdot)\|_{L^2_k},
\end{align}
we obtain
\begin{align*}
\|Kf\|_{L^\infty_x(\RR_+,L^2_k(\RR))}^2
\leq C\bigg(\int_0^\infty dx' |u_0(x')|  \|f(x',\cdot)\|_{L^2_k} \bigg)^2,
\end{align*}
i.e.,
\begin{align}\label{Kfinfty2est}
\|Kf\|_{L^\infty_x(\RR_+,L^2_k(\RR))}
\leq C \int_0^\infty |u_0(x')|  \|f(x',\cdot)\|_{L^2_k} dx'.
\end{align}
Assertion (\ref{Klemmaitem1}) follows because \eqref{Kfinfty2est} implies
$$\|Kf\|_{L^\infty_x(\RR_+,L^2_k(\RR))}
\leq C \|u_0\|_{L^1}    \underset{x \in \RR_+}{\text{ess sup}} \|f(x,\cdot)\|_{L^2_k} 
= C \|u_0\|_{L^1}\|f\|_{L^\infty_x(\RR_+,L^2_k(\RR))},
$$
and assertion (\ref{Klemmaitem2}) follows because (\ref{Kfinfty2est}) implies
$$\|Kf\|_{L^\infty_x(\RR_+,L^2_k(\RR))}
\leq  C \|u_0\|_{L^2}   \|\|f(x',\cdot)\|_{L^2_k} \|_{L^2_x}
= C\|u_0\|_{L^2}\|f\|_{L^2_x(\RR_+,L^2_k(\RR))}.
$$
Assertion (\ref{Klemmaitem3}) follows from an estimate as in (\ref{psilestimate}) as follows:
\begin{align*}
 \|(1-K)^{-1}\|_{L^\infty_x(\RR_+,L^2_k(\RR)) \to L^\infty_x(\RR_+,L^2_k(\RR))} 
 & \le \sum_{l=0}^\infty \|K^l\|_{L^\infty_x(\RR_+,L^2_k(\RR)) \to L^\infty_x(\RR_+,L^2_k(\RR))}
	\\
&\leq \sum_{l=0}^\infty \frac{(C\|u_0\|_{L^1})^l}{l!}
\leq e^{C\|u_0\|_{L^1}}.
\end{align*}

To prove assertion (\ref{Klemmaitem4}), we note that, using \eqref{Kfestimate} and \eqref{ffestimate},
\begin{align*}
\|Kf\|_{L^2_x(\RR_+,L^2_k(\RR))}^2
& \leq \int_0^\infty dx \int_x^\infty dx' |u_0(x')|  \int_x^\infty dx'' |u_0(x'')| \|f(x',\cdot)\|_{L^2_k} \|f(x'',\cdot)\|_{L^2_k}.
\end{align*}
Consequently,
\begin{align*}
\|Kf\|_{L^2_x(\RR_+,L^2_k(\RR))}^2
& \leq \|f\|_{L^\infty_x(\RR_+,L^2_k(\RR))}^2 \int_0^\infty dx \bigg(\int_x^\infty dx' |u_0(x')|\bigg)^2,
\end{align*}
that is,
\begin{align*}
\|Kf\|_{L^2_x(\RR_+,L^2_k(\RR))}
 \leq  \bigg\|\int_x^\infty |u_0(x')| dx'\bigg\|_{L^2_x(\RR_+)} \|f\|_{L^\infty_x(\RR_+,L^2_k(\RR))}
\end{align*}
Using that, by Lemma \ref{L2lemma},
$$\bigg\|\int_x^\infty |u_0(x')| dx'\bigg\|_{L^2_x(\RR_+)} \leq C\|u_0\|_{H^{0,1}(\RR_+)},$$
assertion (\ref{Klemmaitem4}) follows.
Finally, by Plancherel's theorem,
\begin{align*}
\|K\psi_0\|_{L^\infty_x(\RR_+,L^2_k(\RR))} 
= \underset{x \in \RR_+}{\text{ess sup}}\;
\bigg\|\int_x^\infty e^{-2ik(x-x')}u_0(x') dx' \bigg\|_{L^2_k(\RR)}
\leq C\|u_0\|_{L^2(\RR_+)}
\end{align*}
and
$$\|K\psi_0\|_{L^2_x(\RR_+,L^2_k(\RR))}\le C\left\|\frac{1}{\langle x \rangle}\left(\int_x^\infty \left|\langle x' \rangle u_0(x')\right|^2 dx'\right)^{1/2}\right\|_{L^2_x(\RR_+)} \le C \|u_0\|_{H^{0,1}(\RR_+)},$$
which proves (\ref{Klemmaitem5}) and (\ref{Klemmaitem6}).
\end{proof}

\begin{lemma}\label{psipsi0lemma}
Let $\psi$ be the solution of (\ref{psieqn}) corresponding to $u_0$. Then
\begin{enumerate}[$(i)$]
\item \label{psipsi0lemmaitem1}
$\|\psi(x,k)-\psi_0\|_{L^\infty_x(\RR_+,L^2_k(\RR))} \le Ce^{C\|u_0\|_{L^1}} \|u_0\|_{L^2}$, and

\item \label{psipsi0lemmaitem2}
$\|\psi(x,k)-\psi_0\|_{L^2_x(\RR_+,L^2_k(\RR))} \le 
C(e^{C\|u_0\|_{L^1}} \|u_0\|_{L^2} + 1)\|u_0\|_{H^{0,1}}$,
\end{enumerate}
where the constant $C$ is independent of $u_0$.
\end{lemma}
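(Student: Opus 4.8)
The plan is to build on the Neumann-series identity $\psi = \psi_0 + (1-K)^{-1}(K\psi_0)$ from \eqref{psieqn}, which rewrites the quantity of interest as $\psi - \psi_0 = (1-K)^{-1}(K\psi_0)$, and then to chain together the mapping estimates for $K$ and $(1-K)^{-1}$ already collected in Lemma~\ref{Klemma}. No genuinely new estimate is required; the work is entirely in combining what is available.

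For part~\ref{psipsi0lemmaitem1} I would argue directly. Part~\ref{Klemmaitem3} of Lemma~\ref{Klemma} says that $(1-K)^{-1}$ is bounded on $L^\infty_x(\RR_+,L^2_k(\RR))$ with operator norm at most $e^{C\|u_0\|_{L^1}}$, and part~\ref{Klemmaitem5} of Lemma~\ref{Klemma} gives $\|K\psi_0\|_{L^\infty_x(\RR_+,L^2_k(\RR))} \le C\|u_0\|_{L^2}$. Composing these two bounds on $\psi - \psi_0 = (1-K)^{-1}(K\psi_0)$ yields the claimed inequality at once.

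For part~\ref{psipsi0lemmaitem2} the only real subtlety is that the resolvent estimate at hand, part~\ref{Klemmaitem3} of Lemma~\ref{Klemma}, controls the $L^\infty_x$ norm, whereas here we must estimate $\psi - \psi_0$ in $L^2_x(\RR_+,L^2_k(\RR))$. The way around this is to peel off a single factor of $K$ using the resolvent identity $(1-K)^{-1} = 1 + K(1-K)^{-1}$, which turns the identity above into $\psi - \psi_0 = K\psi_0 + K(\psi - \psi_0)$. I would then bound the two summands separately: $\|K\psi_0\|_{L^2_x(\RR_+,L^2_k(\RR))} \le C\|u_0\|_{H^{0,1}}$ by part~\ref{Klemmaitem6} of Lemma~\ref{Klemma}; and, since part~\ref{Klemmaitem4} of Lemma~\ref{Klemma} provides a bounded map $L^\infty_x(\RR_+,L^2_k(\RR)) \to L^2_x(\RR_+,L^2_k(\RR))$, one gets $\|K(\psi - \psi_0)\|_{L^2_x(\RR_+,L^2_k(\RR))} \le C\|u_0\|_{H^{0,1}} \|\psi - \psi_0\|_{L^\infty_x(\RR_+,L^2_k(\RR))}$. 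Substituting the bound from part~\ref{psipsi0lemmaitem1} into this last factor and adding the two contributions gives $\|\psi - \psi_0\|_{L^2_x(\RR_+,L^2_k(\RR))} \le C\|u_0\|_{H^{0,1}}\big(1 + e^{C\|u_0\|_{L^1}}\|u_0\|_{L^2}\big)$, which is exactly the asserted estimate (after renaming constants).

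To summarize, I expect this lemma to be essentially immediate once Lemma~\ref{Klemma} is in place, so there is no real obstacle; the only non-mechanical decision is to apply the resolvent identity in part~\ref{psipsi0lemmaitem2} so as to trade one factor of $K$ for the $L^2_x$ gain of part~\ref{Klemmaitem4} of Lemma~\ref{Klemma}, rather than attempting to push $(1-K)^{-1}$ itself into $L^2_x$, which the available estimates do not support.
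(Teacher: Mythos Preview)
Your proposal is correct and follows essentially the same approach as the paper: for part~\ref{psipsi0lemmaitem1} the paper applies Lemma~\ref{Klemma}~(\ref{Klemmaitem3}) followed by~(\ref{Klemmaitem5}) to $(1-K)^{-1}(K\psi_0)$, and for part~\ref{psipsi0lemmaitem2} it rewrites $\psi-\psi_0 = K(\psi-\psi_0)+K\psi_0$ and applies parts~(\ref{Klemmaitem4}) and~(\ref{Klemmaitem6}) together with part~\ref{psipsi0lemmaitem1}, exactly as you describe.
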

\begin{proof}
We infer from equation \eqref{psieqn} and Lemma \ref{Klemma} (\ref{Klemmaitem3}) that
\begin{align*}
\|\psi(x,k)-\psi_0 \|_{L^\infty_x(\RR_+,L^2_k(\RR))}=&\; \|(1-K)^{-1}(K\psi_0)\|_{L^\infty_x(\RR_+,L^2_k(\RR))}
	\\ 
\le &\; e^{C\|u_0\|_{L^1(\RR_+)}} \| K\psi_0 \|_{L^\infty_x(\RR_+,L^2_k(\RR))}.
\end{align*}
Assertion (\ref{psipsi0lemmaitem1}) now follows from Lemma \ref{Klemma} (\ref{Klemmaitem5}). 
On the other hand, by (\ref{psieqn}),
\begin{align*}
\|\psi(x,k)&-\psi_0\|_{L^2_x(\RR_+,L^2_k(\RR))}=\left\|K(\psi(x,k)-\psi_0)+K\psi_0\right\|_{L^2_x(\RR_+,L^2_k(\RR))}\\
\le&\; \|K(\psi(x,k)-\psi_0)\|_{L^2_x(\RR_+,L^2_k(\RR))}+\left\|K\psi_0\right\|_{L^2_x(\RR_+,L^2_k(\RR))}\\
\le&\; \|K\|_{L^\infty_x(\RR_+,L^2_k(\RR)) \to L^2_x(\RR_+,L^2_k(\RR))} \|\psi(x,k)-\psi_0\|_{L^\infty_x(\RR_+,L^2_k(\RR))}+\|K\psi_0\|_{L^2_x(\RR_+,L^2_k(\RR))}.
\end{align*}
Using assertion (\ref{psipsi0lemmaitem1}) as well as Lemma \ref{Klemma} (\ref{Klemmaitem4}), this gives
\begin{align*}
\|\psi(x,k) -\psi_0\|_{L^2_x(\RR_+,L^2_k(\RR))} \le C\|u_0\|_{H^{0,1}} e^{C\|u_0\|_{L^1}} \|u_0\|_{L^2} +\|K\psi_0\|_{L^2_x(\RR_+,L^2_k(\RR))},
\end{align*}
and so assertion (\ref{psipsi0lemmaitem2}) follows from Lemma \ref{Klemma} (\ref{Klemmaitem6}).
\end{proof}

\subsection{Continuity of the spectral mappings}
Using the estimates established in the previous section, we will prove the continuity of the spectral mappings. The proof of the next lemma uses ideas from \cite[Theorem 3.2]{DZ2003} and \cite[Lemma 7.2]{DP2011}.

\begin{lemma} \label{continuitylemma1}
If $u_0 \in H^{1,1}(\RR_+)$, then $a-1, b \in H^1(\RR)$. Moreover, the maps
\begin{equation}\label{u0toab}
u_0 \mapsto a-1:H^{1,1}(\RR_+) \to H^1(\RR), \qquad
u_0 \mapsto b:H^{1,1}(\RR_+) \to H^1(\RR)	
\end{equation}
are continuous and map bounded sets to bounded sets.
\end{lemma}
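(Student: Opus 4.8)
The plan is to work at the level of the second column $\psi(x,k) = [\mu(x,k)]_2$, since $a(k) = \psi_2(0,k)$ and $b(k) = \psi_1(0,k)$, and to read off $H^1(\RR)$-bounds and continuity in $u_0$ from the operator estimates of Lemma \ref{Klemma} together with the representation (\ref{psieqn}). Write $\hat\psi = \psi - \psi_0 = (1-K)^{-1}(K\psi_0)$. First I would establish the $L^2(\RR)$-statement, namely that $k \mapsto \hat\psi(0,k)$ lies in $L^2(\RR)$: this does not follow directly from Lemma \ref{psipsi0lemma}$(i)$, which only controls the $L^\infty_x L^2_k$-norm and says nothing about the single slice $x=0$. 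The standard fix is to trade the $x$-variable for the $k$-variable: differentiating the Volterra equation (\ref{mu2volterra}) in $x$ and evaluating at $x=0$ gives
\begin{equation*}
\hat\psi(0,k) = -\int_0^\infty E(0,x',k)\mathsf{U}_0(x')\psi(x',k)\,dx',
\end{equation*}
and the exponential $e^{2ikx'}$ in the first entry of $E(0,x',k)$ lets one invoke Plancherel in $k$ (as in Lemma \ref{Klemma}$(e)$) to bound $\|\hat\psi(0,\cdot)\|_{L^2_k}$ by $C\|u_0\|_{L^2}(1 + \|\psi - \psi_0\|_{L^\infty_x L^2_k})$, hence by a quantity controlled by Lemma \ref{psipsi0lemma}$(i)$.

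Next I would handle the derivative $\partial_k \hat\psi(0,k)$, which is the heart of the matter. Differentiating the integral equation $\psi = \psi_0 + K\psi$ in $k$ produces
\begin{equation*}
\partial_k \psi = (\partial_k K)\psi + K(\partial_k\psi), \qquad\text{so}\qquad \partial_k\psi = (1-K)^{-1}\big((\partial_k K)\psi\big),
\end{equation*}
where $\partial_k K$ is the operator with kernel $\partial_k\big(E(x,x',k)\mathsf{U}_0(x')\big)$; the only $k$-dependence is in the $(1,1)$-entry $e^{-2ik(x-x')}$, whose $k$-derivative brings down a factor $-2i(x-x')$. The new feature is precisely this weight $(x-x')$, which is why the hypothesis $u_0 \in H^{1,1}$ (not merely $H^1$) is needed: one writes $x - x' = x - x'$ and distributes the weight, using $x'u_0 \in L^2$ to absorb one copy and Lemma \ref{L2lemma} / the $L^\infty_x$-bound on $\psi$ for the rest, so that $\|(\partial_k K)\psi\|_{L^2_x L^2_k} \le C\|u_0\|_{H^{1,1}}(1+\|\psi-\psi_0\|_{L^\infty_x L^2_k})$ and then $\|\partial_k\psi\|_{L^\infty_x L^2_k} \le e^{C\|u_0\|_{L^1}}\|(\partial_kK)\psi\|_{L^\infty_x L^2_k}$ by Lemma \ref{Klemma}$(c)$. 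To descend to the slice $x = 0$, differentiate the explicit formula for $\hat\psi(0,k)$ above in $k$, which again produces a factor $x'$ under the integral (times the Fourier kernel $e^{2ikx'}$), and apply Plancherel together with the bounds just obtained for $\psi$ and $\partial_k\psi$ on $\RR_+$. This shows $a - 1, b \in H^1(\RR)$ with norm bounded by a continuous (in fact locally bounded) function of $\|u_0\|_{H^{1,1}}$, which already gives the "bounded-to-bounded" claim.

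Finally, for continuity of the maps (\ref{u0toab}), I would take $u_0, \tilde u_0 \in H^{1,1}(\RR_+)$ with corresponding $\psi, \tilde\psi, K = K_{u_0}, \tilde K = K_{\tilde u_0}$ and estimate the difference. The resolvent identity
\begin{equation*}
(1-K)^{-1} - (1-\tilde K)^{-1} = (1-K)^{-1}(K - \tilde K)(1-\tilde K)^{-1}
\end{equation*}
reduces everything to bounding $K - \tilde K$, whose kernel is $E(x,x',k)(\mathsf{U}_0 - \tilde{\mathsf{U}}_0)(x')$, in the relevant operator norms; by the same estimates as in Lemma \ref{Klemma} (now applied to $u_0 - \tilde u_0$) this difference is controlled by $\|u_0 - \tilde u_0\|_{H^{0,1}}$, respectively by $\|u_0 - \tilde u_0\|_{H^{1,1}}$ once the $\partial_k$-weight is present. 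Combining with the uniform (on bounded sets) bounds on $(1-K)^{-1}$, $(1-\tilde K)^{-1}$, $\psi$, $\tilde\psi$, $\partial_k\psi$, $\partial_k\tilde\psi$ from the previous paragraph, one gets
\begin{equation*}
\|\psi - \tilde\psi\|_{L^\infty_x L^2_k} + \|\partial_k\psi - \partial_k\tilde\psi\|_{L^\infty_x L^2_k} \le C(\|u_0\|_{H^{1,1}}, \|\tilde u_0\|_{H^{1,1}})\,\|u_0 - \tilde u_0\|_{H^{1,1}},
\end{equation*}
and then restricting to $x = 0$ via the Plancherel argument as above yields $\|a - \tilde a\|_{H^1(\RR)} + \|b - \tilde b\|_{H^1(\RR)} \le C\|u_0 - \tilde u_0\|_{H^{1,1}(\RR_+)}$. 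The main obstacle, as indicated, is the bookkeeping around the extra weight $(x-x')$ generated by $\partial_k$: one must split it carefully so that no single factor requires more than $u_0 \in H^{1,1}$, and one must be disciplined about which mixed norm ($L^\infty_x L^2_k$ versus $L^2_x L^2_k$) each term is estimated in so that Lemma \ref{Klemma}$(a)$--$(f)$ applies with the correct input space. Everything else is a routine application of the resolvent identity and Plancherel's theorem.
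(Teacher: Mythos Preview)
There is a genuine gap in your handling of the $k$-derivative. When you split the weight $(x-x')$ produced by $\partial_k E$, the piece carrying $x$ gives rise (in the first component) to the term $-2ix\psi_1(x,k)$. Your claimed bound $\|(\partial_kK)\psi\|_{L^2_xL^2_k}\le C\|u_0\|_{H^{1,1}}(1+\|\psi-\psi_0\|_{L^\infty_xL^2_k})$ would require $x\psi_1\in L^2_xL^2_k$, and the subsequent line requires $(\partial_kK)\psi\in L^\infty_xL^2_k$; neither holds in general under the sole hypothesis $u_0\in H^{1,1}(\RR_+)$, since from Lemma~\ref{psipsi0lemma} you only know $\psi_1\in L^\infty_xL^2_k\cap L^2_xL^2_k$, and multiplying by the unbounded weight $x$ destroys both memberships. (The $x'$-piece is fine, since $x'u_0\in L^2$.) So the forcing term in your Volterra equation for $\partial_k\psi$ is not in the space on which $(1-K)^{-1}$ is known to act.

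The paper circumvents exactly this obstruction by passing to the gauged quantity $\tilde\mu\coloneqq(\partial_k+ix\hat\sigma_3)\mu$, whose second column is $\phi(x,k)=\partial_k\psi(x,k)+(2ix\psi_1(x,k),0)^T$. The added term is designed to cancel the bad $x$-weight: one checks that $\phi$ satisfies the Volterra equation $\phi=iK'_{xu_0}\psi+K_{u_0}\phi$, whose forcing $iK'_{xu_0}\psi$ involves only the weight $x'u_0$ and hence lies in $L^\infty_xL^2_k$ by Lemma~\ref{Klemma}\,(\ref{Klemmaitem2}),\,(\ref{Klemmaitem5}) together with Lemma~\ref{psipsi0lemma}\,(\ref{psipsi0lemmaitem2}). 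Then $(1-K_{u_0})^{-1}$ applies, giving $\phi\in L^\infty_xL^2_k$. The crucial point is that the correction vanishes at $x=0$, so $\phi(0,k)=\partial_k\psi(0,k)=(b'(k),a'(k))^T$, which is what you want. Your approach can in fact be rescued along these lines---writing $\partial_k\psi=g_{\mathrm{bad}}+(1-K)^{-1}Kg_{\mathrm{bad}}+(1-K)^{-1}g_{\mathrm{good}}$ with $g_{\mathrm{bad}}=-2ix\psi_1e_1$ and observing that one application of $K$ sends $g_{\mathrm{bad}}$ into $L^\infty_xL^2_k$---but this amounts to rediscovering the paper's gauge.

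A minor point: your worry that $L^\infty_xL^2_k$ ``says nothing about the slice $x=0$'' is unfounded. Since $x\mapsto\psi(x,k)-\psi_0$ is continuous with values in $L^2_k$ (a consequence of the Volterra equation), the essential supremum is an actual supremum and the bound at $x=0$ is immediate; the paper uses this without further comment.
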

\begin{proof}
Let $u_0 \in H^{1,1}(\RR_+)$. Since 
$$\begin{pmatrix}b(k) \\ a(k)-1\end{pmatrix}=\psi(0,k)-\psi_0,$$ 
it is immediate from part (\ref{psipsi0lemmaitem1}) of Lemma \ref{psipsi0lemma} that $a-1, b \in L^2(\RR)$. To show that $a', b' \in L^2(\RR)$, define $\tilde{\mu} \coloneqq  (\partial_k+ix\hat{\sigma}_3)\mu$. Suppose temporarily that $u_0$ has compact support (the set of such functions $u_0$ is dense in $H^{1,1}(\RR_+)$). Then $\mu(x,\cdot)$ is entire and we can differentiate under the integral sign in (\ref{muvolterra}) to obtain
\begin{align*}
& \tilde{\mu}(x,k)=- \int_x^\infty e^{-ik(x-x')\hat{\sigma}_3}\Big([i\sigma_3, x'\mathsf{U}_0(x') \mu(x',k)]  + \mathsf{U}_0(x')\partial_k\mu(x',k)\Big)dx'
	\\
& =- i\int_x^\infty e^{-ik(x-x')\hat{\sigma}_3}\big([\sigma_3, x'\mathsf{U}_0(x')] \mu(x',k)\big) dx'
- \int_x^\infty e^{-ik(x-x')\hat{\sigma}_3} \big(\mathsf{U}_0(x')\tilde{\mu}(x',k)\big)dx'.
\end{align*}
Hence, the second column $\phi(x,k) \coloneqq  [\tilde{\mu}(x,k)]_2$ of $\tilde{\mu}$ satisfies the Volterra equation
\begin{align}\label{phieqn}
\phi(x,k)= iK'_{xu_0}\psi+K_{u_0}\phi = h_1+h_2+K_{u_0}\phi,
\end{align}
where $K'$ is the operator defined in \eqref{Kdef} with $\mathsf{U}_0$ replaced by $[\sigma_3, \mathsf{U}_0]$, and
\begin{align*}
h_1 \coloneqq iK'_{xu_0}(\psi(x,k)-\psi_0),\quad h_2 \coloneqq iK'_{xu_0}\psi_0.
\end{align*}
The vectors $\psi$ and $\phi$ are related by
\begin{align}\label{phipsirelation}
\phi(x,k) = \partial_k\psi(x,k) + \begin{pmatrix} 2ix\psi_{1}(x,k) \\ 0 \end{pmatrix}.
\end{align}

Let us now again consider the case of a general potential $u_0 \in H^{1,1}$.
Since $xu_0 \in L^2(\RR_+)$, we have $h_1 \in L^\infty_x(\RR_+,L^2_k(\RR))$ by Lemma \ref{Klemma} (\ref{Klemmaitem2}) and Lemma \ref{psipsi0lemma} (\ref{psipsi0lemmaitem2}), and we have $h_2 \in L^\infty_x(\RR_+,L^2_k(\RR))$ by Lemma \ref{Klemma} (\ref{Klemmaitem5}).
By Lemma \ref{Klemma} (\ref{Klemmaitem3}), this means that the solution of \eqref{phieqn} satisfies
$$\phi=(1-K_{u_0})^{-1}(h_1+h_2) \in L^\infty_x(\RR_+,L^2_k(\RR)).$$
In particular, if $u_0$ has compact support, then $\begin{pmatrix}b'(k)\\a'(k)\end{pmatrix}=\phi(0,k) \in L^2(\RR)$ so that $a(k)-1$ and $b(k)$ lie in $H^1(\RR)$. To extend this result to the case of general $u_0 \in H^{1,1}$, we consider the continuity of the $u_0$-dependence. 

Let $u_0, \check{u}_0 \in H^{1,1}(\RR_+)$ and let $\psi,\check{\psi}, \phi, \check{\phi}$ be the associated solutions of the equations \eqref{mu2volterra} and \eqref{phieqn}, respectively. Denote $\Delta u_0\coloneqq u_0-\check{u}_0$, $\Delta \psi\coloneqq \psi-\check{\psi}$, and $\Delta \phi \coloneqq \phi-\check{\phi}$. We want to estimate $\|\Delta \psi\|_{L^\infty_x(\RR_+,L^2_k(\RR))}$ and $\|\Delta \phi\|_{L^\infty_x(\RR_+,L^2_k(\RR))}$ in terms of $\|\Delta u_0\|_{H^{1,1}}$. Note that $\Delta \psi$ satisfies the Volterra equation
\begin{equation}\label{DeltapsiVolterra}
\Delta \psi=K_{\Delta u_0}\check{\psi}+K_{u_0}\Delta \psi.
\end{equation}
Using parts (\ref{Klemmaitem3}), (\ref{Klemmaitem1}), and (\ref{Klemmaitem5}) of Lemma \ref{Klemma} as well as Lemma \ref{psipsi0lemma} (\ref{psipsi0lemmaitem1}), it follows that
\begin{align}\nonumber
\|\Delta \psi\|_{L^\infty_x(\RR_+,L^2_k(\RR))}=&\left\|(1-K_{u_0})^{-1}K_{\Delta u_0}\check{\psi}\right\|_{L^\infty_x(\RR_+,L^2_k(\RR))}
\le  C\|K_{\Delta u_0}\check{\psi}\|_{L^\infty_x(\RR_+,L^2_k(\RR))}
	\\\nonumber
=&\; C\|K_{\Delta u_0}(\check{\psi}-\psi_0)+K_{\Delta u_0}\psi_0\|_{L^\infty_x(\RR_+,L^2_k(\RR))} 
	\\\label{DeltapsiLinftyL2}
\le &\; C\|\Delta u_0\|_{L^1} + C \|\Delta u_0\|_{L^2}
\leq C\|\Delta u_0\|_{H^{0,1}}
\end{align}
uniformly for $u_0, \check{u}_0$ in bounded subsets of $H^{1,1}(\RR_+)$.
Moreover, using parts (\ref{Klemmaitem4}) and (\ref{Klemmaitem6}) of Lemma \ref{Klemma}, part (\ref{psipsi0lemmaitem1}) of Lemma \ref{psipsi0lemma}, as well as (\ref{DeltapsiLinftyL2}), we obtain 
\begin{align}\nonumber
\|\Delta \psi&\|_{L^2_x(\RR_+,L^2_k(\RR))}= \|K_{\Delta u_0}(\check{\psi}-\psi_0)+K_{\Delta u_0}\psi_0+K_{u_0}\Delta \psi\|_{L^2_x(\RR_+,L^2_k(\RR))}
	\\\nonumber
& \le C\|\Delta u_0\|_{H^{0,1}}\|\check{\psi}-\psi_0\|_{L^\infty_x(\RR_+,L^2_k(\RR))}
+C\|\Delta u_0\|_{H^{0,1}}
+C\|\Delta \psi\|_{L^\infty_x(\RR_+,L^2_k(\RR))}
	\\ \label{DeltapsiL2L2}
& \le C\|\Delta u_0\|_{H^{0,1}}
\end{align}
uniformly for $u_0, \check{u}_0$ in bounded subsets of $H^{1,1}(\RR_+)$.
Similarly, $\Delta \phi$ satisfies the Volterra equation
\begin{equation}
\Delta \phi= i\left(K'_{x \Delta u_0}\check{\psi}+K'_{xu_0}\Delta \psi\right)+K_{\Delta u_0}\check{\psi}+K_{u_0}\Delta\phi.
\end{equation}
Hence, using parts (\ref{Klemmaitem3}), (\ref{Klemmaitem2}), (\ref{Klemmaitem5}), and (\ref{Klemmaitem1}) of Lemma \ref{Klemma}, parts (\ref{psipsi0lemmaitem1}) and (\ref{psipsi0lemmaitem2}) of Lemma \ref{psipsi0lemma}, as well as (\ref{DeltapsiL2L2}),
\begin{align}\nonumber
\|\Delta \phi &\|_{L^\infty_x(\RR_+,L^2_k(\RR))}= \left\|(1-K_{u_0})^{-1}\left(i\left(K'_{x \Delta u_0}\check{\psi}+K'_{xu_0}\Delta \psi\right)+K_{\Delta u_0}\check{\psi}\right)\right\|_{L^\infty_x(\RR_+,L^2_k(\RR))}
	\\\nonumber
\le&\; C\left\|i\left(K'_{x \Delta u_0}\check{\psi}+K'_{xu_0}\Delta \psi\right)+K_{\Delta u_0}\check{\psi}\right\|_{L^\infty_x(\RR_+,L^2_k(\RR))}
	\\\nonumber
\le&\; C\big\|K'_{x \Delta u_0}(\check{\psi}-\psi_0)+K'_{x \Delta u_0}\psi_0\big\|_{L^\infty_x(\RR_+,L^2_k(\RR))}
	\\\nonumber
&+C \big\|iK'_{xu_0}\Delta \psi + K_{\Delta u_0}(\check{\psi} - \psi_0)
+K_{\Delta u_0}\psi_0 \big\|_{L^\infty_x(\RR_+,L^2_k(\RR))}
	\\\nonumber
\le&\; C\|x \Delta u_0\|_{L^2}\|\check{\psi}-\psi_0\|_{L^2_x(\RR_+,L^2_k(\RR))}
+ C \|x \Delta u_0\|_{L^2}
	\\\nonumber
&+C\|xu_0\|_{L^2}\|\Delta \psi\|_{L^2_x(\RR_+,L^2_k(\RR))}
+C\|\Delta u_0\|_{L^1}\|\check{\psi}-\psi_0\|_{L^\infty_x(\RR_+,L^2_k(\RR))}
+C\|\Delta u_0\|_{L^2}
	\\ \label{DeltaphiLinftyL2}
\le&\; C\|\Delta u_0\|_{H^{0,1}}
\end{align}
uniformly for $u_0, \check{u}_0$ in bounded subsets of $H^{1,1}(\RR_+)$.
The inequalities (\ref{DeltapsiLinftyL2}) and (\ref{DeltaphiLinftyL2}) show that $\|\Delta \psi(0,k)\|_{L^2}$ and $\|\Delta \phi(0,k)\|_{L^2}$ tend to zero whenever $\|\Delta u_0\|_{H^{1,1}}$ tends to zero, provided that $u_0, \check{u}_0$ remain in a bounded subset of $H^{1,1}(\RR_+)$. 
Since (\ref{phipsirelation}) holds on a dense subset of $H^{1,1}(\RR_+)$, a continuity argument implies that $\mu(x,\cdot) \in H^1(\RR)$ for any $u_0 \in H^{1,1}(\RR_+)$ and that the relation (\ref{phipsirelation}) holds for any $u_0 \in H^{1,1}(\RR_+)$. 
We conclude that $a-1, b \in H^1(\RR)$ and that the maps in (\ref{u0toab}) are continuous and map bounded sets to bounded sets.
\end{proof}

We want to show that the reflection coefficient $r(k)$ lies in the weighted Sobolev space $H^{1,1}(\RR)$, but the spectral functions $a(k)-1$ and $b(k)$ are {\it not} in $H^{1,1}(\RR)$. To proceed, we therefore need to subtract suitable constants.

\begin{lemma}\label{specfcnlemma}
Let $u_0 \in H^{1,1}(\RR_+)$ and define $b_1 \in \CC$ by
\begin{align}\label{a1b1}
b_1 = \frac{u_0(0)}{2i}.
\end{align}
Then $kb(k) - b_1 \in L^2(\RR)$.
\end{lemma}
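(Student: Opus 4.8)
The plan is to pull out the factor of $k$ from $b(k)$ by a single integration by parts in its Volterra representation, and then to recognize the resulting expression as the sum of an $L^2$-Fourier transform and of two terms governed by the $L^2$-estimates on $\psi-\psi_0$ from Lemma~\ref{psipsi0lemma}. Writing $\psi=[\mu]_2$ with components $\psi_1,\psi_2$, equations \eqref{mu2volterra}--\eqref{abdef} give
\[
b(k)=\psi_1(0,k)=-\int_0^\infty e^{2ikx'}u_0(x')\psi_2(x',k)\,dx', \qquad a(k)=\psi_2(0,k),
\]
while the second column of \eqref{xparteqn0} reads $\partial_{x'}\psi_2=\lambda\overline{u_0}\,\psi_1$. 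Using $ke^{2ikx'}=\tfrac{1}{2i}\partial_{x'}e^{2ikx'}$ and integrating by parts should yield
\[
kb(k)-b_1=b_1\bigl(a(k)-1\bigr)+\frac{1}{2i}\int_0^\infty e^{2ikx'}\Bigl(u_0'(x')\psi_2(x',k)+\lambda|u_0(x')|^2\psi_1(x',k)\Bigr)dx',
\]
where the boundary term at $x'=\infty$ drops out because $u_0(x')\to0$ (by the $H^{1,1}(\RR_+)$-embedding) while $\psi_2$ is bounded (Lemma~\ref{mulemma}), and the boundary term at $x'=0$ equals $\tfrac{1}{2i}u_0(0)a(k)=b_1a(k)$ by the choice \eqref{a1b1} of $b_1$.

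The first term is in $L^2(\RR)$ since $a-1\in H^1(\RR)\subset L^2(\RR)$ by Lemma~\ref{continuitylemma1}. For the integral, I would split $\psi_2=1+(\psi_2-1)$ and keep $\psi_1$ as the first component of $\psi-\psi_0$ (recalling that $\psi_0$ has components $0$ and $1$), producing three pieces:
\[
\int_0^\infty e^{2ikx'}u_0'\,dx', \qquad \int_0^\infty e^{2ikx'}u_0'(\psi_2-1)\,dx', \qquad \lambda\int_0^\infty e^{2ikx'}|u_0|^2\psi_1\,dx'.
\]
The first is, up to a linear rescaling in $k$, the Fourier transform of the zero-extension of $u_0'\in L^2(\RR_+)$, hence in $L^2(\RR)$ by Plancherel. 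For the other two I would bound the integrands by their moduli, apply Minkowski's integral inequality in $(x',k)$ and then Cauchy--Schwarz in $x'$, obtaining $L^2_k$-norms controlled by $\|u_0'\|_{L^2}\,\|\psi-\psi_0\|_{L^2_x(\RR_+,L^2_k(\RR))}$ and $\|u_0\|_{L^2}^2\,\|\psi-\psi_0\|_{L^\infty_x(\RR_+,L^2_k(\RR))}$ respectively, both finite by Lemma~\ref{psipsi0lemma}. Combining the three estimates with the first term gives $kb(k)-b_1\in L^2(\RR)$.

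The main obstacle is the rigorous justification of the integration by parts for a general potential $u_0\in H^{1,1}(\RR_+)$: since $u_0'$ need not be integrable, the intermediate integrals are not all absolutely convergent and the vanishing of the boundary term at infinity requires care. I would handle this exactly as the analogous issues in the proof of Lemma~\ref{continuitylemma1}, namely by first establishing the displayed identity for $u_0\in C_c^\infty(\RR_+)$, where no convergence difficulty arises, and then passing to general $u_0\in H^{1,1}(\RR_+)$ by density, using the continuity of $u_0\mapsto a$ from Lemma~\ref{continuitylemma1} together with the uniform $\psi-\psi_0$ bounds of Lemma~\ref{psipsi0lemma} to pass to the limit in the remaining terms.
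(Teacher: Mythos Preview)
Your proposal is correct and follows essentially the same route as the paper: integrate by parts in the Volterra representation of $b(k)=\psi_1(0,k)$ to pull out the factor of $k$, then split $\psi_2=1+(\psi_2-1)$ and control the resulting pieces by Plancherel and by the $L^\infty_xL^2_k$ and $L^2_xL^2_k$ bounds on $\psi-\psi_0$ from Lemma~\ref{psipsi0lemma}. The only cosmetic differences are that the paper derives the integrated formula \eqref{kpsi1} for general $x$ (since it is reused later) before specializing to $x=0$, and that it bounds the boundary term $u_0(0)(\psi_2(0,k)-1)$ directly via $\|\psi-\psi_0\|_{L^\infty_xL^2_k}$ rather than invoking $a-1\in L^2$ from Lemma~\ref{continuitylemma1}; your density argument for the integration by parts is a reasonable precaution, though for $u_0\in H^{1,1}(\RR_+)$ the computation can in fact be justified directly since $u_0$ is absolutely continuous with $u_0'\in L^2$ and $\psi_2$ is bounded.
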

\begin{proof}
If we expand the equation \eqref{mu2volterra} in components $\psi=\begin{pmatrix} \psi_1 & \psi_2 \end{pmatrix}^T$, we have
\begin{equation}\label{psicomponent}
\begin{cases}
\psi_1(x,k)=-\int_x^\infty e^{-2ik(x-x')}u_0(x')\psi_2(x',k)dx', \\
\psi_2(x,k)=1-\int_x^\infty \lambda \overline{u_0(x')}\psi_1(x',k) dx'.\\
\end{cases}
\end{equation}
Thus an integration by parts gives
\begin{align}\nonumber
2ik\psi_1(x,k) 
=& - \int_x^\infty [\partial_{x'}e^{-2ik(x-x')}] u_0(x') \psi_2(x',k) dx'	
	\\ \nonumber
= &\; u_0(x) \psi_2(x,k)	
+ \int_x^\infty e^{-2ik(x-x')} u_0'(x') \psi_2(x',k) dx'	
	\\\label{kpsi1}
& + \lambda \int_x^\infty e^{-2ik(x-x')} |u_0(x')|^2 \psi_1(x',k) dx'	, \qquad x\geq 0, ~ k \in \RR.
\end{align}	
We also have
\begin{align}\label{kpsi2}
k(\psi_2(x,k) - 1) = & - \int_x^\infty \lambda \overline{u_0(x')} k\psi_1(x',k) dx', \qquad x\geq 0, ~ k \in \RR.
\end{align}
Since $b(k) = \psi_1(0,k)$, we deduce from \eqref{a1b1} and \eqref{kpsi1} that
\begin{align*}
\|2i(kb(k) - b_1)\|_{L^2_k}=&\|2ik\psi_1(0,k) - u_0(0)\|_{L^2_k} 
	\\
\leq
&\; \|u_0(0)( \psi_2(0,k)-1)\|_{L^2_k} + \left\|\int_0^\infty e^{2ikx} u_0'(x) (\psi_2(x,k)-1)dx\right\|_{L^2_k}\\
&+ \left\|\int_0^\infty e^{2ikx} u_0'(x)dx\right\|_{L^2_k} + \left\|\int_0^\infty e^{2ikx} |u_0(x)|^2 \psi_1(x,k) dx\right\|_{L^2_k}.
\end{align*}
Employing Lemma \ref{psipsi0lemma} and Plancherel's theorem, we can estimate each of the terms on the right-hand side. We obtain
\begin{align*}
\|2i(kb(k) - b_1)\|_{L^2_k}
\le&\; C|u_0(0)| \|\psi_2(x,k)-1\|_{L^\infty_x(\RR_+,L^2_k(\RR))}+\|u_0'\|_{L^2}\|\psi_2(x,k)-1\|_{L^2_x(\RR_+,L^2_k(\RR))}\\
&+C\|u_0'\|_{L^2}+C\|u_0\|_{L^\infty} \|u_0\|_{L^2}\|\psi_1(x,k)\|_{L^2_x(\RR_+,L^2_k(\RR))} < \infty,
\end{align*}
showing that $kb(k) - b_1 \in L^2(\RR)$. Here we have used that by the Fubini--Tonelli theorem, if $f \in L^2_k(\RR, L^2_x(\RR^+))$ or $f \in L^2_x(\RR^+, L^2_k(\RR))$, then
$\|\|f(x,k)\|_{L^2_x}\|_{L^2_k} = \|f(x,k)\|_{L^2_k}\|_{L^2_x}$.

\end{proof}

\begin{lemma}\label{continuitylemma}
The following maps are continuous:
\begin{subequations}
\begin{align}
&u_0 \mapsto b_1:H^{1,1}(\RR_+) \to \CC,\label{u0toa1b1}
	\\
& u_0 \mapsto kb(k) - b_1:H^{1,1}(\RR_+) \to L^2(\RR).\label{u0tokbk}
\end{align}
\end{subequations}

\end{lemma}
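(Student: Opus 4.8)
The plan is to treat the two maps separately: the map \eqref{u0toa1b1} is immediate from the trace inequality, while the map \eqref{u0tokbk} is obtained by redoing the computation in the proof of Lemma~\ref{specfcnlemma} for differences. For \eqref{u0toa1b1}, recall from \eqref{a1b1} that $b_1=u_0(0)/(2i)$, so this is, up to the constant $1/(2i)$, the linear map $u_0\mapsto u_0(0)$. It is bounded by the Sobolev estimate $\|f\|_{L^\infty}\le C\|f\|_{H^1}\le C\|f\|_{H^{1,1}}$ recalled in the Notation section, hence $|b_1-\check{b}_1|\le C\|u_0-\check{u}_0\|_{H^{1,1}(\RR_+)}$, which gives the continuity (indeed Lipschitz continuity) of \eqref{u0toa1b1}.

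For \eqref{u0tokbk}, I would fix $u_0,\check{u}_0\in H^{1,1}(\RR_+)$, write $\Delta u_0:=u_0-\check{u}_0$, let $\psi,\check{\psi}$ be the associated solutions of \eqref{psieqn}, and set $\Delta\psi:=\psi-\check{\psi}$. Evaluating \eqref{kpsi1} at $x=0$ and using $2ib_1=u_0(0)$ gives
\begin{align*}
2i\big(kb(k)-b_1\big)=&\;u_0(0)\big(\psi_2(0,k)-1\big)+\int_0^\infty e^{2ikx}u_0'(x)\big(\psi_2(x,k)-1\big)\,dx\\
&+\int_0^\infty e^{2ikx}u_0'(x)\,dx+\lambda\int_0^\infty e^{2ikx}|u_0(x)|^2\psi_1(x,k)\,dx,
\end{align*}
and likewise for $\check{u}_0$. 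Subtracting these two identities and expanding each product by means of $fg-\check{f}\check{g}=f(g-\check{g})+(f-\check{f})\check{g}$ and $|u_0|^2-|\check{u}_0|^2=u_0\,\overline{\Delta u_0}+\overline{\check{u}_0}\,\Delta u_0$ exhibits $2i\big[(kb(k)-b_1)-(k\check{b}(k)-\check{b}_1)\big]$ as a finite sum of terms, each of which is a product of a factor built from $\Delta u_0$ and a factor built from $u_0$, $\check{u}_0$, $\psi$, or $\check{\psi}$.

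I would then estimate each such term in $L^2_k(\RR)$ using only ingredients already available in the excerpt: Minkowski's integral inequality together with Cauchy--Schwarz to bound $\big\|\int_0^\infty e^{2ikx}f(x)g(x,k)\,dx\big\|_{L^2_k}$ by $\|f\|_{L^2_x}\,\|g\|_{L^2_x(\RR_+,L^2_k(\RR))}$; Plancherel's theorem for the purely $\Delta u_0$-dependent term, $\big\|\int_0^\infty e^{2ikx}\Delta u_0'(x)\,dx\big\|_{L^2_k}\le C\|\Delta u_0'\|_{L^2}$; the Sobolev bounds $|u_0(0)|$, $|\Delta u_0(0)|$, $\|u_0\|_{L^\infty}\le C\|\cdot\|_{H^{1,1}(\RR_+)}$; Lemma~\ref{psipsi0lemma} for the boundedness on bounded subsets of $H^{1,1}(\RR_+)$ of $\|\check{\psi}-\psi_0\|_{L^\infty_x(\RR_+,L^2_k(\RR))}$ and $\|\check{\psi}-\psi_0\|_{L^2_x(\RR_+,L^2_k(\RR))}$, and hence of $\|\check{\psi}_1\|$ and $\|\check{\psi}_2-1\|$ in these norms; and, most importantly, the difference estimates \eqref{DeltapsiLinftyL2}--\eqref{DeltapsiL2L2} obtained inside the proof of Lemma~\ref{continuitylemma1}, which give $\|\Delta\psi\|_{L^\infty_x(\RR_+,L^2_k(\RR))}+\|\Delta\psi\|_{L^2_x(\RR_+,L^2_k(\RR))}\le C\|\Delta u_0\|_{H^{0,1}(\RR_+)}$ uniformly for $u_0,\check{u}_0$ in a bounded subset of $H^{1,1}(\RR_+)$. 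Summing the contributions yields $\big\|k(b(k)-\check{b}(k))-(b_1-\check{b}_1)\big\|_{L^2(\RR)}\le C\|u_0-\check{u}_0\|_{H^{1,1}(\RR_+)}$ with $C$ depending only on an upper bound for $\|u_0\|_{H^{1,1}}+\|\check{u}_0\|_{H^{1,1}}$, and this local Lipschitz estimate proves the continuity of \eqref{u0tokbk}.

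I do not anticipate any genuine obstacle: the argument is essentially bookkeeping, in which every factor originating from $\Delta u_0$ is absorbed into $\|\Delta u_0\|_{H^{1,1}}$ while every remaining factor is shown to be bounded on bounded sets via the earlier lemmas. The one point requiring care is arranging the Leibniz splittings so that the $\psi$-factor multiplying an $O(\|\Delta u_0\|)$ quantity is one for which Lemma~\ref{psipsi0lemma} supplies a bound---namely $\check{\psi}-\psi_0$ rather than $\check{\psi}$ itself---and so that whatever multiplies a $\Delta\psi$-factor is bounded (for instance $\|u_0'\|_{L^2}$ or $\|u_0\|_{L^\infty}\|u_0\|_{L^2}$); this is precisely where the $H^{1,1}$, rather than merely $H^{0,1}$, regularity of $u_0$ enters.
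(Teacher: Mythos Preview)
Your proposal is correct and follows essentially the same approach as the paper: both arguments evaluate \eqref{kpsi1} at $x=0$, subtract $2ib_1$, take differences via the Leibniz splitting, and estimate term by term using Lemma~\ref{psipsi0lemma} together with the difference bounds \eqref{DeltapsiLinftyL2}--\eqref{DeltapsiL2L2}. The only cosmetic difference is that the paper keeps $\int_0^\infty e^{2ikx}u_0'\psi_2\,dx$ as a single term and writes the first term as $u_0(0)(a(k)-1)$ (using $a(k)=\psi_2(0,k)$), whereas you split $\psi_2=(\psi_2-1)+1$ before differencing; both lead to the same local Lipschitz estimate.
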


\begin{proof}
Let $u_0, \check{u}_0 \in H^{1,1}(\RR_+)$ and let $\psi,\check{\psi}$ be the associated solutions of \eqref{mu2volterra}. Let $a, b, b_1$ and $\check{a}, \check{b}, \check{b}_1$ be the quantities corresponding to $u_0$ and $\check{u}_0$, respectively, and let $\Delta u_0 \coloneqq u_0-\check{u}_0$, $\Delta a \coloneqq a-\check{a}$, $\Delta b \coloneqq b-\check{b}$, 
and $\Delta b_1 \coloneqq b_1-\check{b}_1$. We want to estimate
$$|\Delta b_1|,\quad \|k\Delta b-\Delta b_1\|_{L^2}$$
in terms of $\|\Delta u_0\|_{H^{1,1}}$. 

From the definition \eqref{a1b1} of $b_1$, it follows that
\begin{align*}
|\Delta b_1| \le C\|\Delta u_0\|_{L^\infty},
\end{align*}
and hence $|\Delta b_1| \le C \|\Delta u_0\|_{H^{1,1}}$ uniformly for $u_0, \check{u}_0$ in bounded subsets of $H^{1,1}$. Next, if we set $x=0$ in equation \eqref{kpsi1} and subtract $2ib_1$ from both sides,
$$2ikb(k)-2ib_1=u_0(0) (a(k)-1)+\int_0^\infty e^{2ikx} u_0'(x) \psi_2(x,k) dx+\lambda \int_0^\infty e^{2ikx} |u_0(x)|^2 \psi_1(x,k) dx.$$
We infer that the differences $\Delta u_0, \Delta a, \Delta b, \Delta \psi$ satisfy the equation
\begin{align*}
2ik \Delta b-2i \Delta b_1=&\; u_0(0)\Delta a+\Delta u_0(0) (\check{a} - 1)+\int_0^\infty e^{2ikx} \left(u_0' \Delta \psi_2+\Delta u_0' \check{\psi}_2\right)dx\\
&+\lambda \int_0^\infty e^{2ikx} \left(|u_0|^2 \Delta \psi_1+ (u_0 \overline{\Delta u_0} + \Delta u_0 \overline{\check{u}_0}) \check{\psi}_1\right) dx.
\end{align*}
Hence, proceeding as in the proof of Lemma \ref{continuitylemma1} (see the estimates \eqref{DeltapsiLinftyL2} and \eqref{DeltapsiL2L2}) and using Lemma \ref{psipsi0lemma} (\ref{psipsi0lemmaitem2}),
\begin{align*}
\|k\Delta b-\Delta b_1\|_{L^2_k} \le &\; C \|\Delta a\|_{L^2_k}+C \|\Delta u_0\|_{L^\infty_x}+ \|u_0'\|_{L^2_x}\|\Delta \psi_2\|_{L^2_k(\RR,L^2_x(\RR_+))}
	\\
&+\|\Delta u_0'\|_{L^2_x}\big(\|\check{\psi}_2-1\|_{L^2_k(\RR,L^2_x(\RR_+))}+1\big)
	\\
&+\|u_0\|_{L^\infty_x}\|u_0\|_{L^2_x}\|\Delta \psi_1\|_{L^2_k(\RR,L^2_x(\RR_+))}
	\\
& + \| \Delta u_0 \|_{L^\infty_x} (\|u_0\|_{L^2_x} + \|\check{u}_0\|_{L^2_x})\| \check{\psi}_1\|_{L^2_k(\RR,L^2_x(\RR_+))}\\
\le&\; C\|\Delta u_0\|_{H^{1,1}}
\end{align*}
uniformly for $u_0, \check{u}_0$ in bounded subsets of $H^{1,1}$.
\end{proof}

\begin{remark}
Conclusions similar to those established in Lemma \ref{specfcnlemma} and \ref{continuitylemma} for $b(k)$ hold also for the spectral function $a(k)$. Namely, if for $u_0 \in H^{1,1}(\RR_+)$ we define $a_1 :=-\frac{\lambda}{2i} \int_0^\infty  |u_0(x)|^2 dx$, then the mappings
\begin{subequations}
\begin{align}
&u_0 \mapsto a_1:H^{1,1}(\RR_+) \to \CC	\\
&u_0 \mapsto k(a(k)-1) - a_1:H^{1,1}(\RR_+) \to L^2(\RR),\label{u0tokak}
\end{align}
\end{subequations}
are continuous. We will not need these properties, but for completeness we give the proof of the fact that $k(a(k)-1) - a_1 \in L^2(\RR)$; given this fact, the continuity follows by arguments similar to those used in the proof of Lemma \ref{continuitylemma}. To see that $k(a(k)-1) - a_1 \in L^2(\RR)$, note that equation \eqref{kpsi2} implies
\begin{align}\nonumber
 \|k(a(k)-1) - a_1\|_{L^2_k} 
& = \|k(\psi_2(0,k)-1) - a_1\|_{L^2_k} 
	\\\nonumber
& = \bigg\| -\int_0^\infty \lambda \overline{u_0(x)} \bigg(k\psi_1(x,k) - \frac{u_0(x)}{2i}\bigg)dx\bigg\|_{L^2_k} 
	\\\nonumber
& =\frac{1}{2} \bigg\| \int_0^\infty \langle x \rangle \overline{u_0(x)} \frac{2ik\psi_1(x,k) - u_0(x)}{\langle x \rangle} dx \bigg\|_{L^2_k} 
	\\ \label{kaminus1a1}
& \leq C \|u_0\|_{H^{0,1}} \left\|\left\|\frac{2ik\psi_1(x,k) - u_0(x)}{\langle x \rangle}\right\|_{L^2_x}\right\|_{L^2_k}.
\end{align}
Furthermore, from equation \eqref{kpsi1}, we have
\begin{align}\nonumber
\frac{2ik\psi_1(x,k) - u_0(x)}{\langle x \rangle}=&\; u_0(x)\frac{\psi_2(x,k)-1}{\langle x \rangle}+\frac{1}{\langle x \rangle}\int_x^\infty e^{-2ik(x-x')} u_0'(x') \psi_2(x',k) dx'
	\\ \label{psi1quotient}
&+\frac{\lambda}{\langle x \rangle} \int_x^\infty e^{-2ik(x-x')} |u_0(x')|^2 \psi_1(x',k) dx'.
\end{align}
Using Lemma \ref{psipsi0lemma} and the fact that the function $\frac{1}{\langle x \rangle}$ is in $L^\infty_x(\RR_+) \cap L^2_x(\RR_+)$, we can estimate the three terms on the right-hand side of (\ref{psi1quotient}) for $u_0 \in H^{1,1}(\RR_+)$ as follows:
\begin{align*}
\left\Vert u_0(x)\frac{\psi_2(x,k)-1}{\langle x \rangle} \right\Vert_{L^2_x(\RR_+,L^2_k(\RR))}=&\left\Vert\frac{u_0}{\langle x \rangle}\left\Vert \psi_2(x,k)-1\right\Vert_{L^2_k}\right\Vert_{L^2_x}
	\\
\le &\; C \left\Vert u_0\right\Vert_{L^\infty} \left\Vert \psi_2(x,k)-1 \right\Vert_{L^2_x(\RR_+,L^2_k(\RR))}
< \infty,
\end{align*}
\begin{align*}
\bigg\Vert \frac{1}{\langle x \rangle}\int_x^\infty  e^{-2ik(x-x')} &u_0'(x') \psi_2(x',k) dx' \bigg\Vert_{L^2_x(\RR_+,L^2_k(\RR))}
	\\
\le &\left\Vert \frac{\left\Vert u_0' \right\Vert_{L^2_x}}{\langle x \rangle}\left\Vert \psi_2(x,k)-1 \right\Vert_{L^2_x}\right\Vert_{L^2_x(\RR_+,L^2_k(\RR))}
+ \left\Vert \frac{\left\Vert u_0' \right\Vert_{L^2_x}}{\langle x \rangle}\right\Vert_{L^2_x}
	\\
\le &\; C\left\Vert u_0' \right\Vert_{L^2} \left\Vert \psi_2(x,k)-1 \right\Vert_{L^2_x(\RR_+,L^2_k(\RR))}
+ C \left\Vert u_0' \right\Vert_{L^2}< \infty,
\end{align*}
and
\begin{align*}
\bigg\Vert \frac{\lambda}{\langle x \rangle} \int_x^\infty e^{-2ik(x-x')} &|u_0(x')|^2 \psi_1(x',k) dx' \bigg\Vert_{L^2_x(\RR_+,L^2_k(\RR))}
	\\
\le &\; \bigg\Vert \frac{\left\Vert u_0 \right\Vert_{L^\infty_x}}{\langle x \rangle} \int_0^\infty \left\vert u_0(x) \psi_1(x,k)\right\vert dx \bigg\Vert_{L^2_x(\RR_+,L^2_k(\RR))}
	\\
\le &\; \bigg\Vert \frac{\left\Vert u_0 \right\Vert_{L^\infty_x} \left\Vert u_0 \right\Vert_{L^2_x}}{\langle x \rangle}\left\Vert \psi_1(x,k) \right\Vert_{L^2_x} \bigg\Vert_{L^2_x(\RR_+,L^2_k(\RR))}
	\\
\le &\; C\left\Vert u_0 \right\Vert_{L^\infty} \left\Vert u_0 \right\Vert_{L^2} \left\Vert \psi_1(x,k)\right\Vert_{L^2_k(\RR,L^2_x(\RR_+))} 
< \infty.
\end{align*}
Combining the above three estimates with (\ref{kaminus1a1}) and (\ref{psi1quotient}), we conclude that $\|k(a(k)-1)-a_1\|_{L^2_k} < \infty$.
\end{remark}

\begin{lemma}\label{multiplicationlemma}
The following maps are continuous:
\begin{align*}
(f,g) \mapsto fg:&H^1(\RR) \times H^1(\RR) \to H^1(\RR), 
	\\
(f,g) \mapsto fg:&L^2(\RR) \times H^1(\RR) \to L^2(\RR), 
	\\
(f,g) \mapsto f/g:&H^1(\RR) \times \{g \in 1+H^1(\RR) : \text{\upshape $g(x) \neq 0$ for all $x \in \RR$}\} \to H^1(\RR),
	\\
(f,g) \mapsto f/g:&L^2(\RR) \times \{g \in 1+H^1(\RR) : \text{\upshape $g(x) \neq 0$ for all $x \in \RR$}\} \to L^2(\RR).
\end{align*}
\end{lemma}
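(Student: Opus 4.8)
The plan is to reduce all four statements to three elementary facts about $H^1(\RR)$: the one–dimensional Sobolev embedding $\|f\|_{L^\infty(\RR)} \le C\|f\|_{H^1(\RR)}$, together with $H^1(\RR) \hookrightarrow C_0(\RR)$ (continuous functions vanishing at infinity); the Banach–algebra estimate $\|fg\|_{H^1(\RR)} \le C\|f\|_{H^1(\RR)}\|g\|_{H^1(\RR)}$, which follows from the embedding and the Leibniz rule $(fg)' = f'g + fg'$; and the companion bound $\|fg\|_{L^2(\RR)} \le \|g\|_{L^\infty(\RR)}\|f\|_{L^2(\RR)} \le C\|f\|_{L^2(\RR)}\|g\|_{H^1(\RR)}$. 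Granting these, the two multiplication maps are bounded bilinear maps of Banach spaces, and any bounded bilinear map is continuous: from $fg - f_0g_0 = (f-f_0)g + f_0(g-g_0)$ one gets $\|fg - f_0g_0\| \le C\|f-f_0\|\,\|g\| + C\|f_0\|\,\|g-g_0\|$, which tends to $0$ as $(f,g)\to(f_0,g_0)$ because $\|g\|$ stays bounded. This settles the first two maps.

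For the division maps the key auxiliary step is the continuity of the reciprocal map $g \mapsto 1/g$ from $G \coloneqq \{g \in 1 + H^1(\RR) : g(x) \neq 0 \text{ for all } x \in \RR\}$ into $1 + H^1(\RR)$. First I would note that every $g \in G$ satisfies $\delta(g) \coloneqq \inf_{x\in\RR}|g(x)| > 0$: since $g - 1 \in H^1(\RR) \subset C_0(\RR)$, the function $g$ is continuous with $g(x)\to 1$ as $|x|\to\infty$, so choosing $R$ with $|g(x)-1| < 1/2$ for $|x|\ge R$ and using that $g$ is continuous and nonvanishing on the compact interval $[-R,R]$ yields $\delta(g) > 0$. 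Writing $g = 1 + a$ with $a \in H^1(\RR)$, the identity $1/g - 1 = -a/g$ and $(1/g-1)' = -a'/g + aa'/g^2$ give $\|1/g - 1\|_{H^1(\RR)} \le C\big(\delta(g)^{-1}\|a\|_{H^1(\RR)} + \delta(g)^{-2}\|a\|_{H^1(\RR)}^2\big)$; in particular $1/g \in 1 + H^1(\RR)$, so $1/g$ is an $H^1$–multiplier and also $1/g \in L^\infty(\RR)$.

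To obtain continuity of $g \mapsto 1/g$ at a point $g_0 \in G$, restrict to the ball $\{g : \|g - g_0\|_{H^1(\RR)} < \delta(g_0)/(2C)\}$; by the Sobolev embedding every such $g$ satisfies $\delta(g) \ge \delta(g_0)/2 > 0$, so it lies in $G$, and on this ball both $\|g-1\|_{H^1(\RR)}$ and $\delta(g)^{-1}$ are bounded, whence $\|1/g - 1\|_{H^1(\RR)}$ is bounded uniformly by the estimate above. Then $1/g - 1/g_0 = g^{-1}(g_0 - g)g_0^{-1} = \big(1 + (g^{-1}-1)\big)(g_0 - g)\big(1 + (g_0^{-1}-1)\big)$, and expanding and applying the Banach–algebra estimate gives $\|1/g - 1/g_0\|_{H^1(\RR)} \le C' \|g - g_0\|_{H^1(\RR)}$ with $C'$ depending only on $g_0$, i.e.\ $g\mapsto 1/g$ is locally Lipschitz. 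Now assemble the division maps using $f/g = f + f\,(1/g - 1)$: the map $(f,g)\mapsto f/g$ on $H^1(\RR)\times G$ is the sum of the continuous projection $(f,g)\mapsto f$ and the composition of the continuous map $(f,g)\mapsto(f,\,1/g-1)$ into $H^1(\RR)\times H^1(\RR)$ with the (continuous) multiplication $H^1(\RR)\times H^1(\RR)\to H^1(\RR)$, which also shows $f/g \in H^1(\RR)$; replacing the first factor by $L^2(\RR)$ and using the multiplication $L^2(\RR)\times H^1(\RR)\to L^2(\RR)$ handles the last map and gives $f/g\in L^2(\RR)$.

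The only place that genuinely requires care is the reciprocal map: one must upgrade the \emph{pointwise} nonvanishing hypothesis on $g$ to a \emph{uniform} lower bound $\delta(g)\ge \delta(g_0)/2$ valid on an entire neighborhood of $g_0$ (this is exactly what the one–dimensional Sobolev embedding $H^1(\RR)\hookrightarrow L^\infty(\RR)$ provides), and then keep $\|1/g - 1\|_{H^1(\RR)}$ bounded on that neighborhood so that the Banach–algebra estimate may be applied to $g^{-1}(g_0-g)g_0^{-1}$. Everything else is the routine principle that bounded (multi)linear maps, and compositions of continuous maps, are continuous.
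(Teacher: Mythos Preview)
Your proof is correct. The paper's own proof consists of the single sentence ``The assertions follow from standard estimates,'' and what you have written is precisely a careful unpacking of those standard estimates, so your approach is the same as the paper's.
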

\begin{proof}
The assertions follow from standard estimates.
\end{proof}

We are now ready to prove that $r \in H^{1,1}(\RR)$ and to establish the continuity of the map $u_0 \mapsto r$. 

\begin{lemma}\label{rcontinuitylemma}
Given $u_0 \in H^{1,1}(\RR_+)$ and $q \in \RR$, define the reflection coefficient $r(k)$ by the formula \eqref{rdef}.
If $\Delta(k) \neq 0$ for all $k \in \RR$, then $r \in H^{1,1}(\RR)$. Moreover, the map $u_0 \mapsto r:H^{1,1}(\RR_+) \cap \{u_0 \, | \, \text{$\Delta(k) \neq 0$ for all $k \in \RR$}\} \to H^{1,1}(\RR)$ is continuous.
\end{lemma}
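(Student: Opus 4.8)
The plan is to exhibit $r$ as a quotient of an $H^1(\RR)$ function by a function in $1+H^1(\RR)$ that is bounded away from zero on $\RR$, and then to control the weight $k$ separately using the refined decay $kb(k)\to b_1$ from Lemma \ref{specfcnlemma}. First, since $\Delta(0)=-iq$ by Proposition \ref{rDeltaprop}, the hypothesis $\Delta(k)\neq 0$ for all $k\in\RR$ forces $q\neq 0$; hence $2k-iq\neq 0$ for every $k\in\RR$, and
$$\rho(k):=\frac{2k+iq}{2k-iq}, \qquad k\in\RR,$$
is smooth with $|\rho|\equiv 1$ and $\rho-1=\frac{2iq}{2k-iq}\in H^1(\RR)$. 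Writing, for $k\in\RR$, $\mathcal{A}(k):=a(k)\overline{a(-k)}$, $\mathcal{B}(k):=b(k)\overline{b(-k)}$ and $\mathcal{C}(k):=\overline{b(k)}\,\overline{a(-k)}$, and using the identity $\overline{a(k)}\,\overline{b(-k)}=\mathcal{C}(-k)$, one checks that the numerator of $r$ equals $(2k-iq)\big(\mathcal{C}(k)+\rho(k)\mathcal{C}(-k)\big)$ while $\Delta(k)=(2k-iq)\big(\mathcal{A}(k)+\lambda\rho(k)\mathcal{B}(k)\big)$, so after cancelling the common factor
$$r(k)=\frac{\widetilde N(k)}{\widetilde\Delta(k)}, \quad \widetilde N(k):=\mathcal{C}(k)+\rho(k)\mathcal{C}(-k), \quad \widetilde\Delta(k):=\mathcal{A}(k)+\lambda\rho(k)\mathcal{B}(k).$$

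Next I would verify $\widetilde N\in H^1(\RR)$ and $\widetilde\Delta-1\in H^1(\RR)$, with continuous $u_0$-dependence. By Corollary \ref{ablemma} and Lemma \ref{continuitylemma1}, $a-1$ and $b$ lie in $H^1(\RR)$ and depend continuously on $u_0$; since reflection $k\mapsto -k$ and complex conjugation are isometries of $H^1(\RR)$, so are $\overline{a(-k)}-1$ and $\overline{b(-k)}$, and, writing $\rho f=f+(\rho-1)f$ with $\rho-1\in H^1(\RR)$, multiplication by $\rho$ preserves $H^1(\RR)$ by the first assertion of Lemma \ref{multiplicationlemma}. Expanding $\mathcal{A}-1$, $\mathcal{B}$, $\mathcal{C}$ as sums of such $H^1$ functions and products of pairs of them, the bilinear continuity of multiplication in Lemma \ref{multiplicationlemma} shows that $u_0\mapsto \widetilde N\in H^1(\RR)$ and $u_0\mapsto \widetilde\Delta-1\in H^1(\RR)$ are continuous. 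Moreover $\widetilde\Delta=\Delta/(2k-iq)$ is continuous and nonvanishing on $\RR$ and tends to $1$ as $k\to\pm\infty$, hence $\inf_{\RR}|\widetilde\Delta|>0$. The division statements in Lemma \ref{multiplicationlemma} then give $r=\widetilde N/\widetilde\Delta\in H^1(\RR)$ and the continuity of $u_0\mapsto r\in H^1(\RR)$.

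To upgrade $r$ to $H^{1,1}(\RR)$ it remains to show $k\,r\in L^2(\RR)$, with continuous dependence; since $\inf_{\RR}|\widetilde\Delta|>0$, the last assertion of Lemma \ref{multiplicationlemma} reduces this to showing $k\widetilde N\in L^2(\RR)$ and $u_0\mapsto k\widetilde N\in L^2(\RR)$ continuous. This is where Lemmas \ref{specfcnlemma} and \ref{continuitylemma} enter: with $b_1=u_0(0)/(2i)$ we have $kb(k)=b_1+\beta(k)$ where $\beta:=kb-b_1\in L^2(\RR)$ depends continuously on $u_0$, whence
$$k\mathcal{C}(k)=\overline{kb(k)}\,\overline{a(-k)}=\overline{b_1}+\overline{b_1}\,\overline{\big(a(-k)-1\big)}+\overline{\beta(k)}\,\overline{a(-k)}=:\overline{b_1}+P(k),$$
with $P\in L^2(\RR)$ (a constant times an $H^1$ function plus an $L^2$ function times a bounded one), and likewise $k\mathcal{C}(-k)=-\overline{b_1}+Q(k)$ with $Q\in L^2(\RR)$; both $P$ and $Q$ depend continuously on $u_0$. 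Therefore
$$k\widetilde N(k)=k\mathcal{C}(k)+\rho(k)\,k\mathcal{C}(-k)=\overline{b_1}\big(1-\rho(k)\big)+P(k)+\rho(k)Q(k)\in L^2(\RR),$$
using $1-\rho\in L^2(\RR)$ and $\rho\in L^\infty(\RR)$; tracking the continuous dependence of $b_1,P,Q$ on $u_0$ gives the continuity of $u_0\mapsto k\widetilde N\in L^2(\RR)$. Combined with the previous paragraph this proves $r\in H^{1,1}(\RR)$ and the continuity of $u_0\mapsto r$ into $H^{1,1}(\RR)$.

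The main obstacle is the algebraic normalization carried out in the first step: differentiating $r=N/\Delta$ directly fails, because $\Delta$ contains non-decaying terms such as $2k\,a'(k)$ with $a'\in L^2$ but $ka'\notin L^2$, and the quotient does not obviously lie in $H^1$. Pulling out the scalar factor $2k-iq$ removes precisely the offending prefactor and puts $r$ inside $1+H^1$, while the cancellation of the leading constants $\pm\overline{b_1}$ in $k\widetilde N$ (equivalently, $\rho\to 1$ with $1-\rho\in L^2$) is exactly what promotes $r$ from $H^1$ to $H^{1,1}$. The remaining work is bookkeeping: tracking the reflection $k\mapsto-k$ and the Schwartz conjugations through all the products, and checking that $\widetilde\Delta$ is uniformly bounded below on $\RR$, which follows from its nonvanishing on $\RR$ together with $\widetilde\Delta(k)\to 1$ as $k\to\pm\infty$.
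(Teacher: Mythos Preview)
Your proof is correct and follows essentially the same route as the paper: factor out $2k-iq$ to write $r=F/\Delta_a$ with $F\in H^1(\RR)$ and $\Delta_a\in 1+H^1(\RR)$ nonvanishing, then use the product/quotient lemma for the $H^1$ part, and finally exploit the cancellation of the leading constants $\pm\overline{b_1}$ (coming from $kb(k)-b_1\in L^2$) to put the weighted numerator in $L^2(\RR)$. Your $\widetilde N$, $\widetilde\Delta$ coincide with the paper's $F$, $\Delta_a$, and your computation of $k\widetilde N$ is the same as the paper's computation of $(2k-iq)F$ up to an $L^2$ correction.
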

\begin{proof}
We have
\begin{align}\label{rdef2}
r(k) = \frac{F(k)}{\Delta_a(k)}, \qquad k \in \RR,
\end{align}
where
$$F(k) \coloneqq \overline{b(k)a(-k)}+\frac{2k+iq}{2k-iq}\overline{a(k)b(-k)}$$
and 
\begin{align}\label{Deltaadef}
\Delta_a(k) \coloneqq \frac{\Delta(k)}{2k-iq} = a(k)\overline{a(-\bar{k})}+\lambda \frac{2k+iq}{2k-iq} b(k)\overline{b(-\bar{k})}.
\end{align}

By Lemma \ref{multiplicationlemma}, multiplication $(f,g) \mapsto fg:H^{1} \times H^{1} \to H^{1}$ is continuous. Since $\frac{2k+iq}{2k-iq} \in 1+ H^1(\RR)$, we see that 
$$f \mapsto \frac{2k+iq}{2k-iq}f: H^1(\RR) \to H^1(\RR)$$
is continuous. Using also Lemma \ref{continuitylemma1}, it follows that the maps
\begin{align}\label{u0toDeltaacontinuous}
& u_0 \mapsto F:H^{1,1}(\RR_+) \to H^{1}(\RR)
\quad \text{and}\quad u_0 \mapsto \Delta_a - 1:H^{1,1}(\RR_+) \to H^{1}(\RR)
\end{align}
are continuous. Hence, using Lemma \ref{multiplicationlemma} again, we conclude that
$$u_0 \mapsto r = \frac{F}{\Delta_a}:H^{1,1}(\RR_+) \cap \{u_0 \, | \,\text{$\Delta \neq 0$ on $\RR$}\} \to H^{1}(\RR)$$ 
is continuous.
It remains to show that
$$u_0 \mapsto kr(k):H^{1,1}(\RR_+) \cap \{u_0 \, | \,\text{$\Delta \neq 0$ on $\RR$}\} \to L^2(\RR)$$ 
is continuous. Since, by Lemma \ref{multiplicationlemma} and (\ref{u0toDeltaacontinuous}), the map
$$(f, u_0) \mapsto \frac{f}{\Delta_a}:L^2(\RR)  \times \Big(H^{1,1}(\RR_+) \cap \{u_0 \, | \,\text{$\Delta \neq 0$ on $\RR$}\}\Big) \to L^2(\RR)$$
is continuous, it is enough to show that
$$u_0 \mapsto kF(k):H^{1,1}(\RR_+) \cap \{u_0 \, | \,\text{$\Delta \neq 0$ on $\RR$}\} \to L^2(\RR)$$ 
is continuous. This will follow if we can show that
\begin{align}\label{u0toF}
u_0 \mapsto (2k-iq)F(k):H^{1,1}(\RR_+) \cap \{u_0 \, | \,\text{$\Delta \neq 0$ on $\RR$}\} \to L^2(\RR)
\end{align}
is continuous. Recall that $a-1 \in H^1(\RR)$ by Lemma \ref{continuitylemma1} and $kb(k) - b_1 \in L^2(\RR)$ by Lemma \ref{specfcnlemma}. Thus there exist functions $f_a \in H^1(\RR)$ and $f_b \in L^2(\RR)$ such that
$$a(k) = 1 + f_a(k), \qquad b(k) = \frac{b_1 + f_b(k)}{k}.$$
Using these expressions for $a$ and $b$, we can write
\begin{align*}
(2k-iq)F(k) = & -2\Big(\bar{b}_1(\overline{f_a(k)} - \overline{f_a(-k)}) - \overline{a(-k)}\overline{f_b(k)} + \overline{a(k)}\overline{f_b(-k)}\Big)
	\\
&- i q\Big(\overline{a(-k)}\overline{b(k)} - \overline{a(k)}\overline{b(-k)} \Big).
\end{align*}
According to Lemma \ref{continuitylemma1} and Lemma \ref{continuitylemma}, $b_1\in \CC$, $f_a \in H^1(\RR)$, $f_b \in L^2(\RR)$, $a \in 1 + H^1(\RR)$, $b \in H^1(\RR)$ depend continuously on $u_0 \in H^{1,1}(\RR_+)$. Thus, using Lemma \ref{multiplicationlemma}, the continuity of the map in (\ref{u0toF}) follows. 
\end{proof}

\subsection{Proof of Proposition \ref{rDeltaprop}}

It follows from Lemma \ref{ablemma} and the definition (\ref{Deltadef}) that $\Delta(k)$ is continuous for $\im k \geq 0$ and analytic for $\im k > 0$.
The symmetry (\ref{Deltasymm}) is an immediate consequence of (\ref{Deltadef}). Evaluating (\ref{Deltadef}) at $k = 0$ and using the unit determinant relation (\ref{a2b21}), we find $\Delta(0)=-iq$.
The asymptotic formula (\ref{Deltalargek}) follows from (\ref{ablargek}).
This proves (\ref{rDeltapropitem1})--(\ref{rDeltapropitem4}) and assertion (\ref{rDeltapropitem5}) was proved in Lemma \ref{rcontinuitylemma}.

Let $\Delta_a(k) = \Delta(k)/(2k-iq)$ be the function defined in (\ref{Deltaadef}). A straightforward calculation using (\ref{a2b21}) shows that
\begin{equation}\label{rDeltaarelation}
1-\lambda |r(k)|^2= |\Delta_a(k)|^{-2}, \qquad k \in \RR.
\end{equation} 
Note that $\Delta_a$ is continuous on $\RR$.
Hence, equation (\ref{rDeltaarelation}) implies that $|r(k)|<1$ for all $k \in \RR$ if $\lambda = 1$. This proves (\ref{rDeltapropitema}). To prove (\ref{rDeltapropitemb})--(\ref{rDeltapropitemd}), we employ (\ref{rdef}) to eliminate $\overline{b(-\bar{k})}$ from the right-hand side of (\ref{Deltaadef}) and use the unit determinant relation (\ref{a2b21}) to see that
\begin{equation}\label{Deltaasimplified}
\Delta_a(k) = \frac{\overline{a(-k)}}{\overline{a(k)}} \frac{1}{1 - \lambda \frac{b(k)}{\overline{a(k)}} r(k)} , \qquad k \in \RR.
\end{equation}


Suppose now that $\lambda = 1$. Then equation (\ref{a2b21}) implies that $a(k) \neq 0$ for all $k \in \RR$, and thus (\ref{Deltaasimplified}) shows that $\Delta_a(k) \neq 0$ for all $k \in \RR$. Let $Z_{\Delta_a}$ and $P_{\Delta_a}$ denote the number of zeros and poles of $\Delta_a$ in $\CC_+$ counted with multiplicity, and let $Z_a$ denote the number of zeros of $a(k)$ in $\CC_+$ counted with multiplicity.
Recalling the asymptotic formulas (\ref{Deltalargek}) and (\ref{ablargek}) for $\Delta$ and $a$, the argument principle applied to a large semicircle enclosing the upper half-plane yields
\begin{align}\label{ZPDeltaa}
Z_{\Delta_a} - P_{\Delta_a} = \frac{\log \Delta_a(k)}{2\pi i} \bigg|_{k=-\infty}^{+\infty}, \qquad Z_a = \frac{\log a(k)}{2\pi i} \bigg|_{k=-\infty}^{+\infty},
\end{align}
where $\frac{\log f(k)}{2\pi i}|_{k=-\infty}^{+\infty}$ denotes the winding number of $f(k)$ around the origin as $k$ traverses the real axis from $-\infty$ to $+\infty$.
On the other hand, by (\ref{a2b21}), 
$$\bigg|\frac{b(k)}{a(k)}\bigg|^2 = 1 - \frac{1}{|a(k)|^2} < 1, \qquad k \in \RR.$$
Hence, using also that $|r|<1$ on $\RR$, 
$$\bigg|\frac{b(k)}{\overline{a(k)}} r(k)\bigg| < 1, \qquad k \in \RR.$$
In particular, the second factor on the right-hand side of (\ref{Deltaasimplified}) winds zero times around the origin as $k$ traverses $\RR$.
Consequently, it transpires from (\ref{ZPDeltaa}) that
\begin{align*}
Z_{\Delta_a} - P_{\Delta_a}
& = \frac{\log \overline{a(-k)} - \log \overline{a(k)}}{2\pi i} \bigg|_{k=-\infty}^{+\infty}
= 2\frac{\log a(k)}{2\pi i} \bigg|_{k=-\infty}^{+\infty}
 = 2 Z_{a}.
\end{align*}
However, an argument which involves viewing the zeros of $a$ as eigenvalues of a self-adjoint operator obtained by extending (\ref{xparteqn0}) from the half-line $x \geq 0$ to the real line, implies that $Z_a = 0$ (see \cite[Section 3]{L2016} for a detailed proof).
This establishes assertion (\ref{rDeltapropiteme}), and since $\Delta$ has no poles in $\CC_+$, we arrive at
$$Z_{\Delta} = \begin{cases} Z_{\Delta_a} - P_{\Delta_a} + 1 = 1, & q > 0, \\
Z_{\Delta_a} - P_{\Delta_a} = 0, & q < 0,
\end{cases}
$$
where $Z_\Delta$ denotes the number of zeros of $\Delta$ in $\CC_+$ counted with multiplicity. The symmetry (\ref{Deltasymm}) implies that if $\Delta$ has only one zero in $\CC_+$, then it must be pure imaginary. 
Since we already saw that $\Delta_a(k) \neq 0$ for all $k \in \RR$, the assertions (\ref{rDeltapropitemb})--(\ref{rDeltapropitemd}) about the zeros of $\Delta$ follow.

\section{Construction of $m$ from $u$}\label{mfromusec}
The purpose of this section is to prove Proposition \ref{mfromuprop}, which shows that if there is a global Schwartz class solution $u(x,t)$ of the Robin IBVP, then the RH problem of Theorem \ref{linearizableth} has a unique solution $m(x,t,k)$, and $u(x,t)$ can be recovered from $m(x,t,k)$ via (\ref{recoveru}). In addition to motivating the structure of the RH problem of Theorem \ref{linearizableth}, this proposition will be used to establish uniqueness in Section \ref{Schwartzsec}.

\begin{proposition}\label{mfromuprop}
Suppose $\lambda = 1$ or $\lambda = -1$. Let $u_0 \in \mathcal{S}(\RR_+)$ be such that $q \coloneqq -u_{0x}(0)/u_0(0) \in \RR$. If $\lambda = -1$, suppose that Assumption \ref{zerosassumption} holds.
Suppose that there exists a global Schwartz class solution $u(x,t)$ of the Robin IBVP for NLS with initial data $u_0$.
Then the RH problem of Theorem \ref{linearizableth} has a unique solution $m(x,t,k)$ and, for each $(x,t) \in [0, \infty) \times [0, \infty)$, $u(x,t)$ is given in terms of $m(x,t,k)$ by (\ref{recoveru}).
\end{proposition}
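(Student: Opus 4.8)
The plan is to run the Unified Transform Method on the quarter plane: build $m(x,t,k)$ out of simultaneous eigenfunctions of the Lax pair of \eqref{NLS}, and then exploit the linearizability of the Robin condition to collapse the resulting Riemann--Hilbert problem---which a priori has a jump on $\RR\cup i\RR$---to a jump on $\RR$ alone with the data $(r,\Delta)$ of \eqref{rdef}--\eqref{Deltadef}. First I would write the Lax pair, i.e.\ the pair of linear equations whose compatibility condition is \eqref{NLS}, with $2\times2$ coefficient matrices $\mathsf{U}(x,t,k)$ (depending on $u$) and $\mathsf{V}(x,t,k)$ (depending on $u$, $u_x$, and $k$), and introduce the eigenfunctions $\mu_1,\mu_2,\mu_3$ normalized at the three corners of $\{x\ge 0,\,t\ge 0\}$. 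Using that $u$ is a Schwartz solution---so that $u$ and all its $x$-derivatives decay rapidly, uniformly on $[0,T]$---I would establish the usual properties of these eigenfunctions: analyticity and boundedness in the appropriate quadrants of the $k$-plane cut by $\RR\cup i\RR$, unit determinant, the symmetry relations analogous to \eqref{musymm}, and $\mu_j=I+O(1/k)$ as $k\to\infty$. Then I would define the initial spectral functions $a(k),b(k)$ (which agree with \eqref{abdef} because at $t=0$ the $x$-part of the Lax pair reduces to \eqref{muvolterra}) and the boundary spectral functions $A(k),B(k)$ from $u(0,t),u_x(0,t)$, and record the global relation linking $a,b,A,B$ and the boundary values.

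The key step is the linearizable reduction, following the approach of \cite{IS2013} (so that no a priori decay of the boundary values is assumed) and extending it to $\lambda=1$. Because $u_x(0,t)+qu(0,t)=0$, the global relation allows the entries of the matrix built from the boundary data to be expressed through $a(k),b(k)$ and the rational factors $2k\pm iq$; carrying out this algebra is exactly what produces $\Delta(k)$ as in \eqref{Deltadef} and the reflection coefficient $r(k)$ as in \eqref{rdef}. Assembling $m(x,t,k)$ from $\mu_1,\mu_2,\mu_3$ in the standard piecewise manner, one obtains a function that is sectionally meromorphic across $\RR\cup i\RR$; the linearizability identity forces the contribution of the $i\RR$ jump to be trivial, so that $m$ extends across $i\RR$ and only the jump \eqref{mjump} across $\RR$, with $v$ as in \eqref{vdef}, survives. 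The poles of $m$ in $\CC_+$ (resp.\ $\CC_-$) occur precisely at the zeros $\xi_j$ of $\Delta$ (resp.\ $\bar\xi_j$ of $\overline{\Delta(\bar k)}$)---here Assumption \ref{zerosassumption} guarantees these are simple and disjoint from the zeros of $a$ in the focusing case, while in the defocusing case this is automatic by Proposition \ref{rDeltaprop}---and the residue conditions \eqref{mresidues} follow by computing the residues of the relevant eigenfunction columns at $\xi_j$ in terms of $\overline{b(-\bar{\xi}_j)}$, $a(\xi_j)$, and $\dot{\Delta}(\xi_j)$, which yields the constants $c_j$ of \eqref{cjdef}. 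Finally, the recovery formula \eqref{recoveru} is read off from the $O(1/k)$ coefficient in the large-$k$ expansion of $m$, using that in this normalization the $(12)$-entry of that coefficient equals $u/(2i)$.

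For uniqueness, suppose $m$ and $\tilde m$ both solve the Riemann--Hilbert problem. Then $m\tilde m^{-1}$ has no jump across $\RR$, and the residue conditions \eqref{mresidues} make the apparent poles at the $\xi_j,\bar\xi_j$ removable, so $m\tilde m^{-1}$ is entire; since it tends to $I$ at infinity it is identically $I$. Because $u$ is assumed to exist, $m$ exists by the construction above, so the Riemann--Hilbert problem is solvable, hence uniquely solvable. I expect the main obstacle to be the bookkeeping in the linearizable reduction: verifying carefully, through the global relation, that the $i\RR$ jump genuinely disappears and that the surviving data on $\RR$ is exactly $(r,\Delta)$ of \eqref{rdef}--\eqref{Deltadef}, and pinning down the residue constants \eqref{cjdef} at the zeros of $\Delta$---with the extra care needed in the focusing case to track that Assumption \ref{zerosassumption} ensures the simplicity of those zeros and the non-vanishing of $a$ there.
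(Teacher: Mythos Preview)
Your proposal is correct and follows essentially the same route as the paper: build the eigenfunctions $\mu_j$, assemble the standard UTM Riemann--Hilbert problem on a cross contour, invoke the Robin symmetry and the global relation to replace the $T$-dependent boundary spectral data by the $T$-independent quantity $\tilde{\Gamma}$ built from $a,b$ and $2k\pm iq$, then fold the vertical jump onto $\RR$ and read off the residue structure at the zeros of $\Delta$. One small point of precision: the vertical jump does not become \emph{trivial} by linearizability; rather, the paper performs two explicit transformations---first $M\mapsto\tilde M=MG$ to swap $\Gamma(k;T)$ for $\tilde\Gamma(k)$ (with the global relation used only to check $\tilde M\to I$ at infinity), then $\tilde M\mapsto m$ by conjugation with triangular factors involving $\tilde\Gamma e^{2i\theta}$---and it is this second step that removes the vertical contour, so in execution you should expect two separate transformations rather than a single cancellation.
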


The remainder of this section is devoted to the proof of Proposition \ref{mfromuprop}. The proof relies on the theory of linearizable boundary conditions within the framework of the Unified Transform Method of Fokas \cite{F1997} (see also \cite{F2002, FIS2005, IS2013}).

\subsection{The solution $M$} 
The NLS equation (\ref{NLS}) is the compatibility condition of the Lax pair equations
\begin{subequations}\label{mulax}
\begin{align}
\mu_x+ik [\sigma_3, \mu] = \mathsf{U} \mu, \label{xparteqn}\\
\mu_t+2ik^2 [\sigma_3, \mu] = \mathsf{V} \mu, \label{tparteqn}
\end{align}
\end{subequations}
where $k \in \CC$ is the spectral parameter, $\mu(x,t,k)$ is a $2 \times 2$-matrix valued eigenfunction, and $\mathsf{U}, \mathsf{V}$ are defined by
\begin{align*}
\mathsf{U}(x,t) = \begin{pmatrix}
0 & u \\ 
\lambda \bar{u} & 0
\end{pmatrix}, \qquad
\mathsf{V}(x,t,k)= \begin{pmatrix}
-i\lambda|u|^2 & 2ku+iu_x\\ 
2\lambda k\bar{u}-i\lambda \bar{u}_x & i\lambda|u|^2
\end{pmatrix}.
\end{align*}

Suppose $u(x,t)$ is a global Schwartz class solution of the Robin IBVP for NLS with initial data $u_0 \in \mathcal{S}(\RR_+)$.
Fix a final time $T \in (0,\infty)$ and define three solutions $\mu_j$, $j = 1,2,3$, of (\ref{mulax}) as the unique solutions of the integral equations
\begin{equation}\label{inteqn}
\mu_j(x,t,k)=I+\int_{(x_j,t_j)}^{(x,t)}e^{-i(kx+2k^2t)\hat{\sigma}_3}W_j(x',t',k),
\end{equation}
where $(x_1, t_1) = (0,T)$, $(x_2, t_2) = (0,0)$, $(x_3, t_3) = (\infty,t)$, and the exact 1-form $W_j$ is defined by
$$W_j = e^{i(kx+2k^2t)\hat{\sigma}_3}(\mathsf{U}\mu_j dx+\mathsf{V}\mu_j dt).$$
It follows from (\ref{mulax}) that the functions $\mu_j$ are related as follows:
\begin{equation}\label{preRH}
\begin{cases}
\mu_3(x,t,k)=\mu_2(x,t,k)e^{-i\theta \hat{\sigma}_3} \mu_3(0,0,k),\\
\mu_1(x,t,k)=\mu_2(x,t,k)e^{-i\theta \hat{\sigma}_3} \left[e^{2ik^2 T \hat{\sigma}_3}\mu_2(0,T,k)\right]^{-1}.
\end{cases}
\end{equation}
Define the spectral functions $s(k)$ and $S(k;T)$ by
\begin{equation}
s(k):=\mu_3(0,0,k), \qquad S(k; T):=[e^{2ik^2T\hat{\sigma}_3}\mu_2(0,T,k)]^{-1}.
\end{equation}

Let $D_j$, $j =1, \dots, 4$, denote the four open quadrants of the complex plane, see the left half of Figure \ref{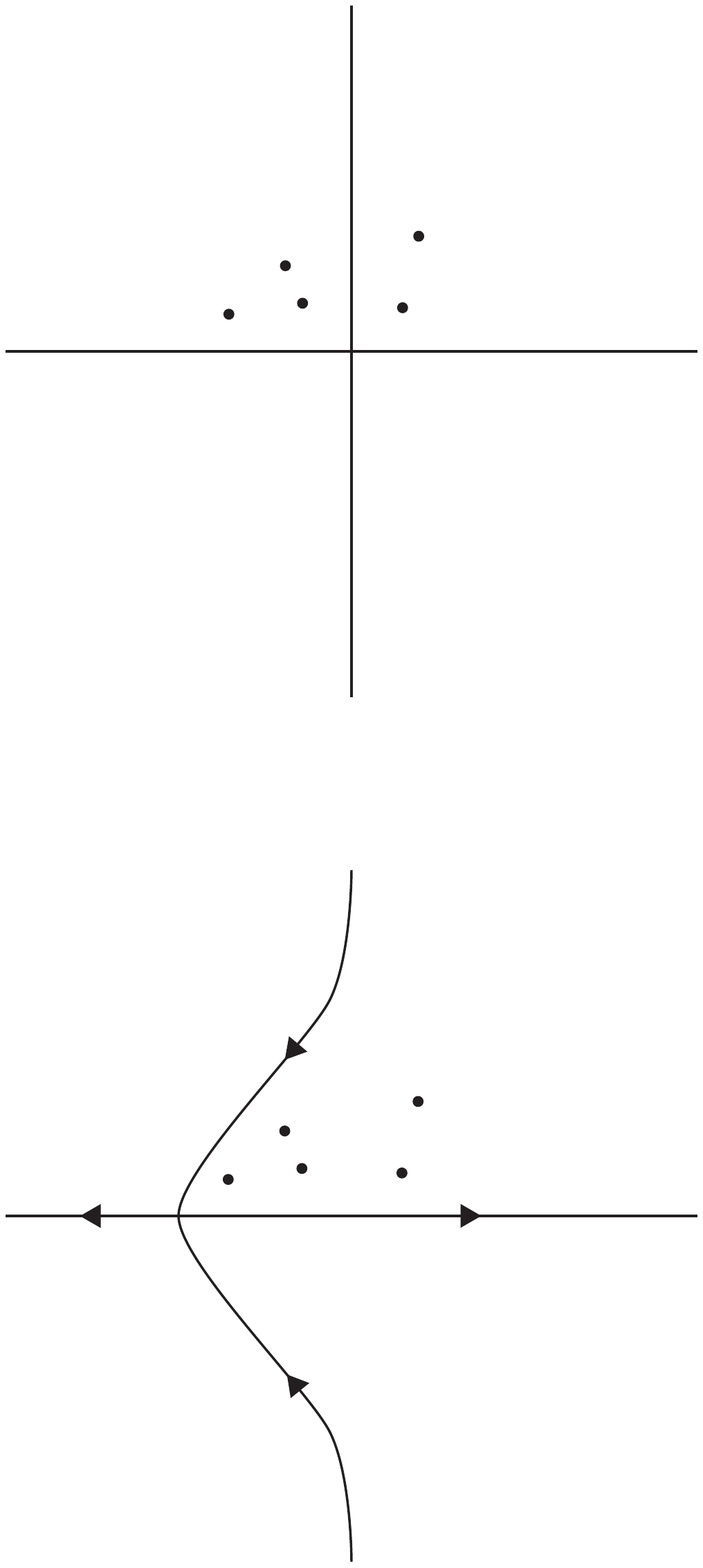}.
The properties of the $\mu_j$ imply that $s$ and $S$ can be expressed as
\begin{equation}\label{abABdef}
s(k)=\begin{pmatrix}
\overline{a(\bar{k})} & b(k)\\ 
\lambda\overline{b(\bar{k})} & a(k)
\end{pmatrix}, \qquad
S(k;T)=\begin{pmatrix}
\overline{A(\bar{k};T)} & B(k;T)\\ 
\lambda\overline{B(\bar{k};T)} & A(k;T)
\end{pmatrix},
\end{equation}
where the functions $a(k)$ and $b(k)$ are defined for $\im k \geq 0$ and analytic for $\im k > 0$, whereas $A(k;T)$ and $B(k;T)$ are entire functions of $k$ which are bounded in $\bar{D}_1 \cup \bar{D}_3$. 
Define $d(k;T)$ by
\begin{equation}\label{ddef}
d(k; T) = a(k)\overline{A(\bar{k};T)}-\lambda b(k)\overline{B(\bar{k};T)}, \qquad k \in \bar{D}_2.
\end{equation}
As $k\to \infty$,
\begin{subequations}\label{abABlargek}
\begin{align}\label{ablargek2}
& a(k) = 1 + O(k^{-1}), \quad b(k) = O(k^{-1}), && k \to \infty, ~ \im k \geq 0,
	\\\label{ABlargek}
& A(k; T) = 1 + O(k^{-1}), \quad B(k; T) = O(k^{-1}), && k \to \infty, ~ k \in \bar{D}_1 \cup \bar{D}_3.
\end{align}
\end{subequations}
It follows that any possible zeros of $a(k)$ in the upper half-plane and of $d(k)$ in $\bar{D}_2$ are contained in a disk of finite radius $\{k \in \CC : |k| < R\}$ for some $R > 0$. 
Thus we may define deformed quadrants $\mathcal{D}_j$, $j = 1,\dots, 4$, such that $a$ and $d$ have no zeros in $\bar{\mathcal{D}}_2$ (see the right half of Figure \ref{Sigma.pdf}).
Define the sectionally meromorphic function $M(x,t,k) \equiv M(x,t,k; T)$ by
\begin{align}\label{Mdef}
M = \begin{cases} 
(\frac{[\mu_2]_1}{a}, [\mu_3]_2), \quad & k \in \mathcal{D}_1, \\
(\frac{[\mu_1]_1}{d}, [\mu_3]_2), & k \in \mathcal{D}_2, \\
([\mu_3]_1, \frac{[\mu_1]_2}{d^*}), & k \in \mathcal{D}_3, \\
([\mu_3]_1, \frac{[\mu_2]_2}{a^*}), & k \in \mathcal{D}_4.
\end{cases}
\end{align}
\begin{figure}
\begin{center}
\begin{overpic}[width=.4\textwidth]{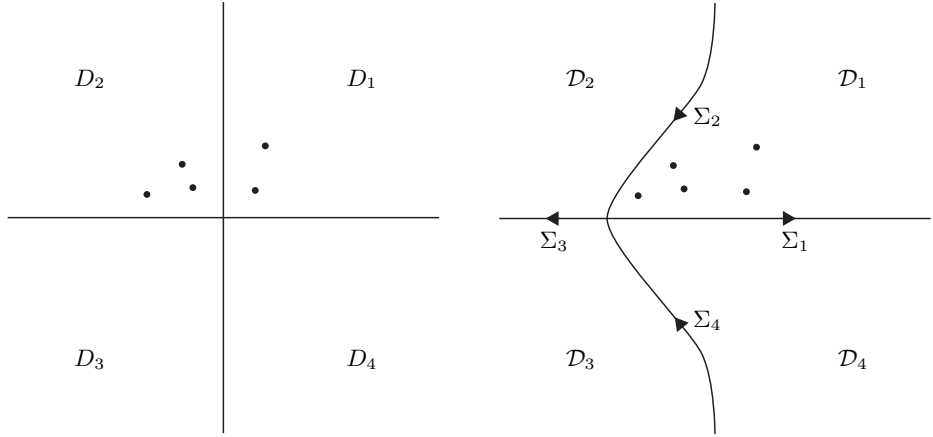}
      \put(78,80){\small $D_1$} 
      \put(16,80){\small $D_2$} 
      \put(16,16){\small $D_3$} 
      \put(78,16){\small $D_4$} 
     \end{overpic}\qquad
\begin{overpic}[width=.4\textwidth]{Sigma.pdf}
      \put(78,80){\small $\mathcal{D}_1$} 
      \put(16,80){\small $\mathcal{D}_2$} 
      \put(16,16){\small $\mathcal{D}_3$} 
      \put(78,16){\small $\mathcal{D}_4$} 
       \put(65, 43){\small $\Sigma_1$} 
      \put(45, 71.4){\small $\Sigma_2$} 
      \put(10, 43){\small $\Sigma_3$} 
      \put(45, 25){\small $\Sigma_4$} 
     \end{overpic}
    \caption{\label{Sigma.pdf} The four quadrants $D_1, \dots, D_4$ (left) and the four deformed quadrants $\mathcal{D}_1, \dots, \mathcal{D}_4$ separated by the contour $\Sigma = \cup_{j=1}^4 \Sigma_j$ (right). The dots represent zeros of $a$ and $d$.}
     \end{center}
\end{figure}
Let $\Sigma = \cup_{j=1}^4 \Sigma_j$, where $\Sigma_j = \bar{\mathcal{D}}_j \cap \bar{\mathcal{D}}_{j-1}$ and $\mathcal{D}_0 \equiv \mathcal{D}_4$, denote the contour separating the domains $\mathcal{D}_j$ (see Figure \ref{Sigma.pdf}) and define the jump matrix $J(x,t,k)$ for $k \in \Sigma$ by
\begin{equation}\label{Jdef}
J(x,t,k)= \begin{cases}
\begin{pmatrix}
1-\lambda \abs{\gamma}^2 & \gamma e^{-2i\theta} \\ 
-\lambda \gamma^* e^{2i\theta} & 1
\end{pmatrix}, & \quad k \in \Sigma_1, 
	\\
\begin{pmatrix}
1 & 0\\ 
-\Gamma e^{2i\theta} & 1
\end{pmatrix}, & \quad k \in \Sigma_2,
	\\
\begin{pmatrix}
1 & -(\gamma - \lambda \Gamma^*)e^{-2i\theta}\\ 
\lambda(\gamma^* -\lambda \Gamma)e^{2i\theta} & 1-\lambda|\gamma^* - \lambda\Gamma|^2
\end{pmatrix}, & \quad k \in \Sigma_3,
	\\
\begin{pmatrix}
1 & \lambda \Gamma^* e^{-2i\theta} \\ 
0 & 1
\end{pmatrix}, & \quad k \in \Sigma_4,
\end{cases}
\end{equation}
where
\begin{align}\label{gammaGammadef}
 \gamma(k) := \frac{b(k)}{\overline{a(\bar{k})}}, \qquad \Gamma(k;T) := \frac{\lambda \overline{B(\bar{k};T)}}{a(k)d(k;T)}.
\end{align}
Letting $p = \cap_{j=1}^4 \bar{\mathcal{D}}_j$ denote the point at which the horizontal and vertical branches of $\Sigma$ intersect, we have the following lemma.

\begin{lemma}[RH problem for $M$]\label{Mlemma}
Suppose that $a(k)$ has no zeros on $\RR$ and only simple zeros in $\CC_+$. For each $(x,t) \in [0, \infty) \times [0, \infty)$, $M(x,t,k)$ is the unique solution of the following RH problem:
\begin{enumerate}[$(a)$]
\item $M(x,t,\cdot): \mathbb{C} \setminus (\Sigma \cup \{k_j, \bar{k}_j\}_1^N) \to \mathbb{C}^{2 \times 2}$ is analytic.

\item The boundary values of $M(x,t,k)$ as $k$ approaches $\Sigma \setminus \{p\}$ from the left $(+)$ and right $(-)$ exist, are continuous on $\Sigma \setminus \{p\}$, and satisfy
\begin{align}\label{Mjump}
  M_+(x,t,k) = M_-(x, t, k) J(x, t, k), \qquad k \in \Sigma \setminus \{p\}.
\end{align}

\item $M(x,t,k) = I + O(k^{-1})$ as $k \to \infty$.

\item $M(x,t,k) = O(1)$ as $k \to p$. 

\item Let $\{k_j\}_1^N$ denote the (necessarily finitely many) simple zeros of $a(k)$ in $\CC_+$. The first column of $M$ has at most a simple pole at each $k_j \in \CC_+$, the second column of $M$ has at most a simple pole at each $\bar{k}_j \in \CC_-$, and the following residue conditions hold for $j = 1, \dots, N$:
\begin{subequations}\label{Mresidues}
\begin{align}\label{Mresiduesa}
& \underset{k = k_j}{\Res} [M(x,t,k)]_1 =  \frac{1}{\dot{a}(k_j)b(k_j)} e^{2i\theta(x,t,k_j)} [M(x,t,k_j)]_2,
	\\\label{Mresiduesb}
& \underset{k=\bar{k}_j}{\Res} [M(x,t,k)]_2 =  \frac{\lambda}{\overline{\dot{a}(k_j)b(k_j)}} e^{-2i\theta(x,t,\bar{k}_j)}  [M(x,t,\bar{k}_j)]_1.	
\end{align}
\end{subequations}

\end{enumerate}
Moreover, 
\begin{align}\label{recoverufromM}
u(x,t) = 2i\lim_{k \to \infty} k(M(x,t,k))_{12}, \qquad x \geq 0, ~ t \geq 0.
\end{align}
\end{lemma}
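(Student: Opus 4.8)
The plan is to verify the four RH-problem properties for $M$ defined by (\ref{Mdef}), to establish the recovery formula (\ref{recoverufromM}), and finally to prove uniqueness. First I would check analyticity and the pole structure: away from $\Sigma$, each $\mu_j$ is analytic in the relevant sector by the Volterra construction (\ref{inteqn}) and the usual domain-of-analyticity argument for the exponential $e^{-i\theta\hat\sigma_3}$, so $M$ is analytic on $\CC\setminus(\Sigma\cup\{k_j,\bar k_j\})$, with possible simple poles at the zeros of $a$ in $\mathcal{D}_1$ (and, via the Schwarz symmetry of the problem, at $\bar k_j$ in $\mathcal{D}_4$); the zeros of $d$ in $\bar{\mathcal{D}}_2\cup\bar{\mathcal{D}}_3$ have been deformed away by the choice of $\mathcal{D}_j$, so they contribute nothing. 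The residue conditions (\ref{Mresidues}) then follow by computing the residue of $[\mu_2]_1/a$ at a simple zero $k_j$ of $a$ using the relation $\mu_3=\mu_2 e^{-i\theta\hat\sigma_3}s(k)$ together with $\det\mu_2=1$ and the explicit form (\ref{abABdef}) of $s$; this is the standard calculation that at $k=k_j$ the first column of $\mu_2$ is proportional to the second, the constant of proportionality being $b(k_j)$, which produces the factor $1/(\dot a(k_j)b(k_j))$.

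Next I would derive the jump relation (\ref{Mjump}). On each ray $\Sigma_j$, two of the definitions in (\ref{Mdef}) meet, and the jump is read off from the relations (\ref{preRH}) between the $\mu_j$ together with the global relation, which is what allows $A(k;T)$ and $B(k;T)$ to enter only through the combination $\Gamma$ in (\ref{gammaGammadef}); the entries of $J$ in (\ref{Jdef}) are obtained by writing, e.g., on $\Sigma_1$ the column $[\mu_2]_1/a$ in terms of $[\mu_3]_1$ and $[\mu_3]_2$ via $\mu_2 = \mu_3 e^{i\theta\hat\sigma_3} s(k)^{-1}$ and $\det s = 1$. The normalization $M\to I$ as $k\to\infty$ follows from the large-$k$ expansions $\mu_j = I + O(k^{-1})$ (a consequence of the integral equations and the Schwartz decay of $u$) together with (\ref{ablargek2})--(\ref{ABlargek}), and the boundedness $M = O(1)$ at the intersection point $p$ follows because at $p$ all four $\mu_j$ are finite and $a,d$ are bounded away from zero there (having deformed the quadrants so that $p\notin\{$zeros of $a,d\}$). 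The recovery formula (\ref{recoverufromM}) comes from matching the $O(k^{-1})$ coefficient in the asymptotic expansion of $M$: expanding (\ref{xparteqn}) for large $k$ gives $M = I + M^{(1)}/k + O(k^{-2})$ with $(M^{(1)})_{12} = -\tfrac{1}{2i}u$, exactly as in the line case.

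Uniqueness is the step I expect to require the most care, because $M$ has poles. The standard route is: given two solutions $M,\tilde M$, the ratio $M\tilde M^{-1}$ would be analytic across $\Sigma$ if there were no poles, but with poles one must first check that the residue conditions (\ref{Mresidues}) force the potential singularities at $k_j,\bar k_j$ to be removable in $M\tilde M^{-1}$ (this is where the specific form of the residue relations, with the same constants for $M$ and $\tilde M$, is used), so that $M\tilde M^{-1}$ extends to an entire function; combined with $M\tilde M^{-1}\to I$ at infinity and a determinant argument ($\det M\equiv 1$, which follows from $\det M = 1$ on $\Sigma$ by the triangularity/unit-determinant structure of $J$ and Liouville), one concludes $M\equiv\tilde M$. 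The one subtlety specific to this IBVP setting is that the jump contour $\Sigma$ is not a contour for which the jump matrix is a small perturbation of the identity, and the point $p$ needs separate treatment: near $p$ one uses property $(d)$ ($M=O(1)$) to argue the isolated singularity at $p$ is removable. I would also note the dependence on $T$: strictly $M = M(x,t,k;T)$, and one must observe that the construction and hence $u(x,t)$ is independent of the choice of final time $T$ (for $x,t$ with $t\le T$), which follows from the uniqueness just established applied on overlapping time intervals.
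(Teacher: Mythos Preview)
Your approach is essentially the standard one that the paper cites (references \cite{F2002, FIS2005}); the paper itself gives no detailed proof, simply noting that the argument is routine in the UTM framework with the only modification being the use of deformed quadrants $\mathcal{D}_j$ to avoid the zeros of $d$. Your sketch correctly covers all the ingredients.

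One correction: the global relation (\ref{globalreln}) plays \emph{no role} in deriving the jump matrix $J$ for $M$. The jumps on $\Sigma_2$ and $\Sigma_4$ (where $\Gamma$ appears) come purely from the algebraic relation $\mu_1 = \mu_2\, e^{-i\theta\hat\sigma_3} S(k;T)^{-1}$ in (\ref{preRH}) together with the definition (\ref{ddef}) of $d$; the combination $\Gamma = \lambda B^*/(ad)$ falls out of that computation directly. The global relation enters only later, in the passage from $M$ to $\tilde M$, to motivate replacing $\Gamma$ by the $T$-independent $\tilde\Gamma$. So drop the phrase ``together with the global relation'' from your jump-derivation paragraph. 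Your remark on $T$-independence, while correct, goes beyond what the lemma asserts and is handled by the paper at the level of $\tilde M$ and $m$, not $M$.
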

\begin{proof}
The lemma is standard in the context of the unified transform method, see \cite{F2002, FIS2005}. In \cite{F2002, FIS2005}, the function $M$ is defined using the standard quadrants $D_j$ instead of the deformed quadrants $\mathcal{D}_j$, and residue conditions are included at the possible zeros of $a$ and $d$ in $D_2$. For the present purposes, it is more convenient to instead handle the possible zeros of $d$ by replacing the quadrants $D_j$ by the deformed quadrants $\mathcal{D}_j$. 
\end{proof}

\subsection{The solution $\tilde{M}$} 
The formulation of the RH problem for $M$ involves the spectral function $\Gamma(k;T)$ which is defined in terms of the unknown boundary values $u(0,t)$ and $u_x(0,t)$. In order to arrive at an effective solution of the problem, we seek to replace $\Gamma(k; T)$ by another function $\tilde{\Gamma}(k)$ which is defined in terms of the initial data alone. 

The definition of $\tilde{\Gamma}$ can be motivated as follows. The spectral functions $a,b,A,B$ are not independent but satisfy the global relation (see \cite{FIS2005})
\begin{equation}\label{globalreln}
A(k;T)b(k)-a(k)B(k;T)=c(k;T)e^{4ik^2 T}, \qquad \im k \geq 0,
\end{equation}
where the function $c(k;T)$ is analytic for $\im k > 0$ and satisfies $c(k,T)=O(1/k)$ as $k \to \infty$, $\im k \geq 0$. The global relation implies that
\begin{align}\label{BAfromGR}
\frac{B(k;T)}{A(k;T)} = \frac{b(k)}{a(k)} - \frac{c(k;T)}{a(k)A(k;T)}e^{4ik^2 T}.
\end{align}
On the other hand, we can rewrite the definition (\ref{gammaGammadef}) of $\Gamma(k;T)$ as
\begin{align}\label{Gammalambdaaa}
\Gamma(k;T) = \frac{\lambda}{a(k)(a(k)\frac{A^*(k;T)}{B^*(k;T)} - \lambda b(k))}, \qquad k \in \bar{D}_2.
\end{align}
The Robin boundary condition $u + qu_x = 0$ implies that the matrix $\mathsf{V}$ in \eqref{tparteqn} obeys the symmetry $\mathsf{V}(0,t,-k) = \mathcal{N}(k) \mathsf{V}(0,t,k) \mathcal{N}(k)^{-1}$, where $\mathcal{N}(k) = \diag(2k+iq, -2k+iq)$ (see \cite{F2002}). Hence, $S(-k) = \mathcal{N}(k) S(k) \mathcal{N}(k)^{-1}$ which yields the following symmetries for $A$ and $B$:
\begin{align}\label{ABsymm}
A(k;T) = A(-k;T), \qquad B(k;T) = -\frac{2k-iq}{2k+iq}B(-k;T).
\end{align}
Using these symmetries, we can write (\ref{Gammalambdaaa}) as
$$\Gamma(k;T) = -\frac{\lambda}{a(k)\big[a(k)\frac{(2k-iq)A^*(-k;T)}{(2k+iq)B^*(-k;T)} + \lambda b(k)\big]}, \qquad k \in \bar{D}_2.$$
Employing (\ref{BAfromGR}), this becomes
\begin{align}\label{GammaaAc}
\Gamma(k;T) = -\frac{\lambda}{a(k)\big[a(k)\frac{2k-iq}{2k+iq}\big(\frac{b^*(-k)}{a^*(-k)} - \frac{c^*(-k;T)}{a^*(-k)A^*(-k;T)}e^{-4ik^2 T}\big)^{-1} + \lambda b(k)\big]}, \qquad k \in \bar{D}_2.
\end{align}

\begin{remark}
The solution $u(x,t)$ evaluated at some time $t < T$ should not depend on $T$.
This suggests that we define the new RH problem for $\tilde{M}$ by replacing $\Gamma$ with the $T$-independent function $\lim_{T \to \infty} \Gamma(k;T)$, if the limit exists. Since $e^{-4ik^2 T}$ has exponential decay as $T \to \infty$ for $k \in D_2$, equation (\ref{GammaaAc}) suggests that
\begin{align}\label{tildeGammamotivation}
\lim_{T \to \infty} \Gamma(k;T) = -\frac{\lambda}{a(k)(a(k)\frac{2k-iq}{2k+iq}\frac{a^*(-k)}{b^*(-k)} + \lambda b(k))}, \qquad k \in D_2.
\end{align}
Regardless of whether the large $T$ limit of $\Gamma(k;T)$ exists or not, we will take the right-hand side of (\ref{tildeGammamotivation}) as our definition of $\tilde{\Gamma}(k)$.
\end{remark}
Inspired by (\ref{tildeGammamotivation}), we define $\tilde{\Gamma}(k)$ by
\begin{align}\label{tildeGammadef}
\tilde{\Gamma}(k)=-\frac{\lambda \overline{b(-\bar{k})}}{a(k)}\frac{2k+iq}{\Delta(k)},\qquad \im k \geq 0,
\end{align}
where $\Delta$ is the function defined in (\ref{Deltadef}). 
Deforming the vertical branch of $\Sigma$ further into the left half-plane if necessary, we may assume that $\Delta$ has no zeros in $\mathcal{D}_2$.
We define the function $G(k;T)$ by
\begin{align}\label{Gdef}
  G_1 = I, \quad G_2 = \begin{pmatrix} 1 & 0 \\ (\tilde{\Gamma} - \Gamma)e^{2i\theta} & 1 \end{pmatrix}, \quad G_3 = \begin{pmatrix} 1 & \lambda (\tilde{\Gamma}^* - \Gamma^*)e^{-2i\theta} \\ 0 & 1 \end{pmatrix}, \quad G_4 = I,
\end{align}
where $G_j$ denotes the restriction of $G$ to  $\mathcal{D}_j$ for $j = 1, \dots, 4$.
Introducing $\tilde{M}(x,t,k)$ by
\begin{align}\label{tildeMdef}
\tilde{M} = MG,
\end{align}
it is easy to verify that $\tilde{M}$ satisfies the same jump relations on $\Sigma$ as $M$ except that $\Gamma(k;T)$ is replaced by $\tilde{\Gamma}(k)$. In fact, we have the following lemma.

\begin{lemma}[RH problem for $\tilde{M}$]\label{Mtildelemma}
Suppose that $a(k)$ has no zeros on $\RR$ only simple zeros in $\CC_+$. 
For each $(x,t) \in [0, \infty)\times [0, \infty)$, $\tilde{M}(x,t,k)$ satisfies the following RH problem:
\begin{enumerate}[$(a)$]
\item $\tilde{M}(x,t,\cdot): \mathbb{C} \setminus (\Sigma \cup \{k_j, \bar{k}_j\}_1^N) \to \mathbb{C}^{2 \times 2}$ is analytic.

\item The boundary values of $\tilde{M}(x,t,k)$ as $k$ approaches $\Sigma \setminus \{p\}$ from the left $(+)$ and right $(-)$ exist, are continuous on $\Sigma \setminus \{p\}$, and satisfy
\begin{align}\label{Mtildejump}
  \tilde{M}_+(x,t,k) = \tilde{M}_-(x, t, k) \tilde{J}(x, t, k), \qquad k \in \Sigma \setminus \{p\}.
\end{align}
where the jump matrix $\tilde{J}$ is defined by replacing $\Gamma(k;T)$ with $\tilde{\Gamma}(k)$ in the definition (\ref{Jdef}) of $J$.

\item $\tilde{M}(x,t,k) = I + O(k^{-1})$ as $k \to \infty$.

\item $\tilde{M}(x,t,k) = O(1)$ as $k \to p$.

\item Let $\{k_j\}_1^N$ denote the (necessarily finitely many) simple zeros of $a(k)$ in $\CC_+$. The first column of $\tilde{M}$ has at most a simple pole at each $k_j \in \CC_+$, the second column of $\tilde{M}$ has at most a simple pole each $\bar{k}_j \in \CC_-$, and the following residue conditions hold for $j = 1, \dots, N$:
\begin{subequations}\label{Mtilderesidues}
\begin{align}\label{Mtilderesiduesa}
& \underset{k = k_j}{\Res} [\tilde{M}(x,t,k)]_1 =  \frac{1}{\dot{a}(k_j)b(k_j)} e^{2i\theta(x,t,k_j)} [\tilde{M}(x,t,k_j)]_2,
	\\\label{Mtilderesiduesb}
& \underset{k=\bar{k}_j}{\Res} [\tilde{M}(x,t,k)]_2 =  \frac{\lambda}{\overline{\dot{a}(k_j)b(k_j)}} e^{-2i\theta(x,t,\bar{k}_j)}  [\tilde{M}(x,t,\bar{k}_j)]_1.	
\end{align}
\end{subequations}

\end{enumerate}
\end{lemma}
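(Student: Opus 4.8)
The plan is to obtain the RH problem for $\tilde{M}$ directly from the RH problem for $M$ (Lemma \ref{Mlemma}) via the multiplicative transformation $\tilde{M} = MG$ defined in (\ref{tildeMdef}), exploiting the fact that $G$ is analytic and bounded away from the contour $\Sigma$ and equals the identity near $k=\infty$ and in the quadrants $\mathcal{D}_1,\mathcal{D}_4$. First I would record the structural properties of $G$: by (\ref{Gdef}), $G_j$ is analytic in $\mathcal{D}_j$ for each $j$ because $\tilde{\Gamma}$ is analytic in $\bar{\CC}_+$ away from the zeros of $a$ and $\Delta$ (by (\ref{tildeGammadef}) and Corollary \ref{ablemma}, together with the assumption that $a$ has only simple zeros in $\CC_+$), $\Gamma(k;T)$ is analytic in $\bar{D}_2$ away from the zeros of $a$ and $d$ (by (\ref{gammaGammadef})), and—crucially—we deformed $\Sigma$ so that neither the zeros of $\Delta$ nor those of $d$ lie in $\bar{\mathcal{D}}_2$ (and similarly in $\bar{\mathcal{D}}_3$ via the Schwartz-conjugate symmetry). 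Hence $G$ has no poles in $\CC\setminus\Sigma$, and since $G_1=G_4=I$ and $G_2,G_3\to I$ as $k\to\infty$ (because $\tilde{\Gamma}, \Gamma = O(k^{-1})$ by the large-$k$ asymptotics (\ref{ablargek2}), (\ref{ABlargek})), the function $G$ is bounded everywhere and tends to $I$ at infinity. This gives properties $(a)$, $(c)$, $(d)$ for $\tilde{M}$ immediately from the corresponding properties of $M$, since multiplication by an analytic bounded factor preserves analyticity off $\Sigma$, the $O(k^{-1})$ behavior at infinity, and the $O(1)$ behavior at $p$.

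Next I would compute the jump. On $\Sigma_j\setminus\{p\}$ we have $\tilde{M}_+ = M_+ G_+ = M_- J\, G_+ = \tilde{M}_- G_-^{-1} J G_+$, so $\tilde{J} = G_-^{-1} J G_+$, where $G_\pm$ denotes the boundary value of $G$ from the side of $\Sigma$ lying in $\mathcal{D}_\pm$. On $\Sigma_1$ (the positive real axis), both adjacent quadrants have $G=I$ (namely $\mathcal{D}_1$ and $\mathcal{D}_4$), so $\tilde{J}=J$ there, consistent with the claim since $J|_{\Sigma_1}$ does not involve $\Gamma$. On $\Sigma_2$, the $(+)$ side is $\mathcal{D}_2$ and the $(-)$ side is $\mathcal{D}_1$, so $\tilde{J} = I \cdot J \cdot G_2$; a direct $2\times 2$ multiplication using $J|_{\Sigma_2} = \bigl(\begin{smallmatrix}1&0\\-\Gamma e^{2i\theta}&1\end{smallmatrix}\bigr)$ and $G_2 = \bigl(\begin{smallmatrix}1&0\\(\tilde\Gamma-\Gamma)e^{2i\theta}&1\end{smallmatrix}\bigr)$ gives $\bigl(\begin{smallmatrix}1&0\\-\tilde\Gamma e^{2i\theta}&1\end{smallmatrix}\bigr)$, which is exactly $\tilde{J}|_{\Sigma_2}$. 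The analogous computation on $\Sigma_4$ (Schwartz conjugate of $\Sigma_2$) uses $G_3$ on one side and $G_4=I$ on the other, and on $\Sigma_3$ both $G_2$ and $G_3$ appear; in each case the algebra is the routine verification already summarized in the sentence preceding the lemma ("$\tilde M$ satisfies the same jump relations on $\Sigma$ as $M$ except that $\Gamma$ is replaced by $\tilde\Gamma$"), so I would present it compactly rather than in full. This establishes property $(b)$.

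Finally, the residue conditions $(e)$: the zeros $\{k_j\}_1^N$ of $a$ in $\CC_+$ lie on $\RR$'s complement inside $\mathcal{D}_1$ (or wherever they sit after the harmless deformation—in any case not in $\mathcal{D}_2$, since we arranged $a$ to be zero-free in $\bar{\mathcal{D}}_2$), and near each such $k_j$ we have $G=G_1=I$; likewise $G=G_4=I$ near each $\bar{k}_j\in\CC_-$. Therefore $\tilde{M}=M$ in a neighborhood of each $k_j$ and each $\bar{k}_j$, so the pole structure and the residue relations (\ref{Mresidues}) transfer verbatim to (\ref{Mtilderesidues}). I expect the main (only real) obstacle to be bookkeeping: one must be careful that the contour deformation producing $\mathcal{D}_j$ genuinely removes all zeros of $\Delta$ (and of $d$, and their Schwartz conjugates) from the closed quadrants $\bar{\mathcal{D}}_2$ and $\bar{\mathcal{D}}_3$ where $G$ is nontrivial, and that no zero of $a$ is inadvertently swept into $\mathcal{D}_2$—this is guaranteed by the finite-disk localization of all these zeros coming from (\ref{abABlargek}) and by Proposition \ref{rDeltaprop}, but it is the point that requires attention. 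Everything else is the standard "multiply by a sectionally analytic factor" manipulation for RH problems.
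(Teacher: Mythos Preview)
Your overall structure is right, and properties $(a)$, $(b)$, $(d)$, $(e)$ indeed follow routinely from Lemma~\ref{Mlemma} via $\tilde M=MG$, as you outline. But your argument for $(c)$---the normalization $\tilde M=I+O(k^{-1})$ at infinity---has a real gap, and this is precisely the step the paper singles out as the only nontrivial one.

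You assert $G_2\to I$ because $\tilde\Gamma,\Gamma=O(k^{-1})$ from (\ref{ablargek2}) and (\ref{ABlargek}). However, the $(2,1)$ entry of $G_2$ is $(\tilde\Gamma-\Gamma)e^{2i\theta}$, not $\tilde\Gamma-\Gamma$. In $\bar D_2$ (hence in $\bar{\mathcal D}_2$ for large $|k|$) one has $\re(ik^2)>0$, so $e^{2i\theta}=e^{2ikx+4ik^2t}$ grows exponentially for any $t>0$. Thus $\tilde\Gamma-\Gamma=O(k^{-1})$ alone does \emph{not} give $(\tilde\Gamma-\Gamma)e^{2i\theta}=O(k^{-1})$. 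The missing ingredient is the global relation: writing $\Gamma$ via (\ref{GammaaAc}) and $\tilde\Gamma$ via (\ref{tildeGammadef}), one finds that the difference $\tilde\Gamma-\Gamma$ is proportional to
\[
X(k;T)=\frac{c^*(-k;T)}{a^*(-k)A^*(-k;T)}\,e^{-4ik^2T}
\]
times a factor that is $1+O(k^{-1})$. Consequently $(\tilde\Gamma-\Gamma)e^{2i\theta}$ carries the exponent $e^{2ikx+4ik^2(t-T)}$, which \emph{is} bounded in $\bar D_2$ for $0\le t\le T$. Combined with $c^*(-k;T)=O(k^{-1})$ this yields $(\tilde\Gamma-\Gamma)e^{2i\theta}=O(k^{-1})$, hence $G_2=I+O(k^{-1})$. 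This cancellation, driven by the global relation, is the substance of the lemma; without it your step ``$G_2\to I$'' fails.

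A minor point: your $\Sigma_2$ computation $J\cdot G_2$ actually produces lower-left entry $(\tilde\Gamma-2\Gamma)e^{2i\theta}$, not $-\tilde\Gamma e^{2i\theta}$; with the correct orientation the product is $G_2^{-1}J$, which does give the desired $-\tilde\Gamma e^{2i\theta}$. This is a bookkeeping slip, not a conceptual one.
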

\begin{proof}
All properties except the normalization condition $\tilde{M} = I + O(k^{-1})$ follow immediately from Lemma \ref{Mlemma} and the definition (\ref{tildeMdef}) of $\tilde{M}$. To prove that $\tilde{M} = I + O(k^{-1})$ as $k \to \infty$, it is sufficient to show that 
\begin{align}\label{Gasymptotics}
  G_j(k;T) = I + O\big(k^{-1}\big)\quad \text{as $k \to \infty$, $k \in \bar{D}_j$, $j = 1,\dots, 4$}.
\end{align}
The large $k$ estimates (\ref{abABlargek}) of $a$ and $b$ together with the expression  (\ref{tildeGammadef}) for $\tilde{\Gamma}$ implies that $\tilde{\Gamma}(k) = O(k^{-1})$ as $k \to \infty$ in $\bar{D}_2$. Moreover, letting
$$X(k;T) := \frac{c^*(-k;T)}{a^*(-k)A^*(-k;T)}e^{-4ik^2 T},$$
and recalling that $A(k;T) = 1 + O(k^{-1})$ and $c(k;T) = O(k^{-1})$ as $k \to \infty$ in $\bar{D}_1$, we see that $X(k;T) = O(k^{-1})$ as $k \to \infty$ in $\bar{D}_2$.
It then follows from the expression (\ref{GammaaAc}) for $\Gamma(k;T)$ that
\begin{align*}
\Gamma(k;T) & = -\frac{\lambda \big(\frac{b^*(-k)}{a^*(-k)} - X\big)}{a(k)\big[a(k)\frac{2k-iq}{2k+iq} + \lambda b(k) \big(\frac{b^*(-k)}{a^*(-k)} - X\big)\big]}
= -\frac{\lambda \big(O(k^{-1}) - X\big)}{1 + O(k^{-1})}
	\\
& = \lambda X(k;T) + O(k^{-1}), \qquad k \to \infty, ~ k \in \bar{D}_2.
\end{align*}
Since $e^{2i\theta -4ik^2 T} = e^{2ikx + 4ik^2(t-T)}$ is bounded for $k \in \bar{D}_2$, we obtain the estimate
$$(\tilde{\Gamma} - \Gamma)e^{2i\theta} = O\big(k^{-1}\big) \quad \text{as $k \to \infty$, $k \in \bar{D}_2$},$$
from which (\ref{Gasymptotics}) follows. 
\end{proof}

\subsection{The solution $m$ and proof of Proposition \ref{mfromuprop}}\label{commonzeroissuesection}
Define $m(x,t,k)$ by
\begin{align}\label{mdef}
m(x,k,t)= \begin{cases}
    \tilde{M}(x,t,k), & k \in \mathcal{D}_2 \cup \mathcal{D}_3, \\
    \tilde{M}(x,t,k)\begin{pmatrix}
1 & 0\\ 
\tilde{\Gamma}(k)e^{2i\theta} & 1
\end{pmatrix}, & k \in \mathcal{D}_1, \\
    \tilde{M}(x,t,k)\begin{pmatrix}
1 & \lambda\overline{\tilde{\Gamma}(\bar{k})}e^{-2i\theta}\\ 
0 & 1
\end{pmatrix}, & k \in \mathcal{D}_4.
\end{cases}
\end{align}

We will show that $m(x,t,k)$ is the unique solution of the RH problem of Theorem \ref{linearizableth} and that $u(x,t)$ satisfies (\ref{recoveru}). 
It is clear that $m$ is analytic in $\mathcal{D}_2 \cup \mathcal{D}_3$, and also in $\mathcal{D}_1 \cup \mathcal{D}_4$ away from the possible zeros of $a$ and $\Delta$ and their complex conjugates. 
Let $\{k_j\}_1^N$ and $\{\xi_j\}_1^M$ be the sets of zeros of $a(k)$ and $\Delta(k)$ in $\CC_+$, respectively. By assumption, these sets are disjoint. 

If $a(k)$ has no zeros on $\RR$ and only simple zeros in $\CC_+$, then the analyticity of $m$ at the $k_j$ as well as the residue conditions (\ref{mresiduesa}) at the $\xi_j$ follow from the relations $[m]_1 = [\tilde{M}]_1 + \tilde{\Gamma} e^{2i\theta} [\tilde{M}]_2$ and $[m]_2 = [\tilde{M}]_2$ which are valid in $\mathcal{D}_1$ together with the relations
$$\underset{k=k_j}{\Res} \tilde{\Gamma}(x,t,k) = -\frac{1}{\dot{a}(k_j)b(k_j)}, \qquad
\underset{k=\xi_j}{\Res} \tilde{\Gamma}(x,t,k) = -\frac{\lambda \overline{b(-\bar{\xi}_j)}}{a(\xi_j)}\frac{2\xi_j+iq}{\dot{\Delta}(\xi_j)}.$$
Since $m$ obeys the symmetries
\begin{align}\label{msymm}
  m_{11}(x,t,k) = \overline{m_{22}(x,t,\bar{k})}, \qquad 
  m_{21}(x,t,k) = \lambda \overline{m_{12}(x,t,\bar{k})},
\end{align}
the analogous statements for $\bar{k}_j$ and $\bar{\xi}_j$ follow by symmetry.

To avoid having to assume that $a(k)$ is nonzero on $\RR$ and that all the zeros of $a(k)$ are simple, we note that the above facts can also be derived directly. Indeed, by (\ref{Mdef}), (\ref{tildeMdef}), and (\ref{mdef}), we have, for $k \in \mathcal{D}_1$,
$$[m]_1 = \frac{[\mu_2]_1}{a(k)} + \tilde{\Gamma} e^{2i\theta} [\mu_3]_2, \qquad [m]_2 = [\mu_3]_2,$$
implying that $[m]_2$ is analytic in $\mathcal{D}_1$ and that the residue conditions (\ref{mresiduesa}) hold at the $\xi_j$. Moreover, using the relation $[\mu_3]_2 = [\mu_2]_1 e^{-2i\theta} b + [\mu_2]_2 a$, we find after simplification that
\begin{equation}\label{m1equation}
[m]_1 = \frac{2k-iq}{\Delta(k)}\overline{a(-\bar{k})}[\mu_2]_1
- \frac{\lambda  (2k+ iq)}{\Delta(k)} \overline{b(-\bar{k})}e^{2i\theta}[\mu_2]_2,
\end{equation}
showing that $[m]_1$ is analytic away from the zeros of $\Delta$. The analogous conclusions in $\mathcal{D}_2$ follow similarly or from the symmetry (\ref{msymm}).

Using that $r = \gamma^* - \lambda\tilde{\Gamma}$, the jump relation (\ref{mjump}) follows from (\ref{Mtildejump}).
Since $e^{2i\theta}$ is bounded in $\bar{D}_1$ and $\tilde{\Gamma}(k) = O(k^{-1})$ as $k \to \infty$, $k \in \bar{D}_1$, by (\ref{ablargek}), we obtain the estimate
$$\tilde{\Gamma} e^{2i\theta} = O\big(k^{-1}\big) \quad \text{as $k \to \infty$, $k \in \bar{D}_1$},$$
which implies that $m = I + O(k^{-1})$ as $k \to \infty$.
Finally, equation (\ref{recoveru}) follows from (\ref{recoverufromM}) and (\ref{tildeMdef}). 
This completes the proof of Proposition \ref{mfromuprop}.

\section{Global Schwartz class solutions}\label{Schwartzsec}
The main result of this section is Proposition \ref{Schwartzprop}, which shows that the conclusion of Theorem \ref{linearizableth} holds whenever the initial data $u_0$ belongs to the Schwartz class. For the proof, we need the following lemma.

\begin{lemma}\label{rSchwartzlemma}
Let $u_0 \in \mathcal{S}(\RR_+)$ and $q \in \RR$. Suppose that $\Delta(k) \neq 0$ for all $k \in \RR$. Then $r \in \mathcal{S}(\RR)$.
\end{lemma}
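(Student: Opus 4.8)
The plan is to reduce the statement to the single input that, when $u_0$ is Schwartz, the spectral functions satisfy $a-1,b\in\mathcal{S}(\RR)$; granting this, the conclusion follows from elementary closure properties of the Schwartz class. To obtain $a-1,b\in\mathcal{S}(\RR)$ I would argue from the Volterra equation \eqref{muvolterra}. Smoothness of $\mu(0,\cdot)$ on $\RR$ comes from differentiating \eqref{muvolterra} in $k$, which produces, exactly as in the proof of Lemma \ref{continuitylemma1}, a new Volterra equation of the same type whose inhomogeneity involves $x^{j}\mathsf{U}_0$; since $x^{j}u_0\in\mathcal{S}(\RR_+)$ for all $j$, these equations are uniquely solvable and $\mu(0,\cdot)\in C^{\infty}(\RR)$. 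Rapid decay of $\partial_k^{j}\mu(0,k)$ as $k\to\infty$ is obtained by iterating \eqref{muvolterra} and integrating by parts: the oscillatory factors $e^{-2ik(x-x')}$ allow $N$ integrations by parts, each gaining a factor $k^{-1}$ at the cost of differentiating a Schwartz factor, so $\partial_k^{j}\mu(0,k)=O(k^{-N})$ for every $j,N\ge 0$. (This is classical in inverse scattering theory.) Hence $a-1,b\in\mathcal{S}(\RR)$, so in particular $b(\pm\,\cdot\,)\in\mathcal{S}(\RR)$ and $a(\pm\,\cdot\,)\in 1+\mathcal{S}(\RR)$.

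Next I would record the two elementary facts that $k\cdot\mathcal{S}(\RR)\subseteq\mathcal{S}(\RR)$ and that $\mathcal{S}(\RR)$ is a module over the algebra $C_b^{\infty}(\RR)$ of smooth functions with all derivatives bounded; note that $1+\mathcal{S}(\RR)\subset C_b^{\infty}(\RR)$ and that $2k\pm iq$ has bounded derivatives of every order $\ge 1$. Restricting \eqref{rdef}--\eqref{Deltadef} to $k\in\RR$ (so that $\overline{a(-\bar{k})}=\overline{a(-k)}$), these closure properties show that the numerator
\[
N(k):=(2k-iq)\overline{b(k)a(-k)}+(2k+iq)\overline{a(k)b(-k)}
\]
lies in $\mathcal{S}(\RR)$, while $\Delta(k)=(2k-iq)+g(k)$ for some $g\in\mathcal{S}(\RR)$. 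Consequently $|\Delta(k)|\ge|2k-iq|-|g(k)|\to\infty$ as $|k|\to\infty$; combined with the hypothesis that $\Delta$ is continuous and nonvanishing on $\RR$, this forces $\delta:=\inf_{k\in\RR}|\Delta(k)|>0$. Moreover every derivative $\Delta^{(j)}$ with $j\ge 1$ is bounded on $\RR$ (indeed $\Delta'=2+g'$ and $\Delta^{(j)}=g^{(j)}$ for $j\ge 2$).

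Finally, since $r=N/\Delta$ with $N\in\mathcal{S}(\RR)$, $\Delta\in C^{\infty}(\RR)$, $|\Delta|\ge\delta$, and all higher derivatives of $\Delta$ bounded, an induction via the quotient rule shows that each $r^{(n)}$ is a finite sum of terms of the form $P(k)\,N^{(l)}(k)/\Delta(k)^{m}$ with $l\le n$, where $P$ is a product of factors $\Delta^{(i)}$ with $i\ge 1$. Each such term is a Schwartz function $N^{(l)}$ times a function bounded on $\RR$, hence decays faster than any power of $k$; together with $r\in C^{\infty}(\RR)$ this yields $r\in\mathcal{S}(\RR)$.

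The main obstacle is the first step — that $u_0\in\mathcal{S}(\RR_+)$ forces $a-1$ and $b$ into $\mathcal{S}(\RR)$ — which is where all the analysis lies; the remaining steps are routine bookkeeping with Schwartz functions. One minor point worth flagging is that $q\ne 0$ automatically here (otherwise $\Delta(0)=-iq=0$ would contradict the hypothesis), so $2k\pm iq$ is nonvanishing on $\RR$ and the algebra above causes no trouble.
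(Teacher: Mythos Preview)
Your first step is not just the main obstacle --- it is false. For the \emph{half-line} problem, $b\notin\mathcal{S}(\RR)$ in general: integrating by parts in
\[
b(k)=\psi_1(0,k)=-\int_0^\infty e^{2ikx'}u_0(x')\psi_2(x',k)\,dx'
\]
produces a boundary term $u_0(0)\psi_2(0,k)/(2ik)=u_0(0)a(k)/(2ik)$ at $x'=0$, so $b(k)\sim u_0(0)/(2ik)$ as $k\to\infty$ (this is exactly the constant $b_1$ in Lemma~\ref{specfcnlemma}). Further integrations by parts keep generating boundary contributions involving $u_0^{(j)}(0)$, so $b$ has a full asymptotic expansion in powers of $1/k$ with generically nonzero coefficients. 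The stationary one-soliton makes this explicit: by \eqref{bsas}, $b_s(k)=-i\sqrt{\omega-q^2}/(2k+i\sqrt{\omega})$ and $a_s(k)-1=i(q-\sqrt{\omega})/(2k+i\sqrt{\omega})$, neither of which is Schwartz. Your appeal to ``classical inverse scattering'' is a full-line intuition that does not survive the boundary at $x=0$.

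The conclusion $r\in\mathcal{S}(\RR)$ therefore cannot come from closure properties alone; it requires a cancellation in the numerator $N(k)$ between the $1/k$-expansions of $\overline{b(k)a(-k)}$ and $\overline{a(k)b(-k)}$. The paper obtains this cancellation from the global relation \eqref{globalreln} together with the Robin symmetry \eqref{ABsymm}: writing $\overline{N(k)}=a(k)a(-k)(2k+iq)\big[\tfrac{b(k)}{a(k)}+\tfrac{2k-iq}{2k+iq}\tfrac{b(-k)}{a(-k)}\big]$, one shows that the bracket vanishes to all orders as $k\to\infty$ because the formal expansions of $b/a$ and $B/A$ agree (from \eqref{BAfromGR}) and $B/A$ has the right parity (from \eqref{ABsymm}). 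This is the missing idea; once you have it, the division by $\Delta$ is routine, as you say.
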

\begin{proof}
Fix $T > 0$. Since $u_0 \in \mathcal{S}(\RR_+)$, standard arguments show that $r \in C^{\infty}(\RR)$. Moreover, it can be shown that there exist coefficients $\{a_j, b_j, A_j, B_j\}_1^\infty \subset \CC$ such that the spectral functions $a,b,A,B$ defined in \eqref{abABdef} admit the following asymptotic expansions to all orders as $k \to \infty$:
\begin{align*}
& a(k) \sim 1 + \sum_{j=1}^\infty \frac{a_j}{k^j}, \quad b(k) \sim \sum_{j=1}^\infty \frac{b_j}{k^j} \quad \text{ uniformly for $\arg k \in [0, \pi]$},
	\\
& A(k) \sim 1 + \sum_{j=1}^\infty \frac{A_j}{k^j}, \quad B(k) \sim \sum_{j=1}^\infty \frac{B_j}{k^j} \quad \text{ uniformly for $\arg k \in [0, \pi/2]\cup [\pi, 3\pi/2]$};
\end{align*}
furthermore, these expansions can be differentiated termwise any number of times. 
Considering the global relation (\ref{BAfromGR}) along a ray $\arg k \in (0, \pi/2)$, we see that the large $k$-expansions of $B/A$ and $b/a$ agree formally to all orders. Hence, recalling (\ref{ABsymm}), we find that, as formal power series in $k^{-1}$ for $k \in \RR$, 
$$\frac{b(k)}{a(k)} + \frac{(2k-iq)}{(2k+iq)}\frac{b(-k)}{a(-k)}
\sim \frac{b(k)}{a(k)} + \frac{(2k-iq)}{(2k+iq)}\frac{B(-k;T)}{A(-k;T)}
=  \frac{b(k)}{a(k)} - \frac{B(k;T)}{A(k;T)}
\sim 0$$
to all orders, and that this relation can be differentiated any number of times. Substituting this into the definition (\ref{rdef}) of $r(k)$, we conclude that $r(k)$ has rapid decay as $k \to \pm \infty$.
\end{proof}

\begin{proposition}\label{Schwartzprop}
Suppose $\lambda = 1$ or $\lambda = -1$. Let $u_0 \in \mathcal{S}(\RR_+)$ and suppose $u_0'(0) + q u_0(0) = 0$ for some $q \in \RR \setminus \{0\}$. If $\lambda = -1$, then suppose that Assumption \ref{zerosassumption} holds; if $\lambda = 1$ and $q > 0$, then suppose the RH problem of Theorem \ref{linearizableth} has a solution for each $(x,t) \in [0, \infty) \times [0, \infty)$.
Then there exists a unique global Schwartz class solution $u(x,t)$ of the Robin IBVP for NLS with parameter $q$ and initial data $u_0$. Moreover, for each $(x,t) \in [0, \infty) \times [0, \infty)$, the RH problem of Theorem \ref{linearizableth} has a unique solution $m(x,t,k)$, and $u(x,t)$ is given in terms of $m(x,t,k)$ by (\ref{recoveru}).
\end{proposition}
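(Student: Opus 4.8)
The plan is to construct the solution directly from the RH problem of Theorem~\ref{linearizableth}, verify each item of Definition~\ref{Schwartzsolutiondef}, and deduce uniqueness from Proposition~\ref{mfromuprop}.

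First I would form $a,b,r,\Delta$ and the discrete data $\{\xi_j,c_j\}_1^M$ from $u_0$ via \eqref{abdef}--\eqref{cjdef}. By Lemma~\ref{rSchwartzlemma} we have $r\in\mathcal S(\RR)$, and by Proposition~\ref{rDeltaprop} (together with Assumption~\ref{zerosassumption} when $\lambda=-1$) the function $\Delta$ has no real zeros and only finitely many simple zeros in $\CC_+$, none of which is a zero of $a$; moreover in the defocusing case $|r|<1$ on $\RR$ and $M\le1$. Next I would show that the RH problem has a (necessarily unique) solution $m(x,t,k)$ for every $(x,t)\in[0,\infty)^2$. For $\lambda=-1$ one replaces the residue conditions \eqref{mresidues} by jump conditions on small circles around the $\xi_j,\bar\xi_j$ in the standard way; the resulting jump matrix on $\RR$ and these circles has the Schwartz-conjugation symmetry and satisfies $v(x,t,k)+v(x,t,k)^\dagger>0$ on $\RR$ because $1-\lambda|r|^2=1+|r|^2\ge1$, so a vanishing lemma applies. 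For $\lambda=1$, $q<0$ there are no poles and $1-|r|^2>0$, so the defocusing vanishing lemma applies. For $\lambda=1$, $q>0$ the existence of a solution is part of the hypotheses. Uniqueness in every case follows from $\det v\equiv1$ (so the ratio of two solutions extends analytically across $\RR$ and across the poles) and Liouville's theorem. I would then define $u(x,t):=2i\lim_{k\to\infty}k\,m_{12}(x,t,k)$.

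That $u$ is smooth, solves \eqref{NLS} for $x,t>0$, and has the rapid decay in (v) would be obtained by the usual dressing-method argument: the RH data depend analytically on $(x,t)$ through $\theta=kx+2k^2t$ and the problem is solvable with uniformly bounded solution operator for $(x,t)$ in compact sets, so $m$ and hence $u$ are $C^\infty$; with $\Phi(x,t,k):=m(x,t,k)e^{-i(kx+2k^2t)\sigma_3}$ one checks $\Phi_x=\mathsf U\Phi$ and $\Phi_t=\mathsf V\Phi$ with $\mathsf U,\mathsf V$ as in Section~\ref{mfromusec} reconstructed from $u$, whose compatibility is exactly \eqref{NLS}; and since $r\in\mathcal S(\RR)$ the standard bounds on the inverse scattering map for NLS on the line, restricted to $x\ge0$, give rapid decay of $u$ and all its $x$-derivatives as $x\to+\infty$, uniformly for $t$ in compact intervals. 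For the Robin condition in (iv), the definitions \eqref{rdef}, \eqref{Deltadef}, \eqref{cjdef} and the symmetries in Corollary~\ref{ablemma} and Proposition~\ref{rDeltaprop} show that $r$ and $\{(\xi_j,c_j)\}$ are invariant under $k\mapsto-k$ composed with Schwartz conjugation and conjugation by $\mathcal N(k)=\diag(2k+iq,-2k+iq)$; tracking this symmetry through the RH problem forces the coefficient matrix $\mathsf V(0,t,k)$ reconstructed from $m$ to obey $\mathsf V(0,t,-k)=\mathcal N(k)\mathsf V(0,t,k)\mathcal N(k)^{-1}$, which is equivalent to $u_x(0,t)+q\,u(0,t)=0$; combined with $u_0'(0)+q u_0(0)=0$ this is (iv).

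The step I expect to be the main obstacle is (iii), namely $u(x,0)=u_0(x)$: one must show that the $t=0$ RH problem --- whose data were produced from $u_0$ only through the indirect formulas \eqref{rdef}--\eqref{cjdef} --- really reconstructs $u_0$ on $x\ge0$. The plan is to analyze the RH problem at $t=0$ (where $\theta=kx$) directly, expressing its solution $m(x,0,k)$ in terms of the eigenfunction $\mu(x,k)$ of \eqref{muvolterra}, which is built from $u_0$ alone and is normalized at $x=+\infty$, together with a companion solution of the $x$-part of the Lax pair normalized at $x=0$; the linearizable $k\mapsto-k$ symmetry used above is precisely what lets the boundary data be eliminated so that this $t=0$ problem coincides with the one in Theorem~\ref{linearizableth}. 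Then $u(x,0)=2i\lim_{k\to\infty}k\,m_{12}(x,0,k)=2i\lim_{k\to\infty}k\,\mu_{12}(x,k)=u_0(x)$ by \eqref{xparteqn0} and \eqref{mulargek}. Finally, uniqueness of the Schwartz class solution follows from Proposition~\ref{mfromuprop}: any global Schwartz class solution gives rise to the RH problem of Theorem~\ref{linearizableth}, which has a unique solution, so the solution is determined by \eqref{recoveru}; equivalently, a Schwartz class solution is a global weak solution in $H^{1,1}(\RR_+)$ and these are unique by Proposition~\ref{H11existenceprop}.
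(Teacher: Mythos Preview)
Your overall architecture matches the paper's: construct $r\in\mathcal S(\RR)$ via Lemma~\ref{rSchwartzlemma}, obtain existence of $m$ from a vanishing lemma (or the hypothesis when $\lambda=1$, $q>0$), use dressing arguments for smoothness, the NLS equation and rapid decay, and deduce uniqueness from Proposition~\ref{mfromuprop}. So the strategy is correct.

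However, you have misjudged where the real work is. You flag the initial condition $u(x,0)=u_0(x)$ as ``the main obstacle'', but the paper (following Its--Shepelsky~\cite{IS2013}) treats this as routine and instead spends its effort on the Robin condition in the \emph{defocusing} case. Your symmetry argument for the Robin condition is too quick: the $k\mapsto-k$ symmetry of the jump data, combined with conjugation by $\mathcal N(k)$, does not pass directly to a symmetry of $m$ or of the reconstructed $\mathsf V(0,t,\cdot)$, because the normalization $m\to I$ at infinity and the pole locations are not invariant under this map. What one actually needs (as in~\cite{IS2013}) is a relation of the form \eqref{mRobinsymmetry} linking $\overline{m(-x,t,-\bar k)}$ to $m(x,t,k)$ through a matrix $\bar P(t)$, and this requires
\[
\delta(t)\;=\;|m_{11}(0,t,-i\beta)|^2-\lambda\,|m_{21}(0,t,-i\beta)|^2\;\neq\;0.
\]
For $\lambda=-1$ this is trivially positive; for $\lambda=1$ it is not, and this is precisely the ``additional difficulty'' the paper identifies. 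The paper's resolution is: $\delta(t)$ is constant in $t$ (the matrix $X(t)$ formed from the relevant columns of $m(0,t,\pm i\beta)$ satisfies the traceless $t$-part of the Lax pair, so $\det X(t)$ is conserved), and its value can be read off from the $t\to\infty$ limit of $m(0,t,\pm i\beta)$ obtained by steepest descent, where $m$ tends to $I$ (if $q<0$) or to the explicit stationary one-soliton solution $m_s$ (if $q>0$), in either case giving $\delta\neq0$. Your proposal does not address this point, and without it the Robin-condition step does not go through when $\lambda=1$.
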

\begin{proof}
By Proposition \ref{rDeltaprop} (if $\lambda = 1$) and by Assumption \ref{zerosassumption} (if $\lambda = -1$), we have $\Delta(k) \neq 0$ for all $k \in \RR$, and hence, by Lemma \ref{rSchwartzlemma}, $r \in \mathcal{S}(\RR)$.
Moreover, for $k \in \RR$, we have
\begin{equation}\label{vplusvdagger}
v+v^\dagger = \begin{pmatrix}2(1-\lambda |r(k)|^2) & (\overline{r}-\lambda \overline{r})e^{-2i\theta} \\ (r-\lambda r)e^{2i\theta} & 2 \end{pmatrix},
\end{equation}
where $v^\dag$ denotes the complex conjugate transpose of the jump matrix $v$. When $\lambda=1$, the matrix in (\ref{vplusvdagger}) is diagonal with strictly positive entries (recall that $|r|<1$ on $\RR$ if $\lambda = 1$ by Proposition \ref{rDeltaprop}), and so it is positive definite. If $\lambda=-1$, then $v=v^\dag$ is positive definite by Sylvester's criterion. Hence, if $\Delta(k)$ has no zeros in $\CC_+$, then the existence of a solution of the RH problem of Theorem \ref{linearizableth} follows from the existence of a vanishing lemma, see \cite{Z1989}.

If $\Delta(k)$ has a finite number of simple zeros $\{\xi_j\}_1^M$ in $\CC_+$, then the RH problem for $m$ can be transformed into a regularized RH problem (whose solution we denote by $m_{\reg}$) together with a system of algebraic equations, see \cite[Proposition 2.4]{FI1996}. 
The solution $m_{\reg}$ exists by the above vanishing lemma argument. In the focusing case, there always exists a unique solution of the associated algebraic system, see \cite[Proposition 2.4]{FI1996}. In the defocusing case, by Proposition \ref{rDeltaprop}, $\Delta$ has a zero only if $q > 0$ and in this case the solution $m$ exists by assumption.
Thus, in either case, the solution $m$ of the RH problem of Theorem \ref{linearizableth} exists for each $(x,t) \in [0,\infty) \times [0, \infty)$. Uniqueness of $m$ follows because the jump matrix has unit determinant. Since $r \in \mathcal{S}(\RR)$, standard arguments based on ideas of the dressing method show that (\ref{recoveru}) defines a smooth solution $u(x,t)$ of the NLS equation, and a Deift--Zhou steepest descent analysis shows that $u(x,t)$ has rapid decay as $x \to \pm \infty$. 

We next verify that $u(x,t)$ obeys (i) the initial condition $u(x,0) = u_0(x)$, and (ii) the Robin boundary condition (\ref{Robincondition}). In the case of the focusing NLS, these properties were proved in \cite{IS2013}. In the case of (i), the analogous argument applies in the defocusing case. In the case of (ii), the defocusing case presents an additional difficulty. Indeed, the elegant verification of (ii) in \cite{IS2013} in the focusing case relies on the function
$$\delta(t) \coloneqq |m_{11}(0,t,-i\beta)|^2 - \lambda |m_{21}(0,t,-i\beta)|^2$$
being nonzero for all $t \geq 0$. In the focusing case of $\lambda = -1$, this condition is clearly always satisfied, and it is then shown in \cite{IS2013} that the following symmetry holds
\begin{align}\label{mRobinsymmetry}
\overline{m(-x,t,-\bar{k})} = \sigma_1 \bar{P}(t) \begin{pmatrix}\frac{1}{k - i\beta} & 0 \\ 0 & \frac{1}{k+i\beta} \end{pmatrix} \bar{P}(t)^{-1} m(x,t,k) \begin{pmatrix} k - i\beta & 0 \\ 0 & k+i\beta\end{pmatrix} D(k) \sigma_1,
\end{align}
where
\begin{align*}
& \bar{P}(t) \coloneqq 
\frac{1}{\delta(t)} \begin{pmatrix} m_{11}(0,t,-i\beta) & m_{12}(0,t,i\beta) \\ m_{21}(0,t,-i\beta) & m_{22}(0,t,i\beta) \end{pmatrix},
	\\
& D(k) \coloneqq \begin{cases} 
\begin{pmatrix} \Delta_e(k) & 0 \\ 0 & 1/\Delta_e(k) \end{pmatrix}, & k \in \CC_+, \\
\begin{pmatrix} 1/\Delta_e^*(k) & 0 \\ 0 & \Delta_e^*(k) \end{pmatrix}, & k \in \CC_-, 
\end{cases}
\end{align*}
and 
\begin{align}\label{Deltaedef}
\begin{cases}
\Delta_e(k) \coloneqq \frac{\Delta(k)}{2k-iq} \;\; \text{and} \;\;  \beta \coloneqq \frac{q}{2} & \text{if $q <0$, $a(-iq/2) \neq 0$ or $q > 0$, $b(iq/2) = 0$},
	\\
\Delta_e(k) \coloneqq \frac{\Delta(k)}{2k+iq}  \;\; \text{and} \;\;  \beta \coloneqq -\frac{q}{2} & \text{if $q >0$, $b(iq/2) \neq 0$ or $q < 0$, $a(-iq/2) = 0$}.
\end{cases}
\end{align}
As shown in \cite{IS2013}, the symmetry (\ref{mRobinsymmetry}) implies (ii). If $\delta(t) \neq 0$ for all $t \geq 0$, then analogous arguments show that the symmetry (\ref{mRobinsymmetry}) is satisfied and leads to (ii) also if $\lambda = 1$. But the difficulty of verifying that $\delta(t) \neq 0$ for all $t \geq 0$ remains. We claim that in fact this condition holds also if $\lambda =1$. To see this, note that
$$\delta(t) = \det X(t), \quad \text{where} \quad X(t) \coloneqq \begin{pmatrix} m_{11}(0,t,-i\beta) & m_{12}(0,t,i\beta) \\
 m_{21}(0,t,-i\beta) & m_{22}(0,t,i\beta) \end{pmatrix}.$$
Since $m(0,t,-i\beta)$ and $m(0,t,i\beta)$ both satisfy the $t$-part in (\ref{mulax}) (this follows from the dressing type arguments mentioned above), so does $X(t)$. Since $\mathsf{V}$ is trace-less, it follows that $\delta(t) \equiv \delta$ is independent of $t$.
Moreover, as $t \to \infty$, a steepest descent analysis of the RH problem shows that
$$m(0, t, \pm i \beta) \to \begin{cases} 
I, & \text{$\lambda = 1$ and $q < 0$}, \\
m_s(0,t,\pm i\beta), & \text{$\lambda = 1$ and $q > 0$}, 
\end{cases}$$
where $m_s$ is the solution corresponding to the stationary one-soliton.  
Since a computation shows that for $\lambda = 1$ and $q > 0$ (see Appendix \ref{RHsolitonsec}) 
$$|(m_{s}(0,t,-i\beta))_{11}|^2 - \lambda |(m_{s}(0,t,-i\beta))_{21}|^2 = -\frac{(\sqrt{\alpha^2 + \omega} + \sqrt{\omega})^2}{\alpha^2} \neq 0,$$ 
we conclude that $\delta(t) \equiv \delta \neq 0$ for all $t \geq 0$. This completes the verification of (ii) in the defocusing case.

Finally, the uniqueness of the Schwartz class solution $u(x,t)$ follows from Proposition \ref{mfromuprop}. Indeed, if $u_1(x,t)$ and $u_2(x,t)$ are two solutions, then Proposition \ref{mfromuprop} implies that both admit the representation (\ref{recoveru}) where $m$ is the unique solution of a RH problem whose formulation only involves the given data.
\end{proof}

\section{Proof of Theorem \ref{linearizableth}}\label{linearizableproofsec}
In Section \ref{Schwartzsec}, we showed that the conclusion of Theorem \ref{linearizableth} holds for initial data in the Schwartz class $\mathcal{S}(\RR_+)$. In order to prove Theorem \ref{linearizableth} in the general case of initial data in $H^{1,1}(\RR_+)$, we will use continuity arguments and the density of $\mathcal{S}(\RR_+)$ in $H^{1,1}(\RR_+)$.
We begin with a few lemmas.

\begin{lemma}\label{sequencelemma}
Let $q \in \RR$ and $u_0 \in H^{1,1}(\RR_+)$. Then there exists a sequence $\{u_0^{(n)}\}_{n=1}^\infty \subset \mathcal{S}(\RR_+)$ such that 
\begin{enumerate}[$(i)$]
\item $u_0^{(n)} \to u_0$ in $H^{1,1}(\RR_+)$ as $n \to \infty$, and 
\item $(u_{0}^{(n)})'(0) + qu_0^{(n)}(0) = 0$ for each $n \geq 1$.
\end{enumerate}
\end{lemma}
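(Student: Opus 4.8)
The plan is to reduce the statement to a density fact on the subspace of $H^{1,1}(\RR_+)$ of functions vanishing at the origin, where approximation by smooth functions supported away from $x=0$ is available. Since elements of $H^{1,1}(\RR_+)$ are (absolutely) continuous and bounded, the value $a \coloneqq u_0(0)$ is a well-defined trace. I would fix once and for all a function $\phi \in \mathcal{S}(\RR_+)$ with $\phi(0)=1$ and $\phi'(0)=-q$ — for instance $\phi(x) \coloneqq (1-qx)e^{-x^2}$ — so that $\phi'(0)+q\phi(0)=0$ and hence the reference function $a\phi \in \mathcal{S}(\RR_+)$ satisfies $(a\phi)'(0)+q(a\phi)(0)=0$. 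Writing $h \coloneqq u_0 - a\phi \in H^{1,1}(\RR_+)$, we have $h(0)=0$.

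The key step will be to show that $C_c^\infty((0,\infty))$ is dense in $X \coloneqq \{h \in H^{1,1}(\RR_+) : h(0)=0\}$. Granting this, one chooses $h_n \in C_c^\infty((0,\infty))$ with $h_n \to h$ in $H^{1,1}(\RR_+)$ and sets $u_0^{(n)} \coloneqq a\phi + h_n$. Since $C_c^\infty((0,\infty)) \subset \mathcal{S}(\RR_+)$ we have $u_0^{(n)} \in \mathcal{S}(\RR_+)$; moreover $u_0^{(n)} \to a\phi + h = u_0$ in $H^{1,1}(\RR_+)$, and because $h_n$ vanishes identically near $x=0$ (so $h_n(0)=h_n'(0)=0$),
$$(u_0^{(n)})'(0)+qu_0^{(n)}(0) = \big[(a\phi)'(0)+q(a\phi)(0)\big] + \big[h_n'(0)+qh_n(0)\big] = 0,$$
which gives both required properties.

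It remains to prove the density claim, which I would carry out in three standard stages. (i) \emph{Truncation}: with $\eta \in C^\infty(\RR_+)$ equal to $1$ near $0$ and to $0$ on $[2,\infty)$ and $\eta_R(x)\coloneqq\eta(x/R)$, one checks that $\eta_R h \to h$ in $H^{1,1}(\RR_+)$ as $R\to\infty$; the only term not immediate from dominated convergence is $\|\eta_R' h\|_{L^2} \le CR^{-1}\|h\|_{L^2}$, which tends to zero. Thus one may assume $h$ has compact support. (ii) \emph{Shift}: extending $h$ by zero across $x=0$ yields a compactly supported function in $H^1(\RR)$ (there is no jump, as $h(0)=0$), so its translates $h(\cdot-\epsilon)|_{\RR_+}$, which vanish on $[0,\epsilon]$, converge to $h$ in $H^1(\RR_+)$ by strong continuity of translation in $H^1$; since all functions here are supported in a common compact set, this is also convergence in $H^{1,1}(\RR_+)$. (iii) \emph{Mollification}: convolving such a shifted function with a mollifier of width $\delta \ll \epsilon$ produces an element of $C_c^\infty((0,\infty))$ converging in $H^{1,1}(\RR_+)$. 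A diagonal selection then yields the desired $h_n$.

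The main obstacle here is conceptual rather than computational, and it dictates the route above. For $u_0 \in H^{1,1}(\RR_+)$ the quantity $u_0'(0)$ is meaningless, so the naive attempt to correct an arbitrary Schwartz approximant $v_n \to u_0$ by adding a multiple $e_n g$ of a fixed profile with $e_n$ chosen to enforce the boundary condition fails: $v_n'(0)$ need not converge and $e_n$ need not vanish. Splitting off the single reference function $a\phi$ and reducing to $X$, where one may legitimately approximate by functions vanishing outright near the origin, is precisely what sidesteps this difficulty; apart from this, every ingredient (cutoff estimates, $H^1$-translation continuity, mollification, and the elementary fact that $H^1$ convergence on a fixed compact set upgrades to $H^{1,1}$ convergence) is routine.
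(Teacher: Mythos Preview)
Your proof is correct and takes a genuinely different route from the paper. The paper splits into two cases according to whether $u_0(0)=0$; when $u_0(0)\neq 0$ it mollifies and then multiplies by an exponential factor of the form $\exp\big[(q+v_\epsilon'(0)/v_\epsilon(0))\int_x^\infty \eta(x'/\epsilon)\,dx'\big]$ to enforce the Robin condition, while when $u_0(0)=0$ it uses an odd extension near the origin so that the mollified function already vanishes at $0$, then cuts off with $(1-\eta(x/\sqrt{\epsilon}))$. Your approach instead peels off a single fixed reference function $a\phi$ satisfying the Robin condition exactly, reducing the problem to the density of $C_c^\infty((0,\infty))$ in $\{h\in H^{1,1}(\RR_+):h(0)=0\}$, which you handle by the standard truncate--shift--mollify scheme. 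This is cleaner in that it avoids case analysis, the correction is linear rather than a multiplicative perturbation depending on $v_\epsilon(0)\neq 0$, and all the analytic work is pushed into a reusable density lemma; the paper's approach, on the other hand, stays closer to the mollified approximant throughout and may feel more hands-on. Both arguments are short and elementary.
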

\begin{proof}
Let $\eta: \RR \to [0,1]$ be a smooth cut-off function such that $\eta$ is even, $\eta \equiv 1$ in a neighborhood of $0$, $\eta(x) = 0$ for $|x| \geq 1$, and $\int_\RR \eta(x) dx = 1$. 
Let $u_{0e} \in H^{1,1}(\RR)$ be an extension of $u_0$ to $\RR$ such that $u_{0e}(x) = u_0(x)$ for $x \geq 0$ and $u_{0e}(x) = 0$ for $x \leq -1$. 
Standard mollifier arguments show that $v_{\epsilon}(x) \coloneqq \int_{\RR} u_{0e}(y) \eta_{\epsilon}(x-y) dy$, where $\eta_\epsilon(x) \coloneqq \eta(x/\epsilon)/\epsilon$, are smooth compactly supported functions such that $v_\epsilon(x) \to u_{0e}$ in $H^{1,1}(\RR)$ as $\epsilon \downarrow 0$. If $u_0(0) \neq 0$, then $v_\epsilon(0) \neq 0$ for all sufficiently small $\epsilon$, and we can obtain the desired sequence by setting
$$u_0^{(n)}(x) = e^{(q + v_\epsilon'(0)/v_\epsilon(0))\int_x^\infty \eta(x'/\epsilon)dx'} v_\epsilon(x),$$
where $\epsilon = 1/(N+n)$ and $N$ is large enough. If $u_0(0) = 0$, then by modifying $u_{0e}$ so that $u_{0e}(-x) = -u_0(x)$ for all sufficiently small $x > 0$, we may assume that $v_\epsilon(0) = 0$ for all small enough $\epsilon$, and then straightforward estimates show that
$$u_0^{(n)}(x) = (1 - \eta(x/\sqrt{\epsilon}))v_\epsilon(x),$$
where $\epsilon = 1/(N+n)$, provides a sequence with the desired properties.
\end{proof}

\begin{lemma}\label{u0toabLinftylemma}
The maps $u_0 \mapsto a$ and $u_0 \mapsto b$ are continuous $H^{1,1}(\RR_+) \to L^\infty(\bar{\CC}_+)$.
\end{lemma}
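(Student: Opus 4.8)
The plan is to reduce the supremum norm over $\bar{\CC}_+$ to the supremum norm over $\RR$ by a maximum-modulus argument, and then to invoke the continuity already established in Lemma \ref{continuitylemma1} together with the Sobolev embedding $H^1(\RR) \hookrightarrow L^\infty(\RR)$. Since both $H^{1,1}(\RR_+)$ and $L^\infty(\bar{\CC}_+)$ are metric spaces, it suffices to prove sequential continuity.

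First I would record the following elementary consequence of the maximum modulus principle: if $f\colon \bar{\CC}_+ \to \CC$ is continuous, analytic in $\CC_+$, and $f(k)\to 0$ as $k\to\infty$ with $\im k \geq 0$, then $\|f\|_{L^\infty(\bar{\CC}_+)}=\|f\|_{L^\infty(\RR)}$. The proof is standard: for $k_0\in\CC_+$ and $\epsilon>0$, choose $R>|k_0|$ so large that $|f|<\epsilon$ on the semicircular arc $\{|k|=R,\ \im k\geq 0\}$; applying the maximum modulus principle on the half-disk $\{|k|\leq R,\ \im k\geq 0\}$ gives $|f(k_0)|\leq \max(\|f\|_{L^\infty(\RR)},\epsilon)$, and letting $\epsilon\downarrow 0$ yields the claim.

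Next, given $u_0\in H^{1,1}(\RR_+)$ and a sequence $u_0^{(n)}\to u_0$ in $H^{1,1}(\RR_+)$, I would apply this fact to the differences $a^{(n)}-a$ and $b^{(n)}-b$ of the corresponding spectral functions. By Corollary \ref{ablemma}, each of $a-1$, $b$, $a^{(n)}-1$, $b^{(n)}$ is analytic in $\CC_+$, continuous on $\bar{\CC}_+$, and $O(k^{-1})$ as $k\to\infty$; hence the differences $a^{(n)}-a$ and $b^{(n)}-b$ inherit analyticity in $\CC_+$, continuity on $\bar{\CC}_+$, and decay to $0$ at infinity, and therefore satisfy the hypotheses of the recorded fact. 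Consequently $\|a^{(n)}-a\|_{L^\infty(\bar{\CC}_+)}=\|a^{(n)}-a\|_{L^\infty(\RR)}\leq C\|a^{(n)}-a\|_{H^1(\RR)}$, and similarly $\|b^{(n)}-b\|_{L^\infty(\bar{\CC}_+)}\leq C\|b^{(n)}-b\|_{H^1(\RR)}$. By Lemma \ref{continuitylemma1}, both right-hand sides tend to $0$ as $n\to\infty$, which proves the asserted continuity.

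There is essentially no serious obstacle: the analytic content was already supplied by Lemma \ref{continuitylemma1}, and the only new ingredient is the half-plane maximum-modulus observation. The single point requiring mild care is that this reduction must be applied to the difference $a^{(n)}-a$ rather than to $a^{(n)}$ and $a$ separately; since the difference retains analyticity, boundary continuity, and decay at infinity, this presents no difficulty.
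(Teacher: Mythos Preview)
Your argument is correct, and it takes a genuinely different route from the paper. The paper proceeds by direct Volterra-type estimates valid throughout $\bar{\CC}_+$: writing $\Delta\psi = K_{\Delta u_0}\check\psi + K_{u_0}\Delta\psi$ and using that $|E(x,x',k)|\le C$ for $\im k\ge 0$, one gets $|\Delta\psi(x,k)|\le C e^{C\|u_0\|_{L^1}}\|\Delta u_0\|_{H^{1,1}}$ uniformly in $x\ge 0$ and $\im k\ge 0$, and evaluating at $x=0$ finishes the proof. Your approach instead reduces the half-plane sup-norm to the real-axis sup-norm via the maximum modulus principle (legitimately applied to the differences, which vanish at infinity by Corollary~\ref{ablemma}), and then feeds in the $H^1(\RR)$ continuity already obtained in Lemma~\ref{continuitylemma1} through the Sobolev embedding $H^1(\RR)\hookrightarrow L^\infty(\RR)$. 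The paper's argument is more self-contained and yields an explicit bound depending only on $\|\Delta u_0\|_{L^1}$ (hence slightly sharper input regularity), while your argument is shorter and avoids repeating any integral-equation estimates by reusing Lemma~\ref{continuitylemma1}; both are entirely valid.
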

\begin{proof}
Using the same notation as in the proof of Lemma \ref{continuitylemma1}, we have from (\ref{DeltapsiVolterra}) that $\Delta \psi = K_{\Delta u_0}\check{\psi}+K_{u_0}\Delta \psi$, where, by (\ref{Ebounded}), (\ref{Psisum}), and (\ref{Kdef}),
$$|(K_{\Delta u_0}\check{\psi})(x, k)|
\leq C\|\Delta u_0\|_{L^1(\RR_+)} e^{C \|u_0\|_{L^1(\RR_+)}}
\leq C\|\Delta u_0\|_{H^{1,1}(\RR_+)}, \qquad x \geq 0, \; \im k \geq 0.$$
Hence, using (\ref{Ebounded}) again, a standard Volterra series estimate as in (\ref{Psisum}) shows that, for all $x \geq 0$ and $\im k \geq 0$,
$$|\Delta\psi(x,k)| \leq C e^{C\|u_0\|_{L^1(\RR_+)}}\|\Delta u_0\|_{H^{1,1}(\RR_+)} \leq C \|\Delta u_0\|_{H^{1,1}(\RR_+)},$$
uniformly for $u_0, \check{u}_0$ in bounded subsets of $H^{1,1}(\RR_+)$. Setting $x = 0$ in this estimate, the lemma follows.
\end{proof}

\begin{lemma}\label{zeroscontinuitylemma}
Let $q \in \RR \setminus \{0\}$ and $\check{u}_0 \in H^{1,1}(\RR_+)$ and suppose the associated spectral functions $\check{\Delta}, \check{a}$ satisfy Assumption \ref{zerosassumption}. Then Assumption \ref{zerosassumption} holds also for any potential $u_0$ sufficiently close to $\check{u}_0$ in $H^{1,1}(\RR_+)$. Moreover, if $\{\xi_i\}_1^M$ are the simple zeros in $\CC_+$ of $\Delta(k)$, then the maps $u_0 \mapsto M \in \ZZ$ and $u_0 \mapsto (\xi_1, \dots, \xi_M) \in \CC^M$ are continuous at $\check{u}_0 \in H^{1,1}(\RR_+)$.
\end{lemma}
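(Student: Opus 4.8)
The plan is to localize the zeros of $\Delta$ to a fixed bounded part of $\bar{\CC}_+$ by means of the large-$k$ asymptotics, and then to control them by a Rouché argument based on the uniform convergence $\Delta \to \check{\Delta}$. First, from the proof of Lemma \ref{mulemma} the $O(k^{-1})$ estimate (\ref{mulargek}), hence also (\ref{ablargek}), holds with a constant that stays bounded as $u_0$ ranges over a bounded subset of $H^{1,1}(\RR_+)$; substituting into (\ref{Deltadef}) shows that (\ref{Deltalargek}) holds uniformly over such subsets. Consequently there are $R > 0$ and a neighborhood $U$ of $\check{u}_0$ such that, for every $u_0 \in U$, $\Delta(k) \neq 0$ whenever $\im k \geq 0$ and $|k| \geq R$. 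Moreover, Lemma \ref{u0toabLinftylemma} (together with the uniform bound $|a|, |b| \leq C$ on $\bar{\CC}_+$ for $u_0 \in U$, again from the proof of Lemma \ref{mulemma}) implies that $a$, $b$, and hence $\Delta$ --- which on $\mathcal{K}_R := \{k : \im k \geq 0,\ |k| \leq R\}$ is a bilinear expression in $a(\pm\cdot)$, $b(\pm\cdot)$ with coefficients bounded on $\mathcal{K}_R$ --- converge uniformly on $\mathcal{K}_R$ to $\check{a}$, $\check{b}$, $\check{\Delta}$ as $u_0 \to \check{u}_0$ in $H^{1,1}(\RR_+)$.

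By Assumption \ref{zerosassumption}(i) for $\check{u}_0$, $\check{\Delta}$ has no zeros on $\RR$ and finitely many, simple zeros $\check{\xi}_1, \dots, \check{\xi}_M$ in $\CC_+$; shrinking $U$ we may assume they lie in $\{|k| < R\}$. Pick $\rho > 0$ so small that the closed disks $\mathcal{B}_j := \{k : |k - \check{\xi}_j| \leq \rho\}$ are pairwise disjoint, contained in $\CC_+ \cap \{|k| < R\}$, and carry no zero of $\check{\Delta}$ on their boundaries. Then $|\check{\Delta}| \geq \delta > 0$ on the compact set $\mathcal{K}_R \setminus \bigcup_j \operatorname{int}\mathcal{B}_j$, which contains all of $\mathcal{K}_R \cap \RR$ and all of $\bigcup_j \partial\mathcal{B}_j$. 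Shrinking $U$ once more, for $u_0 \in U$ we have $|\Delta - \check{\Delta}| < |\check{\Delta}|$ on this set. Hence, for $u_0 \in U$, $\Delta$ has no real zeros and no zeros in $\bar{\CC}_+$ outside $\bigcup_j \operatorname{int}\mathcal{B}_j$, while Rouché's theorem applied inside each $\mathcal{B}_j$ (where $\Delta$ and $\check{\Delta}$ are analytic) gives exactly one zero, counted with multiplicity, in each $\mathcal{B}_j$. Thus $\Delta$ has precisely $M$ zeros in $\CC_+$, each simple, which proves Assumption \ref{zerosassumption}(i) for $u_0$ and shows that $u_0 \mapsto M$ is locally constant --- hence continuous --- at $\check{u}_0$.

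Let $\xi_j = \xi_j(u_0)$ be the unique zero of $\Delta$ in $\mathcal{B}_j$. From the argument principle,
\[
\xi_j(u_0) = \frac{1}{2\pi i} \oint_{\partial\mathcal{B}_j} k\, \frac{\Delta'(k)}{\Delta(k)}\, dk ,
\]
and the integrand converges uniformly on $\partial\mathcal{B}_j$ to the one associated with $\check{u}_0$: indeed $\Delta \to \check{\Delta}$ uniformly on $\partial\mathcal{B}_j$ and, via the Cauchy integral formula on a slightly larger circle, also $\Delta' \to \check{\Delta}'$ uniformly on $\partial\mathcal{B}_j$, while $|\Delta| \geq \delta/2$ there. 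Therefore $\xi_j(u_0) \to \check{\xi}_j$, and with the labelling $\xi_j \in \mathcal{B}_j$ the map $u_0 \mapsto (\xi_1, \dots, \xi_M)$ is continuous at $\check{u}_0$. Finally, Assumption \ref{zerosassumption}(ii) for $u_0$ follows because $\check{a}(\check{\xi}_j) \neq 0$ by Assumption \ref{zerosassumption}(ii) for $\check{u}_0$, and $a(\xi_j) \to \check{a}(\check{\xi}_j)$ (using $a \to \check{a}$ uniformly on $\mathcal{K}_R$ and $\xi_j \to \check{\xi}_j$), so $a(\xi_j) \neq 0$ for all $j$ once $U$ is small enough. (When $\lambda = 1$ these conclusions are already contained in Proposition \ref{rDeltaprop}, so the statement is only used in the focusing case.)

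The main obstacle is the first step: making the localization radius $R$ uniform near $\check{u}_0$ and upgrading the continuity of $u_0 \mapsto a, b$ from Lemma \ref{u0toabLinftylemma} to uniform convergence of $\Delta$ on the compact set $\mathcal{K}_R$, which requires that the constants in the Volterra-series estimates of Lemma \ref{mulemma} be uniform over bounded subsets of $H^{1,1}(\RR_+)$. Once this is available, the zero-counting, simplicity, and continuity assertions are routine consequences of Rouché's theorem and the argument principle.
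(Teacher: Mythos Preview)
Your proof is correct and complete. It differs from the paper's in the localization step: the paper normalizes $\Delta$ by the linear factor $2k\mp iq$ to obtain $\Delta_a$ (or $\Delta_b$) $\in 1+H^1(\RR)$, then uses that $u_0 \mapsto \Delta_a - 1$ is continuous into $H^1(\RR)$ (from Lemma~\ref{continuitylemma1}) to show that the winding number of $\Delta_a$ along $\RR$, and hence $Z_\Delta$, is locally constant; individual zeros are then located by the same integral formula you use. You instead localize directly, exploiting that the $O(1)$ remainder in (\ref{Deltalargek}) is uniform on bounded subsets of $H^{1,1}(\RR_+)$ to confine all zeros to a fixed compact $\mathcal{K}_R$, and then run Rouch\'e on small disks. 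Your route is slightly more elementary---it avoids the normalized function $\Delta_a$ and the $H^1(\RR)$ machinery---while the paper's route reuses estimates already proved for $\Delta_a$ in Section~\ref{spectralfunctionssec}. You also make the verification of Assumption~\ref{zerosassumption}(ii) (that $a(\xi_j)\neq 0$ for the perturbed zeros) explicit, which the paper leaves implicit.
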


\begin{proof}
Suppose first that $q < 0$. The function $\Delta_a(k) \in 1 + H^1(\RR)$ defined in (\ref{Deltaadef}) depends continuously on $u_0 \in H^{1,1}(\RR_+)$ by Lemma \ref{continuitylemma1} (see (\ref{u0toDeltaacontinuous})). It follows that $\Delta_a$, and hence also $\Delta$, is nonzero on $\RR$ for any $u_0$ sufficiently close to $\check{u}_0$. Moreover, $\Delta_a$ has the same number of zeros and poles in $\CC_+$ as $\Delta$, and $\Delta_a \to 1$ as $k \to \infty$. Since $\Delta$ has no poles in $\CC_+$, the argument principle applied to a large semicircle enclosing the upper half-plane yields
\begin{align*}
Z_{\Delta} = \frac{\log \Delta_a(k)}{2\pi i} \bigg|_{k=-\infty}^{+\infty},
\end{align*}
where $Z_{\Delta}$ is the number of zeros of $\Delta$ in $\CC_+$ counted with multiplicity.
Using again that $\Delta_a(k) \in H^1(\RR)$ depends continuously on $u_0 \in H^{1,1}(\RR_+)$, we infer that the map $u_0 \mapsto Z_{\Delta} \in \ZZ$ is continuous at $\check{u}_0 \in H^{1,1}(\RR_+)$.

Since $q < 0$, the function $\Delta_a$ has the same zeros as $\Delta$ in the upper half-plane, and Cauchy's integral formula shows that if $\xi_j \in \CC_+$ is a simple zero of $\Delta_a$, then
$$\xi_j = \frac{1}{2\pi i} \oint_{\gamma_j} \frac{k \dot{\Delta}_a(k)}{\Delta_a(k)}dk,$$
where $\gamma_j$ is a small circle in $\CC_+$ around $\xi_j$ which contains no other zeros of $\Delta_a$. 
By Lemma \ref{u0toabLinftylemma}, the map $u_0 \mapsto \Delta_a$ is continuous $H^{1,1}(\RR_+) \to L^\infty(\bar{\CC}_+)$. Since $\Delta_a$ is analytic in $\CC_+$ it follows that $u_0 \mapsto \dot{\Delta}_a$ is continuous $H^{1,1}(\RR_+) \to L^\infty(K)$ where $K$ is any compact subset of $\CC_+$. 
Since the zeros $\{\check{\xi}_j\}_1^M$ of $\check{\Delta}(k)$ are simple by assumption, we conclude that the zeros $\{\xi_j\}_1^M$ of $\Delta(k)$ are simple and depend continuously on $u_0$ for $u_0$ in a $H^{1,1}(\RR_+)$-neighborhood of $\check{u}_0$.

Analogous arguments applied to the function
\begin{equation}\label{Deltabdef}
\Delta_b(k)\coloneqq \frac{\Delta(k)}{2k+iq}=\frac{2k-iq}{2k+iq}a(k)\overline{a(-\bar{k})}+\lambda b(k) \overline{b(-\bar{k})}, \qquad \im k \geq 0,
\end{equation}
instead of $\Delta_a(k)$ show that the above conclusions hold also if $q > 0$ (note that the identity (\ref{rDeltaarelation}) holds also with $\Delta_a$ replaced by $\Delta_b$). \end{proof}

If the set of zeros of $\Delta$ is non-empty, we replace the poles in the RH problem of Theorem \ref{linearizableth} by jumps along small circles in the standard way. To this end, let $D_j \subset \CC_+$, $j = 1, \dots, M$, be small disjoint open disks centered at the zeros $\xi_j$, $j = 1, \dots, M$, of $\Delta$. Let $D_j^*$ be the image of $D_j$ under complex conjugation. Define the contour $\Gamma$ by 
$$\Gamma = \RR \cup \big( \cup_{j=1}^M (\partial D_j \cup \partial D_j^*)\big),$$
where $\partial D_j$ is oriented clockwise and $\partial D_j^*$ is oriented counterclockwise. Let
$$\tilde{m} = \begin{cases} m(x,t,k), & k \in \CC \setminus (\RR \cup \big( \cup_{j=1}^M (\overline{D_j} \cup \overline{D_j^*})\big), 	
	\\
m(x,t,k) P_j(x,t,k)^{-1}, & k \in D_j, \;\; j = 1, \dots, M, 
	\\
m(x,t,k) Q_j(x,t,k), & k \in D_j^*, \;\; j = 1, \dots, M, 
\end{cases}$$
where
$$P_j(x,t,k) \coloneqq \begin{pmatrix} 1 & 0 \\ \frac{c_j e^{2i\theta(x,t,\xi_j)}}{k- \xi_j} & 1 \end{pmatrix}, \quad
Q_j(x,t,k) \coloneqq \begin{pmatrix} 1 & -\frac{\lambda \bar{c}_j e^{-2i\theta(x,t,\bar{\xi}_j)}}{k- \bar{\xi}_j}  \\ 0 & 1 \end{pmatrix}, \qquad j = 1, \dots, M,$$
and $\{c_j\}_1^M$ are the residue constants defined in (\ref{cjdef}).
Then $m$ satisfies the RH problem of Theorem \ref{linearizableth} if and only if $\tilde{m}$ satisfies the following RH problem. 

\begin{RHproblem}[RH problem for $\tilde{m}$]\label{RHmreg}
Find a $2 \times 2$-matrix valued function $\tilde{m}(x,t,k)$ with the following properties:
\begin{enumerate}[$(a)$]
\item $\tilde{m}(x,t,\cdot): \mathbb{C} \setminus \Gamma \to \mathbb{C}^{2 \times 2}$ is analytic.

\item The boundary values of $m(x,t,k)$ as $k$ approaches $\Gamma$ from the left $(+)$ and right $(-)$ exist, are continuous on $\Gamma$, and satisfy
\begin{align}\label{mregjump}
  \tilde{m}_+(x,t,k) = \tilde{m}_-(x, t, k) \tilde{v}(x, t, k), \qquad k \in \Gamma,
\end{align}
where $\tilde{v}$ is defined by
\begin{align}\label{vregdef}
\tilde{v}(x,t,k) \coloneqq \begin{cases}
v(x,t,k),& k \in \RR, 
	\\
P_j(x,t,k),& k \in \partial D_j, \; \; j = 1, \dots, M, 
	\\
Q_j(x,t,k),& k \in \partial D_j^*, \; \; j = 1, \dots, M.
\end{cases}
\end{align}

\item $\tilde{m}(x,t,k) \to I$ as $k \to \infty$.
\end{enumerate}
\end{RHproblem}

We next consider the set of all initial data for which the  assumptions of Theorem \ref{linearizableth} are fulfilled. As discussed in the introduction, we will have to assume existence of a solution of the RH problem in the defocusing case when there is a pole.

\begin{definition}\label{opensetUdef}
We let $\mathcal{U} \subset H^{1,1}(\RR_+)$ denote the set of all potentials $u_0 \in H^{1,1}(\RR_+)$ such that the corresponding spectral functions satisfy the assumptions of Theorem \ref{linearizableth}, i.e.,
\begin{itemize}
\item if $\lambda = -1$, then $\mathcal{U}$ consists of all $u_0 \in H^{1,1}(\RR_+)$ for which the associated spectral functions satisfy Assumption \ref{zerosassumption};

\item if $\lambda = 1$ and $q < 0$, then $\mathcal{U} = H^{1,1}(\RR_+)$;

\item if $\lambda = 1$ and $q > 0$, then $\mathcal{U}$ consists of all $u_0 \in H^{1,1}(\RR_+)$ for which the RH problem of Theorem \ref{linearizableth} has a solution $m(x,t,\cdot)$ for each $(x,t) \in [0,\infty) \times [0,\infty)$.

\end{itemize}

\end{definition}

For $h \in L^2(\Gamma)$, we define the Cauchy transform $\mathcal{C} h$ by
\begin{equation}
(\mathcal{C}h)(k)=\frac{1}{2\pi i}\int_{\Gamma}\frac{h(k')dk'}{k'-k}, \qquad k \in \CC \setminus \Gamma,
\end{equation}
and write $\mathcal{C}_+ h$ and $\mathcal{C}_-h$ for the left and right boundary values of  $\mathcal{C}f$ on $\Gamma$. Then $\mathcal{C}_+$ and $\mathcal{C}_-$ are bounded operators on $L^2(\Gamma)$ and given $w \in L^2(\Gamma) \cap L^\infty(\Gamma)$, we define $\mathcal{C}_{w}: L^2(\Gamma) + L^\infty(\Gamma) \to L^2(\Gamma)$ by $\mathcal{C}_{w}(f) = \mathcal{C}_-(f w)$.

\begin{lemma}\label{mexistencelemma}
Let $u_0 \in \mathcal{U}$ and $q \in \RR \setminus \{0\}$. Then RH problem \ref{RHmreg} has a unique solution $\tilde{m}(x,t,k)$ for each $(x,t) \in [0,\infty) \times [0,\infty)$ and this solution admits the representation
\begin{align}\label{mtilderepresentation}
\tilde{m}(x,t,k) = I + \frac{1}{2\pi i} \int_{\Gamma} \frac{\tilde{\mu}(x,t,k') \tilde{w}(x,t,k') dk}{k'-k}, \qquad k \in \CC \setminus \Gamma,
\end{align}
where $\tilde{w} = \tilde{v} - I$ and 
\begin{align}\label{mutildedef}
\tilde{\mu} = I + (I - \mathcal{C}_{\tilde{w}})^{-1}\mathcal{C}_{\tilde{w}}I \in I+L^2(\Gamma).
\end{align}
\end{lemma}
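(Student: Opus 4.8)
The plan is to reduce the statement to the standard correspondence between a Riemann--Hilbert problem on an $L^2$-contour and its associated singular integral equation, supplemented by a vanishing lemma. Fix $(x,t) \in [0,\infty) \times [0,\infty)$ throughout. First I would record the soft facts. Writing $\tilde{w} \coloneqq \tilde{v} - I$, along $\RR$ the nonzero entries of $\tilde{w}$ are $-\lambda|r(k)|^2$, $\overline{r(k)}e^{-2i\theta}$ and $-\lambda r(k)e^{2i\theta}$, all of which lie in $L^2(\RR) \cap L^\infty(\RR)$ because $r \in H^{1,1}(\RR)$ by Proposition \ref{rDeltaprop}; along each circle $\partial D_j$ and $\partial D_j^*$ the matrix $\tilde{w}$ is the restriction of a bounded rational function to a compact set. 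Hence $\tilde{w} \in L^2(\Gamma) \cap L^\infty(\Gamma)$, and $\det \tilde{v} \equiv 1$ on $\Gamma$. With these hypotheses in force, the general theory gives that $I - \mathcal{C}_{\tilde{w}}$ is a Fredholm operator of index zero on $L^2(\Gamma)$ (this uses $\det \tilde{v} \equiv 1$; see \cite{Z1989, DZ2003}), and that the following are equivalent: $(\alpha)$ $I - \mathcal{C}_{\tilde{w}}$ is invertible; $(\beta)$ $I - \mathcal{C}_{\tilde{w}}$ is injective; $(\gamma)$ RH problem \ref{RHmreg} has a solution; moreover, when these hold the solution is unique and is given by \eqref{mtilderepresentation}--\eqref{mutildedef}. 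The passage between $(\alpha)$ and $(\gamma)$ is the usual computation: if $\tilde{m}$ solves RH problem \ref{RHmreg} then its boundary values satisfy $\tilde{m}_\pm = I + \mathcal{C}_\pm(\tilde{m}_-\tilde{w})$, so $(I - \mathcal{C}_{\tilde{w}})\tilde{m}_- = I$ and $\tilde{m} = I + \mathcal{C}(\tilde{m}_-\tilde{w})$; conversely, $\tilde{\mu}$ as in \eqref{mutildedef} produces a solution of RH problem \ref{RHmreg} via \eqref{mtilderepresentation}. Uniqueness follows because $\det \tilde{v} \equiv 1$ forces $\det \tilde{m}$ to extend to an entire function equal to $1$, so any two solutions differ by an entire function tending to $I$ at infinity and hence coincide. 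It therefore remains only to establish $(\gamma)$ --- equivalently $(\beta)$ --- for every $u_0 \in \mathcal{U}$.

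The second step is to verify solvability, case by case, exactly as in the existence argument within the proof of Proposition \ref{Schwartzprop}. If $\lambda = 1$ and $q < 0$, then $\Gamma = \RR$ and there are no circles; by \eqref{vplusvdagger} and Proposition \ref{rDeltaprop} the matrix $\tfrac{1}{2}(v + v^\dagger)$ is diagonal with strictly positive diagonal, hence positive definite on $\RR$, so Zhou's vanishing lemma \cite{Z1989} shows that the homogeneous problem ($N$ analytic in $\CC \setminus \Gamma$, $N \to 0$ at infinity, $N_+ = N_- v$) has only the zero solution, which is $(\beta)$. If $\lambda = -1$ and $\Delta$ has no zeros in $\CC_+$, then $v = v^\dagger$ is positive definite by Sylvester's criterion and the same argument applies. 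If $\lambda = -1$ and $\Delta$ has finitely many simple zeros $\{\xi_j\}_1^M$ in $\CC_+$, then, again as in the proof of Proposition \ref{Schwartzprop}, one removes the poles by a Darboux/regularization step, reducing RH problem \ref{RHmreg} to a pole-free RH problem --- solvable by the vanishing lemma just invoked, since the dressing factor may be taken analytic and unitary on $\RR$ --- together with a finite linear algebraic system for the residue data, which is always solvable in the focusing case \cite[Proposition 2.4]{FI1996}; this gives $(\gamma)$. Finally, if $\lambda = 1$ and $q > 0$, then by the very definition of $\mathcal{U}$ (Definition \ref{opensetUdef}) the RH problem of Theorem \ref{linearizableth} has a solution $m(x,t,\cdot)$, and since that RH problem is solvable if and only if RH problem \ref{RHmreg} is solvable (via the explicit transformation $\tilde{m} \leftrightarrow m$ introduced just before RH problem \ref{RHmreg}), we again obtain $(\gamma)$.

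The only genuinely substantive ingredient is the vanishing lemma, i.e.\ the injectivity of $I - \mathcal{C}_{\tilde{w}}$. In the defocusing case it rests on $|r| < 1$ on $\RR$, which makes $v + v^\dagger$ positive definite; in the focusing case it rests on the positivity of $v = v^\dagger$ by Sylvester's criterion, on the unconditional solvability of the focusing residue system when $\Delta$ has zeros, and, in the defocusing case with a pole, on the standing hypothesis $u_0 \in \mathcal{U}$. Everything else --- verifying $\tilde{w} \in L^2 \cap L^\infty$, the Fredholm index computation, and the translation between the RH problem and the singular integral equation \eqref{mutildedef} --- is routine and can be quoted from \cite{Z1989, DZ2003}.
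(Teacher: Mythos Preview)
Your proposal is correct and follows essentially the same approach as the paper: reduce to the vanishing lemma (via positive definiteness of $v+v^\dagger$) in the pole-free cases, invoke the Darboux regularization plus the always-solvable focusing algebraic system when $\lambda=-1$ and $\Delta$ has zeros, and appeal directly to the definition of $\mathcal{U}$ when $\lambda=1$ and $q>0$. The paper's proof is just a terser version of yours, citing the argument of Proposition~\ref{Schwartzprop} and ``standard theory for RH problems'' where you spell out the Fredholm index-zero framework and the equivalence $(\alpha)\Leftrightarrow(\beta)\Leftrightarrow(\gamma)$; the only detail the paper adds that you omit is the remark that $r\in H^{1,1}(\RR)\subset H^1(\RR)$ is H\"older-$\tfrac12$ by Morrey's inequality, which is what guarantees the continuous boundary values required in RH problem~\ref{RHmreg}.
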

\begin{proof}
By Proposition \ref{rDeltaprop}, $r \in H^{1,1}(\RR)$. By Morrey's inequality, every function in $H^{1,1}(\RR_+) \subset H^1(\RR_+)$ is H\"older continuous with exponent $1/2$. Hence, if $\lambda = -1$, the unique existence of $\tilde{m}$ follows by the same vanishing lemma arguments already used to prove Proposition \ref{Schwartzprop}. The same argument applies if $\lambda = 1$ and $q < 0$, because in this case $M = 0$. If $\lambda = 1$ and $q > 0$, then $m$, and hence also $\tilde{m}$, exists by assumption. Standard theory for RH problems then yields the representation formula (\ref{mtilderepresentation}) for $\tilde{m}$.
\end{proof}

The next lemma shows that $\mathcal{U}$ is open and that the singular integral representation \eqref{mutildedef} for $\tilde{\mu}$ is continuous with respect to the initial data in $\mathcal{U}$. This lemma will enable us to approximate the RH solution corresponding to the initial data $u_0 \in \mathcal{U}$ by a sequence of RH solutions corresponding to Schwartz class initial data that approximate $u_0$.

\begin{lemma}\label{UmuBlemma}
The set $\mathcal{U}$ is open in $H^{1,1}(\RR_+)$. Moreover, the map 
\begin{align}\label{tildemumap}
(x,t,u_0) \mapsto \tilde{\mu}(x,t,\cdot): [0, \infty)^2 \times \mathcal{U} \to I +L^2(\Gamma),
\end{align}
where $\tilde{\mu}$ is defined by (\ref{mutildedef}), is continuous.
\end{lemma}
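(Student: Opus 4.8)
The goal is twofold: show that $\mathcal{U}$ is open in $H^{1,1}(\RR_+)$, and establish joint continuity of $(x,t,u_0) \mapsto \tilde\mu(x,t,\cdot)$ into $I + L^2(\Gamma)$. The key observation is that the map $u_0 \mapsto \tilde w(x,t,\cdot)$ inherits continuity from the continuity of the spectral data: by Proposition \ref{rDeltaprop}, $u_0 \mapsto r$ is continuous $\mathcal{U} \to H^{1,1}(\RR) \hookrightarrow L^2(\RR) \cap L^\infty(\RR)$, and by Lemma \ref{zeroscontinuitylemma} (together with Corollary \ref{ablemma}, Lemma \ref{continuitylemma1}, Lemma \ref{u0toabLinftylemma}, and the formula \eqref{cjdef}) the number $M$ of zeros of $\Delta$ and the data $(\xi_j, c_j)$ depend continuously on $u_0$. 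Since the contour $\Gamma$ itself moves with the $\xi_j$, the first step is to fix the contour: given $\check u_0 \in \mathcal{U}$, choose the disks $D_j$ once and for all around the zeros $\check\xi_j$; by continuity of the zeros, for all $u_0$ near $\check u_0$ the zeros $\xi_j$ still lie inside the $D_j$, so the RH problem can be reformulated on the fixed contour $\Gamma = \Gamma(\check u_0)$ (the pole is then inside the disk but the jump $P_j$ is still analytic on $\partial D_j$, with $\xi_j$ and $c_j$ now varying). With the contour fixed, $u_0 \mapsto \tilde w(x,t,\cdot)$ and also $(x,t) \mapsto \tilde w(x,t,\cdot)$ are continuous into $L^2(\Gamma) \cap L^\infty(\Gamma)$, uniformly on compact sets, because $\tilde w$ is built from $r(k)e^{\pm 2i\theta}$ on $\RR$ (and $e^{2i\theta}$ depends continuously on $(x,t,k)$, bounded on $\RR$) and from the rational functions $c_j e^{2i\theta(\xi_j)}/(k-\xi_j)$ on the fixed circles $\partial D_j$.

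**Openness of $\mathcal{U}$.** For $\lambda = -1$ and for $\lambda = 1, q < 0$ this is already contained in Lemma \ref{zeroscontinuitylemma} (resp. trivial, since $\mathcal{U} = H^{1,1}(\RR_+)$ when $q<0$). The substantive case is $\lambda = 1$, $q > 0$, where $\mathcal{U}$ is defined as the set of $u_0$ for which the RH problem has a solution for every $(x,t)$. Here I would use the standard fact that solvability of an $L^2$ RH problem on a fixed contour is equivalent to invertibility of $I - \mathcal{C}_{\tilde w}$ on $L^2(\Gamma)$, which for a problem with unit-determinant jump matrix (by the vanishing-lemma Fredholm argument, $I - \mathcal{C}_{\tilde w}$ is Fredholm of index zero) is equivalent to injectivity, hence to $\det$-nonvanishing of a finite determinant coming from the algebraic system after the Darboux/regularization reduction of Proposition \ref{Schwartzprop} and \cite[Prop.\ 2.4]{FI1996}: $\tilde m$ exists iff $\delta(x,t) := \lambda|d_1(x,t)|^2 - |\delta(\zeta,\xi_1)|^4 \neq 0$ (equivalently the denominator in \eqref{usoldefoc} is nonzero). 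This quantity depends continuously on $(x,t,u_0)$ — through $r_\reg$, $\xi_1$, $c_1$, all continuous in $u_0$ by the above — and when $u_0 \in \mathcal{U}$ it is nonzero for every $(x,t)$; moreover as $(x,t) \to \infty$ it tends to a nonzero limit (the stationary-soliton value, as in the proof of Proposition \ref{Schwartzprop}), so $|\delta(x,t)|$ is bounded below uniformly in $(x,t)$ by a constant $\kappa > 0$. By uniform continuity of $\delta$ in $u_0$ on $[0,\infty)^2$, for $u_0$ close enough to $\check u_0$ we keep $|\delta| \geq \kappa/2 > 0$ for all $(x,t)$, hence $u_0 \in \mathcal{U}$. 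This proves openness.

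**Joint continuity of $\tilde\mu$.** Given openness and the fixed contour, $\tilde\mu = I + (I - \mathcal{C}_{\tilde w})^{-1}\mathcal{C}_{\tilde w} I$, so it suffices to show $(x,t,u_0) \mapsto (I - \mathcal{C}_{\tilde w})^{-1}$ is continuous in operator norm on $L^2(\Gamma)$ and $(x,t,u_0) \mapsto \mathcal{C}_{\tilde w} I$ is continuous in $L^2(\Gamma)$. Since $\mathcal{C}_-$ is bounded on $L^2(\Gamma)$ and $\|\mathcal{C}_{w_1} - \mathcal{C}_{w_2}\|_{L^2(\Gamma)\to L^2(\Gamma)} \leq C\|w_1 - w_2\|_{L^\infty(\Gamma)}$ while $\|\mathcal{C}_{w_1}I - \mathcal{C}_{w_2}I\|_{L^2(\Gamma)} \leq C\|w_1 - w_2\|_{L^2(\Gamma)}$, both reduce to the already-established continuity of $(x,t,u_0) \mapsto \tilde w$ into $L^2(\Gamma)\cap L^\infty(\Gamma)$. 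For the inverse, I invoke the resolvent identity: on the open set where $I - \mathcal{C}_{\tilde w}$ is invertible (which by the openness argument includes a neighborhood of any $(\check x,\check t,\check u_0)$, with the inverse locally bounded), $(I - \mathcal{C}_{w_1})^{-1} - (I - \mathcal{C}_{w_2})^{-1} = (I - \mathcal{C}_{w_1})^{-1}(\mathcal{C}_{w_1} - \mathcal{C}_{w_2})(I - \mathcal{C}_{w_2})^{-1}$, so norm-continuity of the inverse follows from norm-continuity of $\mathcal{C}_{\tilde w}$. Combining, $\tilde\mu$ depends continuously on $(x,t,u_0)$ in $I + L^2(\Gamma)$.

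**Main obstacle.** The delicate point is the openness of $\mathcal{U}$ in the case $\lambda = 1$, $q > 0$: one needs that solvability of the RH problem for \emph{every} $(x,t) \in [0,\infty)^2$ — an a priori non-compact family of conditions — is stable under perturbation of $u_0$. This is handled by reducing solvability to nonvanishing of the scalar $\delta(x,t)$, establishing a uniform lower bound $|\delta(x,t)| \geq \kappa > 0$ over all $(x,t)$ using the large-$(x,t)$ limit toward the stationary soliton (where the relevant determinant is explicitly nonzero, cf.\ the computation in the proof of Proposition \ref{Schwartzprop} and Appendix \ref{RHsolitonsec}), and then transferring this bound to nearby $u_0$ by uniform continuity. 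Once this is in place, the rest is the routine Beals--Coifman machinery.
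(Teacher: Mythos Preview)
Your overall architecture matches the paper's: continuity of $(x,t,u_0)\mapsto\tilde w$ in $L^2\cap L^\infty(\Gamma)$, then the Beals--Coifman formula plus the resolvent identity for $\tilde\mu$; openness of $\mathcal U$ trivial or via Lemma~\ref{zeroscontinuitylemma} except in the case $\lambda=1,\ q>0$. Your explicit remark about freezing the contour $\Gamma=\Gamma(\check u_0)$ and letting the $\xi_j$ vary inside the fixed disks is a point the paper leaves implicit, and it is worth stating.

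Where you diverge is in the openness argument for $\lambda=1,\ q>0$. The paper stays at the operator level: it shows $F:(x,t,u_0)\mapsto I-\mathcal C_{\tilde w}$ is continuous into $\mathcal B(L^2(\Gamma))$, uses a tube-lemma style compactness argument to get a neighborhood $U_1$ of $\check u_0$ on which $F$ is invertible for $(x,t)$ in any fixed compact $K$, and then invokes Deift--Zhou steepest descent to produce a neighborhood $U_2$ on which the RH problem is solvable for all sufficiently large $\sqrt{x^2+t^2}$ (because the solution is close to the stationary one-soliton parametrix, which exists). You instead pass through the Darboux reduction and reduce solvability to nonvanishing of a scalar. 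This is a legitimate alternative, but two points need tightening. First, the quantity you write, $\lambda|d_1|^2-|\delta(\zeta,\xi_1)|^4$, is the \emph{leading asymptotic} of (a multiple of) $\det W_1$ from \eqref{W}, not the exact solvability criterion; the correct scalar is $\det W_1(x,t)=|(W_1)_{11}|^2-\lambda|(W_1)_{12}|^2$, which involves the full $m_\reg(x,t,\xi_1)$. Second, the $\delta(t)$ in the proof of Proposition~\ref{Schwartzprop} is a different object (built from $m(0,t,\pm i\beta)$), so that reference does not directly supply the nonzero limit you want; you need the steepest-descent asymptotics of $m_\reg$ (Lemma~\ref{regasymplemma}) to see that $\det W_1$ has a uniform lower bound for large $(x,t)$, which is exactly the ingredient the paper invokes. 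With those corrections your route and the paper's are essentially the same compact-set-plus-asymptotics split, just organized around the scalar $\det W_1$ rather than the operator $I-\mathcal C_{\tilde w}$.
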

\begin{proof}
By Proposition \ref{rDeltaprop} (\ref{rDeltapropitem5}), the map $u_0 \mapsto r: \mathcal{U} \to H^{1,1}(\RR)$ is continuous. Recalling the definition (\ref{vdef}) of $v$, straightforward estimates then show that $(x,t,u_0) \mapsto v - I:[0, \infty)^2 \times \mathcal{U} \to (L^2 \cap L^\infty)(\RR)$ is continuous. On the other hand, since $\Delta_a \in L^\infty(\CC_+)$ depends continuously on $u_0 \in H^{1,1}(\RR_+)$ by Lemma \ref{u0toabLinftylemma}, and each $\xi_j$ depends continuously on $u_0 \in \mathcal{U}$ by Lemma \ref{zeroscontinuitylemma}, Cauchy's formula for the first derivative shows that $\dot{\Delta}(\xi_j) \in \CC$ depends continuously on $u_0 \in \mathcal{U}$. The continuous dependence of $a(\xi_j), b(-\bar{\xi}_j) \in \CC$ on $u_0 \in H^{1,1}(\RR_+)$ follows from Lemma \ref{u0toabLinftylemma}. Thus, $u_0\mapsto c_j e^{2i\theta(x,t,\xi_j)}$ is continuous $[0, \infty)^2 \times \mathcal{U} \to \CC$ for each $j$. We conclude that $(x,t,u_0) \mapsto \tilde{w}= \tilde{v} - I:[0, \infty)^2 \times \mathcal{U} \to (L^2 \cap L^\infty)(\Gamma)$ is continuous.

Let $\mathcal{B}(L^2(\Gamma))$ denote the space of bounded linear operators on $L^2(\Gamma)$. The set of invertible operators is open in $\mathcal{B}(L^2(\Gamma))$ and the linear map $\tilde{w} \mapsto \mathcal{C}_{\tilde{w}}I$ lies in $\mathcal{B}(L^2(\Gamma))$. Also, since $\|\mathcal{C}_{\tilde{w}}\|_{\mathcal{B}(L^2(\Gamma))} \leq C \|\tilde{w}\|_{L^\infty(\Gamma)}$, the map $\tilde{w} \mapsto I - \mathcal{C}_{\tilde{w}}$ is continuous $L^\infty(\Gamma) \to \mathcal{B}(L^2(\Gamma))$. Since $(x,t,u_0) \mapsto \tilde{\mu}(x,t,\cdot)-I$ with $\tilde{\mu}$ given by (\ref{mutildedef}) can be viewed as a combination of maps of the above forms together with the smooth inversion map $I - \mathcal{C}_{\tilde{w}} \mapsto (I - \mathcal{C}_{\tilde{w}})^{-1}$, this proves that the map in (\ref{tildemumap}) is continuous.

If $\lambda = -1$, the openness of $\mathcal{U}$ follows immediately from Lemma \ref{zeroscontinuitylemma}. If $\lambda = 1$ and $q < 0$, then $\mathcal{U} = H^{1,1}(\RR_+)$ is trivially open. Suppose therefore that $\lambda = 1$ and $q > 0$. Then $\Delta \neq 0$ on $\RR$, and $\Delta$ has exactly one zero $\xi_1$ in $\CC_+$ by Proposition  \ref{rDeltaprop} (\ref{rDeltapropitemc}). Moreover, $a(\xi_1)$ is nonzero by Proposition  \ref{rDeltaprop} (\ref{rDeltapropiteme}).  Thus, in this case, the arguments in the first part of the proof can be extended to all of $H^{1,1}(\RR_+)$, implying that
\begin{align}\label{tildewmaplambda1}
(x,t,u_0) \mapsto \tilde{w}(x,t,\cdot; u_0): [0, \infty)^2 \times H^{1,1}(\RR_+) \to (L^2 \cap L^\infty)(\RR) 
\end{align}
is continuous, where we have indicated the $u_0$-dependence explicitly for clarity. 
Fix $\check{u}_0 \in \mathcal{U}$. By assumption, the RH problem for $\tilde{m}$ corresponding to $\check{u}_0$ has a solution for each $(x,t) \in [0,\infty) \times [0,\infty)$. By standard theory for RH problem, this is equivalent to the map $I - \mathcal{C}_{\tilde{w}(x,t,\cdot; \check{u}_0)}: L^2(\Gamma) \to L^2(\Gamma)$ being invertible for each $(x,t) \in [0,\infty) \times [0,\infty)$. On the other hand, since $\tilde{w}\to I - \mathcal{C}_{\tilde{w}}:L^\infty(\RR) \to \mathcal{B}(L^2(\Gamma))$ is continuous, the continuity of the map (\ref{tildewmaplambda1}) implies that the map $F$ defined by
$$F:(x,t,u_0) \mapsto I - \mathcal{C}_{\tilde{w}(x,t,\cdot; u_0)}:[0, \infty)^2 \times H^{1,1}(\RR_+) \to \mathcal{B}(L^2(\Gamma))$$
is continuous. Let $V$ denote the open set of invertible operators in $\mathcal{B}(L^2(\Gamma))$. Then $F^{-1}(V)$ is open. Given any compact set $K \subset [0, \infty)^2$, $K \times \{\check{u}_0\}$ is a compact subset of $F^{-1}(V)$, and hence there is a neighborhood $U_1 \subset H^{1,1}(\RR_+)$ of $\check{u}_0$ such that $(x,t,U_1) \subset F^{-1}(V)$ for all $(x,t) \in K$, i.e., such that the solution $\tilde{m}(x,t,\cdot; u_0)$ exists whenever $u_0 \in U_1$ and $(x,t) \in K$.
On the other hand, by a Deift--Zhou steepest descent analysis of the RH problem for $\tilde{m}$, one obtains that there is a neighborhood $U_2 \subset H^{1,1}(\RR_+)$ of $\check{u}_0$ such that $\tilde{m}(x,t,\cdot; u_0)$ exists for all $u_0 \in U_2$ whenever $\sqrt{x^2 + t^2}$ is large enough. Indeed, for large $\sqrt{x^2 + t^2}$, the solution is well-approximated by the global parametrix, which in this case is the solution of the RH problem corresponding to the pure stationary one-soliton and always exists (see for example \cite{BJM2018}).
By choosing $K$ large enough, we find that $U_1 \cap U_2 \subset H^{1,1}(\RR_+)$ is an open neighborhood of $\check{u}_0$ such that $\tilde{m}(x,t,\cdot; u_0)$ exists for all  $u_0 \in U_1 \cap U_2$ and all $(x,t) \in [0, \infty)^2$, that is, $U_1 \cap U_2 \subset \mathcal{U}$. This shows that $\mathcal{U}$ is open. 
\end{proof}

\begin{proof}[Proof of Theorem \ref{linearizableth}]
Let $q\in \RR \setminus \{0\}$ and $u_0 \in \mathcal{U}$.
Let $\{u_0^{(n)}\}_{n=1}^\infty \subset \mathcal{S}(\RR_+)$ be a sequence such that $u_0^{(n)} \to u_0$ in $H^{1,1}(\RR_+)$ as $n \to \infty$ and such that $(u_{0}^{(n)})'(0) + qu_0^{(n)}(0) = 0$ for each $n \geq 1$. Such a sequence exists by Lemma \ref{sequencelemma}.
By Lemma \ref{UmuBlemma},  $\mathcal{U}$ is open in $H^{1,1}(\RR_+)$. Hence, passing to a subsequence if necessary, we may assume that $\{u_0^{(n)}\}_{n=1}^\infty \subset \mathcal{U}$, which means that the assumptions of Proposition \ref{Schwartzprop} are satisfied for each $u_0^{(n)}$. Applying Proposition \ref{Schwartzprop}, we conclude that there exists a unique global Schwartz class solution $u^{(n)}(x,t)$ of the Robin IBVP for NLS with parameter $q$ and initial data $u_0^{(n)}$ for each $n$.

By Lemma \ref{mexistencelemma}, the functions $\tilde{m}$ and $\tilde{\mu}$ corresponding to $u_0$, as well as the functions $\tilde{m}^{(n)}$ and $\tilde{\mu}^{(n)}$ corresponding to $u_0^{(n)}$ are all well-defined and satisfy (\ref{mtilderepresentation}) and (\ref{mutildedef}).
Proposition \ref{Schwartzprop} implies that $u^{(n)}$ satisfies, for each $n \geq 1$,
\begin{equation}\label{recoverun}
u^{(n)}(x,t) = 2i\lim_{k \to \infty} k(\tilde{m}^{(n)}(x,t,k))_{12}, \qquad x \geq 0, ~ t \geq 0.
\end{equation}

From the global well-posedness result Proposition \ref{H11existenceprop}, there is a unique global solution $u(x,t)$ in $H^{1,1}(\RR_+)$ of the Robin IBVP for NLS with parameter $q$ and initial data $u_0$.
The Schwartz class solution $u^{(n)}(x,t)$ is clearly also a solution in $H^{1,1}(\RR_+)$ and by the continuity of the data-to-solution mapping established in Proposition \ref{H11existenceprop}, it follows that $u^{(n)}(x,t) \to u(x,t)$ as $n \to \infty$ for each $(x,t)$. Hence, letting $n \to \infty$ in the $(12)$-entry of (\ref{recoverun}),
\begin{equation}\label{ulimlimmn}
u(x,t) = \lim_{n\to \infty}  2i \lim_{k \to \infty} k(\tilde{m}^{(n)}(x,t,k))_{12}, \qquad x \geq 0, ~ t \geq 0.
\end{equation}

Next note that, by (\ref{mtilderepresentation}),
\begin{align}\nonumber
2i\lim_{k \to \infty}k(\tilde{m}^{(n)}(x,t,k)-I) 
& = -\frac{1}{\pi}\int_\RR(\tilde{\mu}^{(n)} \tilde{w}^{(n)})(x,t,k')dk'
	\\ \label{limkmnreg}
& =-\frac{1}{\pi}\left(X_n(x,t) + Y_n(x,t) + Z(x,t)\right)
\end{align}
and
\begin{align}\label{limkmreg}
2i\lim_{k \to \infty}k(\tilde{m}(x,t,k)-I) =&-\frac{1}{\pi}\int_\RR(\tilde{\mu} \tilde{w})(x,t,k')dk'
=-\frac{Z(x,t)}{\pi},
\end{align}
where the limits are taken nontangentially with respect to $\RR$, and
\begin{align*}
X_n \coloneqq \int_\RR (\tilde{\mu}^{(n)}-\tilde{\mu}) \tilde{w}^{(n)} dk',\quad 
Y_n \coloneqq \int_\RR \tilde{\mu} (\tilde{w}^{(n)} - \tilde{w})dk',\quad 
Z \coloneqq \int_{\RR}\tilde{\mu} \tilde{w} dk'.
\end{align*}
Fix $(x,t) \in [0, \infty) \times [0, \infty)$. By Lemma \ref{UmuBlemma}, we have $\tilde{\mu}^{(n)} \to \tilde{\mu}$ in $I + L^2(\Gamma)$  as $n \to \infty$. By Lemma \ref{rcontinuitylemma} and (\ref{vregdef}), we have $\tilde{w}^{(n)} \to \tilde{w}$ in $H^{1,1}(\RR_+)$ as $n \to \infty$.
Hence, $X_n \to 0$ and $Y_n \to 0$ as $n \to \infty$, so letting $n \to \infty$ in the $(12)$-entry of (\ref{limkmnreg}) and comparing with the $(12)$-entry of (\ref{limkmreg}), we find
\begin{align}\label{limlimmregnlimmreg}
\lim_{n\to \infty} 2i\lim_{k \to \infty} k(\tilde{m}^{(n)}(x,t,k))_{12} 
= 2i\lim_{k \to \infty}k(\tilde{m}(x,t,k))_{12}
= 2i\lim_{k \to \infty}k(m(x,t,k))_{12}.
\end{align}
Substituting this into (\ref{ulimlimmn}), we obtain the desired reconstruction formula \eqref{recoveru} for $u$. This completes the proof the theorem.
\end{proof}

\section{Long-time asymptotics}\label{asymproofsection}

\subsection{Proof of Theorem \ref{onesolitonasymptoticsth}}\label{onesolitonasymptoticsthproof}
Let $\lambda = 1$ and $q > 0$. Let $u_0 \in H^{1,1}(\RR_+)$ and suppose the RH problem of Theorem \ref{linearizableth} has a solution $m(x,t,k)$ for each $(x,t) \in [0,\infty) \times [0,\infty)$. By Proposition \ref{rDeltaprop} (\ref{rDeltapropitemc}), $\Delta(k)$ has one simple zero $\xi_1 \in i\RR_{>0}$, which means that $m(x,t,k)$ has simple poles at $\xi_1$ and $\bar{\xi}_1$. We will use a Darboux transformation to remove these poles and consider a solution $m_{\reg}$ of an associated regular RH problem without poles.
Define 
\begin{equation}\label{rregdef}
r_{\reg}(k):=r(k)\frac{k-\xi_1}{k-\bar{\xi}_1}, \qquad k \in \RR.
\end{equation}
By Proposition \ref{rDeltaprop}, $r,r_\reg \in H^{1,1}(\RR)$ and $|r(k)| = |r_\reg(k)| < 1$ for $k \in \RR$. Let $m_\reg(x,t,k)$ be the unique solution of the RH problem of Theorem \ref{linearizableth} with $r(k)$ replaced by $r_\reg(k)$ and with no poles in $\CC \setminus \RR$. Such a solution exists by a standard vanishing lemma argument as described in the proof of Lemma \ref{Schwartzprop} and satisfies $\det m_\reg = 1$. 
Define $B_1$ by
\begin{align}\label{B1definition}
B_1 =-kI+m(x,t,k)\begin{pmatrix}k-\xi_1 & 0 \\ 0 & k -\bar{\xi}_1\end{pmatrix}m_{\reg}(x,t,k)^{-1}.
\end{align}
The properties of $m$ and $m_\reg$ imply that $B_1$ is an entire function of $k$ which is $O(1)$ as $k \to \infty$; hence $B_1 \equiv B_1(x,t)$ is independent of $k$. 
Evaluating (\ref{B1definition}) as $k \to \xi_1$ and as $k \to \bar{\xi}_1$, and using the residue conditions (\ref{mresidues}) satisfied by $m$, we conclude that $B_1$ solves the algebraic system
\begin{equation}\label{algsys}
\begin{cases}
(\xi_1 I+B_1(x,t))m_{\reg}(x,t,\xi_1)\begin{psmallmatrix}1 \\ -d_1(x,t) \end{psmallmatrix}=0,
	\\
(\bar{\xi}_1 I+B_1(x,t))m_{\reg}(x,t,\bar{\xi}_1)\begin{psmallmatrix}-\lambda\overline{d_1(x,t)} \\ 1 \end{psmallmatrix}= 0,
\end{cases}
\end{equation}
with
\begin{align}\label{Darbouxconst}
& d_1(x,t) :=  \frac{c_1 e^{2i\theta(x,t,\xi_1)}}{\xi_1 - \bar{\xi}_1}.
\end{align}
Equation (\ref{B1definition}) can be rewritten as
\begin{equation}\label{Darboux}
m(x,t,k)=(kI+B_1)m_{\reg}(x,t,k)\begin{pmatrix}\frac{1}{k-\xi_1} & 0 \\ 0 & \frac{1}{k-\bar{\xi}_1}\end{pmatrix}.
\end{equation}
As a consequence of Theorem \ref{linearizableth}, $u(x,t)$ is given in terms of $m$ by \eqref{recoveru}. According to \eqref{recoveru} and \eqref{Darboux}, we have
\begin{equation}\label{uasymformula}
u(x,t) = 2i\lim_{k \to \infty} k(m(x,t,k))_{12}= u_\reg(x,t) + 2i (B_1(x,t))_{12}.
\end{equation}
where
\begin{align}\label{uregdef}
u_{\reg}(x,t) \coloneqq 2i\lim_{k \to \infty} k(m_\reg(x,t,k))_{12}.
\end{align}
To determine the asymptotics of $u$, it is therefore sufficient to find the asymptotics of $u_\reg$ and $(B_1)_{12}$. 

\begin{lemma}\label{regasymplemma}

\begin{enumerate}[$(a)$]
\item \label{regasymplemmaitem1}
As $t \to \infty$, the function $u_{\reg}$ defined in (\ref{uregdef}) obeys the asymptotics (\ref{udefocusingqnegative}) with $r$ replaced by $r_\reg$, i.e., 
\begin{align}\label{uregasymptotics}
u_{\reg}(x,t)=\frac{u_{\mathrm{rad}}^{(1)}(x,t)}{\sqrt{t}} + O\big(t^{-3/4}\big)
\end{align}
as $t \to \infty$ uniformly for $x \in [0, \infty)$, where $u_{\mathrm{rad}}^{(1)}$ is given by (\ref{urad1defoc}).

\item \label{regasymplemmaitem2}
If $\xi_1 \in \CC_+$, then
\begin{equation}
m_{\reg}(x, t, \xi_1) = \delta(\zeta,\xi_1)^{\sigma_3} + \frac{Y(\zeta, t) m_1^X(\zeta) Y(\zeta, t)^{-1}}{(k_0 - \xi_1)\sqrt{8t}}\delta(\zeta,\xi_1)^{\sigma_3} 
+ O\big(t^{-3/4}\big)
\end{equation}
as $t\to \infty$ uniformly for $x \in [0, \infty)$, where
\begin{align}\nonumber
\delta(\zeta,k):=&\; \exp\left[\frac{1}{2\pi i}\int_{-\infty}^{k_0}\frac{\log(1-\lambda |r(s)|^2)}{s-k}ds\right], \qquad \delta_0(\zeta,t):= (8t)^{\frac{i\nu}{2}}e^{\chi(\zeta,k_0)},
	\\ \label{deltadelta0Ym1Xdef}
Y(\zeta,t) := &\; e^{-\frac{t\Phi(\zeta, k_0)\sigma_3}{2}} \delta_0(\zeta, t)^{\sigma_3},
\qquad
m_1^X(\zeta) := i\begin{pmatrix} 0  & - \beta(r(k_0)) \\ \lambda \overline{\beta(r(k_0))} & 0 \end{pmatrix}.
\end{align}
\end{enumerate}
\end{lemma}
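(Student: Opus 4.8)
The plan is to obtain both parts from the $\overline{\partial}$ nonlinear steepest descent analysis of the \emph{regular} RH problem, exploiting that this RH problem is of exactly the form of the RH problem associated to the defocusing NLS equation on the line with reflection coefficient $r_{\reg}$ and empty discrete spectrum. By Proposition \ref{rDeltaprop} we have $r_{\reg} \in H^{1,1}(\RR)$ and $|r_{\reg}(k)| = |r(k)| < 1$ for all $k \in \RR$, so the hypotheses under which the analysis of \cite{DMM2019} is carried out are met. Since $|r_{\reg}| = |r|$ on $\RR$, the quantities $\nu$ and $\chi$ in (\ref{k0nubetachidef}) are unchanged under the replacement $r \mapsto r_{\reg}$; only $\arg r$ enters $\beta$, which is why the radiation term $u_{\mathrm{rad}}^{(1)}$ is naturally expressed through $r_{\reg}$.

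For part (\ref{regasymplemmaitem1}), the function $u_{\reg}$ is recovered from $m_{\reg}$ by the formula (\ref{uregdef}), which is of the same shape as (\ref{recoveru}). Hence $u_{\reg}$ is, by construction, the function produced by the reconstruction formula applied to the solution of the line-type RH problem with data $r_{\reg}$, and the long-time asymptotic theorem of \cite{DMM2019} gives immediately that $u_{\reg}(x,t) = u_{\mathrm{rad}}^{(1)}(x,t)/\sqrt{t} + O(t^{-3/4})$ as $t \to \infty$, uniformly for $x \in [0,\infty)$, with $u_{\mathrm{rad}}^{(1)}$ as in (\ref{urad1defoc}); this is (\ref{uregasymptotics}). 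The uniformity over all $x \geq 0$, in particular in the regime $k_0 = -x/(4t) \to -\infty$, is part of the statement of \cite{DMM2019}: there $r_{\reg}(k_0) \to 0$, so $\nu \to 0$ and the leading term is absorbed into the error.

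For part (\ref{regasymplemmaitem2}), I would retrace the chain of transformations $m_{\reg} \mapsto m^{(1)} \mapsto m^{(2)} \mapsto \cdots$ of \cite{DMM2019, BJM2018}: (i) conjugate by $\delta(\zeta,k)^{\sigma_3}$, with $\delta$ as in (\ref{deltadelta0Ym1Xdef}), to move the jump off the real axis; (ii) carry out the $\overline{\partial}$-extension and open lenses, so that the only non-negligible jump concentrates near the stationary phase point $k_0 = -\zeta/4$ while the remaining $\overline{\partial}$-data lives on a shrinking region; (iii) approximate the solution near $k_0$ by the rescaled parabolic-cylinder parametrix $m^X$ and elsewhere by the global parametrix, which in the soliton-free case is the identity. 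Since $\xi_1 \in i\RR_{>0}$ lies a fixed positive distance from $\RR$ and from $\{k_0 : x \geq 0\} \subset (-\infty,0]$, the deformations can be arranged to avoid $\xi_1$ (the deformed jumps being exponentially small away from $k_0$), the $\overline{\partial}$-contribution at $\xi_1$ is $O(t^{-3/4})$, and the total error between $m_{\reg}$ and the product of parametrices is $O(t^{-3/4})$ uniformly in $x \geq 0$. Evaluating the resulting expansion at $k = \xi_1$: the global parametrix contributes $\delta(\zeta,\xi_1)^{\sigma_3}$ (the constant matrices picked up when opening lenses equal the identity at $\xi_1$), while the outer expansion of the local parametrix at $k_0$, once the rescaling $k \mapsto \sqrt{8t}(k-k_0)$ and the accompanying rotation encoded in $Y(\zeta,t)$ are undone, contributes $\frac{Y(\zeta,t)\, m_1^X(\zeta)\, Y(\zeta,t)^{-1}}{(k_0-\xi_1)\sqrt{8t}}\,\delta(\zeta,\xi_1)^{\sigma_3}$, where $m_1^X$ is the subleading coefficient in the large-argument asymptotics of the parabolic-cylinder model, namely the off-diagonal matrix in (\ref{deltadelta0Ym1Xdef}) built from $\beta(r(k_0))$. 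This is the asserted formula.

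The main work, and the main obstacle, is bookkeeping rather than new ideas: one must propagate the precise constants through all the transformations in (i)--(iii) to confirm that the subleading coefficient is exactly $m_1^X(\zeta)$ as displayed in (\ref{deltadelta0Ym1Xdef}) — a computation with the connection formulas for parabolic cylinder functions already carried out in \cite{DMM2019, BJM2018} — and one must check that every error estimate is uniform in $x \in [0,\infty)$, the only delicate point being $k_0 \to -\infty$, where $r_{\reg}(k_0) \to 0$ makes the parametrix degenerate to the identity. The off-axis evaluation itself is unproblematic precisely because $\xi_1$ is purely imaginary with $|\xi_1|$ bounded away from $0$, so that both $\mathrm{dist}(\xi_1,\Gamma)$ and $|k_0 - \xi_1|$ remain bounded below uniformly in $x \geq 0$; this off-axis expansion then feeds, via (\ref{algsys})--(\ref{uasymformula}), into the remainder of the proof of Theorem \ref{onesolitonasymptoticsth}.
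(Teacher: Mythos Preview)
Your proposal is correct and follows essentially the same approach as the paper: both recognize that the RH problem for $m_{\reg}$ coincides with the line-type defocusing NLS RH problem with reflection coefficient $r_{\reg}$ (satisfying $r_{\reg}\in H^{1,1}(\RR)$ and $|r_{\reg}|<1$), and both appeal to the $\overline{\partial}$ steepest descent results of \cite{DMM2019}. The paper's proof is a two-sentence citation, whereas you spell out the chain of transformations and the off-axis evaluation at $\xi_1$ in more detail, but the underlying argument is the same.
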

\begin{proof}
The RH problem for $m_\reg$ has the same form as the RH problem associated to the defocusing NLS equation on the line with reflection coefficient given by $r_\reg(k)$. The lemma therefore follows from well-known results (see \cite{DMM2019}) in the same way as Theorem \ref{nosolitonsasymptoticsth}.
\end{proof}

We can write the algebraic system \eqref{algsys} as
\begin{equation}\label{B1def}
B_1=-W_1 \begin{pmatrix} \xi_1 & 0 \\ 0 & \bar{\xi}_1 \end{pmatrix} W_1^{-1},
\end{equation}
where
\begin{equation}\label{W}
W_1 \equiv W_1(x,t;u_0)=\left(m_{\reg}(x,t,\xi_1) \begin{pmatrix} 1 \\  -d_1(x,t) \end{pmatrix}, m_{\reg}(x,t,\bar{\xi}_1)\begin{pmatrix} -\lambda \overline{d_1}(x,t) \\ 1 \end{pmatrix}\right).
\end{equation}
From \eqref{B1def}, we have
\begin{equation}\label{B112}
2i(B_1(x,t))_{12} = \frac{4i\xi_1 (W_1)_{11}(W_1)_{12}}{|(W_1)_{11}|^2 - \lambda |(W_1)_{12}|^2}
\end{equation}
where
\begin{align*}
& (W_1(x,t))_{11} = (m_\reg(x,t,\xi_1))_{11} - d_1(x,t) (m_\reg(x,t,\xi_1))_{12},
	\\
& (W_1(x,t))_{12} = (m_\reg(x,t,\bar{\xi}_1))_{12} - \lambda \overline{d_1(x,t)} (m_\reg(x,t,\bar{\xi}_1))_{11}.
\end{align*}
By Lemma \ref{regasymplemma} (\ref{regasymplemmaitem2}), as $t \to \infty$, we have
\begin{align*}
(W_1)_{11} =&\; \delta(\zeta,\xi_1) + d_1(x,t) \frac{\lambda ie^{-t\Phi(\zeta,k_0)}\beta^X(r_\reg(k_0))\delta_0(\zeta,t)^2 \delta(\zeta,\xi_1)^{-1}}{(k_0-\xi_1)\sqrt{8t}} + O\big(t^{-3/4}\big),
	\\
(W_1)_{12}=&-\lambda \overline{(m_{\reg}(x,t,\xi_1))_{22}} \overline{d_1(x,t)}+\lambda \overline{(m_{\reg}(x,t,\xi_1))_{21}}
	\nonumber\\
=&-\lambda \overline{\delta(\zeta,\xi_1)}^{-1} \overline{d_1(x,t)} - \frac{i e^{t\overline{\Phi(\zeta,k_0)}}\beta^X(r_\reg(k_0)) \delta_0(\zeta,t)^2 \overline{\delta(\zeta,\xi_1)}}{(k_0+\xi_1)\sqrt{8t}} + O\big(t^{-3/4}\big).
\end{align*}
Substituting these expansions into \eqref{B112}, we find
\begin{equation}\label{B112asym}
2i(B_1(x,t))_{12}=u_{\mathrm{sol}}(x,t)+\frac{u_{\mathrm{rad}}^{(2)}(x,t)}{\sqrt{t}}+O\big(t^{-3/4}\big),\qquad t \to \infty,
\end{equation}
uniformly for $x \in [0,\infty)$, where $u_{\mathrm{sol}}$ and $u_{\mathrm{rad}}^{(2)}$ are given by (\ref{usoldefoc}) and (\ref{urad2defoc}), respectively.  
Substitution of \eqref{uregasymptotics} and \eqref{B112asym} into \eqref{uasymformula} gives the asymptotic formula \eqref{uqpositiveasymptotics} and this completes the proof of the Theorem \ref{onesolitonasymptoticsth}.

\subsection{Proof of Theorem \ref{asymptoticsthdefocusing}}\label{asymptoticsthdefocusingproof}
Suppose $\lambda = 1$ and let $q = \sqrt{\alpha^2+\omega} > 0$ be the value of $q$ associated with the stationary one-soliton. 
Theorem \ref{onesolitonasymptoticsth} provides the asymptotics for the solution $u(x,t)$ of the defocusing NLS equation whenever the initial data $u_0$ lies in the subset $\mathcal{U}$ of $H^{1,1}(\RR_+)$ defined in Definition \ref{opensetUdef}. The stationary one-soliton initial data $u_{s0}$ clearly lies in $\mathcal{U}$. Moreover, by Lemma \ref{UmuBlemma}, $\mathcal{U}$ is open in $H^{1,1}(\RR_+)$. Hence there exists a neighborhood $U$ of $u_{s0}$ such that $U \subset \mathcal{U}$. It follows that if $u_0 \in U$, then the global weak solution $u(x,t)$ with parameter $q$ and initial data $u_0$ satisfies \eqref{uqpositiveasymptotics} as $t \to \infty$. 
On the other hand, from Proposition \ref{rDeltaprop}, the spectral function $\Delta(k)$ associated to $u_0$ has exactly one simple zero in $\CC_+$. By Lemma \ref{zeroscontinuitylemma}, if we let $\xi_1$ and $\xi_{s1}$ be the zeros of $u_0$ and $u_{s0}$, respectively, then $|\xi_1-\xi_{s1}| \to 0$ as $|u_0-u_{s0}|_{H^{1,1}(\RR_+)} \to 0$. This completes the proof of the Theorem \ref{asymptoticsthdefocusing}.

\subsection{Proof of Theorem \ref{asymptoticsthfocusing}}\label{asymptoticsthfocusingproof}
Suppose $\lambda = -1$ and let $q = \sqrt{\omega} \tanh{\phi} \in \RR \setminus \{0\}$ be the value of $q$ associated with the stationary one-soliton. 

Suppose first that $q > 0$. In this case, the spectral functions $\Delta_s$ and $a_s$ corresponding to the stationary one-soliton initial data $u_{s0}$ fulfill Assumption \ref{zerosassumption}, because $\Delta_s$ has only one simple zero in $\bar{\CC}_+$ at $\xi_{s1} := i\sqrt{\omega}/2$, and $a_s$ has no zeros in $\bar{\CC}_+$ (see (\ref{bsas}) and (\ref{Deltasformula})). This means that $u_{s0} \in \mathcal{U}$ where $\mathcal{U}$ is the subset of $H^{1,1}(\RR_+)$ defined in Definition \ref{opensetUdef}. By Lemma \ref{UmuBlemma}, $\mathcal{U}$ is open, and hence the desired conclusions follow immediately  from Proposition \ref{rDeltaprop}, Lemma \ref{zeroscontinuitylemma}, and Theorem \ref{focusingasymptoticsth}.

Suppose now that $q < 0$. In this case, $\Delta_s$ has two simple zeros in $\CC_+$ located at $\xi_{s1}:= i\sqrt{\omega}/2$ and $\xi_{s2} := -iq/2$. Since $a_s(k)$ also has a simple zero at $\xi_{s2}$, Assumption \ref{zerosassumption} is not fulfilled. This means that we cannot immediately apply Theorem \ref{focusingasymptoticsth}. However, as we now describe, it is easy to generalize the statement of Theorem \ref{focusingasymptoticsth} to include also the present situation.

Let $u_0(x)$ be a small perturbation of $u_{s0}$ in $H^{1,1}(\RR_+)$. Let $a(k)$ and $\Delta(k)$ be the spectral functions corresponding to $u_0$. By Lemma \ref{zeroscontinuitylemma}, $\Delta(k)$ has two simple zeros $\xi_1$ and $\xi_2$ and these zeros are such that $\xi_1 \to \xi_{s1}$ and $\xi_2 \to \xi_{s2}$ as $u_0 \to u_{s0}$.
In particular, $\xi_1 \neq \xi_2$ and $\xi_1 \neq \xi_{s2}$ for $u_0$ sufficiently close to $u_{s0}$. The symmetry (\ref{Deltasymm}) implies that both $\xi_1$ and $\xi_2$ lie in $i\RR_+$. 
Furthermore, by Lemma \ref{u0toabLinftylemma} the map $u_0 \mapsto a$ is continuous $H^{1,1}(\RR_+) \to L^\infty(\bar{\CC}_+)$, so the same arguments used to prove Lemma \ref{zeroscontinuitylemma} show that, whenever $u_0$ is close to $u_{s0}$, $a(k)$ has exactly one simple zero $k_1$ in $\CC_+$ and this zero satisfies $k_1 \to \xi_{s2}$ as $u_0 \to u_{s0}$.
We distinguish two cases depending on whether $\xi_2$ coincides with $\xi_{s2}$ or not.

Case 1 ($\xi_2 = \xi_{s2}$). Suppose $\xi_2$ coincides with $\xi_{s2}$. Equation (\ref{Deltadef}) evaluated at $k = \xi_{s2} = -iq/2$ reads
\begin{align}\label{Deltaatxis2}
\Delta(\xi_{s2}) = -2iq|a(\xi_{s2})|^2.
\end{align}
Since $\Delta(k)$ vanishes at $\xi_{s2}$, we deduce that $a(k)$ also vanishes at $\xi_{s2}$. This means that both $\Delta$ and $a$ have simple zeros at $\xi_{s2}$.
In particular, Assumption \ref{zerosassumption} is not fulfilled. However, we claim that the conclusion of  Theorem \ref{linearizableth} still holds provided that the zero $\xi_2$ of $\Delta$ is excluded from the list of poles of $m$. Indeed, consider the derivation of the residue conditions (\ref{mresidues}) in Section \ref{commonzeroissuesection}. As in Section \ref{commonzeroissuesection}, $\xi_1$ is in the present situation a simple zero of $\Delta(k)$ and $a(\xi_1) \neq 0$. This means that $m$ satisfies the residue conditions (\ref{mresidues}) for $j = 1$. However, $\xi_2 = k_1 = \xi_{s2} = -iq/2$ is now a simple zero of both $\Delta$ and $a$. It therefore follows from (\ref{m1equation}) that $[m]_1$ is analytic at $\xi_2$. By symmetry, $[m]_2$ is then analytic at $\bar{\xi}_2$. The upshot is that $m$ satisfies the RH problem of Theorem \ref{focusingasymptoticsth} with the slight modification that the residue conditions (\ref{mresidues}) should only be imposed for $j = 1$ and not for $j = 2$.
In other words, the RH problem involves the modified discrete scattering data $\hat{\sigma}_d=\{(\xi_1,\hat{c}_1)\}$, where $\hat{c}_1$ is defined in \eqref{modifieddataresidue}. We conclude that the long-time asymptotics formula \eqref{ufocusingasymptotics} holds as $t \to \infty$ with the stationary one-soliton solution $u_{\mathrm{sol}}(x,t;\hat{\sigma}_d)$ defined by \eqref{recoverusol} and $u_{\mathrm{rad}}(x,t)$ defined by equation \eqref{uradformula} with the modified scattering data $\hat{\sigma}_d$ and the reflection coefficient $r(k)$ given by \eqref{rdef}.
This completes the proof in the case when $\xi_2 = \xi_{s2}$.

Case 2 ($\xi_2 \neq \xi_{s2}$). Suppose $\xi_2$ does not coincide with $\xi_{s2}$. In this case, we have $a(\xi_2) \neq 0$. Indeed, evaluating (\ref{Deltadef}) at $\xi_2$, we obtain
\begin{align}\label{Deltadefatxi2}
0 = (2\xi_2-iq)|a(\xi_2)|^2 + \lambda(2\xi_2+iq)|b(\xi_2)|^2.
\end{align}
From the explicit expression (\ref{bsas}) for the spectral function $b_s(k)$ corresponding to $u_{s0}$, we see that $b_s$ is nonzero everywhere in the upper half-plane. Furthermore, by Lemma \ref{u0toabLinftylemma}, the map $u_0 \mapsto b$ is continuous $H^{1,1}(\RR_+) \to L^\infty(\bar{\CC}_+)$. Hence, $b(\xi_2) \neq 0$ for all $u_0$ close to $u_{s0}$.
Since $\xi_2 \neq -iq/2$ by assumption, we have $2\xi_2 \pm iq \neq 0$, and hence (\ref{Deltadefatxi2}) implies that $a(\xi_2) \neq 0$. We conclude that, whenever $u_0$ is sufficiently close to $u_{s0}$, Assumption \ref{zerosassumption} is fulfilled and the RH solution $m$ of Theorem \ref{linearizableth} has two simple poles $\xi_1, \xi_2$ in $\CC_+$. This means that the RH problem involves the modified discrete scattering data $\hat{\sigma}_d=\{(\xi_1,\hat{c}_1), (\xi_2,\hat{c}_2)\}$, and that we have the long-time asymptotics formula \eqref{ufocusingasymptotics} as $t \to \infty$ with the stationary two-soliton solution $u_{\mathrm{sol}}(x,t;\hat{\sigma}_d)$ defined by \eqref{recoverusol} and $u_{\mathrm{rad}}(x,t)$ defined by \eqref{uradformula}. This completes the proof of the theorem.


\appendix


\section{Global well-posedness of the NLS equation}\label{globalwpn}
In this appendix, we prove Proposition \ref{H11existenceprop}, i.e., we establish global well-posedness of the Robin IBVP for the NLS equation in the space $H^{1,1}(\RR_+)$. The proof is inspired by \cite{DP2011} and we refer to Theorem 3 in \cite{DP2011} for more details of the proofs of existence and uniqueness in the focusing case. 

Let $H_q^+$ be the operator introduced in Definition \ref{H11globalsoln}.
Let $U_t \equiv e^{-i H_q^+ t/2}$ be the one-parameter unitary operator generated by $H_q^+$ in $L^2(\RR_+)$. Let $X=C([0,T],H^{1,1}(\RR_+))$, $0<T<\infty$, denote the set of continuous maps from $[0,T]$ to $H^{1,1}(\RR_+)$. Equip $X$ with a norm $\|v\|_{X}\equiv \sup_{0 \le t \le T}\|v(t)\|_{H^{1,1}(\RR_+)}$. For $v=v(t) \in X$, define $T_qv$ by
$$(T_q v)(t)=U_t v(0)-i \lambda \int_0^t f(s,v(s)) ds,\quad 0 \le t \le T,$$
where $f(s,v(s))=U_{t-s}|v(s)|^2 v(s)$. Note that $T_q$ is a contraction on $X$, since $f$ is Lipschitz continuous on $X$: for $v, w \in X$ we have the estimate
$$\|f(s,v(s))-f(s,w(s))\|_{H^{1,1}(\RR_+)} \le K \|v(s)-w(s)\|_{H^{1,1}(\RR_+)},$$
where $K=C(T)(\|v\|_{H^{1,0}}^2+\|w\|_{H^{1,0}}^2)$. Hence, there is a unique fixed point $v \in X$ such that $T_q v=v$ and $v(t)$ is a unique weak solution for the Robin IBVP of the NLS equation for $t \in [0,T]$ with small $T$. Furthermore, the following two quantities are constant in time:
\begin{align*}
\mathcal{M}=&\int_0^\infty |u|^2 dx',\\
\mathcal{H}=&\int_0^\infty (|u_x|^2 + \lambda |u|^4) dx - q |u(0,t)|^2.
\end{align*}
These conserved quantities provide an a priori bound on the $H^{1,0}$-norm of $u(t)$, which implies that the Lipchitz constant is bounded. Hence, the contraction mapping can be extended indefinitely by iteration and so the solution of the NLS equation exists globally.

Finally, we show that the data-to-solution mapping is continuous. Suppose that $v, w$ are fixed points of $T_q$ with initial data $v_0, w_0 \in H^{1,1}(\RR_+)$, respectively. Then
\begin{align*}
\|v(t)-w(t)\|_{H^{1,1}(\RR_+)}=&\|T_q v(t)-T_q w(t)\|_{H^{1,1}(\RR_+)}\\
\le &\|U_t (v_0-w_0\|_{H^{1,1}(\RR_+)}+\int_0^t \|f(s,v(s))-f(s,w(s))\|_{H^{1,1}(\RR_+)} ds\\
\le &C \|v_0-w_0\|_{H^{1,1}(\RR_+)}+K\int_0^t \|v(s)-w(s)\|_{H^{1,1}(\RR_+)}ds
\end{align*}
Hence, by the Gr\"{o}nwall's lemma, we have
$$\|v(t)-w(t)\|_{H^{1,1}(\RR_+)} \le C\|v_0-w_0\|_{H^{1,1}(\RR_+)}e^{K t}.$$
Thus, the initial data-to-solution mapping is continuous.

\section{Soliton solutions}\label{RHsolitonsec}

Theorem \ref{linearizableth} provides a representation of the solution $u(x,t)$ of the Robin IBVP for the NLS equation in terms of the RH problem \eqref{mjump} for $m(x,t,k)$. For soliton solutions,  the reflection coefficient $r(k)$ vanishes identically and the jump matrix of the RH problem is trivial. This implies that we can explicitly determine the solution $m_{\mathrm{sol}}(x,t,k)$ of the RH problem \eqref{mjump} corresponding to the $M$-soliton solution $u_{\mathrm{sol}}(x,t)$.

\begin{RHproblem}[RH problem for solitons]\label{RHsoliton}
Let $\lambda = 1$ or $\lambda = -1$. Given discrete data $\sigma_d=\{(\xi_j,c_j)\}_1^M \subset \CC_+ \times (\CC \setminus \{0\})$, find a $2 \times 2$-matrix valued function $m_{\mathrm{sol}}(x,t,k)$ with the following properties:
\begin{enumerate}[$(a)$]
\item $m_{\mathrm{sol}}(x,t,\cdot): \mathbb{C} \setminus \{\xi_j, \bar{\xi}_j\}_1^M \to \mathbb{C}^{2 \times 2}$ is analytic.
\item $m_{\mathrm{sol}}(x,t,k) \to I$ as $k \to \infty$.
\item For $j = 1, \dots, M$, the first column of $m_{\mathrm{sol}}$ has a simple pole at $\xi_j$, the second column of $m_{\mathrm{sol}}$ has a simple pole at $\bar{\xi}_j$, and the following residue conditions hold:
\begin{subequations}\label{msresidues}
\begin{align}\label{msresiduesa}
& \underset{k = \xi_j}{\Res} [m_{\mathrm{sol}}(x,t,k)]_1 =  c_j e^{2i\theta(x,t,\xi_j)} [m_{\mathrm{sol}}(x,t,\xi_j)]_2,
	\\\label{msresiduesb}
& \underset{k=\bar{\xi}_j}{\Res} [m_{\mathrm{sol}}(x,t,k)]_2 = \lambda\bar{c}_j e^{-2i\theta(x,t,\bar{\xi}_j)}  [m_{\mathrm{sol}}(x,t,\bar{\xi}_j)]_1.
\end{align}
\end{subequations}
\end{enumerate}
\end{RHproblem}

If the RH problem \ref{RHsoliton} has a solution, then
\begin{align}\label{recoverusol}
u_{\mathrm{sol}}(x,t;\sigma_d) = 2i\lim_{k \to \infty} k(m_{\mathrm{sol}}(x,t,k))_{12}, \qquad x \geq 0, ~ t \geq 0,
\end{align}
is the NLS $M$-soliton corresponding to the discrete scattering data $\sigma_d$. Note that the data $\sigma_d=\{(\xi_j,c_j)\}_1^M \subset \CC_+ \times (\CC \setminus \{0\})$ determines $m_{\mathrm{sol}}$ and $u_{\mathrm{sol}}$ completely. 
If $\lambda = -1$ and all the $\xi_j$ are distinct, then there exists a unique solution of RH problem \ref{RHsoliton} for each $(x,t) \in \RR^2$; see \cite[Proposition B.1.]{BJM2018} for a proof. If $\lambda = 1$, this is not the case (this is related to the fact that the defocusing NLS does not support bright solitons on the line). However, in the case of the stationary one-soliton (\ref{usdef}) which corresponds to the case of $M=1$ and $\xi_1$ pure imaginary, the RH problem \ref{RHsoliton} does have a unique solution $m_\mathrm{sol}$ whenever $x,t\geq 0$. 
In what follows, we construct this solution explicitly and compute the associated spectral functions $a_s(k)$, $b_s(k)$, and $\Delta_s(k)$ for both $\lambda = -1$ and $\lambda = 1$.

\subsection{The stationary one-soliton}
Let $m_s$ be the solution of RH problem \ref{RHsoliton} with the discrete scattering data $\sigma_d$ given by $\sigma_d = \{(\xi_1,c_1)\}$ for some $\xi_1 \in i\RR_{>0}$. 
Since $m_s \to I$ as $k \to \infty$, (\ref{msresiduesa}) implies that
\begin{equation}\label{expansion1}
[m_s(x,t,k)]_{1}=\begin{pmatrix}
1\\
0
\end{pmatrix}+\frac{c_1 e^{2i\theta(x,t,\xi_1)}}{k-\xi_1}[m_s(x,t,\xi_1)]_{2}.
\end{equation}
From uniqueness of solution of the RH problem and the symmetry condition \eqref{msymm}, we infer that $m_s$ satisfies the symmetries $m_{s,11} = m_{s,22}^*$ and $m_{s,21} = \lambda m_{s,12}^*$. Evaluating (\ref{expansion1}) at $k=\bar{\xi}_1$ and using these symmetries, we find the algebraic system
$$\begin{cases}
\overline{m_{s,22}(\xi_1)}=1+\frac{c_1 e^{2i\theta(\xi_1)}}{\bar{\xi}_1-\xi_1}m_{s,12}(\xi_1), \\
\lambda \overline{m_{s,12}(\xi_1)}=\frac{c_1 e^{2i\theta(\xi_1)}}{\bar{\xi}_1-\xi_1}m_{s,22}(\xi_1).
\end{cases}
$$
Solving for $m_{s,12}$ and $m_{s,22}$, we deduce that
\begin{align*}
m_{s,22}(\xi_1)=\frac{|\bar{\xi}_1-\xi_1|^2}{|\bar{\xi}_1-\xi_1|^2-\lambda |c_1|^2|e^{2i\theta(\xi_1)}|^2},\;\;\;\;m_{s,12}(\xi_1)=\frac{\lambda \overline{c_1} e^{-2i\theta(\bar{\xi}_1)}(\bar{\xi}_1-\xi_1)}{|\bar{\xi}_1-\xi_1|^2-\lambda |c_1|^2|e^{2i\theta(\xi_1)}|^2},
\end{align*}
and substituting these expressions back into \eqref{expansion1}, we obtain
\begin{align*}
m_{s,11}(k)=&\; 1+\frac{\lambda |c_1|^2 |e^{2i\theta(\xi_1)}|^2(\bar{\xi}_1-\xi_1)}{(k-\xi_1)(|\bar{\xi}_1-\xi_1|^2-\lambda |c_1|^2|e^{2i\theta(\xi_1)}|^2)},
	\\
m_{s,21}(k)=&\; \frac{c_1 e^{2i\theta(\xi_1)}}{k-\xi_1}\frac{|\bar{\xi}_1-\xi_1|^2}{|\bar{\xi}_1-\xi_1|^2-\lambda |c_1|^2|e^{2i\theta(\xi_1)}|^2}.
\end{align*}
Using the symmetry and writing $\xi_1=i \rho_1$, we arrive at
\begin{align}\nonumber
m_{s,22}(k)=&\; 1+\frac{2 i \lambda \rho_1  |c_1|^2 e^{-4\rho_1 x}}{(k+i \rho_1)(4 \rho_1 ^2-\lambda |c_1|^2 e^{-4 \rho_1 x})},
	\\ \label{ms22ms12}
m_{s,12}(k)=&\; \frac{4 \lambda \rho_1^2 \overline{c_1} e^{-2 \rho_1 x}e^{4i\rho_1^2 t}}{(k+i\rho_1)(4\rho_1^2-\lambda |c_1|^2e^{-4\rho_1 x})}.
\end{align}
It follows that
\begin{equation}\label{solitonsol}
2i\lim_{k \to \infty} km_{s,12}(x,t,k)=\frac{8i\lambda\rho_1^2 \overline{c_1} e^{2 \rho_1 x+i4\rho_1^2 t}}{4\rho_1^2 e^{4 \rho_1 x}-\lambda |c_1|^2}.
\end{equation}
By choosing $\rho_1= \sqrt{\omega}/2$ for $\lambda=\pm 1$, and
$$c_1= \begin{cases}
- i \sqrt{\omega} e^{-\phi}, & \lambda=-1,
	\\
\frac{i\alpha \sqrt{\omega}}{\sqrt{\alpha^2+\omega}+\sqrt{\omega}}, & \lambda=1,
\end{cases}
$$
the right-hand side of \eqref{solitonsol} coincides with the expression for the stationary one-soliton given in (\ref{usdef}). The denominators in (\ref{ms22ms12}) involve the factor
$$4 \rho_1 ^2-\lambda |c_1|^2 e^{-4 \rho_1 x} = \begin{cases} 
\omega(1 + e^{-2(\phi + x\sqrt{\omega})}), & \lambda = -1, 
\\
\omega  \Big(1 - \frac{\alpha^2 e^{-2 x \sqrt{\omega }}}{(\sqrt{\alpha ^2+\omega}+\sqrt{\omega })^2}\Big), & \lambda = 1.
\end{cases}$$
We conclude that $m_s$ is well-defined for all $x, t \in \RR$ if $\lambda = -1$, but is singular at the negative value of $x$ given in (\ref{singularxvalue}) if $\lambda = 1$. 
Since
$$\begin{pmatrix} b_s(k) \\ a_s(k) \end{pmatrix} = [\mu_3(0,0,k)]_2 = [m_s(0,0,k)]_2,$$
we find that the spectral functions $a_s$ and $b_s$ are given by 
\begin{equation}\label{bsas}
\begin{cases}
b_s(k)= -\frac{i \sqrt{\omega -q^2}}{2k+i\sqrt{\omega}},\\
a_s(k)= \frac{2k+i q}{2k+i\sqrt{\omega}},
\end{cases}\; \text{if $\lambda=-1$}, \quad \text{and} \quad
\begin{cases}
b_s(k)=-\frac{i\sqrt{q^2- \omega}}{2k+i \sqrt{\omega}},\\
a_s(k)=\frac{2k+iq}{2k+i \sqrt{\omega}},
\end{cases}\; \text{if $\lambda=1$}.
\end{equation}
As expected, the reflection coefficient $r_s(k) \equiv 0$ in both cases. It also follows that the function $\Delta_s(k)$ defined in (\ref{Deltadef}) is given for $\lambda = \pm 1$ by
\begin{equation}\label{Deltasformula}
\Delta_s(k)=\frac{(k-i\rho_1)(2k+iq)}{k+i\rho_1}.
\end{equation}
We observe that $\Delta_s$ has a simple zero at $\xi_1 = i\rho_1$ and that $a_s(\xi_1) \ne 0$.

\subsection{Renormalization of the reflectionless RH problem}
Following \cite{BJM2018}, we introduce a renormalization of the solution $m_{\mathrm{sol}}$ of the RH problem \ref{RHsoliton} that satisfies a modified discrete RH problem. Given discrete scattering data $\sigma_d$ and a subset $\Box \subseteq \{1,2,\ldots,M\}$, define
\begin{equation}
m^\Box(x,t,k) \coloneqq m_{\mathrm{sol}}(x,t,k)a_\Box (k)^{\sigma_3}, \qquad a_\Box (k) \coloneqq \prod_{j \in \Box}\frac{k-\xi_j}{k-\bar{\xi}_j}.
\end{equation}
Then $m^\Box$ satisfies the following RH problem with modified scattering data.
\begin{RHproblem}[renormalized reflectionless RH problem]\label{RHsolitonrenorm}
Given discrete scattering data $\sigma_d=\{(\xi_j,c_j)\}_1^M \subset \CC_+ \times (\CC \setminus \{0\})$ and $\Box \subseteq \{1,\ldots,M\}$ find a $2 \times 2$-matrix valued function $m^\Box(x,t,k)$ with the following properties:
\begin{enumerate}[$(a)$]
\item $m^\Box(x,t,\cdot): \mathbb{C} \setminus \{\xi_j, \bar{\xi}_j\}_1^M \to \mathbb{C}^{2 \times 2}$ is analytic.
\item $m^\Box(x,t,k) \to I$ as $k \to \infty$.

\item Each point of $\{\xi_j, \bar{\xi}_j\}_1^M$ is a simple pole of $m^\Box(x,t,k)$ and the following residue conditions hold:
\begin{align*}
& \underset{k = \xi_j}{\Res} [m^\Box(x,t,k)]_1 =  \lim_{k \to \xi_j} m^\Box(x,t,k) n^\Box_j, 
\quad \underset{k=\bar{\xi}_j}{\Res} [m^\Box(x,t,k)]_2 = \lim_{k \to \bar{\xi}_j}m^\Box(x,t,k)\sigma_2 (\overline{n^\Box_j})\sigma_2,
\end{align*}
where
\begin{equation}
n^\Box_j=\begin{cases}
\begin{pmatrix}0 & 0 \\ c_j e^{2i\theta(x,t,\xi_j)}a_\Box(\xi_j)^2 & 0 \end{pmatrix} \quad & j \in \{1,2,\ldots,M\}\setminus \Box, \\
 \begin{pmatrix}0 & \frac{1}{c_j e^{2i\theta(x,t,\xi_j)}} a'_\Box(\xi_j)^{-2}\\ 0 & 0 \end{pmatrix} \quad & j \in \Box.
\end{cases}
\end{equation}
\end{enumerate}
\end{RHproblem}

\bibliographystyle{plain}

\end{document}